\numberwithin{equation}{section}
\theoremstyle{plain}
\newtheorem{theorem}{Theorem}[section]
\newtheorem{proposition}[theorem]{Proposition}
\newtheorem{lemma}[theorem]{Lemma}
\theoremstyle{definition}
\newtheorem{definition}[theorem]{Definition}
\theoremstyle{remark}
\newtheorem{remark}[theorem]{Remark}
\DeclareMathOperator\supp{supp}
\setlist[itemize]{label=\textbullet}
\begin{document}

\title{Variational principles for Hausdorff and packing dimensions of fractal percolation on self-affine~sponges}

\author{Julien Barral}
\address{Laboratoire de G\'eom\'etrie, Analyse et Applications, CNRS, UMR 7539, Universit\'e Sorbonne Paris Nord, CNRS, UMR 7539,  F-93430, Villetaneuse, France}
\email{barral@math.univ-paris13.fr}
\author{Guilhem Brunet}
\address{Laboratoire de G\'eom\'etrie, Analyse et Applications, CNRS, UMR 7539, Universit\'e Sorbonne Paris Nord, CNRS, UMR 7539,  F-93430, Villetaneuse, France}
\email{brunet@math.univ-paris13.fr}


\date{}

\begin{abstract} We establish variational principles for the Hausdorff and packing dimensions of a class of statistically self-affine sponges, including in particular fractal percolation sets obtained from Barański and Gatzouras-Lalley carpets and sponges. Our first step is to compute the Hausdorff and packing dimensions of non-degenerate inhomogeneous Mandelbrot measures supported on the associated random limit sets. This is not a straightforward combination of the existing approaches for the deterministic inhomogeneous Bernoulli  measures  and the Mandelbrot measures on random Sierpi\'nski sponges; it reveals new structural features. The variational principles rely on a specific subclass of inhomogeneous Mandelbrot measures, which are connected to localized digit frequencies in the underlying coding space. This connection makes it possible to construct effective coverings of the random limit set, leading to sharp upper bounds for its Hausdorff and packing dimensions.
\end{abstract}

\maketitle

\raggedbottom
\pagestyle{plain}

\section{Introduction}\label{introduction}

Let $\{f_i\}_{i\in\mathcal I}$ be an iterated function system (IFS) consisting of a non-empty and finite collection of strictly contracting  maps of the Euclidean space $\mathbb{R}^d$ $(d\ge 1)$. According to Hutchinson \cite{Hutchinson}, there exists a unique non empty compact set $K$ such that 
\begin{equation}\label{Hut}
K = \bigcup_{i\in\mathcal{I}} f_i(K),
\end{equation}
called the attractor of the IFS. We assume that the maps $f_i$ have no common fixed points, so that $K$ is nontrivial, and that they are affine maps $x\mapsto A_ix+t_i$, so that $K$ is self-affine. Also, we assume that the $A_i$ are invertible. Associated to $\{f_i\}_{i\in\mathcal I}$ are the Borel probability measures $\mu$ obeying a self-affinity relation 
\begin{equation}\label{SAE}
\mu=\sum_{i\in\mathcal I}p_i\, \mu\circ f_i^{-1},
\end{equation}
where  $(p_i)_{i\in\mathcal I}$ is a probability vector. If $\nu$  stands for the Bernoulli product measure $\otimes_{k=1}^\infty(\sum_{i\in\mathcal I}p_i\delta_i)$ on $\Sigma=\mathcal I^{\mathbb{N}^+}$ endowed with the $\sigma$-algebra generated by cylinders, the unique self-affine Borel probability measure $\mu$ obeying \eqref{SAE}  is the pushforward $\pi_*\nu$ of $\nu$ by the coding map from $\Sigma$ to $K$ defined as
$$
\pi : \boldsymbol{i}=i_1i_2\cdots  \in \Sigma \longmapsto \lim_{k\to +\infty} f_{i_1}\circ \cdots\circ f_{i_k}(0).
$$
The dimension theory of such sets and measures is an active area of research.~A fundamental result by Falconer \cite{ref28} states that if the linear parts $A_i$, $i\in\mathcal I$, have   operator norms $<\frac{1}{3}$ (this bound can be relaxed to $<1/2$~\cite{Solomyak}), then for $\mathcal{L}^{d \#\mathcal I}$-almost every choice of $(t_i)_{i\in\mathcal I}$ ($\mathcal{L}$ denotes the $1$-dimensional Lebesgue measure), $\dim_H K=\dim_B K=\min(d,\dim_a(K))$, where  $\dim_H$ and $\dim_B$ denote the Hausdorff and box-counting dimensions, and $\dim_a(K)$ is the affinity dimension of  $K$ defined thanks to  the singular values of the elements of the semigroup $S$ generated by $\{A_i:\, i\in\mathcal I\}$. The counterpart to $\dim_a(K)$ for the measure $\pi_*\nu$ is the  Lyapunov dimension $\dim_L(\nu,T)$ of $\nu$~\cite{JPS}, where $T$ is the shift operation on $\Sigma$. This dimension is also defined for any $T$-invariant probability measure~$\eta$, and  expressed in terms of the entropy of $\eta$ and the Lyapunov exponents of the system $(f_i)_{i \in\mathcal{I}}$ as seen from~$\eta$. K\"aenm\"aki~\cite{Kaenmaki} showed that for some $T$-ergodic probability measure $\eta$ one has $\min(\dim_L(\eta,T),d)=\min (\dim_aK,d)$ and for $\mathcal{L}^{d \#\mathcal I}$-a.e.  $(t_i)_{i\in\mathcal I}$,  $\min(\dim_L(\eta,T),d)=\dim_H(\pi_*\eta)$ (Hausdorff and packing dimensions of a measure are defined in Section~\ref{appendix}); thus $\dim_H K=\sup\{\dim_H (\pi_*\rho):\, \rho \text{ is }T\text{-invariant}\}$. On the other hand, the set of exceptions~to the validity of the formula $\dim_H K=\min(\dim_aK,d)$ contains classical self-affine~sets such as self-affine Sierpi\'nski carpets and sponges \cite{ref3,ref12,ref18} and their generalizations \cite{ref10,ref1,ref5,Kol}. Though we will focus on such sponges,  we continue our overview of the positive results known about the validity of  $\dim_H= \min(\dim_aK,d)$; this will naturally lead to introduce the starting point of our study, namely a result by Das and Simmons~in~\cite{ref5}.

Considerable progress has been made over the past fifteen years in developing checkable  sufficient conditions on the IFS under which  $\dim_HK=\min (\dim_a K, d)$ and $\dim_H(\pi_*\nu)=\min(\dim_L(\nu,T),d)$, and possible variational principles relating these quantities.  A first breakthrough was made by Hochman when $d=1$~\cite{Hochman1}: he replaced the classical open set condition (OSC)\footnote{Recall that $\{f_i\}_{i\in\mathcal I}$ satisfies the OSC  if there exists a non-empty open set $U$ such that the sets $f_i(U)$, $i\in\mathcal I$, are pairwise disjoint and all included in $U$, and the strong OSC  if, moreover, $U$ can be chosen so that $K\cap U\neq\emptyset$. It satisfies the ESC if it generates a free semi-group and there exists $\epsilon>0$ such that for all $k\ge 1$ and all $i_1\cdots i_k\neq j_1\cdots j_k$ in $\mathcal I^k$, $\| f_{i_1}\circ \cdots\circ f_{i_k}- f_{j_1}\circ \cdots\circ f_{j_k}\|\ge\epsilon^k$ (in the self-similar case, this can be weakened to hold only for infinitely many $k$).}~\cite{Moran,Hutchinson} with  the much weaker so-called exponential separation condition (ESC), and he used  ideas from additive combinatorics to  show the desired equalities. He later extended his result to higher dimensional self-similar systems, by adding some natural assumptions, in particular an irreducibility property for the semi-group~$S$~\cite{Hochman2}. In the planar self-affine setting, B\'ar\'any, Hochman and Rapaport \cite{ref2} obtained  $\dim_H(\pi_*\nu)=\min(\dim_L(\nu,T),d)$ under the assumptions that $S$ is strongly irreducible (no finite union of nontrivial subspaces of $\mathbb{R}^d$ is invariant by $S$), proximal, and that the strong OSC (SOSC) holds.  Subsequently, Hochman and Rapaport \cite{ref40} relaxed the SOSC to the~ESC, and  Rapaport \cite{Rapaport1} extended the result to $d=3$ under the SOSC. Once $\dim_H(\pi_*\nu)=\min(\dim_L(\nu,T),d)$ is obtained, it is combined with results by Morris and Shmerkin ($d=2$) and Morris and Sert ($d\ge 3$) \cite{MSh,MSert}, which state that the Lyapunov dimension of the K\"aenm\"aki measure  is the supremum of  those of Bernoulli product measures associated to subsystems obtained by iterating  the original IFS. This leads to the conclusion $\dim_HK=\min (\dim_a K, d)$.~When the semi-group $S$ preserves  a nontrivial linear subspace, the formulas are known to hold under the ESC, subject to  restrictions in specific planar situations (B\'ar\'any, Rams and Simon \cite{BRS,ref39}, B\'ar\'any, Hochman and Rapaport~\cite{ref31,ref40}).  They also hold for any $d\ge 2$ when the $A_i$ are diagonal and the maps $f_i$, restricted to each principal direction, define an IFS satisfying the~ESC, provided some additional mild condition are satisfied (Rapaport~\cite{Rapaport2}).

Still in the diagonal case, for $d\ge3$, Das and Simmons~\cite{ref5} investigated self-affine Gatzouras–Lalley sponges (see the definition below), for which the restrictions of the maps $f_i$ to some principal subspaces (i.e. subspaces generated by finitely many principal directions) form a self-affine IFS with exact overlaps. Such overlaps typically imply that $\dim_H K<\dim_B K< \min(\dim_aK,d)$. They exhibited examples for which $\dim_H K>\sup\{\dim_H (\pi_*\rho):\, \rho\text{ is }T\text{-invariant}\}=\sup\{\dim_H(\pi_*\nu):\, \nu\text{ Bernoulli}\}$, in sharp contrast to the Gatzouras-Lalley carpets for which the three last quantities are equal. This phenomenon raises the natural question of identifying a class of measures, related to the construction of  $K$, over which a variational principle for $\dim_H K$ could be based. In~\cite{ref5}, a class of inhomogeneous Bernoulli measures is proposed  (see the discussion before Theorem~\ref{dimKd}), but the corresponding variational principle has not been yet established.

In this paper we prove variational principles for the Hausdorff and packing dimensions of a class of statistically self-affine sponges including some random versions of self-affine Gatzouras-Lalley sponges; this covers the deterministic case, for which  the variational principle associated to $\dim_H K$ differs from that considered in \cite{ref5}. Beyond the problem raised by Das and Simmons, our motivation also stems from the fact that, for the type of randomization we consider—namely, a fractal percolation on $K$—the studies of the Hausdorff dimension of random statistically self-affine Sierpiński carpets \cite{ref9} and sponges~\cite{ref6} suggest that the richer geometric structure of Gatzouras–Lalley sponges is likely to give rise to new phenomena and developments. We base our study on the random counterpart of inhomogeneous Bernoulli measures, namely inhomogeneous Mandelbrot measures. Determining the Hausdorff and packing dimensions of such a measure indeed is not simply a matter of combining formulas and  techniques from the deterministic inhomogeneous case and the study of homogeneous Mandelbrot measures on random Sierpi\'nski sponges; rather, it uncovers new structural features. The variational principles rely on a natural connection between a certain subclass of these measures and sequences of localized digit frequencies associated with points in the coding space. This relation enables the construction of suitable coverings, which in turn yield sharp upper bounds on the dimensions.

Let us start with the Hausdorff dimension in the planar case.

\subsection{The planar case. Statistically self-affine Gatzouras-Lalley and Bara\'nski carpets}\label{planarcase} We assume that up to conjugation of $\{f_i\}_{i\in\mathcal I}$ by an affine map, there are families $(a_{i})_{i\in\mathcal I}\in (0,1)^{\mathcal I}$, $(b_i)_{i\in\mathcal I}\in (0,1)^{\mathcal I}$ and $({t}_{i})_{i\in\mathcal I}\in (\mathbb R_+^2)^{\mathcal I}$ such that  for all $i\in\mathcal I$, 
$
f_{i} : x\in\mathbb{R}^2\mapsto \mathrm{diag}(a_i,b_i) x+t_i$ and $f_i([0,1]^2)\subset [0,1]^2$.

Recall that the attractor $K$ of $\{f_i\}_{i\in\mathcal I}$ is then called a \textit{Bara\'nski carpet} if the sets $f_i((0,1)^2)$, $i\in\mathcal I$, are pairwise disjoint sub-rectangles of $(0,1)^2$, and for each of the principal axes, for all $(i,j)\in\mathcal I^2$, the orthogonal projections of $f_i((0,1)^2)$ and $f_j((0,1)^2)$ on this axis are either disjoint or equal intervals.  It is a \textit{Gatzouras-Lalley carpet} if, up to a conjugation of $\{f_i\}_{i\in\mathcal I}$ by the symmetry with respect to the first bisector,  the sets $f_i((0,1)^2)$, $i\in\mathcal I$, are pairwise disjoint sub-rectangles of $(0,1)^2$, stretched in the horizontal direction ($b_i<a_i)$, and for all $(i,j)\in\mathcal I^2$, the orthogonal projections of $f_i((0,1)^2)$ and $f_j((0,1)^2)$ on the first principal axis are either disjoint or equal intervals. When  there are integers $m_1,m_2\ge 2$ such that the $f_i((0,1)^2)$  take the form $(\frac{k_i}{m_1},\frac{k_i+1}{m_1})\times (\frac{\ell_i}{m_2},\frac{\ell_i+1}{m_2})$ for some $(k_i,\ell_i)\in\mathbb{N}^2$ and are pairwise disjoint, $K$ is a \textit{Sierpi\'nski} carpet.

Gathering Gatzouras-Lalley and Bara\'nski results, which generalise those by Bedford~\cite{ref3} and McMullen~\cite{ref12} for Sierpi\'nski carpets, one has the following variational principle. 
\begin{theorem}[{\cite[Theorem 5.3]{ref10},\cite[Theorem A]{ref1}}]\label{thmHDBGL} If $K$ is a Gatzouras-Lalley or a Bara\'nski carpet, then 
$$
\dim_H K=\max\left\{\dim_H(\mu):\, \text{$\mu$ is a self-affine measure supported on $K$}\right \}.
$$
\end{theorem}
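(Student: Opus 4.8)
The plan is to establish the two inequalities separately, the easy one being $\dim_H K\ge \sup\{\dim_H(\mu)\}$, which is immediate since every self-affine measure supported on $K$ has its topological support contained in $K$, so its Hausdorff dimension is at most $\dim_H K$. The substance is the reverse inequality: exhibiting, for each $\epsilon>0$, a self-affine measure $\mu=\pi_*\nu$ with $\dim_H(\mu)\ge \dim_H K-\epsilon$, together with a formula for $\dim_H K$ that one can match. I would first recall (or rederive) the Gatzouras--Lalley/Bara\'nski dimension formula: after grouping the maps $i\in\mathcal I$ according to the common projection of $f_i((0,1)^2)$ onto the ``long'' principal axis (this is where the combinatorial structure of the two carpet types enters), $\dim_H K$ is the supremum over probability vectors $(p_i)_{i\in\mathcal I}$ of a Ledrappier--Young type expression
\[
D\big((p_i)_i\big)=\frac{H(q)}{\log(1/\alpha)}+\Big(1-\frac{\log(1/\alpha)}{\log(1/\beta)}\Big)^{?}\cdots,
\]
more precisely $\displaystyle \dim_H(\pi_*\nu)=\frac{h(\bar q)}{\chi_1}+\frac{h(p)-h(\bar q)}{\chi_2}$ where $\bar q$ is the projected (marginal) probability vector on the long-direction fibres, $h(\cdot)$ is Shannon entropy, and $\chi_1=\sum_i p_i\log(1/a_i)$, $\chi_2=\sum_i p_i\log(1/b_i)$ are the two Lyapunov exponents (with the roles of $a_i,b_i$ swapped in the Bara\'nski chart where $b_i>a_i$; Bara\'nski's theorem actually takes a max over the two possible orderings). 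Here one uses crucially that in both carpet types the projection to the long axis is itself a self-similar IFS satisfying (after the identification of equal intervals) the open set condition, so the projected measure has a clean dimension.

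Next I would carry out the two directions of this dimension formula. For the lower bound $\dim_H(\pi_*\nu)\ge D((p_i)_i)$, the tool is the mass distribution principle / Frostman-type local-dimension estimate applied along approximate squares: for $\nu$-a.e. $\boldsymbol i$, partition the cylinder $[\boldsymbol i|_n]$ by first following $n$ coordinates in the contracted ($a$) direction and then the appropriate number $n'\approx n\chi_1/\chi_2$ of further coordinates so that $b$-contraction catches up, giving a family of ``approximate squares''; the Shannon--McMillan--Breiman theorem controls $\nu$ of such a square as $\exp(-n h(\bar q)-(n'-n)\, \text{(conditional entropy)})$ and one reads off the local dimension. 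For the upper bound $\dim_H K\le \sup_p D(p)$, one covers $K$ by the same approximate squares of a fixed generation and estimates their number, optimizing the exponent over frequency vectors by a standard large-deviations / concavity argument; the supremum is attained because $D$ is continuous on the compact simplex, and an approximating sequence $\mu_\epsilon$ as required is produced. Finally one checks that the optimizing $p$ can be taken with all $p_i>0$ (else pass to the sub-IFS), so $\pi_*\nu$ is genuinely a self-affine measure supported on all of $K$, which closes the variational principle and lets us write $\max$ rather than $\sup$.

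The main obstacle is the upper bound $\dim_H K\le \sup_p D(p)$ and, intertwined with it, the need to organize the covering so that the Bara\'nski case (where neither direction is uniformly the long one) is handled: one must split $\mathcal I$, or rather the set of infinite words, according to which coordinate direction is asymptotically more contracted along a given branch, and take a maximum of the two Ledrappier--Young expressions; making the countable family of approximate-square coverings genuinely efficient (so that the Hausdorff-dimension bound, not merely a box-dimension bound, comes out) is the delicate point. A secondary technical issue is that equal projected intervals force one to work not with the $f_i$ individually but with the induced IFS on the long axis and the conditional structure above each projected cylinder; keeping track of the entropy decomposition $h(p)=h(\bar q)+h(p\mid \bar q)$ correctly through this identification is where Gatzouras--Lalley's and Bara\'nski's arguments do their real work, and I would simply invoke Theorem~\ref{thmHDBGL} as cited rather than reprove it in full.
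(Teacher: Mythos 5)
The paper itself does not prove Theorem~\ref{thmHDBGL}: it is imported verbatim from Gatzouras--Lalley and Bara\'nski, and the paper's own contribution is the randomized generalization (Theorem~\ref{dimK2}), which it obtains not by the classical route you sketch but by specializing its general machinery -- Theorem~\ref{dimKd} for inhomogeneous Mandelbrot measures plus the concavity/reduction argument of Section~\ref{secdimK2} that collapses the inhomogeneous variational principle to one over (Mandelbrot, hence in the deterministic case Bernoulli/self-affine) measures. So there is no ``paper proof'' of this exact statement to match; what can be said is that your outline is the standard direct argument of the cited references, and that the correct Ledrappier--Young expression $\frac{h(\bar q)}{\chi_1}+\frac{h(p)-h(\bar q)}{\chi_2}$, the role of the exactly-overlapping-or-disjoint projections (self-similar IFS with OSC on the long axis), and the need to take a maximum over the two orderings in the Bara\'nski case are all correctly identified.

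As a proof, however, your proposal is incomplete and, at its final step, circular: you end by saying you would ``simply invoke Theorem~\ref{thmHDBGL} as cited rather than reprove it in full,'' which is invoking the statement to be proved. The parts you defer are precisely the hard ones. The upper bound $\dim_H K\le\sup_p D(p)$ cannot be obtained from a single fixed-generation covering by approximate squares (that only bounds the box dimension, and indeed $\dim_B K$ is in general strictly larger); one must stratify the symbolic space by asymptotic digit frequencies, build for each frequency class a tailored family of approximate-square coverings, and control the exponents uniformly via continuity/concavity of $D$ on the simplex -- this is the content of Gatzouras--Lalley's argument (and of the localized-frequency coverings of Section~\ref{UB} in the random setting), and it is exactly what your sketch names as ``the delicate point'' without carrying it out. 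Likewise the attainment of the maximum and the entropy decomposition $h(p)=h(\bar q)+h(p\mid\bar q)$ through the identification of equal projected intervals are asserted rather than executed. So the route is the right one, but as written the proposal is an outline of the known proof, not a proof.
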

If $K$ is a Sierpi\'nski carpet, the maximum is uniquely attained \cite{ref18} (also there is a closed-form expression for $\dim_HK$ \cite{ref3,ref12}), but it may not be the case otherwise~\cite{BFNL}.

Let us now describe the randomization of the previous models considered in this paper.

\noindent
\textbf{Random statistically self-affine Bara\'nski and Gatzouras-Lalley carpets.} Let $\mathbb{N}^+$ denote the set of positive integers. Denote by $\mathcal I^*=\bigcup_{n\ge 0}\mathcal {I}^n$, the set of finite words over the alphabet~$\mathcal {I}$; $\mathcal {I}^0$ contains the empty word denoted by $\boldsymbol{\epsilon}$. The set $\mathcal I^*$ and  the symbolic space $\mathcal{I}^{\mathbb{N}^+}$ made of the infinite words over $\mathcal I$ will be also denoted by $\Sigma^*$ and $\Sigma$ respectively. The concatenation of a finite word $u\in\mathcal I^*$ with a finite or infinite word $v\in\mathcal I^*\cup \mathcal I^{\mathbb{N}^+}$ is denoted by $u\cdot v$.  For each $w\in\mathcal{I}^*$, denote by $[w]$ the cylinder generated by $w$, that is the set of infinite words over $\mathcal I$ having $w$ as prefix; also denote by $|w|$ the length of $w\in\mathcal I^*\cup \mathcal{I}^{\mathbb{N}^+}$. If $\boldsymbol{i}\in\Sigma$ and $n\in\mathbb{N}^+$, $\boldsymbol{i}_{|n}=\boldsymbol{i}_1\cdots \boldsymbol{i}_n$ and $\boldsymbol{i}_{|0}=\boldsymbol{\epsilon}$. The set $\Sigma$ is endowed with the $\sigma$-algebra $\mathcal C$ generated by the cylinders, which is also the Borel $\sigma$-algebra  once  $\Sigma$ has been endowed with the standard distance $\mathrm{d}(\boldsymbol{i},\boldsymbol{i'})=\exp(-|\boldsymbol{i}\land\boldsymbol{i'}|)$, where $\boldsymbol{i}\land\boldsymbol{i'}$ is the longest common prefix of $\boldsymbol{i}$ and $\boldsymbol{i'}$. The shift operation on $\Sigma$ is denoted by $T$.

\noindent
\textbf{\textit{Construction of the random attractor and Mandelbrot measures.}} Fix a Gatzouras-Lalley or a Bara\'nski carpet $K$ as defined above.  Consider a random subset $\mathcal I_\omega$ of $\mathcal{I}$ such that $\mathbb{E}(\# \mathcal{I}_{\omega} )>1$. This is equivalent to considering $C = (c_{i})_{i \in \mathcal{I}}$, a random vector taking values in $\left\{0,1\right\}^{\mathcal{I}}$ such that $\mathbb{E} \left(\sum_{i\in\mathcal{I}} c_i\right)> 1$ and to setting $\mathcal I_\omega=\{i\in\mathcal{I} :\, c_i(\omega)=1\}$. Without loss of generality we assume that $\mathbb{P}(c_i=1)>0$ for all $i\in\mathcal I$, for otherwise we could reduce $\mathcal I$. 

We  are going to construct a random carpet $K_\omega\subset K$ as the image by $\pi$ of the boundary $\Sigma_\omega$ of a non degenerate Galton-Watson tree included in $\Sigma^*$. This will follow a fractal percolation process, or random curdling, according to Mandelbrot procedure \cite{ref25,Mand-Orsay} (see also \cite{GMW,DekGri,Fal,RamSi3,FJ,ref2,SimonOrgov} for studies of geometric and topological properties of statistically self-similar sets obtained by percolation on self-similar sets, and their projections).  The Hausdorff dimension of these sets will be studied using  the pushforward  by $\pi$ on $K_\omega$ of so-called Mandelbrot measures supported on $\Sigma_\omega$. 
To get such a Mandelbrot measure consider, simultaneously with $C$,  a random vector $W = (W_{i})_{i \in \mathcal{I}}$ taking values in  $\mathbb{R}_+^\mathcal{I}$ and satisfying the following properties: 
\begin{align*}
\mathbb{E} \Big (\sum_{i \in \mathcal{I}} W_{i}\Big )= 1,\ \mathbb{P} \Big(\sum\limits_{i \neq i'} W_{i} W_{i'} = 0\Big ) < 1,\ 
\left\{W_{i} > 0\right\} \subset \left\{c_{i} = 1\right\} \text{ a.s. }\forall \, i \in \mathcal{I}.
\end{align*}
The first property guaranties a mass conservation  in the mean in the process to follow, the second one ensures that the limit measure is not a Dirac mass, while the third one ensures that its topological support is included in $\Sigma_\omega$. 

Let  $(C(v),W(v))_{v\in \mathcal{I}^*}$ be a sequence of independent copies of $(C,W)$ and  $(\Omega,\mathcal F,\mathbb P)$ the probability space over which these random variables are defined, and simply denote $(C(\boldsymbol{\epsilon}),W(\boldsymbol{\epsilon}))$ by $(C,W)$. In particular, almost surely, for all $v\in  \mathcal{I}^*$ and $i \in \mathcal{I}$, one has 
$
\left\{W_{i}(v) > 0 \right\} \subset \left\{c_{i}(v) = 1 \right\}.
$
For all $\omega\in\Omega$ and $n\ge 0$  set 
\begin{align}\label{Sigman}
\Sigma_{\omega,n} &= \left\{\boldsymbol{i} \in \Sigma: \ c_{i_m}(\boldsymbol{i}_{|m-1}) (\omega)= 1 \ \text{for all }1\le m \le n\right\},\\
\nonumber \text{and }\quad 
\Sigma_\omega &= \bigcap\limits_{n \geq 0} \Sigma_{\omega,n}.
\end{align}
Classical properties of Galton-Watson processes show that under our assumptions $\mathbb{E}(\# \mathcal I_\omega)>1$,  the set $\Sigma_\omega$ is the boundary of a supercritical Galton-Watson tree, so that  $\mathbb P(\Sigma_\omega \neq \emptyset) > 0$. Set 
\begin{equation*}\label{Kn}
K_\omega = \pi(\Sigma_\omega)=\bigcap_{n\ge 0} K_{\omega,n}, \text{ where }K_{\omega,n} = \pi(\Sigma_{\omega,n}).
\end{equation*}

Now we define the Mandelbrot measure associated with $(W(v))_{v\in \Sigma^*}$. For $v \in \mathcal I^*$, $n \geq 0$ and $w=i_1\cdots i_n \in \mathcal{I}^n$, define $Q^{v}(w)=1$ if $n=0$ and 
\begin{equation}\label{Qvw}
Q^{v}(w)=W_{i_1}(v)W_{i_2}(v\cdot i_1) \cdots W_{i_n}(v\cdot i_1 \cdots i_{n-1})
\end{equation}
otherwise. We simply denote $Q^{\boldsymbol{\epsilon}}(w)$ by $Q(w)$, and set 
$$
Y_n(v) = \sum_{w\in \mathcal{I}^n} Q^{v}(w). 
$$

\noindent The sequence  $(Y_n(v),\sigma(W_{i}(v w) : i \in \mathcal{I}, \ w \in \bigcup_{k=0}^{n-1}\mathcal{I}^{k}))_{n \geq 0}$ is a non negative  martingale. Denote by  $Y(v)$ its almost sure limit. Since $\mathcal{I}^*$ is countable, the random variables $Y(v)$, $v\in \mathcal I^*$, are almost surely defined simultaneously. Moreover, they obey the recursion relation $Y(v)=\sum_{i\in\mathcal I}W_i(v)Y(vi)$, so that one can define almost surely over the cylinders of $\Sigma$ the mapping 
$$
\nu_\omega: [v]\mapsto Q(v)Y(v),
$$
which extends uniquely to a measure on $(\Sigma,\mathcal{B}(\Sigma))$, still denoted by $\nu_\omega$, or simply~$\nu$ when there is no ambiguity. This measure is almost surely the weak limit  of the sequence $(\nu_n)_{n \geq 0}$ defined by uniformly distributing (with respect to the uniform measure on $(\Sigma,\mathcal{B}(\Sigma))$), the mass $Q(w)$ over each cylinder $\left[w\right]$, $w \in \mathcal{I}^n$. By construction, the topological support of $\nu$ is included in $\Sigma_\omega$. Also, the random variables $Y(v)$, $v\in \mathcal I^*$, are identically distributed. Denote $Y(\boldsymbol{\epsilon})=\|\nu\|$ by $Y$. 

\noindent
\textbf{\textit{Non degeneracy.}} The measure~$\nu$ is not necessarily non degenerate, that is positive with positive probability. Let
\begin{equation}\label{phiW}
\phi_W : q \geq 0 \longmapsto \mathbb{E} \Big (\sum_{i \in \mathcal{I}} W_i^q\Big ) \text{ and }T_W=- \log \phi_W.
\end{equation}
$T_W$ is finite, continuous and concave  over $\left[0,1\right]$. Set
\begin{equation}\label{HW}
H(W)=T_W'(1^-) =-\phi_W'(1^-)= - \sum_{i \in \mathcal{I}} \mathbb{E}\left(W_i \log(W_i)\right).
\end{equation}
\begin{theorem}[\cite{ref37, ref26}]\label{thmKP}
The following assertions are equivalent :

(1) $\nu$ is not degenerate (i.e. $\mathbb P(\nu\neq 0)>0$); (2) $\mathbb{E}\left(Y\right)= 1$; (3) $H(W) > 0$.
\end{theorem}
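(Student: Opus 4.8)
The plan is to treat Theorem~\ref{thmKP} as the Kahane--Peyri\`{e}re/Biggins non-degeneracy dichotomy for the additive martingale $(Y_n)_{n\ge0}$ of an inhomogeneous Mandelbrot cascade, and to prove $(2)\Rightarrow(1)\Rightarrow(3)\Rightarrow(2)$. The implication $(2)\Rightarrow(1)$ is immediate: since $\|\nu\|=Y$, $\mathbb E(Y)=1$ forces $\mathbb P(\nu\neq0)=\mathbb P(Y>0)>0$. The engine for the other two implications is Lyons' size-biasing, which I set up once. On an enlargement of $(\Omega,\mathcal F,\mathbb P)$ one builds a random ray $\xi=\xi_1\xi_2\cdots\in\Sigma$ together with a probability measure $\widehat{\mathbb P}$ whose restriction to $\mathcal F_n:=\sigma\big(W(v):\ |v|\le n-1\big)$ has density $Y_n$ with respect to $\mathbb P$; under $\widehat{\mathbb P}$ the weight vectors $W(\xi_{|k})$ along the spine follow the size-biased law $\widehat{\mathbb P}(W(\xi_{|k})\in d\mathbf w)=\big(\sum_i w_i\big)\mathbb P(W\in d\mathbf w)$, given $W(\xi_{|k})=\mathbf w$ the spine continues to child $j$ with probability $w_j/\sum_i w_i$, and the subtrees rooted at the off-spine children of spine vertices are independent copies of the original cascade. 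I will use two standard facts: (i) the Lebesgue decomposition of $\widehat{\mathbb P}$ with respect to $\mathbb P$ on $\mathcal F_\infty:=\bigvee_n\mathcal F_n$ gives $\mathbb E_{\mathbb P}(Y)=\widehat{\mathbb P}\big(\limsup_n Y_n<\infty\big)$; (ii) under $\widehat{\mathbb P}$ the increments $\log W_{\xi_k}(\xi_{|k-1})$, $k\ge1$, are i.i.d.\ with mean $\mathbb E_{\mathbb P}\big(\sum_i W_i\log W_i\big)=-H(W)$, with always-integrable negative part (since $-x\log x\le e^{-1}$ on $(0,1]$ and $\mathcal I$ is finite), so that $H(W)>-\infty$ is equivalent to the $L\log L$-type integrability $\mathbb E_{\mathbb P}\big(\sum_i W_i\log^+W_i\big)<\infty$.

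For $(3)\Rightarrow(2)$, assume $H(W)>0$. By (ii) and the strong law, $\tfrac1n\log Q(\xi_{|n})=\tfrac1n\sum_{k=1}^n\log W_{\xi_k}(\xi_{|k-1})\to-H(W)<0$ $\widehat{\mathbb P}$-a.s., so $Q(\xi_{|n})$ tends to $0$ exponentially fast. Splitting each $w\in\mathcal I^n$ according to the level $k-1$ at which it last agrees with $\xi$, one obtains $Y_n=Q(\xi_{|n})+\sum_{k=1}^n Q(\xi_{|k-1})\sum_{i\neq\xi_k}W_i(\xi_{|k-1})\,Y_{n-k}(\xi_{|k-1}\cdot i)$, where under $\widehat{\mathbb P}$ the $Y_m(\xi_{|k-1}\cdot i)$ are independent copies of the generic $Y_m$; conditionally on the spine and its weights the off-spine contribution at level $k-1$ has $\widehat{\mathbb P}$-mean $\le\big(\sum_i W_i(\xi_{|k-1})\big)\mathbb E_{\mathbb P}(Y)\le\sum_i W_i(\xi_{|k-1})$. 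Since $H(W)>0$ yields (as $\mathcal I$ is finite) $\mathbb E_{\mathbb P}\big((\sum_i W_i)\log^+(\sum_i W_i)\big)<\infty$, a Borel--Cantelli argument shows that both $\sum_i W_i(\xi_{|k-1})$ and hence these off-spine contributions are subexponential in $k$, $\widehat{\mathbb P}$-a.s.; combined with the exponential decay of $Q(\xi_{|k-1})$ (and the fact that $(1/Y_n)$ is a non-negative $\widehat{\mathbb P}$-supermartingale, so $Y_n$ converges $\widehat{\mathbb P}$-a.s.\ in $(0,\infty]$), the standard spine bookkeeping then gives $\limsup_n Y_n<\infty$ $\widehat{\mathbb P}$-a.s., and (i) yields $\mathbb E_{\mathbb P}(Y)=1$.

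For $(1)\Rightarrow(3)$ I argue the contrapositive. If $H(W)<0$ (or $H(W)=-\infty$): since $\phi_W$ is finite on $[0,1]$ with $\phi_W(1)=1$ and $T_W'(1^-)=H(W)<0$, the very definition of the left derivative gives $\phi_W(q)=e^{-T_W(q)}<1$ for some $q\in(0,1)$; then, using $(\sum_k a_k)^q\le\sum_k a_k^q$ and the independence of the copies $(C(v),W(v))$, $\mathbb E(Y_n^q)\le\mathbb E\big(\sum_{w\in\mathcal I^n}Q(w)^q\big)=\phi_W(q)^n\to0$, so $Y_n\to0$ in $L^q$ and $Y=0$ a.s. If $H(W)=0$: under $\widehat{\mathbb P}$, $\big(\log Q(\xi_{|n})\big)_n$ is a random walk of mean $0$. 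If its step is non-degenerate it oscillates, so $\limsup_n Q(\xi_{|n})=+\infty$ $\widehat{\mathbb P}$-a.s.; as $Y_n\ge Q(\xi_{|n})$ this gives $\limsup_n Y_n=\infty$ $\widehat{\mathbb P}$-a.s., whence $\mathbb E_{\mathbb P}(Y)=0$ by (i). If the step is degenerate, then $\log W_{\xi_1}\equiv0$ under $\widehat{\mathbb P}$, which forces $W\in\{0,1\}^{\mathcal I}$ a.s.; then $\mathbb E(\sum_iW_i)=1$ together with $\mathbb P(\sum_{i\neq i'}W_iW_{i'}=0)<1$ makes $N:=\#\{i:W_i=1\}$ a critical offspring number with $\mathbb P(N=0)>0$, so that $Y_n$ is the size of the $n$-th generation of a critical Galton--Watson process and $Y=0$ a.s.

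I expect the genuine obstacle to be the borderline case $H(W)=0$ of the degeneracy direction, and, to a lesser extent, carrying out the spine bookkeeping in $(3)\Rightarrow(2)$ rigorously with no moment hypothesis on $\sum_iW_i$ beyond mean one — these are exactly the subtle points of the Kahane--Peyri\`{e}re/Biggins theory treated in \cite{ref37,ref26}, which one may also invoke directly. Finally, note that the auxiliary field $C$ is inert in this statement: $\{W_i>0\}\subset\{c_i=1\}$ makes $\nu$ and $Y$ functionals of $(W(v))_{v\in\mathcal I^*}$ alone, so Theorem~\ref{thmKP} is the classical inhomogeneous Mandelbrot-cascade dichotomy, and the hypothesis $\mathbb E(\#\mathcal I_\cdot)>1$ is not used in its proof (it serves only to ensure $\mathbb P(\Sigma_\omega\neq\emptyset)>0$).
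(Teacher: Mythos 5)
Your proposal is correct, and it runs on exactly the machinery this paper itself deploys: Theorem~\ref{thmKP} is quoted from \cite{ref37,ref26}, but the paper's proof of its inhomogeneous extension, Theorem~\ref{sufcond2}, is the same Waymire--Williams/Lyons size-biasing you set up --- the same spinal decomposition of $Y_n$ (cf.~\eqref{controlYn}), the same Borel--Cantelli control of the off-spine weight sums, Fatou, and the martingale/supermartingale property of $(1/Y_n)$ under the size-biased law. Two remarks. First, your argument is genuinely sharper than a mere specialization of Theorem~\ref{sufcond2}: since the spine increments are i.i.d., the classical SLLN together with the $S\log^+S$ Borel--Cantelli step (and indeed $\mathbb{E}\big[(\sum_iW_i)\log^+(\sum_iW_i)\big]<\infty$ does follow from $H(W)>-\infty$ when $\mathcal I$ is finite, via $\sum_iW_i\le \#\mathcal I\,\max_iW_i$, so that $S\log^+S\le S\log\#\mathcal I+\#\mathcal I\sum_iW_i\log^+W_i$) replaces the $L^h$-type moment hypothesis of Theorem~\ref{sufcond2}; and your treatment of the boundary case $H(W)=0$ (Chung--Fuchs oscillation of the mean-zero spine walk when the step is nondegenerate; critical Galton--Watson extinction, using $\mathbb{P}(\sum_{i\neq i'}W_iW_{i'}=0)<1$, when it is degenerate) supplies precisely the case the paper leaves to \cite{ref37,ref26}, since Theorem~\ref{sufcond2} only covers strict inequalities. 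Second, a small tightening of wording: it is not the off-spine contributions themselves but their conditional means, given the $\sigma$-algebra generated by the spine and its weight vectors, that are bounded by $\sum_iW_i(\xi_{|k-1})$ and hence subexponential; the correct bookkeeping (which you do gesture at) is to bound this conditional expectation of $Y_n$ by $Q(\xi_{|n})+\sum_{k}Q(\xi_{|k-1})\sum_iW_i(\xi_{|k-1})$, note it is bounded in $n$ almost surely, apply Fatou to get $\liminf_nY_n<\infty$ under the size-biased law, and only then use the $(1/Y_n)$ supermartingale to upgrade to $\limsup_nY_n<\infty$ --- exactly as in the paper's proof of Theorem~\ref{sufcond2}(1).
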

It is not hard to prove that conditional on $\{\nu_.\neq 0\}$, $\mathrm{supp}(\nu_\omega)$ is almost surely equal to the set of those points $\boldsymbol{i}=i_1i_2\cdots$ of  $\Sigma$ such that $W_{i_n}(i_1\cdots i_{n-1}) > 0$ for all $n \geq 1$ (see \cite{ref2}). Also, $\supp(\nu_\omega) = \Sigma_\omega$ almost surely, if and only if $\mathbb{P}(c_{i} = 1) = \mathbb{P}(W_{i}>0)$ for all $i \in \mathcal{I}$. 

\noindent
\textbf{\textit{Symbolic Hausdorff dimension and entropy dimension of $\nu$.}} It will be interesting in our study  to consider the probability vector $p=(p_i)_{i\in\mathcal I}=\mathbb{E}(W)$ and define $\widetilde W_i=W_i/p_i$ if $p_i>0$ and $\widetilde W_i=1$ otherwise. Then, recalling that the entropy of $p$ is defined as $-\sum_{i\in\mathcal I}p_i\log(p_i)$, the quantity $H(W)$ satisfies 
\begin{equation}\label{HW}
H(W)=h(p)-\sum_{i\in\mathcal I}p_i\mathbb{E}(\widetilde W_i\log(\widetilde W_i))\le h(p)\le \log (\#\mathcal I),
\end{equation}
where $\mathbb{E}(\widetilde W_i\log(\widetilde W_i))\ge 0$ since $\mathbb{E}(\widetilde W_i)=1$ and $x\ge 0\mapsto x\log x$ is convex; also, the first inequality is strict except if $W=p$ a.s. When $H(W)>0$, Kahane and Peyri\`ere~\cite{ref26,K87} showed that conditional on $\nu\neq 0$, $\dim_H(\nu)=\dim_P(\nu)=H(W)$ ($\Sigma$ being endowed with the standard distance $\mathrm{d}$). This implies \cite{Heurteaux1998} that 
$$
\lim_{n\to +\infty} -\frac{1}{n}\sum_{w\in\mathcal I^n}\nu([w])\log(\nu([w]))=H(W),
$$
that is $H(W)$ is also the entropy dimension  $\dim_e(\nu)$ of $\nu$.

The measure $\pi_*\nu_\omega$ will be denoted by $\mu_\omega$ and called a Mandelbrot measure on $K_\omega$. Our first result is the following extension of Theorem~\ref{thmHDBGL} (the case of random Sierpi\'nski carpets was established in \cite{ref6}, in which case the supremum in \eqref{dimK2'} below is uniquely attained; the value of $\dim_HK_\omega$ had been obtained in~\cite{ref9}, as well as in~\cite{BenNasr} for special cases). 
\begin{theorem}\label{dimK2}
With probability~1, conditional on  $\{K_\cdot \neq \emptyset\}$,
\begin{equation}\label{dimK2'}
\dim_H(K_\omega) = \max\left\{{\dim}_H(\mu_\omega): \ \mu_\omega  \text{ is a  Mandelbrot measure supported on} \ K_\omega\right\}.
\end{equation}
\end{theorem}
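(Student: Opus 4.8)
\emph{Plan.} Since every Mandelbrot measure $\mu_\omega$ is, by construction, supported on $K_\omega$, one has $\dim_H(\mu_\omega)\le\dim_H(K_\omega)$ for free, which gives ``$\ge$'' in \eqref{dimK2'}. Write $s^\star$ for $\sup\{\dim_H(\mu_\omega):\mu_\omega\text{ Mandelbrot on }K_\omega\}$; everything then reduces to proving $\dim_H(K_\omega)\le s^\star$ and that $s^\star$ is attained. Both rely on the formula, established earlier in the paper, for the dimension of a non-degenerate (possibly inhomogeneous) Mandelbrot measure: in the planar Gatzouras-Lalley normalisation ($b_i<a_i$) and for a homogeneous $\mu_\omega$ it takes the Ledrappier-Young form
\[
\dim_H(\mu_\omega)=\frac{\widehat H}{\chi_1}+\frac{H(W)-\widehat H}{\chi_2},\qquad\chi_1=\sum_{i\in\mathcal I}p_i\log\tfrac1{a_i},\quad\chi_2=\sum_{i\in\mathcal I}p_i\log\tfrac1{b_i},
\]
where $p=\mathbb E(W)$ and $\widehat H$ is the non-degeneracy entropy (as in \eqref{HW}) of the projection of $W$ onto the column structure of $K$ (letters $i,j$ identified when $f_i,f_j$ have the same first-coordinate image), with the Bara\'nski case symmetric according to which of $a_i,b_i$ dominates; this value is deterministic and, conditional on $\{K_\cdot\ne\emptyset\}$, a.s.\ equals $\dim_H(\mu_\omega)$.

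\emph{The covering.} Fix $\epsilon>0$. For $\boldsymbol i\in\Sigma_\omega$ and $\delta\in(0,1)$ let $L=L(\boldsymbol i,\delta)$ be maximal with $\prod_{k\le L}a_{i_k}\ge\delta$ and $M=M(\boldsymbol i,\delta)\ge L$ maximal with $\prod_{k\le M}b_{i_k}\ge\delta$, and let $R(\boldsymbol i,\delta)$ be the approximate square whose horizontal side is the first-coordinate projection of $f_{i_1}\circ\cdots\circ f_{i_L}([0,1]^2)$ and whose vertical side is the second-coordinate projection of $f_{i_1}\circ\cdots\circ f_{i_M}([0,1]^2)$; both sides are comparable to $\delta$ and $\{R(\boldsymbol i,\delta):\boldsymbol i\in\Sigma_\omega\}$ covers $K_\omega$. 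Writing $\mathbf q_n(\boldsymbol i)$ for the empirical frequency of letters in $\boldsymbol i_{|n}$, one discards the $\pi$-negligible set of points admitting no good range of scales and decomposes the rest of $\Sigma_\omega$ into finitely many pieces $E$, indexed by a finite net of triples $(\mathbf q,\mathbf q',\gamma)$, on each of which, over an appropriate scale range, $\mathbf q_L(\boldsymbol i)\approx\mathbf q$, the empirical frequency of the block $i_{L+1}\cdots i_M$ is $\approx\mathbf q'$, and $M/L\approx\gamma$. On such a piece the number of distinct squares $R(\boldsymbol i,\delta)$ factorises as (number of admissible level-$L$ column words)$\,\times\,$(number of distinct vertical prolongations from level $L$ to level $M$), whose exponential rates are the column entropy of $\mathbf q$ and the entropy of $\mathbf q'$ respectively; this governs $\sum_R(\operatorname{diam}R)^{s}$.

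\emph{From the count to the dimension.} To turn this into a bound on $\dim_H\pi(E)$ uniform in $\omega$, attach to $(\mathbf q,\mathbf q',\gamma)$ an inhomogeneous Mandelbrot measure $\nu^{(\mathbf q,\mathbf q',\gamma)}_\omega$ on $\Sigma_\omega$ whose weight law at a node depends on its generation so that the mean profile alternates periodically between a regime matched to $\mathbf q$ over the generations building level $L$ and a regime matched to $\mathbf q'$ over those from level $L$ to level $M$. Using the recursion $Y(v)=\sum_{i\in\mathcal I}W_i(v)Y(vi)$ and exponential-moment estimates for the martingale limits $Y(v)$ (available since $H(W)>0$, together with the mild integrability on $W$ one may assume), a Borel-Cantelli argument along the Galton-Watson tree---after pruning a dimension-negligible set of nodes at which some $Y(v)$ is atypically small---yields $\nu^{(\mathbf q,\mathbf q',\gamma)}_\omega\big(R(\boldsymbol i,\delta)\big)\ge\delta^{\,s(\mathbf q,\mathbf q',\gamma)+\epsilon}$ for all $\boldsymbol i\in E$ and all small $\delta$, where $s(\mathbf q,\mathbf q',\gamma)=\dim_H\pi_*\nu^{(\mathbf q,\mathbf q',\gamma)}_\omega$. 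As $\nu^{(\mathbf q,\mathbf q',\gamma)}_\omega$ is a probability measure this caps the number of squares meeting $E$ by $\delta^{-s(\mathbf q,\mathbf q',\gamma)-\epsilon}$, so $\dim_H\pi(E)\le s(\mathbf q,\mathbf q',\gamma)+\epsilon$. Finally, concavity of the entropy functionals entering $s(\cdot)$---here the planar case is genuinely simpler than the sponge case, there being a single layer of projection---lets one replace the periodic profile by a constant one, so $s(\mathbf q,\mathbf q',\gamma)\le s^\star$; taking the finite union over the net and letting $\epsilon\downarrow0$ gives $\dim_H(K_\omega)\le s^\star$. As $(\mathbf q,\mathbf q',\gamma)\mapsto s(\mathbf q,\mathbf q',\gamma)$ is continuous on a compact set, its maximum is attained, equals $\dim_H(K_\omega)=s^\star$, and is realised by a homogeneous Mandelbrot measure, so the supremum in \eqref{dimK2'} is a maximum.

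\emph{Main obstacle.} The crux is the last step: transporting the Gatzouras-Lalley approximate-square covering from deterministic Bernoulli measures to random inhomogeneous Mandelbrot measures. One must control simultaneously (i) the genuinely random martingale limits $Y(v)$, which may be small on a non-negligible proportion of nodes and ruin the pointwise mass lower bound on $R(\boldsymbol i,\delta)$ unless the offending sub-branches are removed and shown to carry no dimension; (ii) the fact that the column substructure one projects onto is itself a random Galton-Watson object, so the digit-frequency level sets must be intersected with its survival event and the dimension formulas must depend continuously on the surviving-column data; and (iii) the measurability and $\omega$-uniformity of the whole scheme---the finite net of profiles, the pruning, the admissible scales---which rest on the continuity in the parameters of the dimension formula and of the associated pressure functions proved in the earlier sections. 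By contrast, the lower bound $\dim_H(K_\omega)\ge s^\star$ and the identification of each $\dim_H\pi_*\nu^{(\mathbf q,\mathbf q',\gamma)}_\omega$ are routine consequences of the Mandelbrot-measure dimension theory already developed.
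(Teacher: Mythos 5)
Your overall architecture (lower bound trivial; upper bound by approximate squares indexed by digit-frequency profiles; reduce inhomogeneous profiles to a constant one by concavity; attainment by compactness) is broadly in the spirit of the paper, but the central mechanism you propose for the upper bound has a genuine gap. You want to bound the number of approximate squares $R(\boldsymbol i,\delta)$ meeting a frequency piece $E$ by $\delta^{-s-\epsilon}$ via a \emph{uniform pointwise lower bound} $\nu^{(\mathbf q,\mathbf q',\gamma)}_\omega(R(\boldsymbol i,\delta))\ge\delta^{s+\epsilon}$ for all $\boldsymbol i\in E$, obtained by Borel--Cantelli plus pruning of nodes with small $Y(v)$. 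This cannot work as stated: the mass of an approximate square is a sum over the surviving sub-branches it contains, weighted by the martingale limits $Y(v)$, and at scale $\delta$ there are exponentially many (in $\log(1/\delta)$) squares meeting $E$; since $Y$ has a nontrivial lower tail (and whole subtrees may die), the minimum of these masses over the relevant squares is typically exponentially smaller than the typical mass, which destroys the exponent by a definite amount, not by $\epsilon$. Pruning does not repair this, because the squares are dictated by $E$ (a digit-frequency set that knows nothing about the $Y$'s): removing the offending branches removes part of $E$ itself, and you would then have to bound the dimension of the removed part by a separate argument --- which is exactly the counting problem you started with. The paper avoids any lower bound on random masses by a first-moment argument: it bounds the \emph{expectation} of the number of cells intersecting the localized-frequency sets (Markov's inequality applied on the symbolic side, using only $\mathbb E(Y_n(u))=1$ and the product structure of the weights in the special class \eqref{Wn}), then Borel--Cantelli, then sums of diameters; see the claim \eqref{claim}/\eqref{espN} and Section~\ref{dimHK}. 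Moreover, the paper does not reprove a 2D covering theorem from scratch: it quotes the upper bound \eqref{(.)} coming from the proof of Theorem~\ref{dimKd}, and the 2D-specific work is precisely your ``concavity'' step, which is not a one-liner --- it requires the subsequence/case analysis of Section~\ref{secdimK2} because $d_N(\boldsymbol p)$ is a minimum of two entropy sums evaluated at the two scales $g_1(N),g_2(N)$, whose associated averaged vectors $\widehat{\boldsymbol p}_{g_1(N)},\widehat{\boldsymbol p}_{g_2(N)}$ differ, and the ordering of the Lyapunov exponents may oscillate with $N$.

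Two further points need repair. First, your dimension formula for a homogeneous Mandelbrot measure takes $\widehat H$ to be the Kahane--Peyri\`ere entropy of the \emph{projected random vector}; in the random case the projection of a Mandelbrot measure is not a Mandelbrot measure, and the correct term (Theorem~\ref{thm-2.4}(3)) is $\min\big(H(W),h(\Pi_2 p)\big)$ with $p=\mathbb E(W)$, i.e.\ the entropy of the projected \emph{mean}, capped by $H(W)$; without the $\min$ and with the wrong entropy the variational problem you optimize is not the right one. Second, when bounding $\dim_H K_\omega$ you may not ``discard the $\pi$-negligible set of points admitting no good range of scales'': null sets for the measures at hand can a priori carry full Hausdorff dimension (this is exactly the phenomenon behind the Das--Simmons gap in higher dimensions), so the covering must handle every point of $\Sigma_\omega$, as the paper does by using, at each scale, localized frequencies approximated by a finite net. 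Finally, attainment of the supremum requires showing the maximizing probability vector lies in the interior of $P_{\mathcal I}$ (so the limiting Mandelbrot measure is non-degenerate and fully supported); the paper gets this from the infinite derivative of $t\mapsto -t\log t$ at $0$, and this step should not be replaced by a bare compactness-plus-continuity claim.
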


\subsection{The higher dimensional case}\label{sechigherdim} We work in $\mathbb{R}^d$ ($d\ge 2$) and seek for an extension, in the random setting, of Das and Simmons~\cite{ref5} study of the Hausdorff dimension of a class of sponges which contains higher dimensional versions of self-affine Bara\'nski and Gatzouras-Lalley carpets. 

\noindent
\textbf{\textit{``Good'' sponges}} (\cite{ref5})\textbf{.} We assume that for each $i\in\mathcal{I}$, the linear part $A_i$ of $f_i$ is a diagonal matrix $\mathrm{diag}(a_{i,1},\ldots, a_{i,d})$ with $0<|a_{i,k}|<1$ for all $1\le k\le d$, 
and without loss of generality we assume that $f_i([0,1]^d)\subset [0,1]^d$ for all $i\in\mathcal{I}$. If $D\subset\{1,\ldots,d\}$ is non-empty, denote by $\pi^D$ the orthogonal projection from $\mathbb{R}^d$ to the subspace $\mathbb{R}^{D}$ generated by the coordinate axes indexed by the elements of $D$. 

Denoting by $P_{\mathcal I}$ the set of probability vectors $(p_i)_{i\in\mathcal I}$, for each ${p}\in P_{\mathcal I}$ and $1\le k\le d$, consider the Lyapunov exponent associated to $p$ in direction $k$, that is  
\begin{equation}\label{chip}
\chi_k({p})=-\sum_{i\in\mathcal I} p_i\log(|a_{i,k}|).
\end{equation} 
\begin{definition}\label{Goods}
According to \cite{ref5}, say that the attractor $K$ of the IFS $\{f_i\}_{i\in\mathcal I}$ is a \textit{good} sponge if, for each $p\in P_{\mathcal{I}}$ and $x\in\mathbb R_+$,  setting $D=D(p,x)=\{1\le k\le d: \chi_k(p)\le x\}$, for all $i,j\in \mathcal I$, either $f_i$ and $f_j$ overlap exactly on $\mathbb R^D$, that is $\pi^D\circ {f_i}_{|[0,1]^d}= \pi^D \circ  {f_j}_{|[0,1]^d}$, or $\pi^D\circ {f_i}((0,1)^d)\cap \pi^D\circ  {f_j}((0,1)^d)=\emptyset$.
\end{definition}

The class of good sponges  is a little more general than that of the sponges obeying the  \textit{separation of principal projections condition} (SPPC) considered by Fraser and Kolossv\'ary~\cite{Fra-Kol} and Kolossv\'ary~\cite{Kol} for the study of the Assouad and lower dimensions of the associated self-affine measures, as well as their $L^q$-spectrum. To get sponges satisfying the SPPC, in  Definition~\ref{Goods} one should require  in addition that the alternative between exact overlapping and disjointness holds for the orthogonal projections on all the spaces $\mathbb{R}^{D'}$ with $\emptyset \neq D'\subset D$. This prevents certain configurations where in restriction to some subspaces of dimension $\ge 2$ generated by principal axes the linear parts $A_i$ are similarities (in particular SPPC excludes many self-similar sets obeying the OSC).  However SPPC covers many natural examples, starting with Bara\'nski and Gatzouras-Lalley carpets and their higher dimensional versions. Gatzouras-Lalley sponges correspond to the case where there exists a permutation $\sigma\in \mathfrak{S}_d$ such that $|a_{i,\sigma_{k+1}}|<|a_{i,\sigma_{k}}|$ for all $i\in \mathcal I$ and $1\le k\le d-1$, and for all $1\le k\le d$, setting $D_k=\{\sigma_k,\ldots,\sigma_d\}$, for all $i,j\in\mathcal{I}$, either $f_i$ and $f_j$ overlap exactly on $\mathbb R^{D_k}$, or $\pi^{D_k}\circ {f_i}((0,1)^d)\cap \pi^{D_k}\circ  {f_j}((0,1)^d)=\emptyset$. Bara\'nski sponges correspond to the situation where for all $1\le k\le d$, for all $i,j\in\mathcal{I}$, either $f_i$ and $f_j$ overlap exactly on $\mathbb R^{\{k\}}$, or $\pi^{\{k\}}\circ {f_i}((0,1)^d)\cap \pi^{\{k\}}\circ  {f_j}((0,1)^d)=\emptyset$ (note that when $d=2$ both previous classes are slightly more general than in Section~\ref{planarcase}). Sierpi\'nski sponges are Bara\'nski sponges for which there are integers $m_1,\ldots,m_d\ge 2$ such that the linear parts of the $f_i$, $i\in\mathcal I$, are all equal to $\mathrm{diag}(m_1^{-1},\ldots,m_d^{-1})$ and their translation vector parts belong to $\prod_{k=1}^d m_k^{-1}\{0,\ldots, m_k-1\}$. Also, when $d\ge 3$, the class of sponges satisfying SPPC strictly contains the previous ones~\cite{Fra-Kol}.

\noindent
\textbf{\textit{The associated random attractor and  inhomogeneous Mandelbrot measures.}} Fix a good sponge $K$ as above.  As in dimension~$2$,  consider  a random vector $C=(c_i)_{i\in\mathcal{I}}\in\{0,1\}^{\mathcal I}$  such that $\mathbb{E}(\sum_{i\in\mathcal I}c_i)>1$ and $\mathbb{P}(c_i=1)>0$ for all $i\in\mathcal I$. Also,  consider a sequence $((C^{(n)},W^{(n)}))_{n\ge 1}$ of random vectors such that, for each $n\ge 1$, $C^{(n)}$ is distributed like $C$ and the random vector $W^{(n)}=(W^{(n)}_{i})_{i\in \mathcal I}\in\mathbb{R}_+^{\mathcal I}$ satisfies 
\begin{align*}
\mathbb{E} \Big (\sum_{i \in \mathcal{I}} W^{(n)}_{i}\Big )= 1\text{ and }
\left\{W^{(n)}_{i} > 0\right\} \subset \left\{c^{(n)}_{i} = 1\right\} \text{ a.s. }\forall \, i \in \mathcal{I}.
\end{align*}
Let $\big ((C(v),W(v))\big )_{v\in\Sigma^*}$ be a sequence of independent random vectors, such that for all $n\ge 1$ and $v\in\mathcal I^{n-1}$, $(C(v),W(v))$ is distributed like $(C^{(n)},W^{(n)})$. We also denote $(C(v),W(v))$ by $(C^{(n)}(v),W^{(n)}(v))$ when $v\in \mathcal I^{n-1}$. 

Then define $\Sigma_\omega$, $K_\omega$, $\nu_\omega$ and $\mu_\omega=\pi_*\nu_\omega$ exactly in the same way as in dimension 2. The measures $\nu_\omega$ and $\mu_\omega$ are called inhomogeneous Mandelbrot measures (IMM). Note that Mandelbrot measures (MMs) are IMMs, but this should not create any confusion.

\noindent
\textbf{\textit{Non degeneracy.}} One has the following sufficient condition for non degeneracy of $\nu$.

\begin{theorem}\label{sufcond}
If $
\liminf_{N\to +\infty} \frac{1}{N}\sum_{n=1}^N H(W^{(n)})>0\text{  and }\sum_{n\ge 1}\frac{\phi''_{W^{(n)}}(1^-)}{n^2}<+\infty
$,
then $\mathbb{E}(\|\nu\|)=1$.   Moreover, if for all $v\in \mathcal{I}^*$ one has $\mathbb{P}(W_i(v)>0)=\mathbb{P}(c_i(v)=1)$, then conditional on $K_\omega\neq\emptyset$, $\supp(\mu_\omega)=K_\omega$. 
\end{theorem}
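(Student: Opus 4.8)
We prove the two assertions separately; the substantial one is the non-degeneracy $\mathbb{E}_{\mathbb{P}}(\|\nu\|)=1$. Write $Y_n=\sum_{w\in\mathcal I^n}Q(w)=Y_n(\boldsymbol\epsilon)$, a non-negative $\mathbb P$-martingale of mean $1$ with a.s.\ limit $\|\nu\|$. The plan is to adapt the size-biasing (spine) method of Kahane--Peyri\`ere and Lyons, the percolation vectors $C^{(n)}$ being irrelevant here beyond the inclusions $\{W^{(n)}_i>0\}\subset\{c^{(n)}_i=1\}$. First I would introduce the Peyri\`ere probability measure $\mathbb Q$ on $\Omega\times\Sigma$ determined by $\mathbb E_{\mathbb Q}(\Phi)=\mathbb E_{\mathbb P}\big(\sum_{w\in\mathcal I^n}Q(w)\,\Phi(\omega,w)\big)$ for all $n\ge 0$ and all non-negative $\Phi$ depending only on $\omega$ and on the first $n$ letters of its second argument (a probability measure, since $\mathbb E_{\mathbb P}(Y_n)=1$). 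Taking $\Phi=\mathbf{1}_{\{Y_n>\lambda\}}$ gives $\mathbb E_{\mathbb P}\big(Y_n\mathbf{1}_{\{Y_n>\lambda\}}\big)=\mathbb Q(Y_n>\lambda)$, so $(Y_n)_n$ is uniformly integrable under $\mathbb P$ — equivalently $\mathbb E_{\mathbb P}(\|\nu\|)=1$ — as soon as $(Y_n)_n$ is tight under $\mathbb Q$; and this will follow from the stronger statement that $\sup_n Y_n<\infty$ $\mathbb Q$-almost surely, which is what I would actually prove.

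The next step is to record the spine structure of $\mathbb Q$. Under $\mathbb Q$ the second coordinate $\boldsymbol i$ is a ray along which the environment is size-biased: for each $k$, the law of $W^{(k)}(\boldsymbol i_{|k-1})$ is that of $W^{(k)}$ biased by $\sum_i W^{(k)}_i$, the letter $i_k$ equals $j$ with probability $W^{(k)}_j(\boldsymbol i_{|k-1})/\sum_\ell W^{(k)}_\ell(\boldsymbol i_{|k-1})$, and, conditionally on the spine and the environment along it, the subtrees rooted at the off-spine children $\boldsymbol i_{|k-1}\cdot j$ ($j\neq i_k$) are independent ordinary shifted cascades, so Doob's maximal inequality applies to each of their level-$m$ mass sequences $(Y_m(\boldsymbol i_{|k-1}\cdot j))_m$. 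In particular the increments $V_k:=-\log W^{(k)}_{i_k}(\boldsymbol i_{|k-1})$ are $\mathbb Q$-independent with $\mathbb E_{\mathbb Q}(V_k)=-\mathbb E_{\mathbb P}\big(\sum_i W^{(k)}_i\log W^{(k)}_i\big)=H(W^{(k)})$ and $\mathrm{Var}_{\mathbb Q}(V_k)\le\mathbb E_{\mathbb P}\big(\sum_i W^{(k)}_i(\log W^{(k)}_i)^2\big)=\phi''_{W^{(k)}}(1^-)$. Iterating the relation $Y(v)=\sum_i W^{(|v|+1)}_i(v)\,Y(vi)$ along the spine and bounding each off-spine subtree mass by $\overline Y(v):=\sup_m Y_m(v)$ then gives, for every $n$ and with $S_m:=V_1+\cdots+V_m$ (so $e^{-S_m}=Q(\boldsymbol i_{|m})$),
\[
Y_n\ \le\ \sup_{m\ge 0}e^{-S_m}\ +\ \sum_{k\ge 1}e^{-S_{k-1}}\,\overline Z_k,\qquad\overline Z_k:=\sum_{j\neq i_k}W^{(k)}_j(\boldsymbol i_{|k-1})\,\overline Y(\boldsymbol i_{|k-1}\cdot j),
\]
and it remains to check that the right-hand side is $\mathbb Q$-a.s.\ finite.

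For the spine walk, $\sum_k\mathrm{Var}_{\mathbb Q}(V_k)/k^2\le\sum_k\phi''_{W^{(k)}}(1^-)/k^2<\infty$, so Kolmogorov's strong law yields $\frac1n\big(S_n-\sum_{k=1}^nH(W^{(k)})\big)\to 0$ $\mathbb Q$-a.s.; since $c:=\liminf_N\frac1N\sum_{n=1}^NH(W^{(n)})>0$, $\mathbb Q$-a.s.\ there is a random $N_0$ with $S_m\ge\frac c2 m$ for $m\ge N_0$, hence $\sup_{m\ge 0}e^{-S_m}<\infty$ and $e^{-S_{k-1}}\le e^{-\frac c2(k-1)}$ for $k$ large. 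For the $\overline Z_k$, I would fix $\epsilon>0$, condition on the spine and the environment along it, and use that the $\overline Y(\boldsymbol i_{|k-1}\cdot j)$ are then independent with the $\mathbb P$-law of $\sup_m Y_m(v)$, for which Doob's maximal inequality gives $\mathbb P(\sup_m Y_m(v)\ge t)\le 1/t$; together with a truncation and the fact that $Y_1^{(k)}:=\sum_i W^{(k)}_i$ has under $\mathbb Q$ the $\sum_iW^{(k)}_i$-size-biased law, this yields
\[
\mathbb Q\big(\overline Z_k>e^{\epsilon k}\big)\ \lesssim\ e^{-\epsilon k/2}+\mathbb E_{\mathbb P}\big(Y_1^{(k)}\mathbf{1}_{\{Y_1^{(k)}>e^{\epsilon k/4}\}}\big)\ \lesssim\ e^{-\epsilon k/2}+\frac{1}{\epsilon^2 k^2}\,\mathbb E_{\mathbb P}\big(Y_1^{(k)}(\log^+Y_1^{(k)})^2\big),
\]
the last step because $(\log Y_1^{(k)})^2\ge(\epsilon k/4)^2$ on the event in question. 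Since $Y_1^{(k)}\le\#\mathcal I\max_i W^{(k)}_i$, an elementary pointwise estimate gives $\mathbb E_{\mathbb P}(Y_1^{(k)}(\log^+Y_1^{(k)})^2)\lesssim 1+\phi''_{W^{(k)}}(1^-)$; hence $\sum_k\mathbb Q(\overline Z_k>e^{\epsilon k})<\infty$ by the second hypothesis, and Borel--Cantelli gives $\overline Z_k\le e^{\epsilon k}$ eventually $\mathbb Q$-a.s. Choosing $\epsilon=c/4$, the series $\sum_k e^{-S_{k-1}}\overline Z_k$ is $\mathbb Q$-a.s.\ dominated by a convergent geometric series, so $\sup_n Y_n<\infty$ $\mathbb Q$-a.s.\ and $\mathbb E_{\mathbb P}(\|\nu\|)=1$ follows.

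For the second assertion, the always-valid inclusion $\{W_i(v)>0\}\subset\{c_i(v)=1\}$ and the assumed equality of probabilities force, almost surely and simultaneously for all $(i,v)$, the equality $\{W_i(v)>0\}=\{c_i(v)=1\}$; on this event $\Sigma_\omega=\{\boldsymbol i\in\Sigma:W_{i_n}(\boldsymbol i_{|n-1})>0\text{ for all }n\}$. Since the hypotheses of the theorem are stable under shifting the generation index, the first assertion applies to every sub-cascade, and, combined with the description of $\supp\nu_\omega$ recalled after Theorem~\ref{thmKP} (the inhomogeneous analogue of the fact from \cite{ref2}), it gives, conditionally on $\{K_\omega\neq\emptyset\}$, that $\nu_\omega\neq 0$ a.s.\ and $\supp\nu_\omega=\Sigma_\omega$; as $\pi$ is continuous and $\Sigma_\omega$ compact, $\supp\mu_\omega=\pi(\supp\nu_\omega)=\pi(\Sigma_\omega)=K_\omega$. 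The main obstacle I foresee is the last estimate on $\overline Z_k$: along the spine the environment is size-biased, so $Y_1^{(k)}$ may have infinite $\mathbb Q$-mean and a first-moment bound on $\mathbb Q(\overline Z_k>e^{\epsilon k})$ is worthless — one must extract the polynomial factor $k^{-2}$ from a logarithmic truncation in order to exploit the hypothesis $\sum_n\phi''_{W^{(n)}}(1^-)/n^2<\infty$, the Ces\`aro entropy hypothesis serving only to drive the spine walk to $+\infty$ at a linear rate via Kolmogorov's strong law. The remaining steps are the routine inhomogeneous transcription of the spine method.
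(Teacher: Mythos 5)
Your proposal is correct, and its skeleton is the one the paper actually uses: the paper deduces the statement from the more general Theorem~\ref{sufcond2}, proved by the same size-biasing construction (the Peyri\`ere-type measures $\mathbb Q$ and $\mathcal Q$), the same decomposition \eqref{controlYn} of $Y_n$ along the selected ray, a strong law for the spine increments, and a Borel--Cantelli argument exploiting exactly the quantity $\mathbb{E}\big(\sum_i W^{(n)}_i|\log W^{(n)}_i|^h\big)$ — your case is $h=2$, $b_n=n$, where this is $\phi''_{W^{(n)}}(1^-)$. The genuine difference is how the off-spine contribution is closed out. The paper never controls the off-spine subtree masses pathwise: it takes $\mathbb{E}_{\mathbb Q}(Y_n\mid\mathcal G^{\boldsymbol{i}})$, so each $Y_{n-k-1}(\boldsymbol{i}_{|k}i)$ enters only through its conditional mean $1$, deduces $\liminf_n Y_n<+\infty$ $\mathbb Q$-a.s.\ via Fatou, and then upgrades $\liminf$ to $\limsup$ by observing that $(Y_n^{-1},\mathcal G_n)$ is a non-negative $\mathbb Q$-martingale. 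You instead dominate each off-spine cascade by its running supremum, use Doob's maximal inequality $\mathbb{P}(\sup_m Y_m(v)\ge t)\le 1/t$, and run Borel--Cantelli on the events $\{\overline Z_k>e^{\epsilon k}\}$, obtaining $\sup_n Y_n<+\infty$ $\mathbb Q$-a.s.\ directly; your route avoids the inverse-martingale/Fatou step at the price of a slightly heavier truncation estimate, which you handle correctly (your bound $\mathbb{E}_{\mathbb P}\big(Y_1^{(k)}(\log^+Y_1^{(k)})^2\big)\lesssim 1+\phi''_{W^{(k)}}(1^-)$ plays the role of the paper's convexity estimate for $x(\log^+x)^h$). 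Conversely, the paper's argument, run with von Bahr--Esseen and Kronecker instead of Kolmogorov's strong law, yields the more general Theorem~\ref{sufcond2} with $h\in(1,2]$ and normalizers $b_n$, which your specialization does not recover. Your treatment of the support assertion (a.s.\ identity of $\{W_i(v)>0\}$ and $\{c_i(v)=1\}$, then part one applied to the shifted sub-cascades to get $\supp\nu_\omega=\Sigma_\omega$ conditional on survival, then pushing forward by $\pi$) coincides with the paper's recursive argument, at the same level of detail.
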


\noindent
\textbf{\textit{Hausdorff dimension of $K_\omega$.}} When the components of $C^{(n)}$ are all equal to 1 and the $W^{(n)}$, ${n\ge 1}$, are deterministic, the limiting measure $\mu$ is a deterministic inhomogeneous Bernoulli  measure supported on $K$.  When, moreover, $d\ge 3$,  Das and Simmons exhibited examples for which $(W^{(n)})_{n\ge 1}$ is the restriction to $\mathbb{N}^+$ of a continuous function $(W^{(t)})_{t>0}$ such that $u\in\mathbb{R}\mapsto \mathbb{P}_{W^{(\exp(u))}}$ is periodic, and $\dim_H(\mu)>\sup\{\dim_H (\pi_*\rho):\, \rho\text{ is }T\text{-invariant}\}=\sup\{\dim_H(\rho):\, \rho\text{ is self-affine and $\supp(\rho)\subset K$}\}$, thus showing that a dimensional gap between the dynamical and Hausdorff dimensions of $K$ can occur. As a value for  $\dim_H K$ they proposed the supremum of the Hausdorff dimensions of such exponentially periodic Bernoulli measures supported on $K$. However, the proof of this variational principle presents a gap (\cite{ref5} p. 112, between the second and third term of the series of seven equalities and equivalents; personal communication with the authors), and whether it holds true or not remains an open question (see Remark~\ref{remdimH}). 

We will establish an alternative variational principle. Let
\begin{equation}\label{L}
\mathscr L=\left\{ (\ell_m)_{m\ge 1}\in {(\mathbb{N}^+)}^{\mathbb{N}^+}:\, \begin{cases}\ell_{m-1}<\ell_{m},\ \forall\, m\ge 1 \\ \ell_m=o(L_{m-1}=\ell_1+\cdots+\ell_{m-1}) \text{ as $m\to+\infty$}
\end{cases}\right\}.
\end{equation}
If $\ell\in\mathscr L$, say that an inhomogenous Mandelbrot measure  is of type $\ell$ if for all $m\ge 1$, all the $W^{(n)}$, $L_{m-1}< n\le L_m$, have the same law.

\begin{theorem} \label{dimKd}Let  $\ell\in\mathscr L$. 
With probability~1, conditional on $\{K_\cdot\neq \emptyset\}$,  one has 
\begin{equation*}\label{RDS0}
\dim_H K_\omega=\sup\{\dim_H(\mu_\omega): \, \text{$\mu_\omega$ is a non degenerate  IMM of type $\ell$ supported on $K_\omega$}\}.
\end{equation*}
\end{theorem}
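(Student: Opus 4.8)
The plan is to prove the two inequalities separately, the lower bound coming "for free" from the dimension theory of inhomogeneous Mandelbrot measures and the upper bound being the substantial part. For the lower bound, fix $\ell\in\mathscr L$; by the results announced earlier in the paper (the computation of $\dim_H(\mu_\omega)$ for non-degenerate IMMs, which I will assume), every non-degenerate IMM of type $\ell$ supported on $K_\omega$ satisfies $\dim_H(\mu_\omega)\le\dim_H(K_\omega)$ a.s. on $\{K_\cdot\neq\emptyset\}$, so $\sup\{\dim_H(\mu_\omega):\ \mu_\omega \text{ of type }\ell\}\le\dim_H K_\omega$. (One must check the supremum is attained or approached by genuinely non-degenerate measures; here I would use Theorem \ref{sufcond} to exhibit, for any admissible choice of per-block laws, an IMM of type $\ell$ that is non-degenerate, by taking the $W^{(n)}$ close enough to the uniform-on-$\mathcal I_\omega$ weights or by a perturbation argument guaranteeing $\liminf\frac1N\sum H(W^{(n)})>0$ and the summability of $\phi''_{W^{(n)}}(1^-)/n^2$.)

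The heart of the matter is the upper bound $\dim_H K_\omega\le \sup\{\dim_H(\mu_\omega):\ \mu_\omega \text{ non-degenerate IMM of type }\ell\}$. The strategy is to construct, for each $\omega$ with $K_\omega\neq\emptyset$ and each $\varepsilon>0$, an economical covering of $K_\omega$ by approximate cylinders (cylinders cut at level-dependent depths in each principal direction, in the spirit of Gatzouras–Lalley/Das–Simmons coverings), whose total $s$-cost is finite for $s=\sup\{\dim_H(\mu_\omega)\}+\varepsilon$. The first step is to understand the geometry: for a good sponge, a ball of radius $r$ meets the level-$n$ pieces $f_{i_1\cdots i_n}([0,1]^d)$ in a way governed, via Definition \ref{Goods}, by how the cumulative contractions $\prod|a_{i_j,k}|$ in each coordinate $k$ compare to $r$; the exact-overlap structure forces pieces to collapse along the "slow" directions $D(p,x)$. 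This reduces covering $K_\omega$ to a counting problem on the Galton–Watson tree $\Sigma_\omega$: count, as a function of a target scale vector, how many distinct "silhouettes" of surviving cylinders one sees. The crucial idea—flagged in the abstract and introduction—is that along $\mathscr L$-admissible block decompositions, the surviving cylinders organise according to \emph{localized digit frequencies}: within the $m$-th block $(L_{m-1},L_m]$ the empirical frequency vector of digits stabilises, so the number of surviving cylinders with a prescribed block-frequency profile is, up to subexponential errors, $\exp(\sum_m \ell_m\cdot(\text{entropy of the }m\text{-th frequency vector} + \text{Galton–Watson growth correction}))$. Optimising the covering exponent over frequency profiles then yields an exponent that is exactly realised by an IMM of type $\ell$ whose $m$-th block of weights is built from the optimal $m$-th frequency vector—this is the content of the "specific subclass of measures connected to localized digit frequencies."

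I would carry this out in the following order: (i) a deterministic geometric lemma bounding the number of balls of radius $r$ needed to cover $\pi(Z)$ for a finite union $Z$ of level-$n$ cylinders of a good sponge, in terms of the projections $\pi^{D}$ for the relevant $D=D(p,x)$; (ii) a large-deviation / martingale estimate on the Galton–Watson tree controlling, a.s. and uniformly in the frequency profile, the number of surviving words $w\in\Sigma_\omega\cap\mathcal I^n$ with prescribed block-frequencies — here the condition $\ell_m=o(L_{m-1})$ in the definition of $\mathscr L$ is exactly what makes the per-block frequencies asymptotically irrelevant to the exponential growth rate yet fine enough to be matched by a type-$\ell$ measure; (iii) assembling (i) and (ii) into a covering of $K_\omega$ at a sequence of scales, with total $s$-Hausdorff-content bounded, for $s$ slightly above the optimal exponent; (iv) identifying the optimal exponent with $\dim_H(\mu_\omega)$ for the associated type-$\ell$ IMM, using the dimension formula for IMMs and a compactness/continuity argument over frequency profiles to see the sup is the right quantity; (v) a Borel–Cantelli argument to make the estimates hold simultaneously for a dense countable set of parameters, then a monotonicity/approximation step to pass to all $\varepsilon>0$.

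The main obstacle I expect is step (ii) together with the matching in step (iv): one needs the counting of surviving cylinders with prescribed localized frequencies to be controlled \emph{uniformly} over the (non-compact, infinite-dimensional) space of admissible frequency profiles $(\text{freq}_m)_{m\ge1}$, and simultaneously one must ensure that the profile optimising the covering cost is actually achievable by a \emph{non-degenerate} IMM of type $\ell$ — i.e. that the optimiser does not push some block-weights to the boundary of the simplex in a way that destroys the non-degeneracy conditions of Theorem \ref{sufcond} ($\liminf\frac1N\sum H(W^{(n)})>0$ and $\sum_n \phi''_{W^{(n)}}(1^-)/n^2<\infty$). Handling this will likely require a truncation of the frequency profiles (bounded blocks, frequencies bounded away from $0$), an $\varepsilon$-net argument over the truncated family, and a separate argument that truncation changes the optimal exponent by at most $O(\varepsilon)$; the interplay between "$\ell_m\to\infty$ slowly" (needed so each block genuinely carries an entropy term) and "$\ell_m=o(L_{m-1})$" (needed so blocks do not dominate the growth) is the delicate quantitative balance throughout.
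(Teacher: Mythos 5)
Your plan follows essentially the same route as the paper's proof: the easy inequality from inclusion, and for the upper bound a covering of $K_\omega$ organised by localized digit frequencies, with the frequency profiles discretized by an $\eta$-net constant on the blocks $(L_{m-1},L_m]$ (so that, thanks to $\ell_m=o(L_{m-1})$, only subexponentially many truncated profiles occur at each scale), expected covering-number estimates combined with Borel--Cantelli, and identification of the resulting exponent with $\dim_H(\mu_{\boldsymbol{p}})$ for the type-$\ell$ measures $\mu_{\boldsymbol{p}}$ built from the profiles. The two obstacles you flag are exactly the ones the paper resolves: uniformity over profiles comes from expected covering-number bounds (Section~\ref{UPdimH}) whose constants are uniform over profiles with entries bounded below, with the count optimised over the level at which one switches from surviving-word counts to projected-word counts (this is what produces $d_N(\boldsymbol{p})$), while the profiles with small entropy sums are not perturbed away but covered directly, yielding an exceptional set of dimension $O(\epsilon)$.
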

We do not know whether the supremum in Theorem~\ref{dimKd} is attained in general. The possibility of a dimension gap established by Das and Simmons in the deterministic case for $d\ge 3$ naturally persists in the random case, in the sense that in general the supremum of the Hausdorff dimensions of Mandelbrot measures supported on $K_\omega$ is strictly smaller than that associated to inhomogeneous ones. This can be seen by considering a random perturbation of Das and Simmons example (see Section~\ref{epex}).

We also have the following extension of the result obtained in~\cite{ref6} for random Sierpi\'nski sponges. 

\begin{theorem}\label{dimKd''} If the linear parts of the affine maps $f_i$ are equal, with probability~1, conditional on $\{K_\cdot\neq \emptyset\}$, one has  
\begin{equation*}
\dim_H K_\omega=\max\{\dim_H(\mu_\omega): \, \text{$\mu_\omega$ is a non degenerate MM supported on $K_\omega$}\}.
\end{equation*} 
Moreover, the maximum is attained at a unique Mandelbrot measure. 
\end{theorem}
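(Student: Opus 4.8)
The plan is to derive the statement from Theorem~\ref{dimKd} together with the dimension formula for inhomogeneous Mandelbrot measures (IMMs), exploiting that when all the $A_i$ equal a fixed $\mathrm{diag}(a_1,\dots,a_d)$ the nested Lyapunov exponents $\chi_{\sigma_1}<\cdots<\chi_{\sigma_d}$ attached to the principal projections $\pi^{D_1},\dots,\pi^{D_d}$ no longer depend on the measure. The lower bound $\dim_HK_\omega\ge\sup\{\dim_H\mu_\omega:\ \mu_\omega\ \text{a non-degenerate MM supported on }K_\omega\}$ is immediate, since the construction forces $\supp\mu_\omega\subset\Sigma_\omega$, hence $\supp\mu_\omega\subset K_\omega$. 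For the reverse inequality, I would observe that a homogeneous MM is an IMM of type $\ell$ for every $\ell\in\mathscr L$ and, by Theorem~\ref{thmKP}, is non-degenerate exactly when $H(W)>0$; so, fixing any $\ell\in\mathscr L$, by Theorem~\ref{dimKd} it suffices to show that every non-degenerate type-$\ell$ IMM $\mu_\omega$ satisfies $\dim_H\mu_\omega\le\sup\{\dim_H\mu_\omega:\ \mu_\omega\ \text{a non-degenerate MM}\}$.

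This comparison is the core of the argument. Under equal linear parts the IMM dimension formula takes the measure-independent shape $\dim_H\mu_\omega=\liminf_{N\to\infty}\sum_{k=1}^d\chi_{\sigma_k}^{-1}\big(\overline h_k(N)-\overline h_{k+1}(N)\big)$, with the convention $\overline h_{d+1}\equiv 0$, where $\overline h_k(N)$ is the average of the branching entropies $H\big((\pi^{D_k})_*W^{(n)}\big)$ over the levels $n$ that $\pi^{D_k}$ resolves at scale $e^{-\chi_{\sigma_d}N}$ — a range proportional to $N$ — and each difference $\overline h_k-\overline h_{k+1}\ge 0$ has positive coefficient. For a measure of type $\ell$ these per-level entropies are constant on each block $\{L_{m-1}<n\le L_m\}$, so every $\overline h_k(N)$ is a convex combination of the block values $H\big((\pi^{D_k})_*W^{[m]}\big)$; the growth constraint $\ell_m=o(L_{m-1})$ built into $\mathscr L$ is precisely what forces these running averages — over windows of comparable length for the different $k$ — to be asymptotically governed by a common mixture of the recent block laws, so that $\liminf_N\sum_k\chi_{\sigma_k}^{-1}(\overline h_k(N)-\overline h_{k+1}(N))\le\sup_m\sum_k\chi_{\sigma_k}^{-1}\big(H((\pi^{D_k})_*W^{[m]})-H((\pi^{D_{k+1}})_*W^{[m]})\big)$. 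For a block with $H(W^{[m]})>0$ this last summand equals, by the formula of Theorem~\ref{thmKP} applied to the homogeneous MM driven by $W^{[m]}$, its Hausdorff dimension, hence is $\le\sup\{\dim_H\mu_\omega:\ \mu_\omega\ \text{a non-degenerate MM}\}$; the blocks with $H(W^{[m]})=0$, which can only occur in vanishing proportion and pull the running averages down, are treated separately and contribute no more than this supremum. Combined with the lower bound, this yields $\dim_HK_\omega=\sup\{\dim_H\mu_\omega:\ \mu_\omega\ \text{a non-degenerate MM supported on }K_\omega\}$.

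It remains to upgrade the supremum to a maximum attained at a unique Mandelbrot measure, for which I would follow the scheme used for random Sierpiński sponges in~\cite{ref6}. Through the formula, $\dim_H\mu_\omega$ depends on the driving law only through finitely many moments of $(C,W)$; since the supremum equals $\dim_HK_\omega$, which is a.s. positive and finite on $\{K_\cdot\neq\emptyset\}$, along any maximizing sequence $H(W)$ (always $\le\log\#\mathcal I$) stays bounded away from $0$, which together with $\mathbb E(\sum_iW_i)=1$ gives the uniform integrability needed to extract a limiting admissible law, along which $\dim_H\mu_\omega$ is upper semicontinuous; hence the supremum is attained. For uniqueness, one writes $\dim_H\mu_\omega$ as a fixed positive-coefficient linear form in the branching entropies of the successive projections $(\pi^{D_k})_*W$ and optimizes the conditional law of each projection given the next coarser one: each such step is a concave, explicitly solvable problem, and the procedure reduces $\dim_H\mu_\omega$ to a strictly concave function of a single reduced parameter on a convex domain, so the maximizer, and then the whole driving law, is unique. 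The only feature new relative to~\cite{ref6} is that each of these conditional optimizations must respect the percolation constraint $\{W_i>0\}\subset\{c_i=1\}$ almost surely, which replaces the uniform extremal conditional law by an explicit $C$-dependent one but affects neither concavity nor the uniqueness conclusion.

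The main obstacle is the comparison step of the second paragraph — proving that, in the equal-linear-parts regime, the variational principle over type-$\ell$ IMMs genuinely collapses to the one over homogeneous MMs. It hinges on having the IMM dimension formula in the clean form of a $\liminf$ of a fixed affine functional of running averages of per-level projected branching entropies, and on using the $o(L_{m-1})$ growth in $\mathscr L$ to show that the windows associated with the different projections carry, in the limit, the statistics of one and the same mixture of block laws; the compactness and strict-concavity parts, though technical, are essentially those of~\cite{ref6}, the only genuinely new bookkeeping there being the percolation constraint.
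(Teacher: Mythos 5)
Your overall strategy (lower bound trivial, then use Theorem~\ref{dimKd} and show that in the equal-linear-parts case the type-$\ell$ variational principle collapses onto homogeneous MMs) is indeed the paper's strategy, but the core comparison step rests on a formula that is not the one proved in the paper, and the collapse itself is asserted rather than proved. First, the dimension of an IMM is \emph{not} a $\liminf$ of a fixed affine functional of running averages of ``per-level projected branching entropies'': by Theorem~\ref{thm-2.4}(2) it is $\liminf_N \widetilde d_N$ with $\widetilde d_N=\frac1N\min_{g_1(N)\le k\le g_s(N)}H_{N,k}$, where $H_{N,k}$ switches at position $k$ from the full entropies $H(W^{(n)})$ to the entropies $h(\Pi_{r_n}p^{(n)})$ of the \emph{projected mean vectors} $\Pi_r p^{(n)}$ (not of the projected random weights), and for an MM the formula of Theorem~\ref{thm-2.4}(3) carries the terms $\min\bigl(H(W),h(\Pi^D_r p)\bigr)$ — the minimum-over-switch-positions structure is exactly why $H(W)$ and $h(\Pi_2p)$ need not be ordered and why your telescoped differences $\overline h_k-\overline h_{k+1}$ with nonnegative coefficients do not reproduce the dimension. (Incidentally, the MM dimension formula is Theorem~\ref{thm-2.4}(3), not Theorem~\ref{thmKP}, which is only the non-degeneracy criterion.)

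Second, the claim that $\ell_m=o(L_{m-1})$ forces the $\liminf$ to be bounded by the supremum over single blocks of the corresponding homogeneous value is precisely the hard point, and no mechanism is given. The different projections are averaged over windows ending at the different times $g_r(N)$, so one must compare entropies of $\widehat{\boldsymbol p}$ evaluated at distinct scales; the paper handles this by concavity inequalities that push $\widetilde d_N(\boldsymbol p)$ below a weighted topological pressure $P_r^{\vec{\boldsymbol\gamma}_r}(\theta\varphi_r,T_r)$ plus an error term $R_N(r,\theta)$, and then uses a Kenyon--Peres type combinatorial lemma (\cite[Lemma 5.4]{FHJMPA}) to show $\liminf_N R_N(r,\theta)\le 0$; without such a device your ``common mixture of recent block laws'' inequality is unproven (and, as stated with block values $H((\pi^{D_k})_*W^{[m]})$, the right-hand side is not the Hausdorff dimension of the block MM anyway). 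Finally, even once the upper bound $\inf_{r,\theta}P_r^{\vec{\boldsymbol\gamma}_r}(\theta\varphi_r,T_r)$ is obtained, one still has to exhibit a non-degenerate MM attaining it: the paper does this through the equilibrium (Bernoulli) measures of the weighted pressure, a case analysis on the sign of $P_r'(\theta)$ at the minimizing $(r_0,\theta_{r_0})$, and a verification that the minima in the Ledrappier--Young type formula resolve to give exactly $P_{r_0}(\theta_{r_0})$ and that $H(W)>0$; your compactness/upper-semicontinuity argument does not address non-degeneracy or full support of the limiting law, nor the identification of its dimension with the bound. The uniqueness part, deferred to the method of \cite{ref6}, is consistent with the paper, which does the same.
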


Below we describe our approach to get  the previous results. 

The variational principle established in Theorem~\ref{dimKd} relies on having sufficiently precise information about the Hausdorff dimensions of IMMs. To this end, we prove a general result—Theorem~\ref{thm-2.4}(2)—which provides both the Hausdorff and packing dimensions for a broad class of non-degenerate IMMs. This result is of independent interest. Since its precise formulation requires additional notation, we defer its full statement to Section~\ref{DM}. Nevertheless, we outline here the approach used to study these dimensions and contrast it with the method used in the deterministic case. In the latter case, the Hausdorff and packing dimensions of an inhomogeneous Bernoulli measure (IBM) $\mu$ on $K$ associated to a sequence of probability vectors $(p^{(n)})_{n\ge 1}$ can be obtained by studying $\mu$-almost everywhere the fluctuations of $\frac{\log(\mu(B(x,r)))}{\log(r)}$ as $r\to 0$ by $(i)$ replacing balls by sequences $(Q_N(z))_{N\ge 1}$ of almost cubes suitably chosen according to the behavior of the Lyapunov exponents associated with $\{A_i\}_{i\in\mathcal I}$ and $\nu$ and with sides comparable to the scale  $e^{-N}$ for large $N$, that is the collection $(\chi_k(N_k))_{1\le k\le d}$, where $\chi_k(n)=-\frac{1}{n} \int_\Sigma \sum_{j=1}^n \log(|a_{\boldsymbol{i}_j,k}|)\, \mathrm{d}\nu(\boldsymbol{i})$ and $N_k\chi_k(N_k)\sim N$; $(ii)$ exploiting the multiplicative structure of IBMs  and their orthogonal projections to principal subspaces (they are IBMs as well) to decompose the  logarithms of these masses as finitely many sums of independent random variables  to which applies  the strong law of large numbers for non identically distributed independent random variables (see~\cite{ref5} where this is done for $\dim_H(\mu)$ and $(p^{(n)})_{n\ge 1}$ being the restriction to $\mathbb{N}^+$ of an exponentially continuous and periodic function  $(p^{(t)})_{t>0}$; but the method is general). As a result, there is a sequence  $(S_N(\mu))_{N\ge 1}$ of sums of entropies of BMs and entropies of projections of BMs such that $\dim_H(\mu)= \liminf_{N\to\infty} N^{-1}S_N(\mu)$ and $\dim_P(\mu)= \limsup_{N\to\infty} N^{-1}S_N(\mu)$. In the random case, orthogonal projections on principal subspaces of an IMM $\mu=\mu_\omega$ are not IMMs in general, but  they keep multiplicative properties  in expectation. This is why a large deviation approach is substituted to the SLLN, via a fine control of the expectations of sequences of partition functions $\sum_{Q\in\mathcal F_N} \mu(Q)^q$ around the inverse temperature $q=1$ (using the terminology of thermodynamics), where $\mathcal F_N$ is a collection of parallepipeds  (with pairwise disjoint interiors) which  form a covering of  $K_\omega$,  and is determined by the Lyapunov exponents of~$\nu$  associated to successive scales $e^{-N}$ (now $p^{(n)}=\mathbb{E}(W^{(n)})$); the elements of $\mathcal F_N$ are far from being  all almost cubes, while it is the case with the so-called $L^q$-spectrum which is enough to tackle the case of  MM on Sierpi\'nski sponges. However, $\mu$-almost every point is asymptotically contained in an element of $\mathcal F_N$ which is  an almost cube; this makes it possible to get the desired dimensions from the asymptotic behavior of the functions $q\mapsto \mathbb{E} \big (\sum_{Q\in\mathcal F_N} \mu(Q)^q\big)$ near $1$  and concentration inequalities. This asymp\-totic behavior results from calculations which go far beyond those conducted in~\cite{ref6} to control the $L^q$-spectrum of MM on random Sierpi\'nski sponges, and which include new estimates for the $L^q$ norm of inhomogeneous Mandelbrot martingales taking into account the possible occurrence of many levels~$n$ in the cascade of multiplications defining $\nu$ and $\mu$, for which $H(W^{(n)})<0$. It turns out that as the scale $e^{-N}$ goes to 0, $\mathbb{E} \big (\sum_{Q\in\mathcal F_N} \mu(Q)^q\big)$  behaves as $O\big (e^{-(q-1)S_N(\mu)(1+o(1))}\big )$ as $q\to 1$, and this time $S_N(\mu)$ is the minimum of about $\left\lceil\big (\frac{1}{\chi_{\min}(\nu,N))}-\frac{1}{\chi_{\max}(\nu,N)}\big )N\right\rceil$ distinct sums of entropy dimensions of MMs and entropies of dimensions of projections of BMs,  where $\chi_{\min}(\nu,N)$ and $\chi_{\max}(\nu,N)$ are respectively the smallest and the biggest element  of $\{\chi_k(N_k)\}_{1\le k\le d}$.  Again,  setting $d_N(\mu)=N^{-1}S_N(\mu)$, one has $\dim_H(\mu)$ and $\dim_P(\mu)$ equal to $\liminf_{N\to\infty} d_N(\mu)$ and $\limsup_{N\to\infty} d_N(\mu)$ respectively. 

To get Theorem~\ref{dimKd}, we apply Theorem~\ref{thm-2.4}(2) to the subclass of IMMs $\mu_{\boldsymbol{p}}$ of type $\ell$ such that for all $n\ge 1$, $W^{(n)}$ is distributed like $\big (p_i^{(n)}\frac{\mathbf{1}_{\{c_i=1\}}}{\mathbb{P}(c_i=1)}\big )_{i\in\mathcal I}$, and  $\boldsymbol{p}=((p^{(n)})_{i\in\mathcal I})_{n\in\mathbb{N}^+}$ is a sequence of positive probability vectors defining a Bernoulli product measure of type $\ell$ fully supported on $\Sigma$; such a measure is almost surely fully supported on $\Sigma_\omega$ conditional on $\{\Sigma_\omega\neq\emptyset\}$. The validity of the variational principle  follows by proving that $\dim_H K_\omega$ is upper bounded by the supremum of the Hausdorff dimensions of these measures $\mu_{\boldsymbol{p}}$. To do so, as for random statistically self-affine Sierpi\'nski carpets or sponges, we  need to exhibit adapted coverings, in the spirit of the original Bedford's approach  \cite{ref3} to the Hausdorff dimension of Sierpi\'nski carpets, further developed for random Sierpi\'nski carpets and sponges in \cite{ref9,ref6}. It is where, instead of using the usual notion of frequency of digits on the coding space as in the aforementioned studies,  we use for each $\boldsymbol{i}\in\Sigma$  the sequence $(p(\boldsymbol{i},m))_{m\ge 1}$ of localized frequencies of digits obtained when one  considers, for all $m\in\mathbb{N}^+$,  the vector $p(\boldsymbol{i},m)$ of  the frequencies of the digits  $i$ of $\mathcal {I}$ in the finite subword $\boldsymbol{i}_{L_{m-1}+1}\cdots \boldsymbol{i}_{L_{m}}$ of length $\ell_m$. We first provide a new proof of the sharp upper bound for $\dim_H (\mu)$ when $\mu$ is a non degenerate IMM of type $\ell$, by using suitable collections of coverings. This  exploits the fact that one can control very well the asymptotic behavior of the localized frequencies for $\nu$-almost every $\boldsymbol{i}\in\Sigma_\omega$: $\big\|p(\boldsymbol{i},m)-\mathbb{E}(W^{(L_m)})\|_\infty$ converges to 0 as $m\to+\infty$. These coverings are made of collections of almost cubes of side lengths about $e^{-N}$, whose expected number is estimated from above in about $\left\lceil\big (\frac{1}{\chi_{\min}(\nu,N))}-\frac{1}{\chi_{\max}(\nu,N)}\big )N\right\rceil$ manners, and so by the infimum of the resulting estimates. This is where  the connection with $d_N(\mu)$ is made. Then, for each $\epsilon>0$, one selects a suitable subset $\mathscr P_\epsilon$ of sequences $\boldsymbol{p}=(p^{(n)}_i)_{i\in\mathcal I})_{n\in\mathbb{N}^+}$ of positive probability vectors, allowing most sequences $(p(\boldsymbol{i},m))_{m\ge 1}$ (for $\boldsymbol{i}\in\Sigma$) to be approximated,  up to~$\epsilon$, by some $(p^{(L_m)})_{m\ge 1}$ with $\boldsymbol{p}\in  \mathscr P_\epsilon$. Subsequently, for all $\epsilon>0$ and $\boldsymbol{p}\in \mathscr P_\epsilon$ one can construct suitable coverings—closely related to those previously used to estimate $\dim_H(\mu_{\boldsymbol{p}})$ from above—to cover the set of  points in $K_\omega$ that are  images under the coding map $\pi$ of points $\boldsymbol{i}\in\Sigma_\omega$ for which $(p(\boldsymbol{i},m))_{m\ge 1}$ is $\epsilon$-close to $(p^{(L_m)})_{m\ge 1}$.  This implies that this set has a Hausdorff dimension smaller than $\dim_H(\mu_{\boldsymbol{p}})+\eta(\epsilon)$, with $\lim_{\epsilon\to 0} \eta(\epsilon)=0$.  Finally, the  fact that $\mathscr P_\epsilon$ can be taken as an infinite product of finite sets whose cardinalities grow slowly (due to the property $\ell_m=o(L_{m-1})$), combined with the existence of $C(\epsilon)>0$ independent of $\boldsymbol{p}$ such that $d_N(\mu_{\boldsymbol{p}})$ depends on the $\lfloor C(\epsilon) N\rfloor $ first vectors $p^{(n)}$ only, yields the existence of some $K^\epsilon_\omega\subset K_\omega$ such that $\dim_HK^\epsilon_\omega\le \sup_{\boldsymbol{p}\in\mathscr P_\epsilon}\dim_H(\mu_{\boldsymbol{p}})+\eta(\epsilon)$ and $\lim_{\epsilon\to 0} \dim_H( K_\omega\setminus K^\epsilon_\omega)=0$. 
 
Theorem~\ref{dimK2} will be seen as a consequence of Theorem~\ref{dimKd}. Theorem~\ref{dimKd''} can be obtained as rather a direct generalisation of the already known results when $K_\omega$ is a random Sierpi\'nski sponge (\cite{ref6}). However, apart from the uniqueness of the MM of maximal Hausdorff dimension, we will show how  to naturally derive  the result from Theorem~\ref{dimKd}. 

\begin{remark}\label{remdimH}
 It remains open whether or not  it is possible to  use non degenerate  IMMs of exponentially continuous and periodic type (see Section~\ref{epex} for a precise definition) in the variational principle. The fact that continuity plus periodicity implies uniform continuity makes the restriction to the positive  integers of the associated process $(W^{(t)})_{t>0}$ easy to arbitrary approximate uniformly near $+\infty$ by some $({W'}^{(n)})_{n\ge 1}$ defining a non degenerate IMM of type~$\ell$, but it is the converse, possibly in a weaker sense,  and even if one replaces continuity and periodicity by the weaker uniform continuity, which is missing. 
\end{remark}

\noindent
\textbf{\textit{Packing dimension of $K_\omega$.}} Note that by statistical self-affinity,  $\dim_P K_\omega=\overline \dim_B K_\omega$ almost surely.  The following variational principle holds for the packing dimension of $K_\omega$. 
\begin{theorem} \label{dimPKd}
Fix $\ell\in\mathscr L$. With probability~1, conditional on $\{K_\cdot\neq \emptyset\}$,  one has 
\begin{align*}
\dim_P K_\omega =\sup\{\dim_P(\mu_\omega): \, \text{$\mu_\omega$ is a non degenerate  IMM of type $\ell$ supported on $K_\omega$}\}.
\end{align*}
Moreover, the supremum is attained in the deterministic case. 
\end{theorem}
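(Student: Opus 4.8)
The plan is as follows. By statistical self-affinity one has $\dim_P K_\omega=\overline\dim_B K_\omega$ almost surely, and since $\supp\mu_\omega\subseteq K_\omega$ gives $\dim_P(\mu_\omega)\le\dim_P K_\omega$ for every IMM $\mu_\omega$ supported on $K_\omega$, the lower bound in the asserted identity is immediate; the whole content is the reverse inequality $\overline\dim_B K_\omega\le\sup\{\dim_P(\mu_\omega)\}$, almost surely conditional on $\{K_\cdot\neq\emptyset\}$. As in the proof of Theorem~\ref{dimKd}, it suffices to bound $\overline\dim_B K_\omega$ from above by $\sup_{\boldsymbol p}\dim_P(\mu_{\boldsymbol p})$, where $\mu_{\boldsymbol p}$ runs over the subclass of fully supported IMMs of type~$\ell$ with $W^{(n)}$ distributed like $\big(p^{(n)}_i\mathbf 1_{\{c_i>0\}}/\mathbb P(c_i>0)\big)_{i\in\mathcal I}$, $\boldsymbol p$ a sequence of positive probability vectors of type~$\ell$; for these $\supp\mu_{\boldsymbol p}=K_\omega$ and, by Theorem~\ref{thm-2.4}(2), $\dim_P(\mu_{\boldsymbol p})=\limsup_{N\to\infty}d_N(\mu_{\boldsymbol p})$ with $d_N(\mu)=N^{-1}S_N(\mu)$. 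As in Theorem~\ref{dimKd} we may moreover restrict to $\boldsymbol p$ for which $\mu_{\boldsymbol p}$ is non-degenerate in the uniform sense of Theorem~\ref{sufcond} — mixing, if necessary, $\boldsymbol p$ with a fixed robustly non-degenerate sequence at a small weight $\delta$, which changes $\dim_P(\mu_{\boldsymbol p})$ by at most some $\eta(\delta)\to0$.

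Fix $\epsilon>0$. Arguing as in the proof of the sharp upper bound for $\dim_H$ in Theorem~\ref{dimKd}, at each scale $e^{-N}$ we work with the collection $\mathcal F_N$ of the almost-cubes $Q_N(\boldsymbol i)$, $\boldsymbol i\in\Sigma_\omega$, of side lengths all comparable to $e^{-N}$; these cover $K_\omega$, so $\mathcal N_{e^{-N}}(K_\omega)\asymp\#\mathcal F_N$, $\mathcal N_{e^{-N}}(K_\omega)$ denoting the number of cubes of side $e^{-N}$ needed to cover $K_\omega$. We split $\mathcal F_N$ according to the value, rounded on an $\epsilon$-net $\mathscr P_\epsilon$ (an infinite product of finite sets, as in Theorem~\ref{dimKd}), of the localised digit-frequency profile $(p(\boldsymbol i,m))_{m\ge1}$ of the cube (i.e. of any $\boldsymbol i\in\Sigma_\omega$ coding it, which is unambiguous up to the blocks relevant at scale $N$). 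For a fixed $\boldsymbol p\in\mathscr P_\epsilon$, each cube $Q$ whose profile is $\epsilon$-close to $\boldsymbol p$ has $\mu_{\boldsymbol p}(Q)\ge e^{-S_N(\mu_{\boldsymbol p})(1+o(1))-O(\epsilon)N}$; combining this with the sharp estimate $\mathbb E\big(\sum_{Q\in\mathcal F_N}\mu_{\boldsymbol p}(Q)^q\big)=O\big(e^{-(q-1)S_N(\mu_{\boldsymbol p})(1+o(1))}\big)$ as $q\to1^-$ — the technical core of Theorem~\ref{thm-2.4}, where the possible occurrence of many levels $n$ with $H(W^{(n)})<0$ must be controlled — a Markov inequality bounds the expected number of such cubes by $e^{(d_N(\mu_{\boldsymbol p})+\eta(\epsilon))N(1+o(1))}$; summing over $\boldsymbol p\in\mathscr P_\epsilon$ only introduces a subexponential factor, thanks to the property $\ell_m=o(L_{m-1})$ and to the fact that $d_N(\mu_{\boldsymbol p})$ depends on the first $\lfloor C(\epsilon)N\rfloor$ vectors $p^{(n)}$ only (exactly as in the proof of Theorem~\ref{dimKd}). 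A further Markov inequality and a Borel--Cantelli argument over the scales~$N$ then give, almost surely conditional on $\{K_\cdot\neq\emptyset\}$,
\[
\limsup_{N\to\infty}\frac1N\log \mathcal N_{e^{-N}}(K_\omega)\ \le\ \Big(\limsup_{N\to\infty}\max_{\boldsymbol p\in\mathscr P_\epsilon}d_N(\mu_{\boldsymbol p})\Big)+\eta(\epsilon).
\]

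It then remains to pass from the right-hand side to $\sup_{\boldsymbol p}\dim_P(\mu_{\boldsymbol p})=\sup_{\boldsymbol p}\limsup_{N}d_N(\mu_{\boldsymbol p})$, i.e. to exhibit a single maximising competitor — precisely the point where the argument departs from the Hausdorff case, in which countable stability of $\dim_H$ made this automatic. I would do this by splicing. Set $D_\epsilon:=\limsup_N\max_{\boldsymbol p\in\mathscr P_\epsilon}d_N(\mu_{\boldsymbol p})$, pick scales $N_1<N_2<\cdots$ along which $\max_{\boldsymbol p\in\mathscr P_\epsilon}d_{N_j}(\mu_{\boldsymbol p})\to D_\epsilon$, near-maximisers $\boldsymbol p^{(j)}\in\mathscr P_\epsilon$ at scale $N_j$ (chosen uniformly non-degenerate, as in Theorem~\ref{dimKd}), thin out to a sparse subsequence $(N'_l)_l\subset(N_j)_j$ with $N'_{l-1}=o(N'_l)$, and define $\boldsymbol p^\star\in\mathscr P_\epsilon$ of type~$\ell$ whose $n$-th probability vector coincides with that of $\boldsymbol p^{(l)}$ for all $n$ in the block-range $\big(L_{m_{l-1}},L_{m_l}\big]$, where $m_l$ is chosen so that $L_{m_l}\approx\lfloor C(\epsilon)N'_l\rfloor$. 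Because $d_N(\mu_{\boldsymbol q})$ depends on the first $\lfloor C(\epsilon)N\rfloor$ vectors of $\boldsymbol q$ only, and replacing a prefix of length $O(N'_{l-1})=o(N'_l)$ of these vectors alters $S_{N'_l}$ by at most $O(N'_{l-1})=o(N'_l)$, one gets $d_{N'_l}(\mu_{\boldsymbol p^\star})=d_{N'_l}(\mu_{\boldsymbol p^{(l)}})+o(1)\to D_\epsilon$, hence $\dim_P(\mu_{\boldsymbol p^\star})=\limsup_N d_N(\mu_{\boldsymbol p^\star})\ge D_\epsilon$. Since $\mu_{\boldsymbol p^\star}$ is a non-degenerate IMM of type~$\ell$ with $\supp\mu_{\boldsymbol p^\star}=K_\omega$, this gives $\overline\dim_B K_\omega\le D_\epsilon+\eta(\epsilon)\le\sup\{\dim_P(\mu_\omega)\}+\eta(\epsilon)$; letting $\epsilon\to0$ and combining with the lower bound yields $\dim_P K_\omega=\overline\dim_B K_\omega=\sup\{\dim_P(\mu_\omega)\}$.

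In the deterministic case the uniform non-degeneracy requirement is vacuous, and $\#\mathcal F_N$ admits an explicit multiplicative expression, from which $\overline\dim_B K$ is identified with the maximum of a continuous function over a compact set of admissible asymptotic digit-frequency profiles; a sequence $\boldsymbol p^\star$ of type~$\ell$ realising an optimal profile then satisfies $\dim_P(\mu_{\boldsymbol p^\star})=\overline\dim_B K$, so the supremum is attained. The main obstacle will be the two-sided control, uniform over the net $\mathscr P_\epsilon$, of the expected number of the relevant almost-cubes in $\mathcal F_N$ — equivalently, the behaviour as $q\to1$ of $q\mapsto\mathbb E\big(\sum_{Q\in\mathcal F_N}\mu_{\boldsymbol p}(Q)^q\big)$ — which is the inhomogeneous Mandelbrot input provided by Theorem~\ref{thm-2.4} and where the levels $n$ with $H(W^{(n)})<0$ must be handled; the splicing step, though elementary, is the other place requiring care.
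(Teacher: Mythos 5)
Your overall architecture coincides with the paper's: the lower bound is immediate, the upper bound is sought through coverings of $K_\omega$ at scale $e^{-N}$ classified by localized digit-frequency profiles rounded on a slowly growing net, a Borel--Cantelli argument over scales, and finally a splicing of near-maximizing profiles along a sparse sequence of scales into a single IMM of type $\ell$. The genuine gap lies in how you pass from the covering count to packing dimensions of actual measures, i.e. in the treatment of profiles $\boldsymbol{p}$ whose entropy sums are small or negative. In the Hausdorff case such profiles could be discarded (they code a set of dimension $O(\epsilon)$), but for $\overline{\dim}_B$ every cube of $K_\omega$ must be counted, and the near-maximizing profiles at scale $N$ may have $\sum_{n\le M}H(W^{(n)}_{\boldsymbol{p}})<0$ at intermediate scales $M$, in which case $\mu_{\boldsymbol{p}}$ is degenerate while $\widetilde d_N(\boldsymbol{p})$ can still be of the order of the maximum. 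Your proposed remedy --- mixing $\boldsymbol{p}$ with a fixed robustly non-degenerate sequence at small weight $\delta$ --- does not work: by concavity the mixture raises the entropy at each level only by $O(\delta)$, so a long stretch with $\widetilde h(p^{(n)})$ close to $H_{\min}<0$ stays strongly negative and the criterion of Theorem~\ref{sufcond} ($\liminf_N N^{-1}\sum_{n\le N}H(W^{(n)})>0$) is not restored. The paper needs two dedicated statements at precisely this point: Lemma~\ref{elim}, showing that profiles whose partial sums drop below $-M\epsilon$ at some $M\in[\lfloor N\epsilon\rfloor,\lfloor\Lambda_a N\rfloor]$ code an almost surely empty part of $\Sigma_\omega$ for large $N$, so that the covering can be restricted to the class $\mathscr{Q}^{\ell,\epsilon,\eta,\lfloor\Lambda_aN\rfloor}_{\mathcal{I}}$; and Lemma~\ref{lemperturbation}, a quantitative perturbation (mixing weight $\lambda\epsilon$ with $\lambda=8\frac{H_{\max}-2H_{\min}}{H_{\max}^2}$, together with replacing the first $\lfloor\epsilon N\rfloor$ vectors by $p_{\max}$) which converts any remaining profile into one with all partial sums $\ge M\epsilon$ --- hence, after splicing, a non-degenerate fully supported competitor --- while altering $\widetilde d_N$ only by $O(\epsilon)$. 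Without both steps, the key inequality relating $\limsup_N\sup_{\boldsymbol{p}}\widetilde d_N(\boldsymbol{p})$ to $\sup\dim_P(\mu_\omega)$ is not established, and your splicing step cannot verify non-degeneracy of the spliced sequence (the paper's check that $\sum_{n\le M}H(W^{(n)}_{\boldsymbol{q}_\epsilon})\ge M\epsilon/2$ for all $M$).

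A secondary flaw: your count of cubes with profile close to $\boldsymbol{p}$ rests on the pointwise lower bound $\mu_{\boldsymbol{p}}(Q)\ge e^{-S_N(\mu_{\boldsymbol{p}})(1+o(1))-O(\epsilon)N}$ for every matching cube, which is not available --- the cascade factors $Y(\cdot)$ entering $\mu_{\boldsymbol{p}}(Q)$ can be arbitrarily small on surviving cubes. The paper instead bounds the expected number of matching cubes directly, via a Markov inequality on the weight products as in \eqref{ineqforMarkov} and Remark~\ref{alternativeupperbound}, which yields $\mathbb{E}(\widetilde{N}_{\boldsymbol{p},N})\le e^{(C''_1\delta+\widetilde d_N(\boldsymbol{p}))N}$; note also that the relevant exponent is $\widetilde d_N$, not $d_N$. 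Finally, your deterministic attainment argument (identifying $\overline{\dim}_B K$ with the maximum of a continuous function over a compact set of asymptotic frequency profiles) is not substantiated; the paper obtains attainment by the same splicing construction with a diagonal choice $\epsilon_j\to 0$, which is the natural completion of your own scheme.
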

The existence of $\dim_BK_\omega$ is known to hold in the deterministic case \cite{ref3,ref12,ref10,ref18,FengWang, ref1, Kol}, as well as for random Sierpi\'nski carpets and sponges~\cite{ref9,ref6}. This dimension is then expressed as a weighted sum of entropies of Bernoulli or Mandelbrot measures supported on~$K$ and entropies  of projections of such measures, or the supremum of such sums, and in general it is strictly larger than the Hausdorff dimension. In the deterministic case, it is possible to exploit the approximations used to establish that $\dim_B K$ exists to get Theorem~\ref{dimPKd}. That $\dim_B K_\omega$ does exist in the general random case will be shown in a separate paper and requires to substantially modify the approach used for random Sierpi\'nski carpets and sponges~\cite{ref9,ref6}. For the time being, we find it interesting to give a direct proof combining the formula for the packing dimension of IMMs  with covering numbers estimates obtained in the study of $\dim_HK_\omega$.

The paper is organized as follows. Section~\ref{DM} provides an extension of Theorem~\ref{sufcond} (Theorem~\ref{sufcond2}) and presents our results on the Hausdorff and packing dimensions of non degenerate IMMs, with some attention to the case of IMMs of exponentially continuous and periodic type in order to extend to the random case the dimensional gap property  detected by Das and Simmons in the deterministic case. Section~\ref{ND} provides the proof of Theorem~\ref{sufcond2}, as well as some controls of moments  for inhomogeneous Mandelbrot martingales. Section~\ref{proofsdimmu} is dedicated to the proof of the results of Section~\ref{DM}, Section~\ref{UB} to the proof of Theorem~\ref{dimKd}, while Section~\ref{secdim2} contains the proofs of Theorem~\ref{dimK2} and~\ref{dimKd''}, and Section~\ref{secdimPK} that of Theorem~\ref{dimPKd}. Section~\ref{appendix} provides the definitions of Hausdorff and packing dimensions of a measure, as well as some general lemmas.

\section{Hausdorff and packing dimensions of inhomogeneous Mandelbrot measures supported on $K_\omega$}\label{DM}
Let $\nu$ be an IMM  constructed as in Section~\ref{introduction}. Before presenting our result on the dimensions of $\mu=\pi_*\nu$ in~Section~\ref{thmdim}, some preliminaries are required. 
\subsection{Some preliminaries}\label{preliminaries}First, we state a more general version of  Theorem~\ref{sufcond} on non degeneracy of $\nu$. Next we define some coding useful to describe the orthogonal projections of Mandelbrot measures to subspaces generated by the principal directions and associated to the behavior of the typical Lyapunov exponents of the measure along the scales seen from $\nu$. This coding is also used to define some projections of probability vectors. Finally, we present an assumption that can be made without loss of generality, in order to simplify the exposition of the material to follow.

\noindent
\textbf{\textit{Non degeneracy.}} The following result, which exploits the approach developed in~\cite{WW,Lyons} to get  Theorem~\ref{thmKP}, will be proven in Section~\ref{ND}. Note that since $H(W^{(n)})\le \log\#(\mathcal I)$ for all $n\ge 1$, for the assumptions of item (1) of the statement below to hold,  $b_n$ must be~$O(n)$. 

\begin{theorem}\label{sufcond2}Let $h\in (1,2]$ and $(b_n)_{n\ge 1}$ be an unbounded increasing positive sequence such that  
$$
\sum_{n\ge 1}\displaystyle\frac{\mathbb{E}(\sum_{i\in\mathcal I}W^{(n)}_i |\log W^{(n)}_i|^h)}{b_n^h }<+\infty.
$$
\begin{enumerate}
\item If $\liminf_{N\to +\infty} b_N^{-1}\sum_{n=1}^N H(W^{(n)})>0$, then $\mathbb{E}(\|\nu\|)=1$.   Moreover, if for all $v\in \mathcal{I}^*$ one has $\mathbb{P}(W_i(v)>0)=\mathbb{P}(c_i(v)=1)$, then conditional on $K_\omega\neq\emptyset$, $K_\omega=\supp(\mu_\omega)$; equivalently, $\supp(\mu_\omega)=K_\omega$ almost surely.
\item If $\liminf_{N\to +\infty} b_N^{-1}\sum_{n=1}^N H(W^{(n)})<0$, then $\nu=0$ almost surely. 
\end{enumerate}
\end{theorem}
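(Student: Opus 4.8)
The plan is to follow the size-biasing / spine change-of-measure technique of Lyons--Pemantle--Peres and Waymire--Williams \cite{WW,Lyons}, adapted to the inhomogeneous (non-identically distributed) setting. First I would introduce, on an enlarged probability space, the spine (or "Peyri\`ere") measure: pick a random ray $\boldsymbol{i}=i_1i_2\cdots\in\Sigma$ by choosing $i_n$ with probability proportional to $W_{i_n}(\boldsymbol{i}_{|n-1})$ conditionally on the environment, with the environment along the spine size-biased by the total mass. Concretely, let $\mathbb{Q}_n$ be the measure on the environment with Radon--Nikodym derivative $Y_n$ with respect to $\mathbb{P}$ restricted to the $n$-th generation $\sigma$-field; these are consistent because $(Y_n)$ is a martingale, so they define a measure $\mathbb{Q}$ on the enlarged space together with the spine. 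The standard dichotomy then says: $\mathbb{E}(\|\nu\|)=\mathbb{E}(Y)=1$ (equivalently $Y_n\to Y$ in $L^1$) if $\|\nu\|<\infty$ $\mathbb{Q}$-a.s., while $Y=0$ $\mathbb{P}$-a.s. if $\|\nu\|=\infty$ $\mathbb{Q}$-a.s. So everything reduces to a $0$--$1$ type analysis of the quantity $\liminf_n \frac{1}{n}\log Y_n$ (or rather $-\log\nu([\boldsymbol{i}_{|n}])$) under $\mathbb{Q}$.

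The key computation is that, under $\mathbb{Q}$, along the spine one has
\begin{equation*}
-\log Q(\boldsymbol{i}_{|n}) = -\sum_{m=1}^{n}\log \widehat W_m,
\end{equation*}
where $\widehat W_m = W_{i_m}(\boldsymbol{i}_{|m-1})$ and, crucially, conditionally on the past the law of $-\log\widehat W_m$ is the size-biased law of $-\log W^{(m)}_i$, i.e. $\mathbb{E}_{\mathbb{Q}}(-\log\widehat W_m\mid \mathcal F_{m-1}) = -\sum_i \mathbb{E}(W^{(m)}_i\log W^{(m)}_i) = H(W^{(m)})$. Thus $-\log Q(\boldsymbol{i}_{|n})$ is, under $\mathbb{Q}$, a sum of independent (across generations) centered increments plus the deterministic drift $\sum_{m=1}^n H(W^{(m)})$. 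To control the fluctuations I would invoke a Marcinkiewicz--Zygmund / Chung-type strong law for independent non-identically distributed summands normalized by $b_n$: the hypothesis $\sum_n b_n^{-h}\mathbb{E}(\sum_i W^{(n)}_i|\log W^{(n)}_i|^h)<\infty$ is exactly the Kolmogorov-type condition ensuring that $b_n^{-1}\sum_{m=1}^n(-\log\widehat W_m - H(W^{(m)}))\to 0$ $\mathbb{Q}$-a.s. (using $1<h\le 2$, concavity, and a truncation argument; here the size-biasing turns the $\mathbb{P}$-moment $\mathbb{E}(\sum_i W^{(n)}_i|\log W^{(n)}_i|^h)$ into the $\mathbb{Q}$-moment $\mathbb{E}_{\mathbb{Q}}(|\log\widehat W_n|^h)$ up to the centering). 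Consequently $b_n^{-1}(-\log Q(\boldsymbol{i}_{|n}))\to \liminf_N b_N^{-1}\sum_{m=1}^N H(W^{(m)})$ in the appropriate $\liminf$ sense. One also needs the companion estimate controlling $Y(\boldsymbol{i}_{|n})$ (the mass hanging off the spine), which under $\mathbb{Q}$ is a size-biased copy of $Y$ and does not affect the exponential rate; a Borel--Cantelli argument using that $b_n$ is unbounded handles it.

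For item (1): when $\liminf_N b_N^{-1}\sum_{m=1}^N H(W^{(m)})>0$, the above gives $-\log Q(\boldsymbol{i}_{|n})\to+\infty$, hence $\nu([\boldsymbol{i}_{|n}])\to 0$ geometrically fast along almost every spine, which feeds into the standard argument (a second-moment truncation on the tree, or directly Biggins-type uniform integrability) showing $\|\nu\|<\infty$ $\mathbb{Q}$-a.s., hence $\mathbb{E}(Y)=1$; the support statement then follows as in \cite{ref2} since, conditionally on non-extinction, $\supp(\nu_\omega)$ consists exactly of the rays with $W_{i_n}(\boldsymbol{i}_{|n-1})>0$ for all $n$, which coincides with $\Sigma_\omega$ precisely when $\mathbb{P}(W_i(v)>0)=\mathbb{P}(c_i(v)>0)$. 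For item (2): when $\liminf_N b_N^{-1}\sum_{m=1}^N H(W^{(m)})<0$, along a subsequence $-\log Q(\boldsymbol{i}_{|n})\to-\infty$, i.e. $Q(\boldsymbol{i}_{|n})\to+\infty$, forcing $\|\nu\|=\infty$ $\mathbb{Q}$-a.s. on that subsequence; by the dichotomy $Y=0$ $\mathbb{P}$-a.s., i.e. $\nu=0$ a.s.

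The main obstacle I anticipate is the strong law for the non-identically distributed, size-biased increments under $\mathbb{Q}$: one must carefully match the $\mathbb{P}$-side moment hypothesis (phrased with the weights $W^{(n)}_i$) to the $\mathbb{Q}$-side increments $-\log\widehat W_n$, handle the centering by $H(W^{(n)})$ (which is unbounded below only through the negative part of $\log W^{(n)}_i$, already controlled by the $h$-th moment hypothesis), and push the truncated Marcinkiewicz--Zygmund estimate through with the general normalization $b_n$ rather than $b_n=n$ — in particular verifying that $b_n$ unbounded and increasing suffices for the Borel--Cantelli steps and that no monotonicity-in-$n$ of the moments is needed.
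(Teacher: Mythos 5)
Your plan is correct and follows essentially the same route as the paper's proof: the Waymire--Williams/Lyons size-biased (Peyri\`ere) measure, the observation that under it the spine increments $-\log W_{\boldsymbol{i}_n}(\boldsymbol{i}_{|n-1})$ are independent with means $H(W^{(n)})$, an $L^h$ (von Bahr--Esseen/Marcinkiewicz--Zygmund) strong law normalized by $b_n$ plus Kronecker, a Borel--Cantelli control of the off-spine sibling weights, and the standard dichotomy giving $\mathbb{E}(\|\nu\|)=1$ in case (1) and $\nu=0$ in case (2), with the support claim handled by the usual $0$--$1$/recursion argument. The paper carries out exactly these steps (using convexity to convert the hypothesis into the needed bound on $\mathbb{E}\big(\big[\sum_i W^{(n)}_i\big]\big[\log^+\sum_i W^{(n)}_i\big]^h\big)$ and the $\mathbb{Q}$-martingale $(Y_n^{-1})$ to pass from $\liminf Y_n<\infty$ to $\limsup Y_n<\infty$), so there is no substantive difference.
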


\noindent
\textbf{\textit{Coding the orthogonal projections on principal subspaces.}} Recall the definition of $D(p,x)$ in Definition~\ref{Goods}. Denote by $\mathscr D=\{D(p,x):\, (p,x)\in P_{\mathcal{I}}\times \mathbb{R}_+\}$. Fix $s\in\mathbb{N}^+$ and $D=(D_r)_{1\le r\le s}\in \mathscr{D}^s$, such that  $D_1\supsetneq \cdots\supsetneq D_s\neq\emptyset$. The following definitions and notations are inspired from those used in \cite{Kol}.

For $1\le r\le s-1$, denote by $\pi^D_{r,r+1}$ the orthogonal projection from $\mathbb R^{D_{r}}$ to $\mathbb{R}^{D_{r+1}}$. Also, for $1\le r\le s$, denote $\pi^{D_r}$ by $\pi^D_r$, and set $\mathcal E^D_r=\{\pi^D_r\circ f_i((0,1)^d):\, i\in\mathcal I\}$. We endow $\mathcal I$ with any total order relation.  Set $\mathcal I^D_1=\mathcal I$. If $s\ge 2$, define recursively a non decreasing collection $ \mathcal I=\mathcal {I}^D_1\supset \cdots\supset \mathcal {I}^D_s$, as well as mappings $\Pi^D_{r,r+1}:\mathcal {I}^D_r\to \mathcal {I}^D_{r+1}$ for $1\le r\le s-1$ as follows: for each $E\in \mathcal E^D_2$, pick the smallest $j=j_E\in \mathcal I^D_1$  such that $E=\pi^D_2\circ f_j((0,1)^d)$, set $\mathcal I^D_2=\{j_E: \, E\in \mathcal E^D_2\}$, and for all $j\in \mathcal I^D_2$ and $i\in \mathcal I^D_1$ such that $\pi^D_2\circ f_j((0,1)^d)=\pi^D_2\circ f_i((0,1)^d)$, set $\Pi^D_{1,2}(i)=j$.  Suppose that $2\le r\le s$ and $\mathcal {I}^D_1\supset \cdots\supset \mathcal {I}^D_{r-1}$ have been constructed as well as $\Pi^D_{\ell,\ell+1}: \mathcal I^D_{\ell}\to \mathcal I^D_{\ell+1}$ for all $1\le \ell\le r-2$. For each $E\in \mathcal E^D_r$, pick the smallest $j=j_E\in \mathcal I^D_{r-1}$  such that $E=\pi^D_r\circ f_j((0,1)^d)$ (noting that $\pi^D_r=\pi^{D}_{r-1,r}\circ \pi^{D}_{r-1}$), set $\mathcal I^D_r=\{j_E: \, E\in \mathcal E^D_r\}$, and for all $j\in \mathcal I^D_r$ and $i\in \mathcal I^D_{r-1}$ such that $\pi^D_r\circ f_j((0,1)^d)=\pi^D_r\circ f_i((0,1)^d)$, set $\Pi^D_{r-1,r}(i)=j$. 

By construction, $\mathcal I^D_r\subset \mathcal I^D_{r-1}$ for all $2\le r\le s$, and setting  $\Pi^D_r(i)=j$ for all $j\in \mathcal I^D_r$ and $i\in \mathcal I^D_1$ such that $\pi^D_r\circ f_j((0,1)^d)=\pi^D_r\circ f_i((0,1)^d)$, one has  $\Pi^D_r=\Pi^D_{r-1,r}\circ \cdots\circ \Pi^D_{1,2}$. 

Each mapping $\Pi^D_r$ extends uniquely as a 1-block factor map from $\mathcal I^n$ to $(\mathcal I^D_r)^n$ for all $n\in \mathbb N$ and from $\Sigma=\mathcal I^{\mathbb{N}^+}$ to  $(\mathcal I^D_r)^{\mathbb{N}^+}$. 

\noindent
\textbf{\textit{Projections of probability vectors.}} If $p=(p_i)_{i\in\mathcal I}$ is a probability vector and $2\le r\le s$, denoting $\Pi_r=\Pi_r^D$, we set $\Pi_rp=((\Pi_rp)_j)_{j\in  \Pi_r(\mathcal I)}$, where $(\Pi_rp)_j=\sum_{i\in\Pi_r^{-1}(\{j\})}p_i$.

\noindent
\textbf{\textit{A reduction.}} In the rest of the paper we assume, without loss of generality, the following geometric property: for all $1\le k\le d$ and $s\in\{0,1\}$, 
$$
\mathcal I(k,s)=\big \{i\in \mathcal{I}:\, d(f_{i}([0,1]^d), \{(z_1,\ldots,z_d)\in [0,1]^d: \, z_{k}=s\})=0\big \}\varsubsetneq \mathcal I.
$$
Otherwise the set $K_\omega$ is almost surely contained in one of the faces of $[0,1]^d$ and we are back to the dimension $d-1$, and if $d=1$ $K_\omega$ is a singleton. 

The previous assumption has the following useful consequence (we postpone the proof to the end of Section~\ref{pfthmdim}).
\begin{proposition}\label{mursnonchargés} Let $\nu$ be an IMM. Suppose that for all $1\le k\le d$ and $s\in\{0,1\}$, for all $N\ge 1$, $\prod_{n=N}^\infty \left (\sum_{i\in\mathcal I(k,s)} \mathbb{E}(W^{(n)}_{i})\right )=0$  (this is  in particular the case when the $\mathbb{E}(W^{(n)}_i)$ are bounded away from $0$ independently of $n$).  

Then, with probability~1, $\mu(\pi([w]))=\nu([w])=\mu(f_w((0,1)^d))$, and $\mu(\partial f_w([0,1]^d))=0$ for all $w\in\mathcal I^*$, where $f_{w_1\cdots w_n}=f_{w_1}\circ\cdots\circ f_{w_n}$ if $n\ge 1$ and $f_{\boldsymbol{\epsilon}}=\mathrm{Id}_{\mathbb{R}^d}$. 
\end{proposition}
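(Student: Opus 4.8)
The plan is to show that the random measure $\nu$ assigns zero mass, almost surely and simultaneously for all $w \in \mathcal I^*$, to the (symbolic) boundary sets responsible for the overlap between the geometric cylinder $f_w([0,1]^d)$ and neighbouring cylinders, and then transfer this to $\mu = \pi_*\nu$ via the coding map. Concretely, for a fixed face parameter $(k,s)$ with $k\in\{1,\dots,d\}$ and $s\in\{0,1\}$, consider the ``bad'' symbolic set $B_{k,s} = \{\boldsymbol i \in \Sigma : i_n \in \mathcal I(k,s) \text{ for all } n \ge 1\}$, i.e. the infinite words all of whose digits place the image ``against'' the hyperplane $\{z_k = s\}$. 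A point $\boldsymbol i$ with $\pi(\boldsymbol i) \in \partial f_{\boldsymbol\epsilon}([0,1]^d)$ with respect to the $k$-th coordinate necessarily has $T^j\boldsymbol i \in B_{k,s}$ for some $j\ge 0$ and $s\in\{0,1\}$; more precisely, $\pi([w])$ meets $\partial f_w([0,1]^d)$ only through the descendants of $w$ whose suffix lies in $\bigcup_{s} B_{k,s}$. So it suffices to prove $\nu(B_{k,s}) = 0$ a.s. for each $(k,s)$, and then, by the translation structure, $\nu(\{\boldsymbol i : T^j \boldsymbol i \in B_{k,s}\}) = 0$ a.s. for all $j$, which gives $\mu(\partial f_w([0,1]^d)) = 0$ for all $w$ simultaneously (a countable union of null events).

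First I would estimate $\mathbb E(\nu(B_{k,s} \cap [w]))$ for $w \in \mathcal I^{N-1}$. Using $\nu([w \cdot u]) = Q(w)\,Q^w(u)\,Y(w\cdot u)$ and the independence of the weights from distinct generations, together with $\mathbb E(Q(w)) = \prod_{n=1}^{N-1}\mathbb E\big(\sum_i W_i^{(n)}\big)$-type identities and $\mathbb E(Y) = 1$ (which holds under the hypotheses of Theorem~\ref{sufcond2}(1), and in particular under the non-degeneracy assumption implicit here), one finds that $\mathbb E(\nu(B_{k,s}\cap[w])) \le \mathbb E(\nu([w])) \cdot \prod_{n=N}^{\infty}\big(\sum_{i\in\mathcal I(k,s)} \mathbb E(W_i^{(n)})\big)$, because at each generation $n\ge N$ below $w$ the contribution of digits outside $\mathcal I(k,s)$ is excised, leaving the factor $\sum_{i\in\mathcal I(k,s)}\mathbb E(W_i^{(n)})$. (Here one uses the martingale/branching identity relating $\mathbb E$ of a restricted cascade to the infinite product of the restricted mean weights, as in the classical Mandelbrot/Kahane–Peyrière setup.) By hypothesis this infinite product is $0$, hence $\mathbb E(\nu(B_{k,s}\cap[w])) = 0$, hence $\nu(B_{k,s}\cap[w]) = 0$ a.s.; summing over the countably many $w$ gives $\nu(B_{k,s}) = 0$ a.s. The remark that this holds when the $\mathbb E(W_i^{(n)})$ are bounded away from $0$ uniformly in $n$ follows since then $\sum_{i\in\mathcal I(k,s)}\mathbb E(W_i^{(n)}) \le 1 - \delta$ for some $\delta > 0$ (using $\mathcal I(k,s) \subsetneq \mathcal I$ and $\mathbb E(\sum_i W_i^{(n)}) = 1$), so the product vanishes geometrically.

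Next I would deduce the three displayed equalities. The middle one, $\nu([w]) = \mu(f_w((0,1)^d))$, is essentially the definition of $\mu = \pi_*\nu$ once one checks $\pi^{-1}(f_w((0,1)^d)) \supset [w] \setminus (\text{null set})$: indeed $\pi([w]) \subset f_w([0,1]^d)$ always, and the points of $[w]$ mapped to $\partial f_w([0,1]^d)$ are exactly those whose suffix after $w$ eventually stays in some $B_{k,s}$ (for the appropriate local face), a $\nu$-null set by the above applied along the subtree rooted at $w$. This simultaneously gives $\mu(\pi([w])) = \nu([w])$ (the extra $\nu$-mass that $\pi^{-1}(\pi([w]))$ could pick up from other cylinders again sits on boundary sets, hence is null) and $\mu(\partial f_w([0,1]^d)) = \nu(\{\boldsymbol i \in [w] : \pi(\boldsymbol i)\in \partial f_w([0,1]^d)\}) = 0$. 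A routine countable-intersection argument over all $w\in\mathcal I^*$ makes the exceptional $\mathbb P$-null set uniform.

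The main obstacle I anticipate is the bookkeeping in the first step: making precise that excising, generation by generation below $w$, the digits outside $\mathcal I(k,s)$ multiplies the expected mass by exactly $\prod_{n\ge N}\big(\sum_{i\in\mathcal I(k,s)}\mathbb E(W_i^{(n)})\big)$, which requires a careful induction using the independence across generations and the identity $\mathbb E(Y(v)) = 1$ for the sub-cascades — and in the inhomogeneous setting one must be sure that $\mathbb E(Y) = 1$ genuinely holds (i.e. that we are under the hypotheses guaranteeing non-degeneracy, which is exactly what makes the statement meaningful). A secondary technical point is the geometric claim that a point of $[w]$ projects into the \emph{open} box $f_w((0,1)^d)$ unless its suffix is eventually trapped in a face set $B_{k,s}$; this uses that $f_i([0,1]^d)\subset[0,1]^d$ and the definition of $\mathcal I(k,s)$ as precisely the digits touching a given face, so that a digit outside $\mathcal I(k,s)$ at step $n$ pushes the image strictly into the interior along coordinate $k$ at that scale, and once that happens it stays interior.
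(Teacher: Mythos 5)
Your proposal is correct and follows essentially the same route as the paper: boundary-hitting points are characterized symbolically by digits confined to $\mathcal I(k,s)$ from some generation on, the expected $\nu$-mass of such events is bounded by the tail product $\prod_{n}\big(\sum_{i\in\mathcal I(k,s)}\mathbb{E}(W^{(n)}_i)\big)=0$, and countability over $w\in\mathcal I^*$ finishes the argument. The only remark worth making is that your worry about needing $\mathbb{E}(Y)=1$ (non-degeneracy) is unnecessary: the paper bounds the mass through the finite-level measures $\nu_m$ and Fatou's lemma (equivalently, one may use $\mathbb{E}(Y(v))\le 1$, which always holds), so the estimate is valid for an arbitrary IMM.
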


\subsection{Main statement}\label{thmdim}$\ $

\noindent
\textbf{\textit{Decomposition of the random weights.}} We will use, for each random vector $W^{(n)}(v)$ involved in the  construction of $\nu$, the same decomposition as that of $W$ in the introduction part, namely $W=(p_i\widetilde W_i)_{i\in\mathcal I}$, where $p$ is the probability vector $(\mathbb{E}(W_i))_{i\in\mathcal I}$.  To do so we consider the sequence $\boldsymbol{p}=(p^{(n)})_{n\in\mathbb{N}^+}$ of probability vectors in $\mathbb R_+^{\mathcal I}$ obtained as follows: 
\begin{equation}\label{pni0}
p^{(n)}_i=\mathbb{E}(W^{(n)}_i)\quad \forall\, n\in\mathbb{N}^+,\, \forall \, i\in\mathcal {I}.
\end{equation}
Then for all $n\in\mathbb{N}^+$, $v\in\mathcal I^{n-1}$ and $i\in\mathcal I$ set  
\begin{equation}\label{tildeW}
\widetilde W_i(v)=
\begin{cases}
1&\text{ if } p^{(n)}_i=0\\
\frac{W_i(v)}{p^{(n)}_i}&\text{ otherwise}
\end{cases}\quad \text{and} \quad\widetilde W^{(n)}_i=
\begin{cases}
1&\text{ if } p^{(n)}_i=0\\
\frac{W^{(n)}_i}{p^{(n)}_i}&\text{ otherwise}
\end{cases}
\end{equation}

\smallskip

\noindent
\textbf{\textit{Nested family of principal subspaces adapted to the  Lyapounov exponents associated to $\boldsymbol{p}=(p^{(n)})_{n\in\mathbb{N}^+}$ at scale $e^{-N}$.}} Recall the definition of the Lyapunov exponents associate with a probability vector~\eqref{chip}. For each $N\in\mathbb{N}^+$, define $\widehat{\boldsymbol{p}}_N=\frac{1}{N}\sum_{n=1}^N p^{(n)}$. Then, for all $k\in\{1,\cdots d\}$, let 
\begin{equation}\label{lambdakN}
\gamma_k(N)=\inf\{n\in\mathbb{N}^+:\,  n\chi_k(\widehat{\boldsymbol{p}}_n)>N\}.
\end{equation}
There exists a unique integer $s(N)\ge 1$ and a unique partition $({A_r(N)})_{1\le r\le s(N)}$ of $\{1\ldots,d\}$ such that $(i)$ for all $1\le r\le s(N)$, for all $k,k'\in A_r(N)$, one has $\gamma_k(N)=\gamma_{k'}(N):=g_r(N)$ and $(ii)$ for all $r<r'$ one has $g_r(N)<g_{r'}(N)$. We set $D_r(N)=\bigcup_{r'=r}^{s(N)} A_{r'}(N)$ for all $1\le r\le s(N)$, and $D(N)= (D_r(N))_{1\le r\le s(N)}$. 

By construction, for all $2\le r\le s(N)$, $k\in D_r(N)$ and $k'\in \bigcup_{r'=1}^{r-1}A_{r'}(N)=\{1,\ldots,d\}\setminus D_r(N)$, one has $\chi_{k}(\widehat{\boldsymbol{p}}_{g_{r-1}(N)})\le \frac{N}{g_{r-1}(N)}< \chi_{k'}(\widehat{\boldsymbol{p}}_{g_{r-1}(N)})$. Thus $D_r(N)=D(p, x)\in\mathscr{D}$, where $p=\widehat{\boldsymbol{p}}_{g_{r-1}(N)}$ and $x=\frac{N}{g_{r-1}(N)}$. Also, $D_1(N)=\{1,\ldots,d\}$. 

When $\nu$ is a Mandelbrot measure associated with random vectors  identically distributed with a vector $W$, setting $p=\mathbb{E}(W)$, one has $\widehat{\boldsymbol{p}}_N=p$  for all $N\in\mathbb{N}^+$ and $D(N)$ is independent of $N$ for $N$ large enough, so that we simply denote it by $D$. Also, we denote the common value of $\chi_{k}(p)$ for $k\in A_r$ by~$\widetilde \chi_{r}(p)$.

Finally we define a sequence $(d_N)_{N\in\mathbb{N}^+}$ which, under mild assumptions, will describe (see the proof of Theorem~\ref{thm-2.4} in Section~\ref{proofsdimmu}) for $\mu$-almost every point $z$, the fluctuations of the local H\"older exponent of $\mu$ in the sense that one essentially has the scaling relation
\begin{equation}\label{scaling}
\mu(B(z,e^{-N}))\approx (e^{-N})^{ d_N}.
\end{equation} 

\begin{definition}\label{HN}
For $N\ge 1$, writing $\Pi$ for $\Pi^{D(N)}$ and $s$ for $s(N)$, set 
\begin{align*}
H_{N,k}=\sum_{n=1}^k H({W^{(n)}})+\sum_{n=k+1}^{g_s(N)} h(\Pi_{r_n}p^{(n)})\quad (0\le k\le g_s(N)),
\end{align*}
where $r_n$ is the index $r$ such that $g_{r-1}(N)+1\le n\le g_{r}(N)$ and we recall that the entropy of a finite dimentional probability vector $q=(q_j)_{j\in\mathcal J}$ is defined as $h(q)=-\sum_{j\in\mathcal J}q_j\log(q_j)$.

Also, set 
\begin{align}\label{uN0}
d_N&=\frac{1}{N}\min \Big (\min_{g_1(N)\le k\le  g_s(N)-1} H_{N,k},\min_{N'\ge g_s(N)} \sum_{n=1}^{N'} H(W^{(n)})\Big )\\
\label{uN1} \text{and}\quad
\widetilde d_N&=\frac{1}{N}\min_{g_1(N)\le k\le  g_s(N)} H_{N,k}.
\end{align}

Note that in the deterministic case, $d_N=N^{-1} H_{N,g_1(N)}=\widetilde d_N$, and that if $H(W^{(n)})\ge 0$ for all $n\ge 1$, which is the case for non-degenerate Mandelbrot measures, then $d_N= \widetilde d_N$. 
\end{definition}

We can now state our result on the dimensions of $\mu$.  The definitions of Hausdorff and packing dimensions, and of exact dimensionality of a measure are recalled in Section~\ref{appendix}. 

\begin{theorem}\label{thm-2.4} Suppose $\mu$ is non degenerate, that the assumptions of Theorem~\ref{sufcond2}(1) hold (so that $\mu$ is non-degenerate),  and that the assumption of Proposition~\ref{mursnonchargés} holds as well. Let 
$$
\underline{d}(\mu)=\liminf_{N\to +\infty} d_N \text{ and } \overline{d}(\mu)=\limsup_{N\to +\infty} d_N. 
$$
\begin{enumerate}
\item With probability~1, conditional on $\{\mu\neq 0\}$, $\dim_H(\mu)\le \underline{d}(\mu)$ and $\dim_P(\mu)\le \overline{d}(\mu)$. 

In particular, if  $\displaystyle \liminf_{N\to+\infty} N^{-1}\sum_{n=1}^NH(W^{(n)})=0$, then $\dim_H(\mu)=0$.

\item Suppose that $\displaystyle\liminf_{N\to+\infty} N^{-1}\sum_{n=1}^NH(W^{(n)})>0$ and $\sup_{n\ge 1}\phi_{\widetilde W^{(n)}}(q)<+\infty$ for some $q \in \left(1,2\right]$.
With probability~1, conditional on $\{\mu\neq 0\}$,  $\dim_H(\mu)= \underline{d}(\mu)$ and $\dim_P(\mu)= \overline{d}(\mu)$.

\item Suppose that $\mu$ is a Mandelbrot measure and $\phi_{W}(q)<+\infty$ for some $q \in \left(1,2\right]$. With probability~1, conditional on $\{\nu\neq 0\}$, $\mu$ is exact dimensional, with dimension
$$
\underline{d}(\mu)=\overline{d}(\mu)=\frac{H(W)}{\widetilde \chi_{1}(p)}+\sum_{r=2}^s\Big ( \frac{1}{\widetilde \chi_{r}(p)}- \frac{1}{\widetilde \chi_{r-1}(p)}\Big )  \min \big (H(W),h(\Pi^D_{r}p))
$$
(recall that the exponents $\widetilde \chi_r(p)$ were introduced just before Definition~\ref{HN}).
\item It turns out that $\underline{d}(\mu)=\liminf_{N\to +\infty} \widetilde d_N$, while $\overline d(\mu)\neq \limsup_{N\to +\infty}\widetilde d_N $ in ge\-neral.
\end{enumerate}
\end{theorem}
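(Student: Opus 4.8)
The plan is to prove the four items roughly in the order (1), (2), (3), (4), since (2) refines (1), (3) is a special case of (2) with the $\liminf/\limsup$ collapsing to a limit, and (4) is a structural remark that follows by inspecting the formulas for $d_N$ and $\widetilde d_N$.

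For item (1), the upper bounds on $\dim_H(\mu)$ and $\dim_P(\mu)$, I would work on the coding space $\Sigma$ and transport estimates to $K_\omega$ via $\pi$, using Proposition~\ref{mursnonchargés} to identify $\mu(\pi([w]))$ with $\nu([w])$ and to rule out boundary effects. The key is to build, for each large $N$, a covering of $\mu$-almost all of $K_\omega$ by the parallelepipeds $f_w([0,1]^d)$ indexed by a suitable ``stopping-time'' antichain of words $w$ adapted to the Lyapunov exponents $\chi_k(\widehat{\boldsymbol p}_n)$: one cuts each principal direction $k$ at generation $\gamma_k(N)$, so that the image of the cube has side roughly $e^{-N}$ in direction $k$ at the coarsest scale and finer in the contracting directions, and the nested structure $D_1(N)\supsetneq\cdots\supsetneq D_{s(N)}(N)$ records which directions have already been ``resolved.'' Then I would estimate $\mathbb{E}\big(\sum_w \mu(f_w([0,1]^d))^{s}\big)$ for $s$ slightly above $1$ — or, equivalently, estimate $\sum_w \mu(f_w(\cdot))$ restricted to those $w$ on which $\mu(f_w(\cdot))$ is comparable to $(e^{-N})^{d}$ — using the multiplicative-in-expectation structure of the IMM and the orthogonal projections $\Pi^{D}_r$, together with the moment controls of Section~\ref{ND} (the inhomogeneous Mandelbrot martingale $L^h$-estimates). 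This yields that the relevant covering number grows like $e^{N d_N(1+o(1))}$, and a Borel--Cantelli / concentration argument over a sequence of scales $e^{-N}$ (exactly the mechanism sketched in the introduction, ``$\mu$-almost every point is asymptotically contained in an element of $\mathcal F_N$ which is an almost cube'') gives $\dim_H(\mu)\le\liminf_N d_N$ and $\dim_P(\mu)\le\limsup_N d_N$. The ``in particular'' clause then follows since $\liminf_N N^{-1}\sum_{n\le N} H(W^{(n)})=0$ forces the term $\min_{N'\ge g_s(N)}\sum_{n=1}^{N'}H(W^{(n)})$ inside $d_N$ to have $\liminf$ zero.

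For item (2), I need the matching lower bounds. Under the hypotheses $\liminf_N N^{-1}\sum_{n\le N}H(W^{(n)})>0$ and a uniform $L^q$ bound $\sup_n\phi_{\widetilde W^{(n)}}(q)<\infty$ for some $q\in(1,2]$, the non-degeneracy is genuine and one has good control on negative moments / positivity of the limit weights $Y(v)$. The strategy is to apply a Billingsley-type / Frostman argument: show that for $\mu$-a.e.\ $z=\pi(\boldsymbol i)$, $\liminf_{r\to0}\frac{\log\mu(B(z,r))}{\log r}\ge\underline d(\mu)$ and $\limsup_{r\to 0}\frac{\log\mu(B(z,r))}{\log r}\le\overline d(\mu)$, by comparing balls $B(z,e^{-N})$ with the adapted almost-cubes $f_w([0,1]^d)$ above and showing $\mu(B(z,e^{-N}))\approx(e^{-N})^{d_N}$ as in~\eqref{scaling}. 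The lower estimate on $\mu$ of a ball reduces, after the multiplicative decomposition $W=(p_i\widetilde W_i)$, to a law-of-large-numbers-type control of the ``entropy part'' $\frac1N\sum_{n\le k}\log\widetilde W^{(n)}_{\boldsymbol i_n}$ and $\frac1N\sum_{k<n\le g_s(N)}\log(\Pi_{r_n}p^{(n)})_{\Pi_{r_n}\boldsymbol i_n}$, together with the uniform moment bound to push through a large-deviation estimate valid for non-identically-distributed summands; the positivity of the $Y(v)$'s handles the multiplicative ``tail'' factor. This is the step I expect to be the main obstacle: one must show that the pointwise fluctuations of the local exponent are captured exactly by the sequence $d_N$ (both $\liminf$ and $\limsup$), which requires the delicate interplay between the scale-dependent nested family $D(N)$, the possibility of many levels $n$ with $H(W^{(n)})<0$, and the fact that the relevant antichains of words are not made of almost-cubes in general — only $\mu$-typically so. The technical heart will be the uniform-in-$N$ $L^q$ control of the inhomogeneous Mandelbrot martingales restricted to these antichains, exactly the estimates referred to in the introduction as going ``far beyond'' those of~\cite{ref6}.

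For item (3), when $\mu$ is a (homogeneous) Mandelbrot measure, $\widehat{\boldsymbol p}_N\equiv p=\mathbb E(W)$, $D(N)$ stabilizes to $D$, and $g_r(N)/N\to 1/\widetilde\chi_r(p)$; moreover all $H(W^{(n)})=H(W)\ge 0$ so $d_N=\widetilde d_N$ and the minimum defining $d_N$ is attained at $k=g_1(N)$. Substituting these asymptotics into $H_{N,g_1(N)}/N$ and letting $N\to\infty$ collapses the telescoping sum $\sum_{n=k+1}^{g_s(N)}h(\Pi_{r_n}p)$ into the stated weighted combination $\sum_{r=2}^s(1/\widetilde\chi_r(p)-1/\widetilde\chi_{r-1}(p))h(\Pi^D_r p)$, and since each comparison $\min(H(W),h(\Pi^D_r p))$ arises from whether the ``entropy budget'' $H(W)$ or the projected entropy is the binding constraint at that layer, one gets $\underline d(\mu)=\overline d(\mu)$ equal to the displayed value; exact dimensionality is then immediate from item (2) together with the equality of upper and lower dimensions. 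Finally, item (4): the identity $\underline d(\mu)=\liminf_N\widetilde d_N$ holds because the only difference between $d_N$ and $\widetilde d_N$ is the extra term $\min_{N'\ge g_s(N)}\sum_{n\le N'}H(W^{(n)})$ in $d_N$; when $\liminf_N N^{-1}\sum_{n\le N}H(W^{(n)})>0$ this term is $\ge cN$ for large $N$ while $\widetilde d_N$ is bounded, so along a subsequence realizing $\liminf\widetilde d_N$ it does not bind, and conversely $d_N\le\widetilde d_N$ always; one checks the two $\liminf$s agree. That $\overline d(\mu)\ne\limsup_N\widetilde d_N$ in general I would show by exhibiting an explicit example (e.g.\ a periodic-in-$\log$ type sequence $(W^{(n)})$ as in Das--Simmons, cf.\ Section~\ref{epex}) where along the scales realizing $\limsup\widetilde d_N$ the truncated partial sum $\sum_{n\le N'}H(W^{(n)})$ for some $N'\ge g_s(N)$ dips below $H_{N,k}$, so that the minimum in~\eqref{uN0} is strictly smaller than the one in~\eqref{uN1}.
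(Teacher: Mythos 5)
Your overall architecture (scale-adapted partitions built from the Lyapunov data, multiplicativity in expectation, moment estimates near $q=1$, Borel--Cantelli over scales, the Peyri\`ere-measure SLLN to get the almost-cube property, then a Billingsley-type lemma) does match the paper's strategy, but the step you explicitly defer is where the theorem is decided, and the mechanism you propose in its place would not work. For item (2) you reduce the two-sided control of $\nu(B_N(\boldsymbol{i}))$ to a law of large numbers for $\sum_n\log\widetilde W^{(n)}_{\boldsymbol{i}_n}$ plus projected Bernoulli entropies, with ``positivity of the $Y(v)$'s'' handling the tail. But $\nu(B(U_1,\ldots,U_s))$ is a sum over all preimage words $(U'_r)\in\prod_r\Pi_r^{-1}(U_r)$ of correlated random masses---projections of IMMs are not IMMs---so $\log\nu(B_N(\boldsymbol{i}))$ is not a sum of independent increments along $\boldsymbol{i}$ and no SLLN applies directly. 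The paper replaces this by two-sided estimates of $\mathbb{E}\big(\sum_{B}\nu(B)^q\big)$ for $q$ on either side of $1$ (Propositions~\ref{tau} and~\ref{SN'}; note that for the dimension \emph{upper} bounds of item (1) it is the subadditive $q<1$ estimate that is needed, not ``$s$ slightly above $1$''), and, crucially, by the new tail-moment bound of Proposition~\ref{Moments1}(2), where $\mathbb{E}\big((Y^{(g_s(N))})^{q'}\big)$ is controlled by $\exp\big[-(q'-1)\sum_{n=g_s(N)+1}^{\widetilde{g_s(N)}}H(W^{(n)})\big]$ up to manageable factors. The tail factor is not neutralized by mere positivity: over stretches beyond $g_s(N)$ where $H(W^{(n)})<0$ it is typically exponentially large, and this is precisely what inserts the term $\min_{N'\ge g_s(N)}\sum_{n=1}^{N'}H(W^{(n)})$ into $d_N$ in \eqref{uN0}. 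A scheme that ignores it would at best prove the statement with $\widetilde d_N$ in place of $d_N$, which by item (4) is false for $\dim_P(\mu)$.

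Item (4) also has a genuine gap. Since $H_{N,g_s(N)}$ is one of the competitors in the extra minimum, one always has $d_N\le\widetilde d_N$; the issue is the converse inequality for the liminf, and your remark that the extra term ``does not bind along a subsequence realizing $\liminf\widetilde d_N$'' does not exclude that $\liminf d_N$ is attained along a different subsequence where it does bind and is strictly smaller. The paper's argument shows that $\frac1N\sum_{n=1}^{\widetilde{g_s(N)}}H(W^{(n)})$ is bounded below, up to $o(1)$, by $\frac{1}{N'}H_{N',g_{s(N')}(N')}\ge\widetilde d_{N'}$ at the shifted scale $N'$ for which $g_{s(N')}(N')\approx\widetilde{g_s(N)}$, using $|H(W^{(n)})|=o(n)$; some such scale-comparison is unavoidable. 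For the second half, an exponentially periodic Das--Simmons type family cannot witness $\overline d(\mu)\neq\limsup\widetilde d_N$, because in the deterministic case $d_N=\widetilde d_N$ identically; the paper's counterexample is genuinely random, built with a block of levels on which $H(W^{(n)})<0$. Finally, in item (3) the minimum defining $\widetilde d_N$ is not attained at $k=g_1(N)$ in general (only when $H(W)\ge h(\Pi^D_2p)$); the $\min\big(H(W),h(\Pi^D_rp)\big)$ structure comes from optimizing $k$ layer by layer, as your later sentence correctly indicates.
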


\begin{remark}
(1) The case of Mandelbrot measures (item (3)) is an extension of the result obtained in \cite{ref6} for random Sierpi\'nski sponges. We recover the competition  between the entropy dimension of the measure $\nu$ and those of the expectations  of its successive  projections on symbolic spaces of the form $({\mathcal I}_r^{D})^{\mathbb{N}^+}$, $2\le r\le s$. In item~(2), we see how the phenomenon generalises  in the determination of the Hausdorff and packing dimensions of IMMs with, in particular, the additional contribution of the term $\min_{N'\ge g_s(N)} \sum_{n=1}^{N'} H(W^{(n)})$. This term is absent in the deterministic case (inhomogeneous Bernoulli measures) as well as in the case of Mandelbrot measures. It accounts for the influence of long finite subsequences of $(H(W^{(n)}))_{n\ge 1}$ with a high proportion of negative terms, both in the fluctuations of the local H\"older exponent along the scales (see \eqref{scaling}) and in the value of $\dim_P(\mu)$,  but it does not affect the value of $\dim_H(\mu)$. Also,  even when all the $H(W^{(n)})$ are non negative, the expression taken by the sequence $(d_N)_{N\ge 1}$ is not clear to anticipate from the forms it takes when it is specialized to the deterministic case or to Mandelbrot measures.

\noindent
(2) In the simplest of the conformal cases, i.e when the $a_{i,k}$ are all equal to the same positive contraction ratio, $d_N$ reduces to $N^{-1}\min_{N'\ge g_1(N)}\sum_{k=1}^{N'} H(W^{(n)})$, and Theorem~\ref{dimHmu} is a substantial improvement of \cite[Theorem 8]{ref4} (in which one works on the boundary of a general Galton-Watson tree), where it is assumed the very strong property that $\sup_{v\in\mathcal I^*}\|Y(v)\|_q<+\infty$ for some $q\in(1,2]$, a property which holds for instance when  $\sup_{n\in\mathbb{N}^+}\phi_{W^{(n)}}(q)<1$, which implies that $H(W^{(n)})>0$ for all $n\ge1$.  

\noindent(3)(i) In the deterministic case, when $\mu$ is self-affine, the exact dimensionality of $\mu$ follows from the general fact that any self-affine measure associated to an IFS made of inverti\-ble affine maps is exact dimensional \cite{BK,FengDuke}. For Bernoulli measures on good sponges, one can get this exact dimensionality by an appropriate exploitation of the multiplicative structure of Bernoulli measures and there orthogonal projections (which are still Bernoulli measures) and the SLLN \cite{ref12,ref10,ref18,Olsen,ref1}. As mentioned in the introduction,  a similar approach yields Theorem~\ref{thm-2.4}(2) for deterministic inhomogeneous Bernoulli measures~\cite{ref5}).  

\noindent(3)(ii) Using alternatively the differentiability at 1 of the so-called $L^q$-spectrum of the measure, combined with a general result by Ngai~\cite{Ngai} (which is a to get the exact dimensionality) has been done for self-affine  (and more generally Gibbs) and Mandelbrot measures on Sierpi\'nski carpets and sponges~\cite{BM,ref6} (as explained in the introduction, in the random case such a large deviations approach has the advantage that it can exploit the fact that projections of MMs have multiplicative properties (only) in the mean), and self-affine measures on Gatzouras-Lalley sponges~\cite{Kol}. With general good sponges, even in the deterministic case the $L^q$-spectrum is hard to control because of the more complex asymptotic behavior of the Lyapunov exponents associated to the measure $\nu$ along the scales: ordering the principal directions according to the ordering of these Lyapunov exponents yields a permutation which varies along the scales.  Our approach is still based on large deviations, but to circumvent the difficulty raised by the $L^q$-spectrum, one must consider  partition functions constructed over the values of the Lyapunov exponents. This is a special case of the general approach described in the introduction to get Theorem~\ref{thm-2.4}(2). 

\end{remark}

\subsection{The case of IMMs of exponentially continuous and periodic type}\label{epex} Here we consider the extension to inhomogeneous Mandelbrot measures of Das and Simmons result~\cite{ref5} about the Hausdorff dimension of inhomogeneous Bernoulli measures of exponentially continuous and periodic type. This case is interesting in its own right because it yields nice formulas beyond the case of Mandelbrot measures. Also, it makes it easy to construct, from Das and Simmons deterministic example, an example of non-deterministic random sponge whose Hausdorff dimension is not the supremum of the Hausdorff dimensions of the  Mandelbrot measures it supports. 

\noindent
\textbf{\textit{Construction and dimensions.}} Consider an IMM on $K$, and suppose that there exists $\boldsymbol{p}=(p^{(n)})_{n\ge 1}\in P_{\mathcal I}^{\mathbb{N}^+}$ and a non negative random vector $\widetilde W=(\widetilde W_i)_{i\in\mathcal I}$ whose components have expectation 1 and such that each vector $W^{(n)}=(W^{(n)}_i)_{i\in\mathcal I}$ is distributed as $(p^{(n)}_i\widetilde W_i)_{i\in\mathcal I}$. Suppose also that  $\boldsymbol{p}$ is the restriction to the positive integers of a non constant exponentially continuous  and periodic $(p^{(t)})_{t>0}$ and that $\phi_{\widetilde W}(q)<+\infty$ for some $q\in (1,2]$. For each $t>0$, denote by $W^{(t)}$ a random vector distributed like $(p^{(t)}_i\widetilde W_i)_{i\in\mathcal I}$. 

Denote by $\lambda$ the exponential period of this function; one has $\lambda>1$. A calculation shows that 
$$
\liminf_{T\to +\infty} \frac{1}{T}\int_0^TH(W^{(t)})\,\mathrm{d}t=\min_{T\in[1,\lambda]}\frac{1}{T}\Big (\frac{1}{\lambda-1}\int_1^\lambda  H(W^{(t)})\,\mathrm{d}t+\int_1^T H(W^{(t)})\,\mathrm{d}t\Big ).
$$ 
We suppose that $\liminf_{T\to +\infty} T^{-1}\int_0^TH(W^{(t)})\,\mathrm{d}t>0$. The assumptions of Theorem~\ref{sufcond} are then satisfied since the fact that $\phi_{\widetilde W}(q)<+\infty$ implies that $\mathbb{E}(\sum_{i\in\mathcal I}\widetilde W_i\log^2(\widetilde W_i))<+\infty$, which yields $\sup_{n\ge 1} \phi_{W^{(n)}}''(1)<+\infty$. Thus,  the associated measures  $\nu$  and $\mu=\pi_*\nu$ are non degenerate.

In the spirit of the definitions of Proposition~\ref{Moments1}(2), for $T>0$ denote by $\widetilde T$ the minimum of those $T'\ge T$ at which $\int_T^{T'}H(W^{(t)})\,\mathrm{d}t$ reaches its minimum. The mapping $T\to\widetilde T$ is also exponentially continuous  and equivariant, with same exponential period $\lambda$. Like Das and Simmons in~\cite{ref5}, rather than just considering Lyapunov exponents associated to the discrete scales $e^{-N}$ via the formulas \eqref{lambdakN}, we can associate them with continuous scales $e^{-T}$, with the alternative formulas 
$$
\gamma_k(T)=\inf\left \{t>0:\,  t\cdot \chi_k\left(\frac{1}{t}\int_0^t\boldsymbol{p}^{(u)}\,\mathrm{d}u\right)>T\right \}\quad (1\le k\le d)
$$
and get objects $D(T)$, $s(T)$, $g_1(T),\ldots, g_{s(T)}(T)$ defined in the same way as $D(N)$, $s(N)$, $g_1(N),\ldots, g_{s(N)}(N)$; these functions of $T$ are $\lambda$-exponentially periodic. One has the following extension of \cite[Theorem 3.2 (good sponges case)]{ref5}, which provides  $\dim_H(\mu)$ in the deterministic case. 

\begin{theorem}\label{dimep}Suppose that the vectors $p^{(t)}$, $t>0$, have positive entries. With probability~1, conditional on $\nu\neq 0$, one has 
\begin{align}
\label{dimmuep}\dim_H(\mu)&=\inf_{T\in[1,\lambda]} \min (\delta_1(T),\delta_2(T))\\
\label{Dimmuep}\dim_P(\mu)&= \sup_{T\in[1,\lambda]} \min (\delta_1(T),\delta_2(T)),
\end{align}
where for $T>0$, 
\begin{align*}
\delta_1(T)&=T^{-1} \int_0^{\widetilde {g_s(T)}}H(W^{(t)})\,\mathrm{d}t,\\
\delta_2(T)&=T^{-1}\min_{g_1(T)\le T'\le g_s(T)}\int_0^{T'}H(W^{(t)})\,\mathrm{d}t+\int_{T'}^{g_s(T)}h(\Pi^{D(T)}_{r_t}H(W^{(t)}))\,\mathrm{d}t,
\end{align*}
$r_t$ being the index $r$ such that $g_{r-1}(T)<t\le g_r(T)$. 
\end{theorem}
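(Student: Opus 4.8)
The plan is to derive Theorem~\ref{dimep} from the discrete-scale result Theorem~\ref{thm-2.4}(2), by showing that the continuous-scale quantities $\delta_1(T),\delta_2(T)$ encode exactly the same information as the sequence $(d_N)_{N\ge 1}$ up to $o(1)$ corrections, and that the exponential periodicity collapses the $\liminf$ and $\limsup$ over $N$ to an $\inf$ and $\sup$ over one period $[1,\lambda]$ in $\log$-scale. First I would check that the hypotheses of Theorem~\ref{thm-2.4}(2) hold: non-degeneracy comes from the assumption $\liminf_T T^{-1}\int_0^T H(W^{(t)})\,\mathrm dt>0$ together with Theorem~\ref{sufcond2}(1) (using $b_n=n$, and exponential continuity to bound the moment series), and $\sup_{n\ge 1}\phi_{\widetilde W^{(n)}}(q)=\phi_{\widetilde W}(q)<\infty$ holds because all $W^{(n)}$ share the same $\widetilde W$. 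So with probability one, conditional on $\nu\neq 0$, $\dim_H(\mu)=\liminf_N d_N$ and $\dim_P(\mu)=\limsup_N d_N$.

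Next I would translate $d_N$ into continuous-scale language. Exponential continuity gives $p^{(n)}=p^{(t)}+o(1)$ uniformly for $t\in[n,n+1]$, hence $\widehat{\boldsymbol p}_N=\frac1N\sum_{n\le N}p^{(n)}=\frac1N\int_0^N p^{(u)}\,\mathrm du+o(1)$, so the discrete Lyapunov data $\gamma_k(N),g_r(N),s(N),D(N)$ agree with the continuous ones $\gamma_k(T),g_r(T),s(T),D(T)$ at $T=N$ up to errors that are negligible at the scale $N$ (here one uses that $\chi_k(\widehat{\boldsymbol p}_n)$ is bounded away from $0$ and $\infty$, and that the periodicity prevents pathological accumulation of the crossing times). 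Then $H_{N,k}=\sum_{n\le k}H(W^{(n)})+\sum_{k<n\le g_s(N)}h(\Pi_{r_n}p^{(n)})$ is, by Riemann-sum comparison, equal to $\int_0^k H(W^{(t)})\,\mathrm dt+\int_k^{g_s(T)}h(\Pi^{D(T)}_{r_t}p^{(t)})\,\mathrm dt+o(N)$ uniformly in $k$; minimizing over $k\in[g_1(N),g_s(N)-1]$ and recognizing that the minimum over $k$ of the first summand plus the tail of the second is exactly $N\,\delta_2(T)+o(N)$. Likewise the term $\min_{N'\ge g_s(N)}\sum_{n\le N'}H(W^{(n)})$ becomes $\min_{T'\ge g_s(T)}\int_0^{T'}H(W^{(t)})\,\mathrm dt+o(N)$, and since beyond $g_s(T)$ there are no more entropy-of-projection contributions this is precisely $N\,\delta_1(T)+o(N)$ once one checks $\widetilde{g_s(T)}$ (the argmin of $\int_{g_s(T)}^{\cdot}H$) is where that minimum is realized and that $\int_0^{\widetilde{g_s(T)}}H=\min_{T'\ge g_s(T)}\int_0^{T'}H$. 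Combining, $d_N=N^{-1}\min(N\delta_1(N),N\delta_2(N))+o(1)=\min(\delta_1(N),\delta_2(N))+o(1)$.

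Finally I would invoke $\lambda$-exponential periodicity: all of $\delta_1(T),\delta_2(T)$ are built from $H(W^{(t)})$, $h(\Pi^{D(t)}_{r_t}p^{(t)})$, and the functions $g_r(T)$, $\widetilde{g_s(T)}$ normalized by $T$, and each of these is invariant under $T\mapsto \lambda T$ (the crossing-time functions $\gamma_k$, hence $g_r$, scale linearly, and the integrals rescale accordingly), so $T\mapsto \min(\delta_1(T),\delta_2(T))$ is a continuous $\log$-periodic function of $\log T$ with period $\log\lambda$. Therefore $\liminf_{N\to\infty}\min(\delta_1(N),\delta_2(N))=\inf_{T\in[1,\lambda]}\min(\delta_1(T),\delta_2(T))$ and similarly for $\limsup$ and $\sup$; the density of $\{\log N\bmod\log\lambda: N\in\mathbb N^*\}$ in $[0,\log\lambda)$ (which holds since $\log\lambda$ is fixed and $\mathbb N^*$ is unbounded, the fractional parts being equidistributed or at least dense) together with the continuity of the periodic function guarantees that passing from integer $N$ to real $T$ does not change the extrema. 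This yields \eqref{dimmuep} and \eqref{Dimmuep}.

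The main obstacle I expect is the uniformity of the Riemann-sum/exponential-continuity error estimates: one needs $H_{N,k}=(\text{continuous integral})+o(N)$ with the $o(N)$ \emph{uniform in} $k\in\{g_1(N),\dots,g_s(N)\}$ and in the choice of principal-subspace decomposition, and one must make sure that the discrete combinatorial data $D(N),s(N),g_r(N)$ really do match $D(T),s(T),g_r(T)$ for all large $N$ (not just generically) — potential trouble arises exactly at parameters $T$ where two Lyapunov exponents $\chi_k(\widehat{\boldsymbol p}_\cdot)$ cross, so that $s(T)$ jumps; handling the inf/sup over $[1,\lambda]$ requires checking that $\min(\delta_1,\delta_2)$ remains continuous (or at least that its extrema are unaffected) across such crossing times. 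This is a finite-combinatorics-plus-continuity bookkeeping issue rather than a deep one, but it is where the argument needs care.
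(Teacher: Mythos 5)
Your proposal is correct in outline but takes a genuinely different route from the paper. You use Theorem~\ref{thm-2.4}(2) as a black box at integer scales and then convert, arguing $d_N=\min(\delta_1(N),\delta_2(N))+o(1)$ by Riemann-sum and combinatorial-stability estimates, and concluding via $\lambda$-multiplicative periodicity together with the density of $\{\log N \bmod \log\lambda\}$. The paper instead re-runs the partition-function machinery of Propositions~\ref{tau} and~\ref{SN'} along the geometric scale sequences $T_N=T\lambda^N$ for a countable family of $T$ (a grid $\mathcal D_{q,\lambda}$, later enlarged by near-extremizers $T_\epsilon,T'_\epsilon$): for each fixed $T$, exact periodicity makes the analogue of $d_{T_N}$ converge to the single number $\min(\delta_1(T),\delta_2(T))$, and arbitrary scales $j$ are then handled by sandwiching $j$ between $T^{(j)}\lambda^N$ and $(T^{(j)}+q^{-1})\lambda^N$ and invoking Lemma~\ref{calcdim}. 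What the paper's choice buys is exactly the avoidance of the two obstacles you flag: the discrete-to-continuous comparison is only needed along fixed-$T$ geometric subsequences (no uniformity in a varying cut-off $k$ or in $N$), and no continuity of $T\mapsto\min(\delta_1(T),\delta_2(T))$ across Lyapunov-crossing times is ever used, since the inf and sup over $[1,\lambda]$ are captured by enlarging the countable grid rather than by a continuity argument. To complete your route you must supply both: (i) the $o(N)$ comparison of $H_{N,k}$ and of $\min_{N'\ge g_s(N)}\sum_{n\le N'}H(W^{(n)})$ with the corresponding integrals, uniformly in $k$, including robustness to the mismatch between the discrete grouping data $(D(N),s(N),g_r(N))$ of Definition~\ref{HN} and the continuous data at $T=N$ (this works because groupings can only differ on time windows whose length is the gap between nearly coinciding crossing times, and all entropies involved are uniformly bounded); and (ii) continuity of $\delta_1$ and $\delta_2$ on $(0,\infty)$ — for $\delta_1$ it is safer to write $\delta_1(T)=T^{-1}\min_{T'\ge g_s(T)}\int_0^{T'}H(W^{(t)})\,\mathrm{d}t$, since $T'\mapsto\widetilde{T'}$ itself can jump, and for $\delta_2$ one checks that changes of $D(T)$ affect the integrand only on intervals shrinking to zero at a crossing. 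With (i) and (ii) in place your argument yields the theorem, and it has the merit of requiring no new probabilistic estimates at non-integer scales; the price is the deterministic bookkeeping that the paper's geometric-scale device is designed to eliminate.
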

A sketched proof will be given at the end of Section~\ref{proofsdimmu}.  

\noindent
\textbf{\textit{Random perturbation of Das and Simmons example of Gatzouras-Lalley sponge.}} We borrow from \cite{ref5} the information that when $d\ge 3$, one can consider an example of Gatzouras-Lalley sponge $K$, a real number $\lambda >1$, as well as $\boldsymbol{p}=(p^{(n)})_{n\ge 1}\in P_{\mathcal I}^{\mathbb{N}^+}$ which  is the restriction to the positive integers of a non constant $\lambda$-exponentially continuous  and periodic function $(p^{(t)})_{t>0}$, such that the associated inhomogeneous Bernoulli measure $\mu_0$ is fully supported on $K$ and satisfies $\dim_H(\mu_0)>\sup\{\dim(\eta):\, \eta \text{ is self-affine with }\supp(\eta)\subset K\}$ (this supremum equals that taken over the pushforward by the coding map $\pi$ of $T$-invariant ergodic measures on $\Sigma$).

Fix $i_0\in\mathcal I$, and choose the vector $C=(c_i)_{i\in \mathcal I}$ such that $\mathbb{P}_{c_i}=\delta_1$ if $i\in\mathcal I\setminus\{i_0\}$ and $\mathbb{P}_{c_{i_0}}=(1-\alpha)\delta_0+\alpha\delta_1$, with $\alpha\in(0,1)$. The set $K_\omega$ is non empty with probability~1. Consider the IMM $\widetilde \mu_\omega$ obtained by considering the random vectors $W^{(n)}(v)=\big (p^{(n)}_i \frac{\mathbf{1}_{\{c_i(v)=1\}}}{\mathbb{P}(c_i=1)}\big )_{i\in\mathcal I}$ for $n\ge 1$ and $v\in\mathcal I^{n-1}$. Define $W^{(t)}=(p^{(t)}_i \frac{\mathbf{1}_{\{c_i=1\}}}{\mathbb{P}(c_i=1)})_{i\in\mathcal I}$. By construction, if $\alpha$ is close enough to $1$, one has $\min_{t>0}H(W^{(t)})>0$. In particular, $\widetilde T=T$ for all $T>0$ so that it is direct to see that $\dim_H(\mu_\omega)$ tends to $\dim_H(\mu_0)$ as $\alpha$ tends to $1$. On the other hand, it is also clear from Theorem~\ref{thm-2.4}(3) that as $\alpha$ tends to 1, the supremum of the Hausdorff dimensions of Mandelbrot measures supported on $K_\omega$ converges to that of the  Hausdorff dimensions of self-affine measures supported on $K$. This yields a family of non-deterministic examples of statistically self-affine sponges for which Theorem~\ref{dimKd} cannot be improved by restricting the variational principle to Mandelbrot measures. 

\section{Non degeneracy and moments of inhomogeneous Mandelbrot martingales}\label{ND}

We consider an inhomogeneous Mandelbrot measure $\nu_\omega$ as defined in Section~\ref{introduction}. We first establish Theorem~\ref{sufcond2}. Next we consider some estimates of moments of order in $(1,2]$ for the random variables $Y(v)$, $v\in\mathcal I^*$. 

\subsection{Proof of Theorem~\ref{sufcond2}} We adapt the size-biasing approach to Theorem~\ref{thmKP} developed in~\cite{WW} and~\cite{Lyons}, and combine it with the classical approach to the strong law of large numbers for non necessarily identically distributed  random variables (see, e.g., \cite[Section 3.6]{Br}). For $n\ge 1$, denote by $\mathcal G_n$ the $\sigma$-algebra generated by $\{W_{i}(w) : i \in \mathcal{I}, \ w \in \bigcup_{k=0}^{n-1}\mathcal{I}^{k}))$ and set $\mathcal G_\infty=\sigma(\bigcup_{n\ge 1}\mathcal G_n))_{n\ge 1}$.  Denote by $\mathbb{Q}_n$ the probability measure on $(\Omega, \mathcal G_n)$ defined by $\mathbb{Q}_n(\mathrm{d}\omega)=Y_n(\omega)\, \mathbb{P}_{|\mathcal G_n}(\mathrm{d}\omega)$. Due to the martingale structure of $(Y_n,\mathcal G_n)_{n\ge 1}$, Kolmogorov's extension theorem yields a unique probability measure $\mathbb{Q}$ on $(\Omega, \mathcal G_\infty)$ such that $\mathbb Q_{|\mathcal G_n}=\mathbb Q_n$ for all $n\ge 1$. 

According to \cite[Theorem 4.3.5]{Dur}, if $\limsup_{n\to +\infty} Y_n=+\infty$ $\mathbb Q$-a.s., then $Y=0$ $\mathbb P$-a.s., while if $\limsup_{n\to +\infty} Y_n<+\infty$ $\mathbb Q$-a.s., then $(Y_n)_{n\ge 1}$ is $\mathbb{P}$-uniformly integrable, and if $0<\mathbb{Q}(\limsup_{n\to +\infty} Y_n=+\infty)<1$, then $0<\mathbb{E}(Y)<1$ (this is a consequence of the Radon-Nikodym decomposition of $\mathbb{Q}$ with respect to $\mathbb{P}$). 

Denote by $\mathcal Q_n$ the probability measure on $(\Omega\times\Sigma,\mathcal G_n\otimes\mathcal B(\Sigma))$ defined by 
$$ \mathcal Q_n(\mathrm{d}\omega,\mathrm{d}\boldsymbol{i})= (\# \mathcal I)^{n}Q(\omega,\boldsymbol{i}_{|n})\, \mathbb{P}_{|\mathcal G_n}(\mathrm{d}\omega)\, \lambda(\mathrm{d}\boldsymbol{i}),
$$
 where $\lambda$ is the measure of maximal entropy on $\Sigma$. 
 
 There is a unique probability measure $\mathcal Q$ on $(\Omega\times\Sigma,\mathcal G_\infty\otimes\mathcal B(\Sigma))$ such that for all $n\ge 1$, $\mathcal Q_{|\mathcal G_n\otimes\mathcal B(\Sigma)}=\mathcal Q_n$. Moreover, denoting by $\pi_\Omega:\Omega\times \Sigma\to\Omega$ the projection on the first coordinate, one has $\mathbb{Q}=\mathcal Q\circ \pi_\Omega^{-1}$.  
 
We note that $\log (Q(\omega,\boldsymbol{i}_{|n}))=\sum_{k=1}^n \log(W_{\boldsymbol{i}_k}(\omega, \boldsymbol{i}_{|k-1}))$ and  that the random variables $X_k: (\omega,\boldsymbol{i})\mapsto\log(W_{\boldsymbol{i}_k}(\omega, \boldsymbol{i}_{|k-1}))$ are $\mathcal Q$-independent. Also, for any non negative measurable function $g$, $\mathbb{E}_\mathcal Q(g(X_k))=\mathbb{E}(\sum_{i\in\mathcal I} W^{(k)}_ig(W^{(k)}_i))$.

Moreover, our assumption on the sequence $(W^{(n)})_{n\ge 1}$ translates into $\sum_{k\ge 1}\frac{\mathbb{E_{\mathcal Q}}(|X_k|^h)}{b_n ^h}<+\infty$. It implies that the series $\sum_{k\ge 1}\frac{\mathbb{E_{\mathcal Q}}(|X_k-\mathbb{E}_{\mathcal Q}(X_k)|^h)}{b_n^h}$ converges. In particular, the sequence  $\big(\sum_{n=1}^N\frac{X_n-\mathbb{E}_{\mathcal Q}(X_n)}{b_n}\Big )_{N\ge 1}$, which is by construction a martingale with respect to its natural filtration, is bounded in $L_{\mathcal Q}^h$, by Lemma~\ref{lemvBE}. By Kronecker's Lemma, the almost sure convergence of the martingale then implies that $\mathcal Q$-a.s., $b_N^{-1}\sum_{n=1}^N X_n +b_N^{-1}\sum_{n=1}^N H(W^{(n)})$ tends to $0$ as $N\to+\infty$, where we used that  $\mathbb{E}_{\mathcal Q}(X_n)=-H(W^{(n)})$.

Now, observe with~\cite{WW,Lyons} that for all $\boldsymbol{i}\in\Sigma$ and $n\ge 1$, 
\begin{equation}\label{controlYn}
Q(\omega,\boldsymbol{i}_{|n})\le Y_n(\omega)=Q(\omega,\boldsymbol{i}_{|n}) +\sum_{k=0}^{n-1} Q(\omega,\boldsymbol{i}_{|k})\cdot M_{n,k}(\omega,\boldsymbol{i}),
\end{equation}
where 
$$
 M_{n,k}(\omega,\boldsymbol{i})=\sum_{i\neq \boldsymbol{i}_{k+1}} W_{i}(\boldsymbol{i}_{|k})Y_{n-k-1}(\boldsymbol{i}_{|k}i).
$$ 

\noindent
\textit{Proof of (1).} Suppose that $c=\liminf_{N\to +\infty} b_N^{-1}\sum_{n=1}^N H(W^{(n)})>0$. By the observation made above, $\limsup_{k\to +\infty}\frac{\log Q(\omega,\boldsymbol{i}_{|k})}{b_k}=-c$ $\mathcal Q$-a.s. Fix $0<c'<c$. 

For each $\boldsymbol{i}\in \Sigma$, define $\mathcal G^{\boldsymbol{i}}=\sigma (W(\boldsymbol{i}_{|n}), \, n\ge 0)$. Using the independence between $Q(\omega,\boldsymbol{i}_{|k})$ and each $Y_{n-k-1}(\boldsymbol{i}_{|k}i)$ in the right hand side of \eqref{controlYn}, one obtains 
$$
\mathbb{E}_{\mathbb Q}(Y_n|\mathcal G^{\boldsymbol{i}})=Q(\omega,\boldsymbol{i}_{|n})+\sum_{k=0}^{n-1} Q(\omega,\boldsymbol{i}_{|k})\cdot \sum_{i\neq \boldsymbol{i}_{k+1}}W^{(k+1)}_i(\boldsymbol{i}_{|k}).
$$
Now, note that  for all $\epsilon\in (0,c'/2)$, 
\begin{align*}
\mathcal{Q}\Big (\sum_{i\neq \boldsymbol{i}_{k+1}}W^{(k+1)}_i(\boldsymbol{i}_{|k})\ge e^{\epsilon b_k}\Big )&\le \mathcal{Q}\Big( \Big [\log^+\sum_{i\in\mathcal I}W^{(k+1)}_i(\boldsymbol{i}_{|k})\Big ]^h\ge \epsilon^h b_k^{h}\Big )\\
&\le( \epsilon^h b_k^h)^{-1}\mathbb{E}_{\mathcal Q}\Big( \Big [\log^+\sum_{i\in\mathcal I}W^{(k+1)}_i(\boldsymbol{i}_{|k})\Big ]^h\Big )\\
&=( \epsilon^h b_k^{h})^{-1}\mathbb{E}\Big (\Big [\sum_{i\in\mathcal I}W^{(k+1)}_i\Big ]\Big [\log^+\sum_{i\in\mathcal I}W^{(k+1)}_i\Big ]^h\Big).
\end{align*}
Moreover, by convexity of $x\ge 0\mapsto x(\log^+(x))^h$, our assumption also implies that $\sum_{k\ge 1}b_k^{-h} \mathbb{E}\Big (\Big [\sum_{i\in\mathcal I}W^{(k+1)}_i\Big ]\Big [\log^+\sum_{i\in\mathcal I}W^{(k+1)}_i\Big ]^h\Big)$ is finite.  By the Borel-Cantelli Lemma, we conclude that $\mathcal Q$-a.s., $ \sum_{i\neq \boldsymbol{i}_{k+1}}W^{(k+1)}_i(\boldsymbol{i}_{|k})\le e^{\epsilon b_k}$ for $k$ large enough. Moreover, $\mathcal Q$-a.s., $Q(\omega,\boldsymbol{i}_{|k})\le e^{-c'b_k/2}$  for $k$ large enough. 

It follows that if we denote by $\pi_\Sigma: \Omega\times\Sigma\to\Sigma$  the projection on the second coordinate, for $\mathcal Q\circ\pi_\Sigma^{-1}$ almost every $\boldsymbol{i}$, $\liminf_{n\to +\infty} \mathbb{E}_{\mathbb Q}(Y_n|\mathcal G_{\boldsymbol{i}})<+\infty$, hence by the Fatou Lemma $\mathbb{E}_{\mathbb Q}(\liminf_{n\to +\infty}Y_n|\mathcal G_{\boldsymbol{i}})<+\infty$. This implies that $\liminf_{n\to +\infty}Y_n<+\infty$ $\mathbb{Q}$-a.s. However, by construction, $(Y_n^{-1},\mathcal G_n)_{n\ge 1}$ is a non negative martingale under $\mathbb Q$,  so $\liminf_{n\to +\infty}Y^{}_n<+\infty$ $\mathbb{Q}$-a.s implies that $\liminf_{n\to +\infty}Y^{-1}_n$ is $\mathbb Q$-almost surely  positive, and finally $\limsup_{n\to +\infty}Y_n<+\infty$ $\mathbb Q$-almost surely. 

The fact that $\{Y>0\}=\{\Sigma_{\omega}\neq\emptyset\}$ up to a set of null $\mathbb{P}$-probability follows from Kolmogorov's $0$-$1$ law and our assumption which implies that for all $n\ge 1$ one has  $\{Y_n>0\}= \{\Sigma_{n,\omega}\neq\emptyset\}$. Then, that $\supp(\mu_\omega)=K_\omega$ almost surely, conditional on $K_\omega\neq 0$, follows from the same argument applied recursively to the surviving subtrees. 

\noindent 
\textit{Proof of (2).} In this case, since $\mathcal Q$-a.s.
$$
\limsup_{N\to +\infty} b_N^{-1}\log(Q(\omega,\boldsymbol{i}_{|N}))=-\liminf_{N\to +\infty} b_N^{-1}\sum_{n=1}^N H(W^{(n)})>0,
$$
we can directly use the left hand side of \eqref{controlYn} to get   $\limsup_{n\to +\infty}Y_n=+\infty$ $\mathbb Q$-a.s.
\subsection{Estimates for the $L^q$ moments of $Y_n(v)$}

If $\lim_{N\to +\infty} \sum_{n=1}^\infty H(W^{(n)})=+\infty$ (which is the case under the assumptions of Theorem~\ref{sufcond2}(1)), for $N\ge 1$ set  
\begin{align}
\label{AN}A_N&=\min\Big \{\sum_{n=N+1}^{N'}H(W^{(n)}):\, N'\ge N\Big\}\\
\label{Ntilde} \text{and }\widetilde N&=\min\Big\{N'\ge N:\, \sum_{n=N+1}^{N'}H(W^{(n)})=A_N\Big\}.
\end{align}
Note that $N\mapsto \widetilde N$ is non decreasing.

The first part of the following result is, except for the first claim,  a corollary of  results obtained in \cite{ref4}. The second one, which invokes $\widetilde N$, is new and will play an important role in the estimation of the Hausdorff and packing dimensions of IMMs. 
 
\begin{proposition}\label{Moments1} 
\begin{enumerate}  Let $q\in(1,2]$. 
\item Suppose that $\sum_{n\ge 1}\left(\prod_{k=1}^n \phi_{W^{(k)}}(q)\right)^{\frac{1}{q}}<+\infty$. Then, $\lim_{N\to +\infty} \sum_{n=1}^N H(W^{(n)})=+\infty$. Also, for all $v\in \mathcal I^*$, $(Y_n(v))_{n\ge 0}$ converges to $Y(v)$, almost surely and in $L^q$ norm. In particular, $\mathbb{E}\left(Y(v)\right) = 1$. Moreover, there exists  a constant $C=C(\# \mathcal I)> 0$  such that for all  $v\in \mathcal I^*$, $\|Y(v)\|_q\le C \sum_{n=0}^{\infty} \left(\prod_{k=1}^n \phi_{W^{(|v|+k)}}(q) \right)^{\frac{1}{q}} < +\infty$, with the convention that  the empty product at $n=0$ is equal to 1. 

In particular, if $\sup_{n\ge 1} \phi_{W^{(n)}}(q)<1$, then $\sup_{v\in\mathcal{I}^*}(\|Y(v)\|_q)_{v\in\mathcal{I}^*}<+\infty$. 

\item Suppose that  $\sup_{n\ge 1} \phi_{W^{(n)}}(q)<+\infty$ and $\liminf_{N\to +\infty}N^{-1}\sum_{n=1}^N H(W^{(n)})>0$. Then, there exists $\hat q\in (1, q)$ such that 
 $\sum_{n\ge 1}\left(\prod_{k=1}^n \phi_{W^{(k)}}(q')^{\frac{1}{q'}}\right)<+\infty$ for all $q'\in (1,\hat q]$.  
Let  $\epsilon>0$ such that 
\begin{equation}\label{epsilon}
\sum_{n=1}^NH(W^{(n)})\ge n\epsilon \text{ for all $N$ sufficiently large. }
\end{equation}
Let $N_\epsilon=\min\big \{N\ge 1:\, \forall \, N'\ge N, \sum_{n=1}^{N'}H(W^{(n)})\ge N'\epsilon\big\}$. There exists  $\tilde q\in (1,\hat q]$ such that for $q'\in (1,\tilde q]$, there are constants $C=C(q',\#\mathcal I,\epsilon)\ge 1$ and $c=c(q',\# \mathcal I,\epsilon)>0$ such that for all $N\ge N_\epsilon$,  one has $\widetilde N\le \frac{\log (\#\mathcal{I})}{\epsilon} N$ and for $v\in \mathcal{I}^N$, 
\begin{equation}\label{Bnq}
B(N,q')\le \mathbb{E}\left((Y(v))^{q'}\right) \le \frac{CN^{q'} e^{cN(q'-1)^2}}{(1-e^{-\frac{(q'-1)}{4q'}\epsilon})^{q'}}B(N,q'),
\end{equation}
where $B(N,q')=\max \Big (1, \exp \big [-(q'-1)\sum_{n=N+1}^{\widetilde N}H(W^{(n)})\big ]\Big )$. 
\end{enumerate}
\end{proposition}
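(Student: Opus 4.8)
\textbf{Proof proposal for Proposition~\ref{Moments1}.}

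The plan is to treat the two items separately, establishing first the qualitative consequences (divergence of the entropy sums, convergence of the martingale in $L^q$), then the quantitative two-sided bound~\eqref{Bnq}.

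For item~(1), I would start from the recursion $Y_{n+1}(v)=\sum_{i\in\mathcal I}W_i^{(|v|+1)}(v)\,Y_n(vi)$ and the elementary inequality, valid for $q\in(1,2]$ and non-negative martingale differences, that $\|\sum_j X_j\|_q^q \le \|\sum_j \mathbb{E}(X_j\mid \text{past})\|_q^q + 2^{q}\sum_j \|X_j - \mathbb{E}(X_j\mid\text{past})\|_q^q$ (a von Bahr--Esseen type estimate; the paper already invokes Lemma~\ref{lemvBE} for exactly this purpose in the proof of Theorem~\ref{sufcond2}). Applying this conditionally, using independence of the $W^{(|v|+1)}(v)$ from the subtree variables, and the fact that the $Y_n(vi)$ are identically distributed in $n$, one gets a recursion of the form $\|Y_{n+1}(v)\|_q \le \|Y_n(v\cdot)\|_q\,\phi_{W^{(|v|+1)}}(q)^{1/q} + (\text{const})\cdot(\text{increment})$, which unrolls into the claimed geometric-type series $\|Y(v)\|_q\le C_q\sum_{n\ge 0}\big(\prod_{k=1}^n\phi_{W^{(|v|+k)}}(q)\big)^{1/q}$. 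Convergence of that series forces $\prod_{k=1}^n\phi_{W^{(k)}}(q)\to 0$, i.e. $-\sum_{k=1}^n\log\phi_{W^{(k)}}(q)=\sum_{k=1}^n T_{W^{(k)}}(q)\to+\infty$; since $q\mapsto T_{W^{(k)}}(q)$ is concave on $[0,1]$ extended past $1$ and vanishes at $q=1$ with $T_{W^{(k)}}'(1^-)=H(W^{(k)})$, a convexity/concavity comparison (for $q>1$, $T_{W^{(k)}}(q)\le (q-1)H(W^{(k)})$ fails in the wrong direction, so one uses instead $T_{W^{(k)}}(q)\ge 0 \Rightarrow$ control via the slope at $1$ after noting $\phi_{W^{(k)}}(q)\le 1$ cannot persist unless...) — more carefully, I would argue that $\phi_{W^{(k)}}(q)\ge \phi_{W^{(k)}}(1)\cdot\exp((q-1)\phi'_{W^{(k)}}(1^-)/\phi_{W^{(k)}}(1)+o) = \exp(-(q-1)H(W^{(k)})+\ldots)$, so $\prod\phi\to 0$ gives $\sum H(W^{(k)})\to+\infty$. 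The uniform-in-$v$ bound when $\sup_n\phi_{W^{(n)}}(q)<1$ is then immediate since the geometric series is dominated uniformly.

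For item~(2), the hypothesis $\liminf_N N^{-1}\sum_{n=1}^N H(W^{(n)})>0$ together with $\sup_n\phi_{W^{(n)}}(q)<\infty$ should first be shown to imply $\sum_n(\prod_{k=1}^n\phi_{W^{(k)}}(q))^{1/q}<\infty$: choosing $\tilde q\in(1,q]$ close to $1$, interpolate $\phi_{W^{(k)}}(q')$ between $\phi_{W^{(k)}}(1)=1$ and $\phi_{W^{(k)}}(q)$ (log-convexity of $q\mapsto\phi_{W^{(k)}}(q)$), so that $\phi_{W^{(k)}}(q')\le \exp(-(q'-1)H(W^{(k)})+(q'-1)^2 c_k)$ with $c_k$ controlled by $\phi''_{W^{(k)}}(1^-)$ or directly by $\sup_n\phi_{W^{(n)}}(q)$; then $\prod_{k=1}^n\phi_{W^{(k)}}(q')\le \exp(-(q'-1)\sum_{k=1}^n H(W^{(k)})+(q'-1)^2 O(n))\le e^{-(q'-1)n\epsilon/2}$ for $q'$ small enough and $n$ large, which is summable after the $1/q'$ power. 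The bound $\widetilde N\le(\frac{2}{\epsilon}\log\#\mathcal I)^2 N$ follows because $\sum_{n=N+1}^{N'}H(W^{(n)})=\sum_{1}^{N'}-\sum_1^N \ge N'\epsilon/?\ldots$; more precisely, for $N'$ past the linear-growth threshold $\sum_1^{N'}H\ge N'\epsilon$, and using $\sum_1^N H\le N\log\#\mathcal I$ (since $H(W^{(n)})\le\log\#\mathcal I$), the partial sum $\sum_{N+1}^{N'}H\ge N'\epsilon - N\log\#\mathcal I$ becomes positive (hence the running minimum defining $\widetilde N$ cannot be realized at such $N'$) once $N'\ge (\log\#\mathcal I/\epsilon)N$, and a second factor of the same order absorbs the pre-threshold regime.

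The heart of item~(2) is the two-sided estimate~\eqref{Bnq}. The lower bound $\mathbb{E}((Y(v))^{q'})\ge B(N,q')$ has two pieces: $\mathbb{E}((Y(v))^{q'})\ge (\mathbb{E}Y(v))^{q'}=1$ by Jensen (using $\mathbb{E}Y(v)=1$ from part~(1)), and $\mathbb{E}((Y(v))^{q'})\ge \exp(-(q'-1)\sum_{n=N+1}^{\widetilde N}H(W^{(n)}))$ by a change-of-measure / size-biasing argument: under the tilted measure $\mathbb{Q}$ of the proof of Theorem~\ref{sufcond2}, $\mathbb{E}((Y(v))^{q'})=\mathbb{E}_{\mathbb{Q}}((Y(v))^{q'-1})\ge \exp((q'-1)\mathbb{E}_{\mathbb{Q}}\log Y(v))$, and $\mathbb{E}_{\mathbb{Q}}\log Y(v)$ is bounded below by $-\sum_{n>N}H(W^{(n+|v|/\ldots)})$-type quantities, optimized at $\widetilde N$. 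For the upper bound I would set up a recursion for $y_N(q'):=\sup_{v\in\mathcal I^N}\mathbb{E}((Y(v))^{q'})/B(N,q')$ — or rather run the von Bahr--Esseen recursion from part~(1) but now keeping the $B(N,q')$ normalization, so that the ratio between $B(N,q')$ and $B(N+1,q')$ (which is either $1$ or $e^{(q'-1)H(W^{(N+1)})}$ depending on whether $N$ versus $N+1$ lies before or after $\widetilde N$) exactly cancels the $\phi_{W^{(N+1)}}(q')^{1/q'}$ factor up to the $e^{c(q'-1)^2}$ correction from the log-convexity interpolation above; the increment terms contribute, after summation, the $N^{q'}$ polynomial factor and the $(1-e^{-(q'-1)\epsilon/4q'})^{-q'}$ geometric-tail factor.

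The main obstacle I anticipate is bookkeeping the normalization $B(N,q')$ correctly across the two regimes $N<\widetilde N$ and $N\ge\widetilde N$ simultaneously inside the recursion: when $N+1\le \widetilde{N+1}$ the weight $B$ shrinks by $e^{-(q'-1)H(W^{(N+1)})}$ which compensates a potentially large $\phi_{W^{(N+1)}}(q')^{1/q'}$, but when $H(W^{(N+1)})<0$ these conspire in the wrong direction and one must use the $\max(1,\cdot)$ in the definition of $B$ and the optimality of $\widetilde N$ to prevent blow-up — this is precisely the new difficulty (absent in \cite{ref4}) of handling many levels $n$ with $H(W^{(n)})<0$, and it is where the definition~\eqref{Ntilde} of $\widetilde N$ as the $\arg\min$ is essential rather than cosmetic.
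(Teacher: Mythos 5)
Your item (1) follows essentially the paper's route: the $L^{q}$ bound comes from the von Bahr--Esseen estimate (Lemma~\ref{lemvBE}) applied conditionally to the branching/martingale decomposition, and your direct derivation of $\sum_{n\le N}H(W^{(n)})\to+\infty$ from $\prod_{k\le n}\phi_{W^{(k)}}(q)\to 0$ together with the convexity bound $\phi_{W^{(k)}}(q)\ge e^{-(q-1)H(W^{(k)})}$ is a clean substitute for the paper's argument by contradiction. Likewise your proof that $\widetilde N\le \big(\tfrac{2}{\epsilon}\log\#\mathcal I\big)^2N$, via $\sum_{n=N+1}^{N'}H(W^{(n)})\ge N'\epsilon-N\log\#\mathcal I>0$ for $N'$ of order $N\log(\#\mathcal I)/\epsilon$, is correct (and simpler than the paper's). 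Two caveats on item (2). First, your interpolation argument establishes the summability of $\sum_n\big(\prod_{k\le n}\phi_{W^{(k)}}(q')\big)^{1/q'}$ only for $q'$ close to $1$, not at $q$ itself; this is all that is needed downstream, but it is not what you set out to prove, so say so explicitly. Second, your size-biased proof of the lower bound in \eqref{Bnq} is a genuinely different route from the paper's, and it can be made to work, but as sketched the key step is missing: to get $\mathbb{E}_{\mathbb Q}\log Y(v)\ge-\sum_{n=N+1}^{\widetilde N}H(W^{(n)})$ (with $\mathbb Q$ the law tilted by $Y(v)$ itself) you need the spinal inequality $Y(v)\ge Q^{v}(w)Y(vw)$ along the distinguished path down to level $\widetilde N$, together with $\mathbb{E}_{\mathbb Q}[\log Y(vw)]=\mathbb{E}[Y\log Y]\ge 0$ (Jensen), otherwise a multiplicative constant is lost and the stated inequality $B(N,q')\le\mathbb{E}(Y(v)^{q'})$ is not recovered. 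The paper gets the same bound in two lines from superadditivity of $t\mapsto t^{q'}$ along the tree down to level $\widetilde N$, Jensen, and $\phi_{W^{(n)}}(q')\ge e^{-(q'-1)H(W^{(n)})}$.

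The genuine gap is in the upper bound of \eqref{Bnq}, which is the heart of item (2). Your $B$-normalized level-by-level recursion loses a factor $e^{c(q'-1)^2}>1$ at every level (this is the best the Taylor/log-convexity comparison of $\phi_{W^{(n)}}(q')$ with $e^{-(q'-1)H(W^{(n)})}$ can give), and it has no terminal level: to bound level $N$ you need level $N+1$, and so on indefinitely, so the compounded losses diverge and the increment terms cannot be summed without further input. You identify exactly this in your closing paragraph as the ``main obstacle'', but the $\max(1,\cdot)$ in $B$ and the minimality defining $\widetilde N$ in \eqref{Ntilde} only ensure the per-step cancellation $\phi_{W^{(N+1)}}(q')B(N+1,q')\le e^{c(q'-1)^2}B(N,q')$; they do not stop the compounding, and resolving that is precisely where the claimed factors $N^{q'}$, $e^{cN(q'-1)^2}$ and $(1-e^{-\frac{q'-1}{4q'}\epsilon})^{-q'}$ must come from. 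The missing ingredient is the quantitative use of $N\ge N_\epsilon$ from \eqref{epsilon}: since $\sum_{n=1}^{N'}H(W^{(n)})\ge N'\epsilon$ for all $N'\ge N$ while $H(W^{(n)})\le\log\#\mathcal I$, one has $\sum_{n=N+1}^{N+m}H(W^{(n)})\ge m\epsilon-N\log\#\mathcal I$, so beyond $m$ of order $N\log(\#\mathcal I)/\epsilon$ levels the entropy partial sums grow linearly at rate at least $\epsilon/2$ (the paper packages this as the auxiliary index $\widehat N\le N\big(\tfrac{2}{\epsilon}\log\#\mathcal I\big)^3$). Choosing $\tilde q$ so that $c(\tilde q-1)\le\epsilon/4$ then turns the tail into a convergent geometric series and confines both the accumulated $e^{c(q'-1)^2}$ losses and the number of non-geometric terms to $O(N)$ levels. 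The paper avoids any recursion: it applies the series bound of item (1) directly at $v$, splits the series at $\widehat N$, bounds each pre-$\widehat N$ term using $\sum_{n=N+1}^{n'}H(W^{(n)})\ge\sum_{n=N+1}^{\widetilde N}H(W^{(n)})$, and sums the post-$\widehat N$ terms geometrically. To complete your argument you must either adopt this splitting or stop your iteration at a level of size $O(N)$ and invoke item (1) there; without such an input the stated bound is not established.
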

\begin{remark}Recall the sequence $(d_N)_{N\ge 1}$ considered in Definition~\ref{HN}. The integer $\widetilde N$ defined in \eqref{Ntilde} is also equal to $\min\Big\{N'\ge N:\, \sum_{n=1}^{N'}H(W^{(n)})=A'_N\Big\}$, where $A'_N=\min\Big\{\sum_{n=1}^{N'}H(W^{(n)}): \, N'\ge N\Big\}$. 

Note also that by \eqref{HW}, any $\epsilon$ such that \eqref{epsilon} holds is necessarily smaller than or equal to $\log(\# \mathcal I)$.
\end{remark}
\begin{proof}
(1) To see that $\lim_{N\to +\infty} \sum_{n=1}^N H(W^{(n)})=+\infty$, suppose on the contrary that there is $M>0$ and an increasing sequence of integers $(n_j)_{j\in\mathbb{N}^+}$ such that $\sum_{n=1}^{n_j} H(W^{(n)})\le M$ for all $j\in \mathbb{N}^+$. One checks that the derivative at 1 of $\prod\limits_{k=1}^{n_j} \phi_{W^{(k)}}$ equals  $-\sum_{n=1}^{n_j} H(W^{(n)})$ so by convexity of this product, $\prod\limits_{k=1}^{n_j} \phi_{W^{(k)}}(q')\ge 1-M(q'-1)$ for all $q'>1$. Taking $q'>1$ close enough to $1$ yields $\sum_{n\ge 1} \left (\prod\limits_{k=1}^{n} \phi_{W^{(k)}}(q')\right)^{\frac{1}{q'}}=+\infty$, hence by convexity $\sum_{n\ge 1} \left(\prod\limits_{k=1}^{n} \phi_{W^{(k)}}(q)\right)^{\frac{1}{q}}=+\infty$, which contradicts $\sum_{n\ge 1}\left(\prod_{k=1}^n \phi_{W^{(k)}}(q)\right)^{\frac{1}{q}}<+\infty$. 

The other claims can be deduced from \cite[Theorem 6]{ref4}. For the sake of completeness, just observe that for all $v\in\mathcal I^*$ and $n\ge 0$, 
$$
Y_{n+1}(v)-Y_n(v)=\sum_{w\in \mathcal I^n} Q^v(w)\Big (\sum_{i\in\mathcal I}W_i(vw)-1\Big).
$$ 
Set $\mathcal G_{v,n}=\sigma(Q^v(w):\, w\in \mathcal I^n)$, and note that in the right-hand side the random variables  $\sum_{i\in\mathcal I}W_i(vw)-1$, $w\in\mathcal I^n$, are centered and i.i.d, and independent of $\mathcal G_{v,n}$. Lemma~\ref{lemvBE} applies, conditional on $\mathcal G_{v,n}$, and yields 
$$
\mathbb{E}(|Y_{n+1}(v)-Y_n(v)|^q|\mathcal G_{v,n})\le 2^q\sum_{w\in \mathcal I^n}Q^v(w)^q\|\sum_{i\in\mathcal I} W_i^{(|v|+n+1)}-1 \|_q^q.
$$
Moreover, using the branching property, one gets  $\mathbb{E}(\sum_{w\in \mathcal I^n}Q^v(w)^q)=\prod_{k=1}^n \phi_{W^{(|v|+k)}}(q)$. Then,
\begin{align*}
\|Y_{n+1}(v)-Y_n(v)\|_q &\leq 2 \Big (\prod\limits_{k=1}^n \phi_{W^{(|v|+k)}}(q)\Big )^{\frac{1}{q}} \Big (1+\Big \|\sum_{i\in\mathcal I} W_i^{(|v|+n+1)}\Big \|_q\Big )\\
&\le 2 \Big (\prod_{k=1}^n \phi_{W^{(|v|+k)}}(q)\Big )^{\frac{1}{q}} \Big (1+ (\#\mathcal I)\phi_{W^{(|v|+n+1)}}(q)^{1/q}\Big ).
\end{align*}
Moreover, $Y_0(v)=1$. This is enough to get the remaining part of  item~(1). 

\noindent
(2) Note that $\sup_{n\ge 1}\phi_{W^{(n)}}(q)<+\infty$ implies $\sup_{n\ge 1}\sup_{q''\in [1,q']}\phi''_{W^{(n)}}(q')<+\infty$, which, together with the inequality $\phi_{W^{(n)}}(q')\ge  1+\phi_{W^{(n)}}'(1)(q'-1)=  1-(q'-1)H(W^{(n)})\ge 1-(q'-1)\log (\# \mathcal I)$ valid for all $q'>1$, implies that for some $\bar q\in (1,q)$ one has $c_{\bar q}=\sup_{n\ge 1}\sup_{q'\in [1,\bar q]}\big (\log(\phi_{W^{(n)}})\big)''(q')<+\infty$.  Then,  for all $q'\in (1,\bar q]$, using Taylor-Lagrange's expansion of each $\phi_{W^{(n)}}$, $n\ge 1$, yields 
for all $q'\in (1,\bar q)$, using Taylor-Lagrange's expansion of each $\phi_{W^{(n)}}$ yields 
 \begin{align}
\nonumber \phi_{W^{(n)}}(q')&=\exp \left ((\log(\phi_{W^{(n)}})'(1) \cdot (q'-1)+ c(q') (q'-1)^2\right )\\
\label{expansion}&=\exp \left (-H(W^{(n)}) (q'-1)+ \frac{c(q')}{2} (q'-1)^2\right ),
\end{align}
 where $0\le c(q')\le c_{q'}:=\sup_{n\ge 1}\sup_{t\in [1,q']}\big (\log(\phi_{W^{(n)}})\big)''(t)\le c_{\bar q}$.  

In particular,  under the assumption $\liminf_{N\to+\infty} N^{-1}\sum_{n=1}^NH(W^{(n)})>0$ (which already implies the non degeneracy of $\nu$ by Theorem~\ref{sufcond}), taking $\epsilon>0$ and $N_\epsilon$ as in the statement, for all $1<q'<\hat q=\min \big (\bar q, 1+ \frac{\epsilon}{c_{\bar q}}\big )$,  $\sum_{n=1}^N\log\phi_{W^{(n)}}(q')\le  -(q'-1)\epsilon N/2$ for all  $N\ge N_\epsilon$ large enough, which implies  $\sum_{n\ge 1}\left(\prod\limits_{k=1}^n \phi_{W^{(k)}}(q')\right)^{\frac{1}{q'}}<+\infty$. 

Now, let $N\ge N_\epsilon$ and $v\in\mathcal I^N$. Note that  for all $1<q'\le q$, one has $\mathbb{E}(Y(v)^{q'})\ge  \mathbb{E}(Y(v))^{q'}=1$. This together with the super-additivity of $t\ge 0\mapsto t^{q'}$ and the expectation taken successively on the equality $Y(v)=\sum_{w\in \mathcal{I}^{\widetilde N-|v|}}Q^v(w)Y(vw)$ yields $\mathbb{E}(Y(v)^{q'})\ge \mathbb{E}(Y(\tilde v)^{q'}) \prod_{k=|v|+1}^{\widetilde N}\phi_{W^{(k)}}(q')\ge  \prod_{k=|v|+1}^{N'}\phi_{W^{(k)}}(q')$, where $\tilde v$ is some element of $\mathcal{I}^{\widetilde N}$. This implies the first inequality in \eqref{Bnq} since each function $\log(\phi_{W^{(n)}})$ is convex, and satisfies  $\log (\phi_{W^{(n)}})(1)=0$ and $(\log(\phi_{W^{(n)}}))'(1)=\phi_{W^{(n)}}'(1)$. 

For the second inequality, we first need to control $\widetilde N$ from above, as well as the integer 
$$
\widehat N=\min\Big\{N'\ge \widetilde N+1:\, \forall\, n'\ge N',\,   \sum_{n=\widetilde N+1}^{n'}H(W^{(n)})\ge (n'-\widetilde N)\epsilon/2\Big\}.
$$

Recall the definitions \eqref{AN} and  \eqref{Ntilde} of $A_N$ and $\widetilde N$, and that due to \eqref{HW}, for all $n\ge 1$ one has $H(W^{(n)})\le\log (\#\mathcal{I})$.

\noindent
\textbf{Claim:} for all $N\ge N_\epsilon$, one has $\widetilde N\le \frac{\log (\#\mathcal{I})}{\epsilon}\cdot N$ and $\widehat N\le 3 \left(\frac{ \log (\#\mathcal{I})}{\epsilon}\right )^2\cdot N$. 

Indeed, if $N\ge N_\epsilon$, then $\widetilde N \epsilon\le \sum_{n=1}^{\widetilde N}H(W^{(n)})\le  \sum_{n=1}^{N}H(W^{(n)})\le N\log (\# \mathcal I)$. This yields the first claim. Next, since $\sum_{n=1}^{\widetilde N}H(W^{(n)})\le \widetilde N \log(\#\mathcal I)$ and  $N\ge N_\epsilon$ implies that $\sum_{n=1}^{N'}H(W^{(n)})\ge N'\epsilon$ for all $N'\ge N$, if  $N'\ge \widetilde N+1(\ge N)$, one has $ \sum_{n=\widetilde N+1}^{N'}H(W^{(n)})\ge (N'-\widetilde N)\epsilon/2$ as soon as $N'\epsilon -\widetilde N\log(\#\mathcal I)\ge (N'-\widetilde N)\epsilon/2$. The previous inequality holds for all $N'\ge \widetilde N \cdot \frac{2}{\epsilon} \log (\#\mathcal{I})$ ($\ge \widetilde N+1$). Consequently, $\widehat N\le \lceil\widetilde N\cdot \frac{2}{\epsilon} \log (\#\mathcal{I})\rceil \le \widetilde N\cdot \frac{3}{\epsilon} \log (\#\mathcal{I})$. This yields the second part of the claim.

 Now, for $1<q'<\hat q$ and $n\ge 1$, by (1), we have
 \begin{align*} 
 C(\# \mathcal I)^{-1}\mathbb{E}(Y(v)^{q'})^{1/q'}&\le \sum_{n=N}^{\widehat N-1}\prod_{n'=N+1}^n \phi^{1/q'}_{W^{(n')}}(q')\\
 &\quad \quad +\sum_{n=\widehat N}^\infty \Big (\prod_{n'=N+1}^{\widetilde N} \phi^{1/q'}_{W^{(n')}}(q')\Big) \cdot \prod_{n'=\widetilde  N+1}^{n} \phi^{1/q'}_{W^{(n')}}(q'),
 \end{align*}
and using \eqref{expansion} as well as the definitions of $\widetilde N$ and $\widehat N$ we can get  
\begin{align*}
\sum_{n=N+1}^{\widehat N-1}\prod_{n'=N+1}^n \phi^{1/q'}_{W^{(n')}}(q')&\le (\widehat N-N)\sup_{N+1\le n\le \widehat N-1}\Big (\prod_{n'=N+1}^{n} \phi^{1/q'}_{W^{(n')}}(q')\Big)\\
&\le (\widehat N-N)e^{\frac{c_{q'}}{2}\frac{(q'-1)^2}{q'}(\widehat N-N)} \exp \Big (-\frac{(q'-1)}{q'}\sum_{n'=N+1}^{\widetilde N} H(W^{(n')})\Big ).
\end{align*}
Also, setting $\gamma_{q'}= \frac{c_{q'}}{2}\frac{(q'-1)^2}{q'}$,
\begin{align*} 
&\sum_{n=\widehat N}^\infty \Big (\prod_{n'=N+1}^{\widetilde N} \phi^{1/q'}_{W^{(n')}}(q')\Big) \cdot \prod_{n'=\widetilde N+1}^{n} \phi^{1/q'}_{W^{(n')}}(q')\\
&\le  e^{\gamma_{q'}(\widetilde N-N)} \exp \Big (-\frac{(q'-1)}{q'}\sum_{n'=N+1}^{\widetilde N} H(W^{(n')})\Big )\sum_{n=\widehat N}^\infty e^{\gamma_{q'}(n-\widetilde  N)} e^{-\frac{(q'-1)}{q'}\sum_{n'=\widetilde N+1}^{n} H(W^{(n')})}
\\
&\le  
e^{\gamma_{q'}(\widehat N-N)} \exp \Big (-\frac{(q'-1)}{q'}\sum_{n'=N+1}^{\widetilde N} H(W^{(n')})\Big )\sum_{n=\widehat N}^\infty e^{\gamma_{q'}(n-\widehat N)} e^{- \frac{(q'-1)}{q'}(n-\widehat N)\epsilon/2}.
\end{align*}

We can find $\tilde q\in (1,\hat q]$ such that for all $q'\in (1,\tilde q)$ one has $\frac{c_{q'}}{2}(q'-1)\le \epsilon/4$, and
$$
\sum_{n=\widehat N}^\infty e^{\gamma_{q'}(n-\widehat N)} e^{- \frac{(q'-1)}{q'}(n-\widehat N)\epsilon/2}\le (1-e^{-\frac{(q'-1)}{4q'}\epsilon})^{-1}.
$$
By using the inequality $\widehat N\le 3\left(\frac{\log (\#\mathcal{I})}{\epsilon} \right )^2$ we finally get that for some positive constant $C=C(q',\# \mathcal I,\epsilon) $ and $c=C(q',\# \mathcal I,\epsilon)$, one has 
$$
\mathbb{E}(Y(v)^{q'})\le  \frac{CN^{q'} e^{cN(q'-1)^2}}{(1-e^{-\frac{(q'-1)}{4q'}\epsilon})^{q'}}\max \Big (1, \exp \big [-(q'-1)\sum_{n=N+1}^{\widetilde N}H(W^{(n)})\big ]\Big ).
$$
\end{proof}

\section{Proofs of Theorems~\ref{thm-2.4} and~\ref{dimep}}\label{proofsdimmu}
Our estimates of the dimensions of the measure $\mu$ are based on a large deviations argument using appropriate partition functions. Rather than directly use the partition functions adapted to get the results of Section~\ref{DM}, we prefer to derive general estimates in the next preliminary section.
\subsection{Estimates of some partition functions} \label{partfunc} We use the notation introduced in Sections~\ref{sechigherdim} and~\ref{preliminaries}. Fix $s\in\mathbb{N}^+$ and $D=(D_r)_{1\le r\le s}\in \mathscr{D}^s$, such that  $D_1\supsetneq \cdots\supsetneq D_s\neq\emptyset$. 

Recall \eqref{pni0} and \eqref{tildeW}. For $n\in\mathbb{N}^+$, $r\in\{1,\ldots, {s}\}$,  $j\in \mathcal I^D_r$ and $q\ge 0$ recall that 
$$
(\Pi^D_rp^{(n)})_j=\sum_{i\in (\Pi^{D}_r)^{-1}(\{j\})}p^{(n)}_{i}
$$ 
and set
\begin{equation}\label{taunq}
\tau^{D,n}_r(q) = - \log \sum_{j\in \mathcal I^D_r} (\Pi^D_rp^{(n)})_j^q .
\end{equation}
Also, recall (see \eqref{phiW}) that 
$$
T_{W^{(n)}}(q) =-\log(\phi_{W^{(n)}}(q))= - \log  \mathbb{E}\left(\sum_{i \in \mathcal{I}} \left({W}^{(n)}_{i}\right)^q \right)=- \log  \mathbb{E}\left(\sum_{i \in \mathcal{I}} \left(p^{(n)}_{i}  \widetilde{W}^{(n)}_{i}\right)^q \right). 
$$

Now we consider a finite sequence of positive integers $g=(g_r)_{1\le r\le s}$, such that  $g_{1}<\cdots <g_s$.  
Also set $g_{0}=0$. Like for $D$, specific choices for $g$, adapted to $(p^{(n)})_{n\in\mathbb{N}^+}$, will be considered in the next section. 

For each $(U_1,\ldots, U_s)\in \prod_{r=1}^s (\mathcal{I}^D_{r})^{g_r-g_{r-1}}$, set 
$$
B(U_1,\ldots, U_s)= \big \{\boldsymbol{i}\in \mathcal{I}^{\mathbb{N}^+}:\, \forall \, 1\le r\le s, \, \Pi^D_{r} (T^{g_{r-1}}\boldsymbol{i})\in [U_r]\big\}.
$$
Then,  
\begin{equation}\label{partition}
\mathcal F^D(g)=\{B(U_1,\ldots, U_s): \, (U_1,\ldots, U_s)\in \prod_{r=1}^s (\mathcal{I}^D_{r})^{g_r-g_{r-1}}\}
\end{equation}
is a partition of $\mathcal{I}^{\mathbb{N}^+}$. 

\begin{definition}\label{SN}
For $q \ge 0$ set 
\begin{align*}
S_{k}(q)&=\sum_{n=1}^k T_{W^{(n)}}(q)+\sum_{n=k+1}^{g_s} \tau^{D,n}_{r_n}(q)\quad (0\le k\le g_s),
\end{align*}
where $r_n$ is the index $r$ such that $g_{r-1}+1\le n\le g_{r}$.

\end{definition}
We have the following controls from above for $\mathbb{E} \big (\sum_{B\in\mathcal F^D(g)}\nu (B)^q\big )$, where we distinguish the cases $q>1$ and $q\le 1$. 
\begin{proposition}\label{tau} 
Fix $q \in \left(1,2\right]$ such that $\phi_{W^{(n)}}(q)<+\infty$ for all $n\ge 1$. One has 
\begin{align}\label{partf1}
\mathbb{E}\Big (\sum_{B\in \mathcal F^{D}(g)}\nu (B)^q\Big )\leq e^{-S_{g_s}(q)}\mathbb{E}((Y^{(g_s)})^q)+ M_{q,g_s}\sum_{k=g_1}^{g_s-1}  e^{-S_{k}(q)},
\end{align}
where $Y^{(g_s)}$ is any of the  $Y(U'_1,\ldots,U'_s)$, $(U'_1,\ldots,U'_s)\in \prod_{r=1}^s \mathcal{I}^{g_r-g_{r-1}}$, which are identically distributed, and $M_{q,g_s}=(\#\mathcal I)^2\sup_{g_1\le n\le g_s}\phi_{\widetilde W^{(n)}}(q)$, where $\widetilde W^{(n)}$ is defined in~\eqref{tildeW}. 
\end{proposition}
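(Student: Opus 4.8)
\textit{Strategy.} The partition $\mathcal F^D(g)$ is the coarsening of the level-$g_s$ cylinder partition of $\Sigma$ obtained by merging, block by block, the cylinders whose images under the $1$-block factor maps $\Pi^D_r$ agree on the $r$-th block, and \eqref{partf1} is an accounting of the $L^q$-cost of these mergings: the term $e^{-S_{g_s}(q)}\mathbb{E}((Y^{(g_s)})^q)$ comes from cylinders that survive down to level $g_s$ (carrying the residual mass), while $e^{-S_k(q)}$ comes from a cylinder whose descendants are all merged together from level $k+1$ on, i.e.\ which is followed honestly through levels $1,\dots,k$ and only through its $\Pi^D_{r_{k+1}},\dots,\Pi^D_{r_{g_s}}$-projections afterwards. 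The plan is to write, for each $B\in\mathcal F^D(g)$,
\[
\nu(B)=\sum_{w\in\mathcal I^{g_s}:\,[w]\subset B}Q(w)\,Y(w)=\overline\nu(B)+\delta(B),\qquad \overline\nu(B)=\!\!\sum_{w:\,[w]\subset B}\!\!Q(w),\quad \delta(B)=\!\!\sum_{w:\,[w]\subset B}\!\!Q(w)(Y(w)-1),
\]
and to use at every step the inequality valid for $q\in(1,2]$, $a\ge0$ and a random variable $f$ with $a+f\ge0$ and $\mathbb{E}(f\mid\mathcal H)=0$, namely $\mathbb{E}((a+f)^q\mid\mathcal H)\le a^q+C_q\,\mathbb{E}(|f|^q\mid\mathcal H)$ (the Taylor estimate underlying von Bahr–Esseen), which keeps the multiplicative constant on the leading term under control and thus prevents the constants from deteriorating with $g_s$.

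\textit{The fluctuation term.} Conditioning on the $\sigma$-algebra $\mathcal G$ generated by the $W^{(n)}(v)$, $|v|\le g_s-1$, the variables $(Y(w)-1)_{|w|=g_s}$ are independent, centered, distributed as $Y-1$ and independent of $\mathcal G$, while the $Q(w)$ are $\mathcal G$-measurable; Lemma~\ref{lemvBE} applied conditionally gives $\mathbb{E}(|\delta(B)|^q\mid\mathcal G)\le C_q\,\mathbb{E}(|Y-1|^q)\sum_{w:[w]\subset B}Q(w)^q$. Summing over $B$, using $\bigsqcup_B\{w:[w]\subset B\}=\mathcal I^{g_s}$ and the branching property $\sum_{|w|=g_s}\mathbb{E}(Q(w)^q)=\prod_{n=1}^{g_s}\phi_{W^{(n)}}(q)=e^{-S_{g_s}(q)}$, one bounds $\mathbb{E}\big(\sum_B|\delta(B)|^q\big)$ by a term of the order of $e^{-S_{g_s}(q)}\mathbb{E}((Y^{(g_s)})^q)$; estimating $\mathbb{E}(\nu(B)^q\mid\mathcal G)$ directly, without splitting off the conditional mean $\overline\nu(B)$, produces the constant $1$ displayed in \eqref{partf1}.

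\textit{The average term.} The core is the estimate of $\mathbb{E}\big(\sum_B\overline\nu(B)^q\big)$. I would introduce the partitions $\mathcal P_n$, $g_1\le n\le g_s$, where $\mathcal P_n$ resolves the first block exactly and, for $n$ in block $r$, resolves the completed blocks $2,\dots,r-1$ up to their $\Pi^D_{r'}$-projections and the current block up to its $\Pi^D_r$-projection truncated at level $n$, so that $\mathcal P_{g_1}$ is the level-$g_1$ cylinder partition and $\mathcal P_{g_s}=\mathcal F^D(g)$; for $P\in\mathcal P_n$ set $m_n(P)=\sum_{|v|=n,\,[v]\cap P\neq\emptyset}Q(v)$ (so $m_{g_s}(B)=\overline\nu(B)$ and $m_{g_1}([v])=Q(v)$). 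For $P\in\mathcal P_{n-1}$ with $n$ in block $r\ge2$ one has $P=\bigsqcup_{j\in\mathcal I^D_r}P_j$ and $m_n(P_j)=\sum_{|\tilde v|=n-1,\,[\tilde v]\cap P\neq\emptyset}Q(\tilde v)\sum_{i\in(\Pi^D_r)^{-1}(j)}W^{(n)}_i(\tilde v)$; writing $W^{(n)}_i=p^{(n)}_i\widetilde W^{(n)}_i$ and isolating, at each node $\tilde v$, the conditional mean $(\Pi^D_r p^{(n)})_j$ of $\sum_{i\in(\Pi^D_r)^{-1}(j)}W^{(n)}_i(\tilde v)$ from the centered remainder, the inequality of the first paragraph (applied conditionally on the cascade up to level $n-1$), together with $\sum_{j}(\Pi^D_r p^{(n)})_j^q=e^{-\tau^{D,n}_r(q)}$ for the main part, Lemma~\ref{lemvBE} applied across the \emph{independent contributions of the distinct nodes $\tilde v$} for the fluctuation part (one cannot use it across a fiber, the coordinates of $W^{(n)}$ being correlated), the crude bounds $\#(\Pi^D_r)^{-1}(j)\le\#\mathcal I$, $\big(\sum_j(\Pi^D_r p^{(n)})_j^q\big)^{-1}=e^{\tau^{D,n}_r(q)}\le(\#\mathcal I)^{q-1}$ and $\phi_{W^{(n)}}(q)\le\phi_{\widetilde W^{(n)}}(q)$, and finally $\sum_{|\tilde v|=n-1}\mathbb{E}(Q(\tilde v)^q)=\prod_{m=1}^{n-1}\phi_{W^{(m)}}(q)$, yields a one-step recursion for $\Sigma_n:=\mathbb{E}\big(\sum_{P\in\mathcal P_n}m_n(P)^q\big)$ of the form $\Sigma_n\le e^{-\tau^{D,n}_{r_n}(q)}\Sigma_{n-1}+(\#\mathcal I)^2\,\phi_{\widetilde W^{(n)}}(q)\,e^{-\tau^{D,n}_{r_n}(q)}\prod_{m=1}^{n-1}\phi_{W^{(m)}}(q)$. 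Since $\Sigma_{g_1}=e^{-\sum_{m=1}^{g_1}T_{W^{(m)}}(q)}$, unwinding from $n=g_1$ to $n=g_s$ and bounding each $\phi_{\widetilde W^{(n)}}(q)$ by $\sup_{g_1\le n\le g_s}\phi_{\widetilde W^{(n)}}(q)$ gives $\mathbb{E}\big(\sum_B\overline\nu(B)^q\big)\le e^{-S_{g_1}(q)}+M_{q,g_s}\sum_{k=g_1+1}^{g_s-1}e^{-S_k(q)}\le e^{-S_{g_s}(q)}+M_{q,g_s}\sum_{k=g_1}^{g_s-1}e^{-S_k(q)}$ (the cylinder term $e^{-S_{g_s}(q)}$ being added for free from the diagonal $\sum_{|w|=g_s}Q(w)^q$); combining with the fluctuation estimate gives \eqref{partf1}.

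\textit{Main obstacle.} The delicate step is this recursion: one must keep track simultaneously of the genuine martingale fluctuations of the inhomogeneous cascade and of the distortion produced by the fiber mergings, and accumulate these contributions over all $g_s$ levels so that the bookkeeping reproduces \emph{exactly} the partition functions $S_k(q)$ and a constant $M_{q,g_s}$ uniform in $D$ and $g$. The two points that require care, and that make this more than the Sierpiński-sponge computation of \cite{ref6}, are the systematic separation of the deterministic mean $p^{(n)}$ from the genuinely random factor $\widetilde W^{(n)}$ at every level—this is where $\phi_{\widetilde W^{(n)}}(q)$ enters the constant—and the fact that, the coordinates of $W^{(n)}$ being correlated, the only independence available at scale $n$ is across distinct tree nodes rather than across a fiber, which forces a power of $\#\mathcal I$ (rather than a moment of $W^{(n)}$) to be spent on each merging.
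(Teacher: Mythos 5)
Your strategy is genuinely different from the paper's. The paper works with the normalized masses $\nu(B)/m(B)$, organizes them into the nested variables $Z(U'_1,\ldots,U'_r)$, and at each level applies Kahane's trick $\big(\sum_j Y_j\big)^q\le\sum_j Y_j^q+\sum_{j\neq k}(Y_jY_k)^{q/2}$: the off-diagonal part is bounded by $(\#\mathcal I)^2\phi_{\widetilde W^{(n)}}(q)$ using Jensen and Cauchy--Schwarz together with $(\widetilde V_r)^{(1)}_{u'_1,u_1}\le \widetilde W_{u'_1}$, and the resummation against $m(B)^q$ is done through the identity \eqref{Lqfibre}, which plays the role of your $\sum_j(\Pi_r p^{(n)})_j^q=e^{-\tau^{D,n}_{r_n}(q)}$. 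You instead run a level-by-level mean/fluctuation splitting (Taylor estimate plus Lemma~\ref{lemvBE} across tree nodes) on the partial masses $m_n(P)$ of interpolating partitions, plus a final splitting of $\nu(B)$ into $\nu_{g_s}(B)$ and the tail fluctuation. Your recursion does reproduce the correct combinatorial structure, i.e.\ the family $e^{-S_k(q)}$, $g_1\le k\le g_s-1$, with a constant of the type $(\#\mathcal I)^2\sup_n\phi_{\widetilde W^{(n)}}(q)$.

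There is, however, a real shortfall with respect to the statement as written. First, your scheme never produces the term $e^{-S_{g_s}(q)}\,\mathbb{E}((Y^{(g_s)})^q)$: the main-term recursion terminates at $e^{-S_{g_1}(q)}$, and the level-$g_s$ fluctuation yields $c(q)\,\mathbb{E}\big(|Y^{(g_s)}-1|^q\big)\,e^{-S_{g_s}(q)}$, where $c(q)$ collects the Taylor constant and the von Bahr--Esseen constant $2^q$; neither quantity is dominated by $\mathbb{E}((Y^{(g_s)})^q)\,e^{-S_{g_s}(q)}$ in general (under the sole hypothesis $\phi_{W^{(n)}}(q)<\infty$ one may have $\mathbb{E}(Y^{(g_s)})<1$, so $\mathbb{E}((Y^{(g_s)})^q)$ can be smaller than $1$; note also that your ``centered'' step tacitly assumes $\mathbb{E}(Y^{(g_s)})=1$, which is not among the hypotheses and would have to be replaced by centering at $\mathbb{E}(Y^{(g_s)})\le 1$). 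Adding ``the cylinder term $e^{-S_{g_s}(q)}$ for free'' only enlarges your upper bound; it cannot create the factor $\mathbb{E}((Y^{(g_s)})^q)$, which is essential later (it is what lets Proposition~\ref{Moments1}(2) feed the term $\min_{N'\ge g_s(N)}\sum_{n\le N'}H(W^{(n)})$ into $d_N$). The clean way to obtain it is to keep $Y(w)$ attached to the diagonal throughout, i.e.\ expand $\nu(B)=\sum_wQ(w)Y(w)$ and use a decomposition in which the diagonal $\sum_wQ(w)^qY(w)^q$ survives with coefficient exactly $1$ --- which is precisely what the paper's $q/2$-decomposition does, with no absolute constants entering the off-diagonal estimate. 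Second, the same phenomenon occurs at every level of your recursion: the constants $C_q$, $2^q$ and the $2^{q-1}$ from centering accumulate in front of $(\#\mathcal I)^2\phi_{\widetilde W^{(n)}}(q)$, so you get $c(q)M_{q,g_s}$ rather than $M_{q,g_s}$. In summary, your argument proves \eqref{partf1} with $\mathbb{E}((Y^{(g_s)})^q)$ replaced by $c(q)\max\big(1,\mathbb{E}((Y^{(g_s)})^q)\big)$ and $M_{q,g_s}$ replaced by $c(q)M_{q,g_s}$; this weaker form would in fact suffice for all subsequent applications in the paper, but it is not the stated inequality, and recovering the exact constants essentially forces you back to the diagonal/off-diagonal argument of the paper.
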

Note that under additional assumptions, the term $\mathbb{E}((Y^{(g_s)})^q)$ in \eqref{partf1} can itself be controlled thanks to Proposition~\ref{Moments1}(2). This will be used in the proof of Theorem~\ref{thm-2.4}(2).

\begin{proposition}\label{SN'}
For all  $q\in (0,1]$, one has 
\begin{align*}
\mathbb{E}\Big (\sum_{B\in \mathcal F^{D}(g)}\nu (B)^q\Big )\leq \min\Big (e^{-\sum_{n=1}^{\widetilde{g_s}}T_{W^{(n)}}(q)},\min_{g_1\le k\le g_s-1}e^{-S_{k}(q)}\Big ),
\end{align*}
where $\widetilde {g_s}$ is defined as in \eqref{Ntilde}. 
\end{proposition}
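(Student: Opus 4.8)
The plan is to bound $\mathbb E\big(\sum_{B\in\mathcal F^D(g)}\nu(B)^q\big)$ by each of the two quantities inside the minimum separately. Everything rests on two elementary facts valid for $q\in(0,1]$: the map $t\mapsto t^q$ is subadditive on $\mathbb R_+$, and (being concave) it satisfies $\mathbb E(X^q)\le(\mathbb E X)^q$ for every non-negative integrable $X$; note also that $\phi_{W^{(n)}}(q)\le\#\mathcal I+1<+\infty$ since $t^q\le 1+t$, so — unlike in Proposition~\ref{tau} — no extra integrability assumption is needed. I shall also use that each $B\in\mathcal F^D(g)$ is a finite disjoint union of cylinders $[w]$, $w\in\mathcal I^{g_s}$, namely those for which $\Pi^D_r$ applied to the block of letters of $w$ in stage $r$ is the prescribed word, for all $1\le r\le s$.

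For the first bound I would apply subadditivity twice. First $\nu(B)^q\le\sum_{[w]\subset B}\nu([w])^q$, so that $\sum_B\nu(B)^q\le\sum_{w\in\mathcal I^{g_s}}\nu([w])^q$; then, using $\nu([w])=\sum_{v\in\mathcal I^{N'-g_s}}\nu([wv])$ for any $N'\ge g_s$, one gets $\sum_{w\in\mathcal I^{g_s}}\nu([w])^q\le\sum_{w\in\mathcal I^{N'}}\nu([w])^q$. Writing $\nu([w])=Q(w)Y(w)$ with $Q(w)$ and $Y(w)$ independent (they depend on disjoint families of weights), using $\mathbb E(Y(w)^q)\le(\mathbb E Y(w))^q\le 1$ (Fatou gives $\mathbb E Y\le 1$), and factorising $\sum_{w\in\mathcal I^{N'}}\mathbb E(Q(w)^q)=\prod_{n=1}^{N'}\phi_{W^{(n)}}(q)$ over the levels, one obtains $\mathbb E\big(\sum_B\nu(B)^q\big)\le e^{-\sum_{n=1}^{N'}T_{W^{(n)}}(q)}$; taking $N'=\widetilde{g_s}\,(\ge g_s)$ yields the first bound.

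For the second bound, fix $k$ with $g_1\le k\le g_s-1$ and split each length-$g_s$ word as $w=u\cdot v$, $|u|=k$. Then $\nu(B)=\sum_u Q(u)Z_u(B)$, the sum over $u\in\mathcal I^k$ compatible with the part of the data of $B$ concerning levels $1,\dots,k$, where $Z_u(B)=\sum_v Q^u(v)Y(uv)$, the sum over the $v\in\mathcal I^{g_s-k}$ completing $u$ inside $B$. Subadditivity and independence of $Q(u)$ from $Z_u(B)$ give $\mathbb E(\nu(B)^q)\le\sum_u\mathbb E(Q(u)^q)\,\mathbb E(Z_u(B)^q)$. Summing over $B$ and reorganising — for fixed $u$ the levels-$1,\dots,k$ data of $B$ is forced, while the remaining data of $B$ (the ``suffix'') ranges freely over $\prod_{n=k+1}^{g_s}\mathcal I^D_{r_n}$, and $\mathbb E(Z_u(B)^q)$ depends only on this suffix, not on $u$, by stationarity of the construction below level $k$ — one obtains
\[
\mathbb E\Big(\sum_B\nu(B)^q\Big)\le\Big(\sum_{u\in\mathcal I^k}\mathbb E(Q(u)^q)\Big)\Big(\sum_{\mathrm{suffix}}\mathbb E(Z(\mathrm{suffix})^q)\Big)=e^{-\sum_{n=1}^k T_{W^{(n)}}(q)}\sum_{\mathrm{suffix}}\mathbb E(Z(\mathrm{suffix})^q).
\]
To treat the suffix factor I would use $\mathbb E(Z(\mathrm{suffix})^q)\le(\mathbb E Z(\mathrm{suffix}))^q$ and $\mathbb E Z(\mathrm{suffix})=\sum_v\mathbb E(Q^u(v))\,\mathbb E(Y(uv))\le\sum_v\prod_j p^{(k+j)}_{v_j}$; since $\Pi^D_r$ is a $1$-block factor map the projection constraints defining the suffix factorise coordinate by coordinate, so this last sum equals $\prod_{n=k+1}^{g_s}(\Pi^D_{r_n}p^{(n)})_{\beta_n}$, with $\beta_n\in\mathcal I^D_{r_n}$ the suffix letter at position $n$. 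Raising to the power $q$ and summing over suffixes (i.e. over the $\beta_n$ independently) gives $\sum_{\mathrm{suffix}}\mathbb E(Z(\mathrm{suffix})^q)\le\prod_{n=k+1}^{g_s}\sum_{j\in\mathcal I^D_{r_n}}(\Pi^D_{r_n}p^{(n)})_j^q=e^{-\sum_{n=k+1}^{g_s}\tau^{D,n}_{r_n}(q)}$, whence $\mathbb E\big(\sum_B\nu(B)^q\big)\le e^{-S_k(q)}$. Minimising over $k$ and intersecting with the first bound finishes the proof.

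I expect no serious obstacle: the argument is essentially bookkeeping. The two points that need care are keeping straight which weights $W_i(v)$ enter $Q(u)$, $Q^u(v)$ and $Y(uv)$ so that the required independences genuinely hold, and — the one load-bearing step — combining Jensen's inequality $\mathbb E(Z^q)\le(\mathbb E Z)^q$ with the coordinatewise factorisation of the block-defining projection constraints: it is this factorisation that makes $\mathbb E Z(\mathrm{suffix})$ collapse to a product of projected probabilities and thereby produces the exponents $\tau^{D,n}_{r_n}(q)$ rather than the coarser $T_{W^{(n)}}(q)$.
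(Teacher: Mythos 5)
Your proposal is correct and follows essentially the same route as the paper: decompose $\nu(B)$ at generation $k$ (resp.\ down to generation $\widetilde{g_s}$ for the first bound), use subadditivity of $t\mapsto t^q$ together with independence of the weights along disjoint levels, apply Jensen's inequality and $\mathbb{E}(Y(\cdot))\le 1$ to the tails, and exploit the $1$-block factor maps so that the levels beyond $k$ contribute the projected vectors $\Pi^D_{r_n}p^{(n)}$, i.e.\ the exponents $\tau^{D,n}_{r_n}(q)$. The only difference is bookkeeping: the paper runs the argument through the normalized fiber variables $(\widetilde V_r)^{(n)}_{i,j}$ and the identity \eqref{Lqfibre} inherited from the proof of Proposition~\ref{tau}, whereas you keep the un-normalized quantities $Q(u)$, $Z_u(B)$ and compute $\mathbb{E}(Z_u(B))$ directly, which is an equivalent reorganization of the same estimate.
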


Before starting the proof, we note that for every $p\in\mathbb{N}$ and $U\in\mathcal I^p$, the probability distribution of the family of random vectors $(\widetilde W(Uv))_{v\in \mathcal{I}^*}$  does not depend on $U$. Thus, each such family generates a copy $\nu^{(U)}$ of a random inhomogeneous Mandelbrot measure $\nu^{p}$ (so that $\nu^{(\epsilon)}=\nu^{0}=\nu$), as well as the associated sequence of measures $(\nu^{(U)}_n)_{n\in\mathbb{N}^+}$, defined in the same way as  $(\nu_n)_{n\in\mathbb{N}^+}$ was defined, that is by uniformly distributing (with respect to the uniform measure on $(\Sigma,\mathcal{B}(\Sigma))$), the mass $Q^U(w)$ over each cylinder $\left[w\right]$ of generation $n$.

\begin{proof}[Proof of Proposition~\ref{tau}] For each $r\in \{1,\ldots, s\}$, we simply denote $\mathcal I^D_r$ by $\mathcal I_r$ (recall that $\mathcal I_1=\mathcal I$) and $\Pi^D_r$ by~$\Pi_r$.
Also, denote by $m$ the inhomogeneous Bernoulli product measure on $\mathcal{I}^{\mathbb{N}^+}$ associated to the probability vectors $(p^{(n)})_{n\ge1}$, that is the measure $\otimes_{n=1}^\infty (\sum_{i\in\mathcal I}p^{(n)}_i\delta_{i})$. Note that $m=\mathbb{E}(\nu)$ and for each $1\le r\le s$, ${\Pi_r}_*m$, the pushforward of $m$ on $\mathcal I_r^{\mathbb{N}^+}$ by $\Pi_r$, is the inhomogeneous  Bernoulli product measure on $\mathcal{I}_r^{\mathbb{N}^+}$associated to the probability vectors $(\Pi_rp^{(n)})_{n\ge1}$. The shift operation on $\mathcal I_r^{\mathbb{N}^+}$ is denoted by $T_r$. 

Set $j_r=g_r-g_{r-1}$. For each $(U_1,\ldots, U_s)\in  \prod_{r=1}^s \mathcal{I}_{r}^{g_r-g_{r-1}}$, write $U_{r}=U_{r,1}\cdots U_{r,j_r}$. By construction one has 
\begin{align}
\nonumber\mathbb{E}(\nu(B(U_1,\ldots, U_s)))&=m(B(U_1,\ldots, U_s))\\
\label{mU1Us}&=\prod_{r=1}^s {\Pi_r}_*m([U_r])=\prod_{r=1}^s \prod_{n=1}^{j_r} (\Pi_{r}p^{(g_{r-1}+n)})_{U_{r,n}},
\end{align}
and if this number is different from $0$, 
\begin{align*}
&\frac{\nu(B(U_1,\ldots, U_s))}{m(B(U_1,\ldots, U_s))} \\
&=\sum_{(U'_r)_{r=1}^s\in \prod_{r=1}^s \Pi_r^{-1} (\{U_r\})}\frac{\nu_{g_s}([U'_1U'_2\cdots U'_s])}{\prod_{r=1}^s {\Pi_r}_*m([U_r])}\cdot Y(U'_1\cdots U'_s)\\
&=\sum_{(U'_r)_{r=1}^s\in \prod_{r=1}^s \Pi_r^{-1} (\{U_r\})}\frac{\nu_{g_1}([U'_1])}{m([U_1])}\frac{\nu^{(U'_1)}_{g_2-g_1}([U'_2])}{{\Pi_2}_*m([U_2])}\cdots \frac{\nu^{(U'_1\cdots U'_{s-1})}_{g_s-g_{s-1}}([U'_s])}{{\Pi_s}_*m([U_s])}\cdot Y(U'_1\cdots U'_s)\\
&= \frac{\nu_{g_1}([U_1])}{m([U_1])}\sum_{(U'_r)_{r=2}^s\in \prod_{r=2}^s \Pi_r^{-1} (\{U_r\})}\frac{\nu^{(U'_1)}_{g_2-g_1}([U'_2])}{{\Pi_2}_*m([U_2])}\cdots \frac{\nu^{(U'_1\cdots U'_{s-1})}_{g_s-g_{s-1}}([U'_s])}{{\Pi_s}_*m([U_s])}\cdot Y(U'_1\cdots U'_s).
\end{align*}
Define, with $U'_1=U_1$, $Z(U'_1,\ldots,U'_s)=Y(U'_1\cdots U'_s)$, and for $2\le r\le s$
\begin{multline*}
Z(U'_1,\ldots,U'_{r-1})\\=\sum_{(U'_{t})_{t=r}^s\in \prod_{t=r}^s (\Pi_t)^{-1} (\{U_t\})}\frac{\nu^{(U'_1U'_2\cdots U'_{r-1})}_{g_r-g_{r-1}}([U'_r])}{{\Pi_r}_*m([U_r])}\cdots \frac{\nu^{(U'_1U'_2\cdots U'_{s-1})}_{g_s-g_{s-1}}([U'_s])}{{\Pi_s}_*m([U_s])}\cdot Z(U'_1\cdots U'_s).
\end{multline*}
One thus has 
\begin{equation}\label{muBrec}
\frac{\mu(B(U_1,\ldots, U_s))}{\prod_{r=1}^s {\Pi_r}_*m([U_r])} =\frac{\nu_{g_1}([U_1])}{m([U_1])} Z(U_1)=\frac{\nu_{g_1}([U_1])}{m([U_1])} Z(U'_1),
\end{equation}
and for $2\le r\le s$, 
\begin{equation}\label{Z}
Z(U'_1,\ldots,U'_{r-1})=\sum_{U'_{r}\in \Pi_r^{-1} (\{U_{r}\})}\frac{\nu^{(U'_1U'_2\cdots U'_{r-1})}_{g_r-g_{r-1}}([U'_r])}{{\Pi_r}_*m([U_r])} Z(U'_1,\ldots,U'_{r}).
\end{equation}
Note that the $Z(U'_1,\ldots,U'_{r-1})$ are identically distributed. So $\mathbb{E}(Z(U'_1,\ldots,U'_{r-1})^q)$ depends only on $(U_1,\ldots,U_{r-1})$. We denote this value by $\mathcal Z_q(U_1,\ldots,U_{r-1})$.
We are going to estimate $\mathcal Z_q(U_1,\ldots,U_{r-1})$ recursively. To do so, we fix $(U'_1,\ldots,U'_{r-1})$ and start by writing the term $\frac{\nu^{(U_1'U'_2\cdots U'_{r-1})}_{g_r-g_{r-1}}([U'_r])}{{\Pi_r}_*m([U_r])}$ in \eqref{Z} in its natural form of product of independent random variables. This requires some notation.

For $n\ge 1$, $j\in \Pi_r(\mathcal I)$, $i\in \Pi_r^{-1}(\{j\})$ and $v\in\mathcal I^{n-1}$,  we define
$$
(V_{r})^{(n)}_{i,j}(v) =
\begin{cases}
\displaystyle \frac{W^{(n)}_i(v)}{(\Pi_rp^{(n)})_{j}}=  \frac{p^{(n)}_i \widetilde W^{(n)}_{i}(v)}{(\Pi_rp^{(n)})_{j}}&\text{if }(\Pi_rp^{(n)})_{j}\neq 0\\
0&\text{otherwise},
\end{cases}
$$
and simply write $(V_{r})^{(1)}_{i,j}$ for $(V_{r})^{(1)}_{i,j}(\boldsymbol{\epsilon})$.  For $j\in \mathcal I_r$, the random vectors $\big ((V_{r})^{(n)}_{i,j}(v)\big )_{i\in \Pi_r^{-1} (\{j\})}$, $v\in\mathcal I^{n-1}$, are identically distributed and we denote by $(V_r)^{(n)}_{j}$ one of these vectors. For all $j\in \Pi_r(\mathcal I)$, by construction one has 
\begin{align*}
\mathbb{E}\Big (\sum_{i\in \Pi_r^{-1} (\{j\})}(V_r)^{(n)}_{i,j}\Big )=1.
\end{align*}

Write $U_{r,1}\cdots U_{r,j_r}=u_1\cdots u_{j_r}$ to lighten the notation, as well as  $U'_r=u'_1\cdots u'_{j_r}$. Also, set $(\widetilde V_r)_{i,j}^{(n)}(v)= (V_r)_{i,j}^{(g_{r-1}+n)}(U'_1,\ldots,U'_{r-1}v)$ for all $1\le n\le j_r$ and $v\in\mathcal{I}^*$. It is easily seen that 
$$
\frac{\nu^{(U'_1U'_2\cdots U'_{r-1})}_{g_r-g_{r-1}}([U'_r])}{{\Pi_r}_*m([U_r])}=\prod_{n=1}^{j_r}(\widetilde V_r^{(n)})_{u'_n,u_n}(u'_1\cdots u'_{n-1}).
$$
Hence, remembering \eqref{Z} and denoting  $Z(U'_1,\ldots,U'_{r-1})$ by $X_{1,\ldots,j_r}(u_1,\ldots,u_{j_r})$, we get 
$$
X_{1,\ldots,j_r}(u_1,\ldots,u_{j_r})=\sum_{u'_{1}\in \Pi_r^{-1} (\{u_{1}\})}(\widetilde V_r^{(1)})_{u'_1,u_1}\cdot X^{u'_1}_{2,\ldots,j_r}(u_2,\ldots,u_{j_r}),
$$
where
$$
X^{u'_1}_{2,\ldots,j_r}(u_2,\ldots,u_{j_r})=\sum_{(u'_{n})_{n=2}^{j_r}\in \Pi_r^{-1} (\{(u_{n})_{n=2}^{j_r}\})} \Big (\prod_{n=2}^{j_r}(\widetilde V_r^{(n)})_{u'_n,u_n}(u'_1\cdots u'_{n-1})\Big ) \cdot  Z(U'_1,\ldots,U'_{r}).
$$
Now we start like  Kahane in \cite{ref26} to estimating  the $L^q$ moment of Mandelbrot martingales: we use the subadditivity of $x\ge 0 \mapsto x^{\frac{q}{2}}$ ($q\in (1,2]$). This yields, dropping the dependence on $(u_1,\ldots,u_{j_r})$ and $(u_1,\ldots,u_{j_r})$ in $X_{1,\ldots,j_r}(u_1,\ldots,u_{j_r})$ and $X^{u'_1}_{2,\ldots,j_r}(u_2\cdots u_{j_r})$ respectively, 
\begin{align}
\nonumber \mathbb{E} \left(X_{1,\ldots,j_r}^q\right) &\leq  \mathbb{E}\left[\left(\sum_{u'_1\in (\Pi_r)^{-1}(\{u_1\})} \left((\widetilde V_{r})^{(1)}_{u'_1,u_1})^2\right)^{\frac{q}{2}} \left(X^{u'_1}_{2,\ldots,j_r}\right)^{\frac{q}{2}}\right)^2\right] \\\nonumber&= \mathbb{E}\left[\sum_{u'_1\in (\Pi_r)^{-1}(\{u_1\})}\left((\widetilde V_{r})^{(1)}_{u'_1,u_1}\right )^{q} \left(X^{u'_1}_{2,\ldots,j_r}\right)^{q}\right] \\& 
\label{secondterm}\quad +\mathbb{E}\left[\sum_{u'_1 \neq v'_1\in (\Pi_r)^{-1}(\{u_1\})} \left((\widetilde V_{r})^{(1)}_{u'_1,u_1}\right)^{\frac{q}{2}} \left(X^{u'_1}_{2,\ldots,j_r}\right)^{\frac{q}{2}} \left((\widetilde V_{r})^{(1)}_{v'_1,u_1}\right)^{\frac{q}{2}} \left(X^{v'_1}_{2,\ldots,j_r})\right)^{\frac{q}{2}} \right].
\end{align}
By construction, one has that $(i)$ the random variables  $\left ((\widetilde V_{r})^{(1)}_{u'_1,u_1}, (\widetilde V_{r})^{(1)}_{v'_1,u_1}\right )$, $X^{u'_1}_{2,\ldots,j_r}$, and $X^{v'_1}_{2,\ldots,j_r}$ are mutually independent;  $(ii)$ $X^{u'_1}_{2,\ldots,j_r}$ and $X^{v'_1}_{2,\ldots,j_r}$ are identically distributed and of expectation 1; $(iii)$ $(\widetilde V_{r})^{(1)}_{u'_1,u_1}\le \widetilde W^{(g_{r-1}+1)}_{u'_1}$.  Since $q/2\le 1$, Jensen's inequality yields $\mathbb E( (X^{u'_1}_{2,\ldots,j_r})^{q/2})\le 1$. Then, the Cauchy-Schwarz inequality applied to  the right-hand side of the inequality $\mathbb{E}\left(\big ((\widetilde V_{r})^{(1)}_{u'_1,u_1}\big )^{\frac{q}{2}}\big ((\widetilde V_{r})^{(1)}_{v'_1,u_1}\big )^{\frac{q}{2}}\right )\le \mathbb{E}\left(\big (\widetilde W^{(g_{r-1}+1)}_{u'_1}\big )^{\frac{q}{2}} \big (\widetilde W^{(g_{r-1}+1)}_{v'_1}\big )^{\frac{q}{2}}\right)$ implies 
$$
\sum_{u'_1 \neq v'_1\in (\Pi_r)^{-1}(\{u_1\})} \mathbb{E}\left(\big ((\widetilde V_{r})^{(1)}_{u'_1,u_1}\big )^{\frac{q}{2}}\big ((\widetilde V_{r})^{(1)}_{v'_1,u_1}\big )^{\frac{q}{2}}\right )\le \sum_{u'_1 \neq v'_1\in (\Pi_r)^{-1}(\{u_1\})} \sum_{i\in\mathcal I}  \mathbb{E}\left(\big (\widetilde W^{(g_{r-1}+1)}_{i}\big )^{q}\right )
$$
so that the term in \eqref{secondterm} is bounded from above by 
$$
 \left(\sup_{u_1\in \mathcal I_r}\big (\# (\Pi_r)^{-1}(\{u_1\})\big )\right )^2 \sum_{i\in\mathcal I}  \mathbb{E}\left(\big (\widetilde W^{(g_{r-1}+1)}_{i}\big )^{q}\right )\le (\#\mathcal I)^2\phi_{\widetilde W^{(g_{r-1}+1)}}(q)\le M_{q,g_s}. 
$$
Thus, setting 
$$
T_{(V_r)^{(n)}_{j}}(q) = - \log \mathbb{E}\Big (\sum_{i\in \Pi_r^{-1}(\{j\})} ((V_r)^{(n)}_{i,j})^q\Big ),
$$
we get 
\begin{align*}\label{Iter}
\mathbb{E} \left(X_{1,\ldots,j_r}(u_1\cdots u_{j_r})^q\right) &\le M_{q,g_s}+ \sum_{u'_1\in (\Pi_r)^{-1}(\{u_1\})} \mathbb{E}\left[\left((\widetilde V_{r})^{(1)}_{u'_1,u_1}\right )^{q}\right] \mathbb{E}\left[X^{u'_1}_{2,\ldots,j_r}(u_2\cdots u_{j_r})^{q}\right]\\
\nonumber& =M_{q,g_s}+ \exp\left (-T_{(\widetilde V_{r})^{(1)}_{u_1}}(q)\right )\cdot   \mathbb{E}\left[X^{u'_1}_{2,\ldots,j_r}(u_2\cdots u_{j_r})^{q}\right].
\end{align*}
We can iterate the previous estimates on the expectations $\mathbb{E}\left[X^{u'_1}_{2,\ldots,j_r}(u_2\cdots u_{j_r})^{q}\right]$, then on those they lead to, and so on... by using recursive relations of the form 
\begin{equation}\label{recursioncruciale}
X^{u'_1\cdots u'_{j-1}}_{j,\ldots,j_r}(u_{j}\cdots u_{j_r})= \sum_{u'_j\in\Pi_r^{-1}(\{u_j\})} (\widetilde V_{r})^{(j)}_{u'_j,u_j}X^{u'_1\cdots u'_{j}}_{j+1,\ldots,j_r}(u_{j+1}\cdots u_{j_r}),
\end{equation}
with  $X^{u'_1\cdots u'_{j_r}}_{j_r+1,\ldots,j_r}(u_{j_r+1}\cdots u_{j_r})=Z(U'_1,\ldots,U'_{r})$.

This yields, setting $S^{(r)}_{U_{r,1}\cdots U_{r,j}}(q)=\sum_{i=1}^jT_{(\widetilde V_r)^{(i)}_{U_{r,i}}}(q) $ (this definition being extended to the case $r=1$ for the estimate starting from \eqref{muU1Us} below) : 
\begin{equation}\label{Iter}
\mathcal Z_q(U_1,\ldots,U_{r-1})\leq M_{q,g_s} \left(1+\sum_{j=1}^{j_r-1}e^{-S^{(r)}_{U_{r,1}\cdots U_{r,j}}(q)} \right) + e^{-S^{(r)}_{U_{r,1}\cdots U_{r,j_r}}(q)}\mathcal Z_q(U_1,\ldots,U_{r}) .
\end{equation}
One deduces from  \eqref{muBrec}, \eqref{Z} and \eqref{Iter} used recursively from $r=2$ to $r=s-1$ that 
\begin{align}
\label{muU1Us}&\mathbb{E}\left[\left(\frac{\mu(B(U_1,\ldots, U_s))}{m(B(U_1,\ldots, U_s))} \right)^q\right]\\
\nonumber &=\frac{\mathbb{E}(\nu_{g_1}([U_1])^q)}{m([U_1])^q} \mathcal Z_q(U_1)\\
\nonumber &\le \frac{\mathbb{E}(\nu_{g_1}([U_1])^q)}{m([U_1])^q}M_{q,g_s} \Big (1+ \sum_{j=1}^{j_2-1} e^{-S^{(2)}_{U_{2,1}\cdots U_{2,j}}(q)}\Big )\\&\quad \quad \quad \quad \quad \nonumber\quad +\frac{\mathbb{E}(\nu_{g_1}([U_1])^q)}{m([U_1])^q} e^{-S^{(2)}_{U_{2,1}\cdots U_{2,j_2}}(q)}\mathcal Z_q(U_1,U_{2}) \text{ (if $s\ge 2$)}\\
\nonumber&= M_{q,g_s} \Big (e^{-S^{(1)}_{U_{1}}(q)} + e^{-S^{(1)}_{U_{1}}(q)}\sum_{j=1}^{j_2-1} e^{-S^{(2)}_{U_{2,1}\cdots U_{2,j}}(q)}\Big )+  e^{-S^{(1)}_{U_{1}}(q)-S^{(2)}_{U_{2}}(q)} \mathcal Z_q(U_1,U_{2})\\
\nonumber&\vdots\\
\nonumber &\le M_{q,g_s}\Big (e^{-S^{(1)}_{U_{1}}(q)}+ \sum_{r=2}^{s-1} e^{-\sum_{r'=1}^{r-1} S^{(r')}_{U_{r'}}(q)} \sum_{j=1}^{j_r}  e^{-S^{(r)}_{U_{r,1}\cdots U_{r,j}}(q)}\Big) \\
\nonumber&\quad+\mathbf{1}_{\{s\ge 2\}}M_{q,g_s} e^{-\sum_{r=1}^{s-1} S^{(r)}_{U_{r}}(q)} \sum_{j=1}^{j_s-1}  e^{-S^{(s)}_{U_{s,1}\cdots U_{s,j}}(q)} +e^{-\sum_{r=1}^{s} S^{(r)}_{U_{r}}(q)}  \mathbb{E}\big ((Y^{(g_s)})^q\big ).
\end{align}
Denote by $T_q(U_1,\cdots,U_s)$ the right hand side of the last inequality. Also, for $1\le r\le s$ and $g_{r-1}+1\le n\le g_r$ set $\mathcal J_n=\mathcal I_r$, and for $u\in\mathcal J_n$ set $a_u=\big ((\Pi_rp^{(n)})_u\big )^q$ and $b_u=e^{-T_{V^{(n)}_{r,u}}(q)}$.  Due to \eqref{mU1Us} and the last inequality,  one has 
\begin{align*}
\mathbb{E}\Big (\sum_{B\in \mathcal F^{D}(g)}\nu (B)^q\Big )&=\sum_{(U_1,\ldots, U_s)\in  \prod_{r=1}^s \mathcal{I}_{r}^{g_r-g_{r-1}}}\mathbb{E}\big (\nu(B(U_1,\ldots, U_s))^q\big )\\
&\le \sum_{(U_1,\ldots, U_s)\in  \prod_{r=1}^s \mathcal{I}_{r}^{g_r-g_{r-1}}} m(B(U_1,\ldots, U_s))^q\cdot T_q(U_1,\cdots,U_s)\\
&=M_{q,g_s}\sum_{k=g_1}^{g_s-1}\sum_{(u_n)_{n=1}^{g_s}\in\prod_{n=1}^{g_s}\mathcal J_n}\Big (\prod_{n=1}^{g_s}a_{u_n}\Big )\Big (\prod_{n=1}^kb_{u_n}\Big)\\
&\quad\quad \quad\quad \quad\quad + \mathbb{E}\big ((Y^{(g_s)})^q\big )\sum_{(u_n)_{n=1}^{g_s}\in\prod_{n=1}^{g_s}\mathcal J_n}\Big (\prod_{n=1}^{g_s}a_{u_n}\Big )\Big (\prod_{n=1}^{g_s}b_{u_n}\Big) \\
&=M_{q,g_s}\sum_{k=g_1}^{g_s-1}\Big (\prod_{n=1}^k\Big(\sum_{u\in\mathcal J_n}a_ub_u\Big) \Big)\Big (\prod_{n=k+1}^{g_s}\Big(\sum_{u\in\mathcal J_n}b_u\Big) \Big)\\
&\quad\quad \quad\quad \quad\quad +\mathbb{E}\big ((Y^{(g_s)})^q\big )\prod_{n=1}^{g_s}\Big(\sum_{u\in\mathcal J_n}a_ub_u\Big).
\end{align*}
Recalling \eqref{taunq} and noticing that by construction one has
\begin{equation}\label{Lqfibre}
 \sum\limits_{j\in \mathcal I_r}\big ((\Pi_rp^{(n)})_j\big )^q e^{-T_{V^{(n)}_{r,j}}(q) }=e^{-T_{W^{(n)}}(q)},
\end{equation}
we get the desired conclusion.\end{proof}

\begin{proof}[Proof of Proposition~\ref{SN'}] With the notation of the previous proof, fix $2\le r\le s$ as well $ 0\le j\le j_r$. The situation is much simpler than when $q\ge 1$ because one can simply use the subbaditivity of $x\ge 0\mapsto x^q$ to get, instead of \eqref{Iter}, using the definition \eqref{recursioncruciale} and the convention that $S^{(r)}_{\emptyset}=0$ in the case that $j=0$,
\begin{equation*}
\mathcal Z_q(U_1,\ldots,U_{r-1})\leq e^{-S^{(r)}_{U_{r,1}\cdots U_{r,j}}(q)}\mathbb{E}((X^{u'_1,\ldots,u'_j}_{j+1,\ldots, j_r})^q)\le e^{-S^{(r)}_{U_{r,1}\cdots U_{r,j}}(q)},
\end{equation*}
since $\mathbb{E}(X^{u'_1,\ldots,u'_j}_{j+1,\ldots, j_r})=1$. This implies that 
$$
\mathbb{E}\left[\left(\frac{\nu(B(U_1,\ldots, U_s))}{m(B(U_1,\ldots, U_s))} \right)^q\right]\le e^{-\sum_{r'=1}^{r-1} S^{(r')}_{U_{r'}}(q)} e^{-S^{(r)}_{U_{r,1}\cdots U_{r,j}}(q)},
$$
and summing over $(U_1,\ldots, U_s)$ yields, for $k=g_{r-1}+j$,
$$
\mathbb{E}\Big (\sum_{B\in \mathcal F^{D}(g)}\nu (B)^q\Big )\leq e^{-S_{k}(q)}. 
$$
The inequality
$$
\mathbb{E}\Big (\sum_{B\in \mathcal F^{D}(g)}\nu (B)^q\Big )\leq e^{-\sum_{n=1}^{\widetilde{g_s}}T_{W^{(n)}}(q)}
$$
follows from writing that 
\begin{multline*}
\nu (B(U_1,\ldots,U_s))\\=\sum_{(U'_r)_{r=1}^s\in \prod_{r=1}^s \Pi_r^{-1} (U_r)}\sum_{U'\in \mathcal{I}^{\widetilde {g_s}-g_s}} \nu_{\widetilde{g_s}}([U'_1U'_2\cdots U'_sU'])\cdot Y(U'_1\cdots U'_sU'),
\end{multline*}
then using again that $x\ge 0\mapsto x^q$ is subbaditive, taking the expectation using the independences and the branching property, and the fact that $\mathbb{E}(Y(U'_1\cdots U'_sU')^q)\le 1$. 
\end{proof}

\subsection{Proof of Theorem~\ref{thm-2.4}}\label{pfthmdim} Recall that  in Section~\ref{thmdim} we introduced the sequences $(D(N))_{N\ge 1}$, $(s(N))_{N\ge 1}$ and $(g(N)=(g_1(N),\ldots,g_{s(N)}(N)))_{N\ge 1}$ associated with $\boldsymbol{p}$.  This makes it possible to associate, to each $N\ge 1$, the partition $\mathcal F^{D(N)}(g(N))$ of $\Sigma$ defined in~\eqref{partition}, and that we simply denote by $\mathcal F^{D}_N(g)$.  For each $\boldsymbol{i}\in\Sigma$, the element of $\mathcal F_N^{D}(g)$ which contains $\boldsymbol{i}$ is denoted by $B_N(\boldsymbol{i})$. 

We are going to apply Propositions~\ref{tau} and~\ref{SN'} with these partition functions, and for each $N\ge 1$ and $1\le k\le g_{s(N)}(N)$,  the associated function $S_k(\cdot)$ considered in Definition~\ref{SN} is now denoted $S_{N,k}(\cdot)$. Note that the quantity $H_{N,k}$ introduced in Definition~\ref{HN} equals $S_{N,k}'(1)$.

The proof of Theorem~\ref{thm-2.4} will be deduced from the following result for $\nu$ on $\Sigma$. 
\begin{theorem}\label{dimHmu} Suppose that the assumptions of Theorem~\ref{sufcond2}(1) hold, so that $\nu$ is non-degenerate. Let 
$$
\underline{d}(\nu)=\liminf_{N\to +\infty} d_N \text{ and } \overline{d}(\nu)=\limsup_{N\to +\infty} d_N. 
$$
where $d_N$ is defined as in \eqref{uN0}.  
\begin{enumerate}
\item Suppose that $\liminf_{N\to+\infty} N^{-1}\sum_{n=1}^NH(W^{(n)})=0$. With probability~1, conditional on $\{\nu\neq 0\}$, or $\nu$-almost every~$\boldsymbol{i}$ one has 
$$
\liminf_{N\to +\infty} \frac{\log(\nu(B_N(\boldsymbol{i})))}{-N}= \underline{d}(\nu)=0 \text{ and }\limsup_{N\to +\infty} \frac{\log(\nu(B_N(\boldsymbol{i})))}{-N}\le \overline{d}(\nu).
$$

\item Suppose that $\liminf_{N\to+\infty} N^{-1}\sum_{n=1}^NH(W^{(n)})>0$ and $\sup_{n\ge 1}\phi_{\widetilde W^{(n)}}(q)<+\infty$ for some $q \in \left(1,2\right]$.
With probability~1, conditional on $\{\nu\neq 0\}$, for $\nu$-almost every~$\boldsymbol{i}$ one has 
$$
\lim_{N\to +\infty}\left|\frac{\log(\nu(B_N(\boldsymbol{i})))}{-N}-d_N\right|=0,
$$
hence
$$
\begin{cases}
\liminf_{N\to +\infty}\displaystyle \frac{\log(\nu(B_N(\boldsymbol{i})))}{-N}= \underline{d}(\nu)\\
\limsup_{N\to +\infty}\displaystyle \frac{\log(\nu(B_N(\boldsymbol{i})))}{-N}= \overline{d}(\nu)
\end{cases}.
$$
\end{enumerate}
\end{theorem}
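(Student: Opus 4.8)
The plan is to transfer the whole question to the Peyrière probability measure $\mathbb{Q}$ on $\Omega\times\Sigma$ defined by $\mathrm d\mathbb{Q}(\omega,\boldsymbol{i})=\mathrm d\nu_\omega(\boldsymbol{i})\,\mathrm d\mathbb{P}(\omega)$: it is a probability measure because $\mathbb{E}(\|\nu\|)=1$ under the standing hypotheses, it charges no point of $\{\nu=0\}$, and a $\mathbb{Q}$-a.e.\ statement is precisely a statement holding with probability~$1$, conditional on $\{\nu\neq0\}$, for $\nu_\omega$-a.e.\ $\boldsymbol{i}$. Everything then hinges on the identity $\mathbb{E}_{\mathbb{Q}}\big(\nu(B_N(\boldsymbol{i}))^{q-1}\big)=\mathbb{E}_{\mathbb{P}}\big(\sum_{B\in\mathcal F^D_N(g)}\nu(B)^{q}\big)$, whose right-hand side is controlled by Proposition~\ref{tau} when $q>1$, by Proposition~\ref{SN'} when $q\in(0,1)$, and — in the first case — refined by the moment bound \eqref{Bnq} of Proposition~\ref{Moments1}(2). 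Throughout I would use that $g_{s(N)}(N)=O(N)$ (since $\chi_k(\widehat{\boldsymbol p}_n)\ge\min_{i,k}(-\log|a_{i,k}|)>0$) together with the Taylor expansions $S_{N,k}(q)=(q-1)H_{N,k}+O((q-1)^2g_{s(N)}(N))$ and $\sum_{n=1}^{\widetilde{g_s}}T_{W^{(n)}}(q)=(q-1)\min_{N'\ge g_s}\sum_{n=1}^{N'}H(W^{(n)})+O((q-1)^2\widetilde{g_s})$, which follow from $S_{N,k}(1)=0$, $S_{N,k}'(1)=H_{N,k}$, the identification of $\widetilde{g_s}$ in the remark after Proposition~\ref{Moments1}, and a uniform bound near $1$ on the second derivatives of the $T_{W^{(n)}}$ and $\tau^{D,n}_r$ (the point where $\sup_n\phi_{\widetilde W^{(n)}}(q)<\infty$ enters).

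For the upper bound on the local exponent I would fix $\epsilon>0$ and pick $q=q(\epsilon)\in(0,1)$ so close to $1$ that the $O((q-1)^2g_{s(N)}(N))=O((1-q)^2N)$ remainders are negligible; then Proposition~\ref{SN'}, used with only its second term $\min_{g_1\le k\le g_s-1}e^{-S_{N,k}(q)}$ (a minimum over $O(N)$ indices), gives $\mathbb{E}_{\mathbb{P}}(\sum_B\nu(B)^q)\le e^{(1-q)N(u_N+\epsilon)}$ for all large $N$, where $u_N:=N^{-1}\min_{g_1\le k\le g_s-1}H_{N,k}$; in case~(2) one may instead keep both terms of Proposition~\ref{SN'} (there $\widetilde{g_s}=O(N)$ by the Claim in Proposition~\ref{Moments1}(2)) and get the same with $d_N$ in place of $u_N$, the term $e^{-\sum_{n\le\widetilde{g_s}}T_{W^{(n)}}(q)}$ producing the first entry of the minimum \eqref{uN0}. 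A Markov inequality under $\mathbb{Q}$ applied to $\nu(B_N(\boldsymbol{i}))^{q-1}$ (note $q-1<0$), then Borel--Cantelli and $\epsilon\downarrow0$ along a sequence, yield $\mathbb{Q}$-a.e.\ that $\limsup_N\big(\tfrac{\log\nu(B_N(\boldsymbol{i}))}{-N}-u_N\big)\le0$, resp.\ with $d_N$ in case~(2).

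In case~(1), where $\liminf_N N^{-1}\sum_{n\le N}H(W^{(n)})=0$, I would first note that $\underline d(\nu)=\liminf_Nd_N=0$ is deterministic: choosing $M_k$ with $M_k^{-1}\sum_{n\le M_k}H(W^{(n)})\to0$ and $N_k\asymp M_k$ with $g_{s(N_k)}(N_k)\le M_k$ gives $d_{N_k}\le N_k^{-1}\sum_{n\le M_k}H(W^{(n)})\to0$, while the standing assumption of Theorem~\ref{sufcond2}(1) ($\sum_{n\le m}H(W^{(n)})\ge cb_m$ for large $m$) and the nonnegativity of the entropy terms in $H_{N,k}$ force $Nd_N\ge cb_{g_1(N)}>0$ for large $N$, hence $\underline d(\nu)\ge0$; the same two facts give $\liminf_N u_N=0$. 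Since $\nu(B_N(\boldsymbol{i}))\le\|\nu\|<\infty$ trivially gives $\liminf_N\tfrac{\log\nu(B_N)}{-N}\ge0$, the previous paragraph forces $\liminf_N\tfrac{\log\nu(B_N(\boldsymbol{i}))}{-N}=0$ $\mathbb{Q}$-a.e., and $\le\overline d(\nu)$ is then immediate.

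In case~(2), where $\liminf_N N^{-1}\sum_{n\le N}H(W^{(n)})>0$ and $\sup_n\phi_{\widetilde W^{(n)}}(q)<\infty$ for some $q\in(1,2]$, the matching lower bound on the local exponent comes from Proposition~\ref{tau}: in its bound $e^{-S_{N,g_s}(q)}\mathbb{E}((Y^{(g_s)})^q)+M_{q,g_s}\sum_{k=g_1}^{g_s-1}e^{-S_{N,k}(q)}$, the constant $M_{q,g_s}$ is bounded and each $e^{-S_{N,k}(q)}\le e^{-(q-1)N(d_N-\epsilon)}$ for large $N$ (using $H_{N,k}\ge Nd_N$ and absorbing the $O((q-1)^2N)$ error), while \eqref{Bnq} gives $e^{-S_{N,g_s}(q)}\mathbb{E}((Y^{(g_s)})^q)\le\mathrm{poly}(N)\,e^{-(q-1)\min_{N'\ge g_s}\sum_{n\le N'}H(W^{(n)})+O((q-1)^2N)}\le e^{-(q-1)N(d_N-2\epsilon)}$, the factor $\max(1,\exp[-(q-1)\sum_{g_s<n\le\widetilde{g_s}}H(W^{(n)})])$ in \eqref{Bnq} being exactly what turns the naive rate $\sum_{n\le g_s}H(W^{(n)})$ into $\min_{N'\ge g_s}\sum_{n\le N'}H(W^{(n)})$. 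A Markov inequality under $\mathbb{Q}$ for $\nu(B_N(\boldsymbol{i}))^{q-1}$ (now $q-1>0$) and Borel--Cantelli then give $\liminf_N\big(\tfrac{\log\nu(B_N(\boldsymbol{i}))}{-N}-d_N\big)\ge0$ $\mathbb{Q}$-a.e.; combined with the upper bound this is $\lim_N\big(\tfrac{\log\nu(B_N(\boldsymbol{i}))}{-N}-d_N\big)=0$, whence the stated identities. I expect the crux to be precisely this identification: showing that $\mathbb{E}((Y^{(g_s)})^q)$ — only polynomial in $N$ when $\sum_{n\le g_s}H(W^{(n)})=\min_{N'\ge g_s}\sum_{n\le N'}H(W^{(n)})$ but exponentially large otherwise — contributes exactly the extra term in \eqref{uN0}, which rests entirely on the sharp two-sided estimate \eqref{Bnq}; a secondary difficulty is keeping the $O((q-1)^2g_{s(N)}(N))$ remainders harmless, which forces $q=q(\epsilon)$ to be chosen after $\epsilon$ and the limit $N\to\infty$ to be taken before $\epsilon\downarrow0$.
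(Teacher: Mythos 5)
Your proposal is correct and follows essentially the same route as the paper: the heart of the argument is the same control of $\mathbb{E}\big(\sum_{B\in\mathcal F^{D}_N(g)}\nu(B)^q\big)$ via Proposition~\ref{tau} together with the two-sided moment bound \eqref{Bnq} for $q>1$ and Proposition~\ref{SN'} for $q<1$, combined with the same Taylor expansions of $S_{N,k}$ and $\sum_{n\le\widetilde{g_s}}T_{W^{(n)}}$ near $q=1$ and the same choice of $q=q(\epsilon)$ after $\epsilon$. Your use of the Peyrière measure with Markov and Borel--Cantelli under $\mathbb{Q}$ is just a repackaging (by Fubini) of the paper's Lemma~\ref{BC} applied pathwise after bounding the expectation of the series, and your slightly more explicit treatment of case (1) (via $\underline d(\nu)=0$ and the $q<1$ bound, which needs no moment hypothesis) matches the paper's Remark~\ref{remmoments}.
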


The proof of Theorem~\ref{dimHmu} will use the following lemma. 
\begin{lemma} \label{BC}
Let $\rho$ be a positive and finite Borel measure on $\mathcal{I}^{\mathbb{N}^+}$. Let $(\mathcal F_N)_{N\in\mathbb{\mathbb{N}^+}}$ be a sequence of partitions of $\Sigma$. For $\boldsymbol{i}\in \Sigma$ and $N\in\mathbb{\mathbb{N}^+}$, denote by $B_N(\boldsymbol{i})$ the  element of $\mathcal F_N$ which contains $\boldsymbol{i}$. Also let $(v_N)_{N\in\mathbb{\mathbb{N}^+}}\in \mathbb{R}^{\mathbb{N}^+}$. Suppose that for all $\eta> 0$ there exists $q > 1$ such that  $\sum_{N \geq 1} e^{N(q-1)(v_N-\eta)} \sum_{B\in\mathcal F_N} \rho(B)^q < +\infty$. Then $\liminf_{N \rightarrow \infty} \left(\frac{\log(\rho(B_N(\boldsymbol{i})))}{-N} - v_N \right) \geq 0$ for $\rho$-almost every $\boldsymbol{i}$. Similarly, if for all $\eta> 0$ there exists $0 < q < 1$ such that $\sum_{N \geq 1} e^{N(q-1)(v_N+\eta)} \sum_{B\in\mathcal F_N} \rho(B)^q < +\infty$. Then $\limsup_{N \rightarrow \infty} \left(\frac{\log(\rho(B_N(\boldsymbol{i})))}{-N} - v_N \right) \leq 0$ for $\rho$-almost every $\boldsymbol{i}$. 
\end{lemma}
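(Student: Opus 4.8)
The plan is a direct first Borel--Cantelli argument, in which Markov's inequality is replaced by the elementary threshold estimate $\rho(B)^{1-q}\le t^{1-q}$, valid for $q>1$ when $\rho(B)\le t$ and for $q<1$ when $\rho(B)\ge t$. I will treat the two assertions in parallel.

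For the first one, I would fix $\eta>0$, pick $q>1$ as provided by the hypothesis, and introduce the ``bad set'' at scale $N$,
$$
E_N=\Big\{\boldsymbol i\in\Sigma:\ \rho\big(B_N(\boldsymbol i)\big)> e^{-N(v_N-\eta)}\Big\},
$$
which is the union of those cells $B\in\mathcal F_N$ with $\rho(B)>e^{-N(v_N-\eta)}$. For each such $B$, writing $\rho(B)=\rho(B)^q\,\rho(B)^{1-q}$ and using $1-q<0$ together with the defining inequality gives $\rho(B)\le \rho(B)^q\,e^{N(q-1)(v_N-\eta)}$; summing over $\mathcal F_N$ yields $\rho(E_N)\le e^{N(q-1)(v_N-\eta)}\sum_{B\in\mathcal F_N}\rho(B)^q$. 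The hypothesis says precisely that $\sum_N\rho(E_N)<+\infty$, so by Borel--Cantelli $\rho(\limsup_N E_N)=0$: for $\rho$-almost every $\boldsymbol i$ one has $\rho(B_N(\boldsymbol i))\le e^{-N(v_N-\eta)}$ for all large $N$, equivalently $\frac{\log\rho(B_N(\boldsymbol i))}{-N}\ge v_N-\eta$ eventually (trivially true as well at those $\boldsymbol i$ where $\rho(B_N(\boldsymbol i))=0$). Hence $\liminf_N\big(\frac{\log\rho(B_N(\boldsymbol i))}{-N}-v_N\big)\ge-\eta$ $\rho$-a.e., and applying this with $\eta=1/k$, $k\in\mathbb N^*$, and intersecting the corresponding full-measure sets gives $\liminf_N\big(\frac{\log\rho(B_N(\boldsymbol i))}{-N}-v_N\big)\ge0$ $\rho$-a.e.

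The second assertion is symmetric: fix $\eta>0$ and $0<q<1$ as in the hypothesis, set $E_N=\{\boldsymbol i:\ 0<\rho(B_N(\boldsymbol i))<e^{-N(v_N+\eta)}\}$, and for a cell $B$ with $0<\rho(B)<e^{-N(v_N+\eta)}$ use now $1-q>0$ to get $\rho(B)^{1-q}<e^{N(q-1)(v_N+\eta)}$, hence again $\rho(E_N)\le e^{N(q-1)(v_N+\eta)}\sum_{B\in\mathcal F_N}\rho(B)^q$ and $\sum_N\rho(E_N)<+\infty$. Borel--Cantelli, together with discarding the $\rho$-null set $\bigcup_N\{\boldsymbol i:\rho(B_N(\boldsymbol i))=0\}$ (a countable union of null sets, each $\mathcal F_N$ being at most countable, so that logarithms make sense $\rho$-a.e.), yields $\rho(B_N(\boldsymbol i))\ge e^{-N(v_N+\eta)}$ for all large $N$ for $\rho$-almost every $\boldsymbol i$, i.e.\ $\frac{\log\rho(B_N(\boldsymbol i))}{-N}\le v_N+\eta$ eventually; letting $\eta=1/k\to0$ finishes the argument.

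I do not expect a genuine obstacle here. The only points that require a little care are (i) recognizing that the correct substitute for Markov's inequality is the monotonicity bound on $\rho(B)^{1-q}$ against the threshold, which is exactly what converts the assumed series bound into summability of $\rho(E_N)$, and (ii) in the $q<1$ case, excluding cells of zero $\rho$-mass before passing to logarithms; both are routine.
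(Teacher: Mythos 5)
Your argument is correct and is essentially the paper's own proof: bounding the $\rho$-measure of the bad set at scale $N$ by $e^{N(q-1)(v_N\mp\eta)}\sum_{B\in\mathcal F_N}\rho(B)^q$ (your cell-by-cell threshold bound is just Markov's inequality applied to $\rho(B_N(\cdot))^{q-1}$, which is how the paper phrases it), then Borel--Cantelli and letting $\eta\to0$ along a countable sequence. The only cosmetic difference is your explicit handling of zero-mass cells, which the paper leaves implicit.
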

We give a proof for the sake of completeness.
\begin{proof}
Let us prove the first assertion. One has 
\begin{align*}
\begin{split}
&\rho \left(\left\{\boldsymbol{i}\in \mathcal{I}^{\mathbb{N}^+}, \ \frac{\log(\rho(B_N(\boldsymbol{i})))}{-N} - v_N \leq -\eta\right\}\right) \\&= \rho \left(\left\{\boldsymbol{i}\in \mathcal{I}^{\mathbb{N}^+},  \ \rho(B_N(\boldsymbol{i}))^{q-1} \geq e^{-N(q-1)(v_N-\eta)}\right\}\right) \\&\leq e^{N(q-1)(v_N-\eta)} \sum_{B\in\mathcal F_N} \rho(B)^q . 
\end{split}
\end{align*}
Under our assumption, by the Borel-Cantelli lemma one deduces that for  $\rho$-a.e.  $\boldsymbol{i}$, one has $\frac{\log(\rho(B_N(\boldsymbol{i})))}{-N} - v_N  \geq -\eta$ for $N$ large enough. Consequently, $$\liminf_{n \rightarrow \infty} \left(\frac{\log(\rho(B_N(\boldsymbol{i})))}{-N} - v_N \right) \geq -\eta,$$
which yields the desired conclusion. 
The other inequality is proven similarly. 
\end{proof}

It will be useful for many proofs to come,  to note that since the $|a_{i,k}|$ are uniformly bounded away from $0$, there are positive constants $\Lambda_a$ and $\Lambda'_a$ such that, independently of $(p^{(n)})_{n\in\mathbb{N}^+} $ and $N\ge 1$, one has 
\begin{equation}\label{Lambdaa}
\Lambda'_a N\le g_1(N)\le g_s(N)\le \Lambda_a N.
\end{equation}
We can take 
\begin{equation}\label{LambdaLambda'}
\begin{cases}
\Lambda_a=1+\max\{|\log(|a_{i,k}|)|^{-1}:\, i\in\mathcal{I},\, 1\le k \le d\}\\
\Lambda'_a=\min \{|\log(|a_{i,k}|)|^{-1}:\, i\in\mathcal{I},\, 1\le k \le d\}.
\end{cases} 
\end{equation}
We will denote $\widetilde{g_{s(N)}(N)}$ by  $\widetilde{g}_{s(N)}(N)$.

\begin{proof}[Proof of Theorem~\ref{dimHmu}] It is convenient to first prove (2). Recall \eqref{tildeW}. It is direct to see that $\sup_{n\ge 1}\phi_{\widetilde W^{(n)}}(q)<+\infty$ implies that $\sup_{n\ge 1}\phi_{W^{(n)}}(q)<+\infty$. We leave it to the reader to check that there is an open  neighborhood $\mathcal U$ of $1$ in $[0,q]$ over which the second derivatives of the mappings $T_{W^{(n)}}$ and  $\tau^{D,n}_{r}$ are bounded independently of $(p^{(n)})_{n\in\mathbb{N}^+}$ and  $(D(N))_{N\in\mathbb{N}^+}$. Thus, noting that $(\tau^{D(N),n}_{r})'(1)=h\big (\Pi^{D(N)}_{r}p^{(n)}\big) $ and $T_{W^{(n)}}'(1)=H(W^{(n)})$, we deduce that there exists a constant $M>0$ depending on $(\widetilde W^{(n)})_{n\ge 1}$ and $q$ only such that for all $q'\in \mathcal U$ such that $q'>1$ one has both $\tau^{D(N),n}_{r}(q')\ge h(\Pi^D_{r}p^{(n)}) (q'-1)-M(q'-1)^2$ and $T_{W^{(n)}}(q')\ge H(W^{(n)})(q'-1)-M(q'-1)^2$. Moreover, $\mathcal U$ can be taken so that for all $q'\in \mathcal U$, the conclusions of Proposition~\ref{Moments1}(2) hold. 

Fix $\eta>0$. Fix $\epsilon\in (0,\log(\#\mathcal I))$ and $N_\epsilon\in\mathbb{N}^+$ such that $\sum_{n=1}^NH(W^{(n)})\ge N\epsilon$ for all $N\ge N_\epsilon$. Then consider $q'>1$ close enough to $1$ in $\mathcal U$ so that $(q'-1)M \Lambda_a\frac{\log(\#\mathcal{I})}{\epsilon}<\eta/4$. Recalling Definitions~\ref{SN} and~\ref{HN}, we get 
\begin{align}
\label{controlSN} &-S_{N,k}(q')\le (q'-1)^2 M g_s(N) -(q'-1)H_{N,k} \text{ for $g_1(N)\le k\le g_s(N)$}\\
\nonumber
\text{and }&\\
\label{controlSN2}& -\sum_{n=g_s(N)+1}^{ \widetilde{g}_s(N)}T_{W^{(n)}}(q')\le  (q'-1)^2 M ( \widetilde{g}_s(N)-g_s(N))  -(q'-1)\sum_{n=g_s(N)+1}^{ \widetilde{g}_s(N)}H(W^{(n)}).
\end{align}

Now, recall Proposition~\ref{tau} and Proposition~\ref{Moments1}(2). Our assumption $\sup_{n\ge 1}\phi_{\widetilde W^{(n)}}(q)<+\infty$ implies that $M_{q',g_s(N)}$ is uniformly bounded by a constant $M_{q'}$. Moreover, \eqref{Lambdaa} and the facts that $\widetilde g_s(N)\le \frac{\log(\#\mathcal{I})}{\epsilon} g_s(N)$ and $g_s(N)\le \Lambda_aN$ imply that $\widetilde g_s(N)\le \Lambda_a\frac{\log(\#\mathcal{I})}{\epsilon}N$. Thus,  for $N$ large enough we get 
\begin{align*}
&\mathbb{E}\Big (\sum_{B\in \mathcal F^{D}_N(g)}\nu (B)^{q'}\Big )\\
&\le M_{q'} e^{N(q'-1)\eta/4} \sum_{k=g_1(N)}^{g_s(N)-1}e^{-(q'-1)H_{N,k}}\\
&\quad + e^{N(q'-1)\eta/4} e^{-(q'-1)H_{N,g_s(N)}}  \frac{Cg_s(N)^{q'} e^{cg_s(N)(q'-1)^2}}{(1-e^{-\frac{(q'-1)}{4q'}\epsilon})^{q'}}B(g_{s}(N),q')\\
&= M_{q'} e^{N(q'-1)\eta/4} \sum_{k=g_1(N)}^{g_s(N)-1}e^{-(q'-1)H_{N,k}}\\
&\quad + e^{N(q'-1)\eta/4} \frac{Cg_s(N)^{q'} e^{cg_s(N)(q'-1)^2}}{(1-e^{-\frac{(q'-1)}{4q'}\epsilon})^{q'}}\max \left (e^{-(q'-1)H_{N,g_s(N)}}, e^{-(q'-1)\sum_{n=1}^{ \widetilde{g}_s(N)}H(W^{(n)})}\right ).
\end{align*}
We can also suppose that $q'-1$ is small enough so that $2\frac{Cg_s(N)^{q'} e^{cg_s(N)(q'-1)^2}}{(1-e^{-\frac{(q'-1)}{4q'}\epsilon})^{q'}}\le e^{N(q'-1)\eta/4}$ for $N$ large enough. Then, by definition of $d_N$, since $q'-1>0$, each term contributing to  the sum in the right-hand side of the last equality is dominated by $e^{-N(q'-1)(d_N -3\eta/4)}$, which yields 
$$
\mathbb{E}\Big (\sum_{B\in \mathcal F^{D}_N(g)}\nu (B)^{q'}\Big )\le (g_s(N)-g_1(N)+1) e^{-N(q'-1)(d_N-3\eta/4)}. 
$$
Consequently,
$$
\mathbb{E}\Big (\sum_{N\ge 1}e^{N(q'-1)(d_N-\eta)}\sum_{B\in \mathcal F^{D}_N(g)}\nu (B)^{q'}\Big )<+\infty.
$$
It follows that with probability~1, conditional on $\nu\neq 0$,  for all $\eta>0$, the series inside the above expectation converges. Using Lemma~\ref{BC}, we deduce that  $\liminf_{N\to +\infty} \frac{\log(\nu(B_N(\boldsymbol{i})))}{-N}-d_N\ge 0$ for $\nu$-almost every $ \boldsymbol{i}$. 

Next fix $\eta>0$ and consider $q''\in(0,1)$ close enough to $1$ in $\mathcal U$ so that $(1-q'')M \Lambda_a<\eta/4$.  Note that \eqref{controlSN} and \eqref{controlSN2} still hold. We then deduce from Proposition~\ref{SN'}, the definition of $d_N$ and the fact that $q''-1<0$ that
\begin{align*}
\mathbb{E}\Big (\sum_{B\in \mathcal F^{D}_N(g)}\nu (B)^{q''}\Big )\le e^{N(q''-1)\eta/4} e^{-(q''-1)d_N},
\end{align*}
which implies 
\begin{equation}\label{controlq''}
\mathbb{E}\Big (\sum_{N\ge 1}e^{N(q''-1)(d_N-\eta)}\sum_{B\in \mathcal F^{D}_N(g)}\nu (B)^{q''}\Big )<+\infty.
\end{equation}
Lemma~\ref{BC} then yields that with probability~1, conditional on $\nu\neq 0$,  for $\nu$-almost every~$ \boldsymbol{i}$,  $\limsup_{N\to +\infty} \frac{\log(\nu(B_N(\boldsymbol{i})))}{-N}-d_N\le 0$.
\begin{remark}\label{remmoments}
Note that to get \eqref{controlq''}, we did not use the assumption of Proposition~\ref{Moments1}(2). 
\end{remark}
Finally, $\liminf_{N\to +\infty}\Big |\frac{\log(\nu(B_N(\boldsymbol{i})))}{-N}-d_N\Big|=0$, for $\nu$-almost every $ \boldsymbol{i}$, hence the desired conclusion holds. 

\noindent (1) Due to Remark~\ref{remmoments} this follows from the same argument as in item (2).\end{proof}

Next we prove Theorem~\ref{thm-2.4}. We will use the following proposition, which is a consequence of the strong law of large numbers applied, for each $1\le k\le d$, to the sequence of uniformly bounded and independent random variables $X_n(\omega,\boldsymbol{i})=\log(|a_{i_n,k}|)$ with respect to the Peyri\`ere measure  defined on $(\Omega\times\Sigma,\mathcal G_n\otimes\mathcal B(\Sigma))$ as
\begin{equation}\label{Peyriere}
\mathcal Q(\mathrm{d} \omega,\mathrm{d}\boldsymbol{i})=\mathbb{P}(\mathrm{d\omega})\nu_\omega(\mathrm{d}\boldsymbol{i}).
\end{equation}

\begin{proposition}\label{Lyap}With probability~1, conditional on $\nu\neq 0$, for  $\nu$-almost every $\boldsymbol i\in\mathcal{I}^{\mathbb{N}^+}$, for all $1\le k\le d$, one has 
$$
\lim_{N\to +\infty} \Big |\chi_k(\widehat{\boldsymbol{p}}_N)+\frac{1}{N}\sum_{n=1}^N\log(|a_{i_n,k}|)\Big |
=0.
$$
\end{proposition}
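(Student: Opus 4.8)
The plan is to follow the route indicated by the statement of the proposition: realize, for each fixed direction $k\in\{1,\dots,d\}$, the quantities $X_n(\omega,\boldsymbol{i})=\log(|a_{\boldsymbol{i}_n,k}|)$ as partial sums of independent and uniformly bounded summands under the Peyri\`ere measure $\mathcal{Q}$ of \eqref{Peyriere}, invoke the strong law of large numbers for non identically distributed random variables, and then transfer the resulting $\mathcal{Q}$-almost sure statement to the desired conclusion. The key input is the law of the coordinate process $(\boldsymbol{i}_n)_{n\ge1}$ under $\mathcal{Q}$. For $w=w_1\cdots w_n\in\mathcal{I}^n$, the variable $Q(w)$ of \eqref{Qvw} depends only on the weights $W(v)$ indexed by strict prefixes $v$ of $w$, whereas $Y(w)$ depends on the weights indexed by $w$ and its descendants; these two families are independent, and $\mathbb{E}(Y(w))=\mathbb{E}(Y)=1$ (Theorem~\ref{sufcond2}(1)), so $\mathcal{Q}(\boldsymbol{i}_{|n}=w)=\mathbb{E}(\nu_\omega([w]))=\mathbb{E}(Q(w))\,\mathbb{E}(Y(w))=\prod_{m=1}^n p^{(m)}_{w_m}$. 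In other words, under $\mathcal{Q}$ the digits $\boldsymbol{i}_n$ are independent with $\mathcal{Q}(\boldsymbol{i}_n=i)=p^{(n)}_i$, that is, the $\Sigma$-marginal of $\mathcal{Q}$ is the inhomogeneous Bernoulli product $m=\mathbb{E}(\nu)=\bigotimes_{n\ge1}\big(\sum_{i\in\mathcal{I}}p^{(n)}_i\delta_i\big)$.

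It follows that the $X_n$ are $\mathcal{Q}$-independent, bounded by $M_a:=\max_{i\in\mathcal{I},\,1\le j\le d}|\log(|a_{i,j}|)|<+\infty$ (recall $0<|a_{i,j}|<1$), and, by \eqref{chip}, have means $\mathbb{E}_{\mathcal{Q}}(X_n)=\sum_{i\in\mathcal{I}} p^{(n)}_i\log(|a_{i,k}|)=-\chi_k(p^{(n)})$. Since $\mathrm{Var}_{\mathcal{Q}}(X_n)\le M_a^2$ for all $n$, the series $\sum_{n\ge1} n^{-2}\mathrm{Var}_{\mathcal{Q}}(X_n)$ converges, so Kolmogorov's strong law of large numbers yields $N^{-1}\sum_{n=1}^N\big(X_n-\mathbb{E}_{\mathcal{Q}}(X_n)\big)\to 0$, $\mathcal{Q}$-almost surely. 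Because $p\mapsto\chi_k(p)$ is linear and $\widehat{\boldsymbol{p}}_N=N^{-1}\sum_{n=1}^N p^{(n)}$, one has $N^{-1}\sum_{n=1}^N\mathbb{E}_{\mathcal{Q}}(X_n)=-\chi_k(\widehat{\boldsymbol{p}}_N)$, whence $\mathcal{Q}$-almost surely $\big|\chi_k(\widehat{\boldsymbol{p}}_N)+N^{-1}\sum_{n=1}^N\log(|a_{\boldsymbol{i}_n,k}|)\big|\to 0$ as $N\to+\infty$.

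To conclude, let $B_k\subset\Sigma$ be the Borel set of $\boldsymbol{i}$ for which this last convergence fails; this event depends on $\boldsymbol{i}$ alone, and the previous step gives $m(B_k)=\mathcal{Q}(\Omega\times B_k)=0$. Since $\mathbb{E}(\nu_\omega(B_k))=m(B_k)=0$, we obtain $\nu_\omega(B_k)=0$ for $\mathbb{P}$-almost every $\omega$. Taking the union over the finitely many $k\in\{1,\dots,d\}$ shows that, with probability $1$, hence in particular conditional on $\{\nu\neq0\}$, $\nu_\omega$-almost every $\boldsymbol{i}$ satisfies the asserted convergence for all $1\le k\le d$. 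The argument involves no real obstacle; the only step deserving attention is the identification of the $\mathcal{Q}$-law of $(\boldsymbol{i}_n)_{n\ge1}$, which is a direct consequence of the independence of the weights $(W(v))_{v\in\Sigma^*}$ and of the normalization $\mathbb{E}(Y)=1$.
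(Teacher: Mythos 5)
Your proof is correct and follows essentially the same route as the paper, which obtains the proposition by applying the strong law of large numbers to the uniformly bounded, independent variables $X_n(\omega,\boldsymbol{i})=\log(|a_{i_n,k}|)$ under the Peyri\`ere measure $\mathcal Q$ of \eqref{Peyriere}, whose $\Sigma$-marginal is $m=\mathbb{E}(\nu)$. The only cosmetic point is your line $\mathbb{E}(Y(w))=\mathbb{E}(Y)=1$: in the inhomogeneous setting $Y(w)$ is not distributed like $Y$, but $\mathbb{E}(Y(w))=1$ does hold (e.g.\ from $1=\mathbb{E}(Y)=\sum_{|w|=n}\mathbb{E}(Q(w))\mathbb{E}(Y(w))$ and the fact that the $Y(w)$, $|w|=n$, are identically distributed), which is all your argument needs.
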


\begin{proof}[Proof of Theorem~\ref{thm-2.4}]We start with item (2).  Recall the sets $(A_r)_{1\le r\le s}=(A_r(N))_{1\le r\le s(N)}$ defined in Section~\ref{thmdim}. For each $B=B(U_1,\ldots,U_s)\in \mathcal F_N^{D(N)}(g)$, let $Q_B$ be the parallelepiped $\prod_{r=1}^s \big (\pi^{A_r}\circ f_{U'_1U'_2\cdots U'_r}([0,1]^d)\big)$, where we recall that $\pi^{A_r}$ is the orthogonal projection on $\mathbb{R}^{A_r}$, and $(U'_1,\ldots ,U'_s)$ is any element of $\mathcal U(B)=\prod_{r=1}^s\Pi_r^{-1}(U_r)$,  $\Pi_r$ standing for $\Pi^{D(N)}_r$ (the independence with respect to $(U'_1,\ldots,U'_s)$ comes from the fact that $\pi^{A_r}=\pi^{A_r}\circ \pi^{D_r}$ and the definition of the sets $D_r(N)$). 

Note that by construction, the sets $Q_B$ have pairwise disjoint interiors. Also, for each $B\in  \mathcal F_N^{D}(g)$, one has  $B=\bigcup_{w\in\mathcal U(B)}[w]$, $\pi(B)\subset Q_B$,  and $K_\omega\cap Q_B=\bigcup_{w\in \mathcal U(B)}K_\omega\cap f_w([0,1]^d)$.

By Proposition~\ref{mursnonchargés}, the boundaries of the sets $f_w((0,1)^d)$ have $0$ $\mu$-mass, and $\nu([w])=\mu(f_w((0,1)^d))$ for all $w\in\Sigma^*$. This implies that $\nu(B)=\sum_{w\in\mathcal U(B)}\mu(f_w((0,1)^d))=\mu(\bigcup_{w\in \mathcal U(B)}f_w([0,1]^d))$, since $\nu(B)=\sum_{w\in\mathcal U_B} \nu([w])$ and the sets  $f_w([0,1]^d)$ have pairwise disjoint interiors. Consequently, since $\mu$ is supported on $K_\omega$ and $K_\omega\cap Q_B=\bigcup_{w\in \mathcal U(B)}K_\omega\cap f_w([0,1]^d)$, we conclude that  $\nu(B)=\mu(Q_B)$. Moreover, for $\mu$-almost every $z\in K$, for every $N\in\mathbb{N}^+$, there is a unique element $B\in \mathcal F^{D}_N(g)$ such that $z\in \mathrm{int}(Q_B)$. This is due to Proposition~\ref{mursnonchargés} again. Denote this  $Q_B$ by $Q_N(z)$. Theorem~\ref{dimHmu} implies that  for $\mu$-almost every $z\in K$ one has 
$$
\liminf_{N\to +\infty} \frac{\log(\mu(Q_N(z)))}{-N}= \underline{d}(\mu) \text{ and }\limsup_{N\to +\infty} \frac{\log(\mu(Q_N(z)))}{-N}= \overline{d}(\mu).
$$
 Also, due to Proposition~\ref{Lyap}, and the definition of the $g_r(N)$, for $\mu$-almost every $z\in K$, if $r(Q_N(z))$ and $R(Q_N(z))$ stand for the smallest and the biggest side of $Q_N(z)$, one has 
$$
\log(r(Q_N(z))))\sim \log(R(Q_N(z)))\sim -N \text{ at $+\infty$}.
$$
Lemma~\ref{calcdim} can thus be applied with $\mathcal G_N=\{Q_B:\, B\in\mathcal F^{D(N)}_N(g)\}$, $\delta_1=\delta_2=\underline{d}(\mu)$, $\Delta_1=\Delta_2=\overline{d}(\mu)$,  as well as $\epsilon_1$ and $\epsilon_2$ arbitrarily close to $0$. It follows that $\dim_H(\mu)= \underline d(\mu)$ and  $\dim_P(\mu)= \overline d(\mu)$.

\noindent 
(3) The fact that the law of $W^{(n)}$ is independent of $n$ implies that $\widehat{\boldsymbol{p}}_N$ is independent~$N$, so $g_r(N)/N$ converges to $\widetilde \chi^{-1}_r$ as $N$ tends to $+\infty$; moreover, the  $H(W^{(n)})$ are positive so  $\widetilde N=N$ for all $N\ge 1$. The previous properties combined with the definitions  of $\underline{d}(\mu)$ and $\overline{d}(\mu)$  and point (1) of the theorem yield the desired 
conclusion. 

\noindent 
(1) Similar arguments as in the proof of (2) yield $\dim_H(\mu)\le \underline d(\mu)$ and $\dim_P(\mu)\le \overline d(\mu)$. However, the assumption $\liminf_{N\to+\infty} N^{-1} \sum_{n=1}^H H(W^{(n)})=0$ and the definition of $d_N$  directly imply $\dim_H(\mu)=0$, since $g_1(N)/N$ is bounded. 

\noindent 
(4) It is easily seen from the definitions of $d_N$ and $\widetilde d_N$ that 
\begin{align*}
\underline{d}(\mu)=\min \Big (\liminf_{N\to+\infty} \widetilde d_N, \liminf_{N\to+\infty} \frac{1}{N}\sum_{n=1}^{ \widetilde{g}_s(N)}H(W^{(n)})\Big ).
\end{align*}
Denote  $\widetilde{g}_s(N)$ by $M$. By definition of the Lyapounov exponents, and since $\widetilde M=M\ge g_s(N)$, there exists a constant $C>0$ independent of $N$ such that if $N'$ is the largest integer  for which  $g_{s(N')}(N')\le M$, one has $N'\ge N-C$ and  $M-C\le g_{s(N')}(N')\le \widetilde{g}_{s(N')}(N')\le M=\widetilde M$.  Hence, since by the assumptions of Theorem~\ref{sufcond2}(1) one has $|H(W^{(n)})|=o(n)$, one obtains  that  for all $\epsilon>0$, for $N$ large enough,  $\frac{1}{N}\sum_{n=1}^{ \widetilde{g}_s(N)}H(W^{(n)})\ge \frac{1}{N'}\sum_{n=1}^{ {g_{s(N')}(N')}}H(W^{(n)})-\epsilon$. This is enough to conclude that $\underline{d}(\mu)\ge \liminf_{N\to+\infty} \widetilde d_{N}$, hence $\underline{d}(\mu)= \liminf_{N\to+\infty} \widetilde d_{N}$. 

Now we give an example for which one has 
$$
\overline d(\mu)<\min \Big (\limsup_{N\to+\infty} \widetilde d_N, \limsup_{N\to+\infty} \frac{1}{N}\sum_{n=1}^{\widetilde g_s(N)}H(W^{(n)})\Big ).
$$ 
We work on a Sierpi\'nski carpet, so that the Lyapunov exponents $\chi_1$ and $\chi_2$ are constant, and we assume that they are distinct. We fix a probability vector $p=(p_i)_{i\in\mathcal I}$ with positive components, as well as three non negative random vectors $\widetilde W_1=(\widetilde W_{1,i})_{i\in\mathcal I}$, $\widetilde W_2=(\widetilde W_{2,i})_{i\in\mathcal I}$ and $\widetilde W_3=(\widetilde W_{3,i})_{i\in\mathcal I}$ whose components are positive and bounded, with  expectation 1, such that  setting $W_j=(p_i\widetilde W_{j,i})_{i\in\mathcal I}$, and $H_j=H(W_j)$, one has  $H_2>H_1>h(\Pi_2(p))$ and $H_3<0$.  We define a sequence $(W^{(n)})_{n\ge 1}$ of random vectors as follows. 

Fix $M_0=1$ and an integer $N_1>1$. Then set $N_2=\lceil \frac{\chi_1}{\chi_2}N_1\rceil$, $M_1= g_2(N_1)$, $M_2=g_2(N_2)$, $M_3=M_2-H_3^{-1}\sum_{n=g_2(N_1)+1}^{M_2}H_2$. Then, define 
$$
W^{(n)}= \begin{cases}
W_1&\text{if }M_0\le n\le M_1,\\
W_2& \text{if }M_1+1\le n\le M_2,\\
W_3&\text{if }M_2+1\le n\le M_3.
\end{cases}
$$
By construction, $\sum_{n=M_1+1}^N H(W^{(n)})>0$ for all $N\in[M_1+1,M_3)$ (with a maximum at $M_2$) and $\sum_{n=M_1+1}^{M_3}H(W^{(n)})=0$, so that with the definition of $W^{(n)}$ for $n\ge M_3+1$ to follow, this implies that $\widetilde N=M_3$ for all $N\in [M_1+1,M_3]$. The construction continues recursively by updating the values $M_0$ and $N_1$  so that $M_0=M_3+1$ and $N_1$ satisfies $g_2(N_1)>M_3^2\ge g_2(N_1-1)$, and defining  $N_2$, $M_2$, $M_3$ and the $W^{(n)}$ as above, and so on... in particular, asymptotically $g_1(N_2)\sim g_2(N_1)$, $M_2\sim \frac{\chi_1}{\chi_2}M_1$  and $M_3\sim\big ( \frac{\chi_1}{\chi_2}+\frac{H_2}{|H_3|}(\frac{\chi_1}{\chi_2}-1)\big )M_1$. 

By construction $\liminf_{N\to+\infty}\sum_{n=1}^NH(W^{(n)}) \ge \frac{H_1}{ \frac{\chi_1}{\chi_2}+\frac{H_2}{|H_3|}(\frac{\chi_1}{\chi_2}-1)}>0$, so the sequence  $(W^{(n)})_{n\ge 1}$ yields a non degenerate IMM. Moreover, it is easily checked that $\overline d(\mu)\le \frac{H_1}{\chi_1}+(\frac{1}{\chi_2}-\frac{1}{\chi_1})h(\Pi_2(p))$ while $ \limsup_{N\to+\infty} \widetilde d_N\ge\frac{\chi_2}{\chi_1^2}(H_2-H_1)+ \frac{H_2}{\chi_1}+(\frac{1}{\chi_2}-\frac{1}{\chi_1})h(\Pi_2(p))$ and $ \limsup_{N\to+\infty} \frac{1}{N}\sum_{n=1}^{\widetilde g_2(N)}H(W^{(n)})\ge \frac{H_1}{\chi_2}$. 
\end{proof}

\begin{proof} [Proof of Proposition~\ref{mursnonchargés}] At first, note that since the $f_i((0,1)^d)$, $i\in\mathcal I$, are pairwise disjoint, for $w\in\mathcal I^*$, the inclusion $\pi([w])\subset f_w([0,1]^d)$ implies that for $\mu(\pi([w]))>\nu([w])$ to hold,  there must be some cylinder $w'$ such that $[w']\cap [w]=\emptyset $ but $\partial f_w([0,1]^d)\cap \partial f_{w'}([0,1]^d)\neq \emptyset$ and $\mu(\partial f_w([0,1]^d)\cap \partial f_{w'}([0,1]^d))>0$. Take such a cylinder.  Without loss of generality we can suppose that $|w|=|w'|$; then, for each point $z\in \partial f_w([0,1]^d)\cap \partial f_{w'}([0,1]^d)$, upon exchanging $w$ and $w'$, if necessary, their must exist $1\le k\le d$ such that for all $n>n_0=|w|$, there exists  $\widetilde w_n\in   \mathcal I(k,0)$ and  $\widetilde w'_n\in \mathcal I(k,1)$ such that for all $n>n_0$, $z\in f_{w\cdot \widetilde w_{n_0+1}\cdots  \widetilde w_n}([0,1]^d)\cap  f_{w'\cdot \widetilde w'_{n_0+1}\cdots  \widetilde w'_n}([0,1]^d)$. Also, if $z$ belongs to another parallepiped $f_{w''}([0,1]^d)$ whose interior is disjoint from that of $f_{w}([0,1]^d)$, the same property as with $w'$ must hold with $w''$. Thus, 
$$
\pi^{-1}\big ( \partial f_w([0,1]^d)\cap \partial f_{w'}([0,1]^d)\big )\subset \bigcup_{p\ge 1}\bigcup_{s\in\{0,1\},1\le k\le d}\bigcap_{n>p} \bigcup_{\substack{w''\in\mathcal{I}^n\\ w''_{p+1},\ldots,w''_n \in  \mathcal I(k,s)}}[w''].
$$ 
However, it is easily seen that by construction of $\nu$, for all $s\in\{0,1\}$  and $1\le k\le d$, and for all $m>n> p$, 
$$
\mathbb{E}\Big (\nu_m\Big (\bigcup_{\substack{w''\in\mathcal{I}^n\\ w''_{p+1},\ldots,w''_n \in \mathcal I(k,s)}}[w'']\Big)\Big)\le \prod_{j=p+1}^n\Big(\sum_{i\in\mathcal I(k,s)}\mathbb{E}(W^{(j)}_{i})\Big).
$$
By using the Fatou lemma one deduces that 
$$
\mathbb{E}\Big (\nu\Big( \bigcup_{\substack{w''\in\mathcal{I}^n\\ w''_{p+1},\ldots,w''_n \in \mathcal I(k,s)}}[w']\Big)\Big)\le \prod_{j=p+1}^n\Big (\sum_{i\in\mathcal I(k,s)}\mathbb{E}(W^{(j)}_{i})\Big).
$$
Consequently, 
$$
\mathbb{E}\Big (\nu\Big(\bigcap_{n>p} \bigcup_{\substack{w''\in\mathcal{I}^n\\ w''_{p+1},\ldots,w''_n \in \mathcal I(k,s)}}[w'']\Big)\Big)=0.
$$
Finally, it is almost sure that for all $w,w'$ in $\Sigma^*$ such that $[w']\cap [w]=\emptyset $, one has $\nu\big (\pi^{-1}\big ( \partial f_w([0,1]^d)\cap \partial f_{w'}([0,1]^d)\big )\big )=0$, hence $\mu(\pi([w]))=\nu([w])$. 

It follows from what precedes that the $\mu$-mass of $\partial f_w([0,1]^d)$ is only due to that the $\nu$-mass of subcylinders of $[w]$. Reasoning  as above  we see that for each $n\in\mathbb{N}^+$, the subcylinders of $w$ of generation $|w|+n$ which do contribute to this mass must be of the form $ww'$, with $w'\in  \bigcup_{k=1}^d\bigcup_{s\in\{0,1\}} \mathcal I(k,s)^n$. Then, a calculation similar to the previous one  shows that $\nu (\pi^{-1}(\partial f_w([0,1]^d)))=0$.  
\end{proof}

\subsection{Sketch of proof of Theorem~\ref{dimep}} We already know by Theorem~\ref{thm-2.4}(2) that $\dim_H(\mu)$ and $\dim_P(\mu)$ do exist.  For each rational number $T\in[1,\lambda)$, consider the sequence of scales $e^{-T_N}$ with $T_N=T\lambda^N$, $N\ge 0$. 

These scales define a sequence $(D(T_N), s(T_N), g_1(T_N),\ldots, g_{s(T_N)}(T_N))_{N\ge 1}$, a sequence $(d_{T_N})_{N\ge 1}$ by replacing $N$ by $T_N$ in Definition~\ref{HN}, and associated partitions as in Propositions~\ref{tau} and~\ref{SN'}. Applying these propositions to this new sequence of partitions, approximating the sums involved in $d_{T_N}$ by integrals thanks to the exponential continuity and periodicity property of $(W^{(t)})_{t>0}$, and noting that the assumption of Proposition~\ref{mursnonchargés} holds since the mapping $t\mapsto p^{(t)}$ is positive and continuous as well as exponentially periodic, yields that with probability~1, conditional on $\mu\neq 0$, 
\begin{align*}
\liminf_{N\to +\infty} \frac{\log(\mu(Q_{T_N}(z)))}{-T_N}&=\liminf_{N\to+\infty} \min (\delta_1(T_N),\delta_2(T_N))\\
 \limsup_{N\to +\infty} \frac{\log(\mu(Q_{T_N}(z)))}{-T_N}&=\limsup_{N\to+\infty} \min (\delta_1(T_N),\delta_2(T_N))
 \end{align*}
for $\mu$-almost every $z$, where  $Q_{T_N}(z)$ is a parallelepided whose sides lengths have logarithms equivalent to $-T_N$. Since the set of rational numbers of $[1,\lambda)$ is countable, the previous equality holds simultaneously for all  $T\in \mathbb Q\cap[1,\lambda)$. However, by $\lambda$-exponential periodicity, one has $\min (\delta_1(T_N),\delta_2(T_N))=\min (\delta_1(T),\delta_2(T))=:\delta(T)$. It follows that given $\epsilon>0$, for each integer $q\in\mathbb{N}^+$, there exists $N_q\in \mathbb{N}^+$ such that for $\mu$-almost every $z$, for all $T\in \mathcal D_{q,\lambda}=(q^{-1} \mathbb{N})\cap [1,\lambda)$ and $N\ge N_q$, one has $ \frac{\log(\mu(Q_{T_N}(z)))}{-T_{N}}\in [\delta(T)-\epsilon, \delta(T)+\epsilon]$. Moreover, for all $j\in \mathbb{N}^+$ there exists $T^{(j)}\in \mathcal D_{q,\lambda}$ and $N\in\mathbb{N}$ such that $T^{(j)}\lambda^N\le j<(T^{(j)}+q^{-1})\lambda^N$. This makes it possible to construct a parallelepiped $\widetilde Q_j(z)$ containing $z$ as interior point, and whose  sides lengths have logarithms equivalent to $-j$ as $j\to+\infty$, and such that for $j$ large enough $\frac{\log(\mu(\widetilde Q_j(z)))}{-j}\in \bigcup_{T\in  \mathcal D_{q,\lambda}}   [\delta(T)-2\epsilon, \delta(T)+2\epsilon]$. Pick $T_{\min}(q)$ and $T_{\max}(q)$ at which $\delta_{| \mathcal D_{q,\lambda}}$ takes its minimum and its maximum respectively. The previous lines together with Lemma~\ref{calcdim} imply that $\dim_H(\mu)\in  [\delta(T_{\min}(q))-2\epsilon, \delta(T_{\min}(q))+2\epsilon]$ and $\dim_P(\mu)\in [\delta(T_{\max}(q))-2\epsilon, \delta(T_{\max}(q))+2\epsilon]$. Letting $q$ tend to $+\infty$ and then  $\epsilon$ to 0 yields \eqref{dimmuep} and  \eqref{Dimmuep} when $T$ is restricted to rational numbers of $[1,\lambda]$. In particular, $\dim_H(\mu)\ge \inf_{T\in[1,\lambda]}\min(\delta_1(T),\delta_2(T))$ and $\dim_P(\mu)\le \sup_{T\in[1,\lambda]}\min(\delta_1(T),\delta_2(T))$. On the other hand, for any $\epsilon$, we can take $T_{\epsilon}$ and $T_\epsilon'$ in $[1,\lambda]$ such that   $\inf_{T\in[1,\lambda]}\min(\delta_1(T),\delta_2(T))\ge \min(\delta_1(T_\epsilon),\delta_2(T_\epsilon))-\epsilon$ and $\sup_{T\in[1,\lambda]}\min(\delta_1(T),\delta_2(T))\le \min(\delta_1(T_\epsilon'),\delta_2(T_\epsilon'))-\epsilon$. Considering $\widetilde  {\mathcal D}_{q,\lambda}=\mathcal D_{q,\lambda}\cup\{T_\epsilon, T_\epsilon'\}$ instead of $\mathcal D_{q,\lambda}$ and letting $\epsilon\to 0$  yields $ \dim_H(\mu)\le \inf_{T\in[1,\lambda]}\min(\delta_1(T),\delta_2(T))$ and $\dim_P(\mu)\ge \sup_{T\in[1,\lambda]}\min(\delta_1(T),\delta_2(T))$.

\section{Proof of Theorem~\ref{dimKd}}\label{UB}

We establish that the value provided by Theorem~\ref{dimKd} for $\dim_H K_\omega$ is sharp. To do so, we use suitable coverings.  In the spirit of what Bedford did for Sierpi\'nski carpets~\cite{ref3}, Gatzouras and Lalley for statistically self-affine Sierpi\'nski carpets~\cite{ref9}, and recently the first author and Feng in \cite{ref6} for statistically self-affine Sierpi\'nski sponges, our argument appeals to digit frequencies.  However, as a counterpart of the fact that considering Mandelbrot measures is in general too limited to get a variational principle for $\dim_H K$, we associate to each  element of $\mathscr L$ (recall the Definition~\ref{L} of $\mathscr L$) a sequence of \textit{localized} frequencies for the digits of any point $\boldsymbol{i}\in\mathcal{I}^{\mathbb{N}^+}$.

Fix $\ell= (\ell_m)_{m\in\mathbb{N}^+}\in \mathscr L$, and denote $L_m=\ell_1+\cdots+\ell_m$ for $m\in\mathbb{N}^+$, and $L_0=0$. 

For $\boldsymbol{i}\in \mathcal{I}^{\mathbb{N}^+}$, $i\in\mathcal I$, $m\in\mathbb{N}^+$,  and $L_{m-1}+1\le n\le L_m$, set 
\begin{equation}\label{pni}
p^{(n)}_i(\boldsymbol{i})=\frac{1}{\ell_m}\sum_{n=L_{m-1}+1}^{L_m} \mathbf{1}_{\{i\}}(\boldsymbol{i}_n).
\end{equation}
If $L_{m-1}+1\le n\le L_m$, then  the probability vector $p^{(n)}(\boldsymbol{i})$  provides the frequency of the digits $i\in\mathcal I$ in the subword $ \boldsymbol{i}_{L_{m-1}+1}\cdots \boldsymbol{i}_{L_m}$. 

Before dealing with $\dim_H K_\omega$, we provide another approach to get the upper bound $\dim_H(\mu)\le \underline{d}(\nu)$ in Theorem~\ref{thm-2.4}(1) using coverings, and under suitable assumptions. It will exhibit estimates for the expectation of some covering numbers, which turn out to be crucial to get the sharp upper bound for $\dim_H K_\omega$.  

\subsection{Alternative proof of the upper bound $\dim_H(\mu)\le \underline{d}(\mu)$ in Theorem~\ref{thm-2.4}(1) when $\mu$ is of type $\ell$}\label{UPdimH} Recall \eqref{tildeW}. We assume that for all integers $m\ge 1$, $p^{(n)}$ is independent of $n$ for $n\in [L_{m-1}+1,L_m]$, and that there exists $\eta\in (0,1)$ such that $\inf_{n\ge 1, i\in\mathcal I}p^{(n)}_i\ge \eta$. We will use the following lemma, whose proof we  postpone to the end of this subsection.

\begin{lemma}\label{local control} Under the assumptions of Theorem~\ref{thm-2.4}(1), if $\log(m)=o(\ell_m)$ and with probability~1, conditional on $\nu\neq 0$, for $\nu$-almost every $\boldsymbol{i}$, one has 
\begin{equation}\label{prophp}
\lim_{m\to +\infty} \ell_m^{-1}\left |\sum_{n=L_{m-1}+1}^{L_m} \log (W_{i_n}(\boldsymbol{i}_{|n-1}))+H(W^{(L_m)})\right|=\lim_{n\to +\infty} \left \|p^{(n)}(\boldsymbol{i})-p^{(n)}\right \|_\infty=0.
\end{equation}
\end{lemma}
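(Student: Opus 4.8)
The statement to be proven is Lemma~\ref{local control}: under the type-$\ell$ hypothesis, the condition $\log m = o(\ell_m)$, and the standing assumptions of Theorem~\ref{thm-2.4}(1), we get two simultaneous almost-sure asymptotic statements for $\nu$-a.e.\ $\boldsymbol{i}$, namely the convergence of the localized log-weight averages to $-H(W^{(L_m)})$ and the convergence of the localized digit frequencies $p^{(n)}(\boldsymbol{i})$ to the prescribed vectors $p^{(n)}$. The natural framework is the Peyri\`ere measure $\mathcal Q(\mathrm d\omega,\mathrm d\boldsymbol{i}) = \mathbb P(\mathrm d\omega)\,\nu_\omega(\mathrm d\boldsymbol{i})$ on $\Omega\times\Sigma$, as in \eqref{Peyriere}; a $\mathcal Q$-a.s.\ statement translates, via $\mathbb E_{\mathcal Q}(\cdot) = \mathbb E(\int \cdot\, \mathrm d\nu)$ and $\mathbb E(\|\nu\|) = 1$, into a $\mathbb P$-a.s.\ statement valid for $\nu$-a.e.\ $\boldsymbol{i}$, conditional on $\nu\neq 0$.

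\textbf{Step 1: reduce to a block sum of independent bounded increments under $\mathcal Q$.} For fixed $m$, the increments $X_n(\omega,\boldsymbol{i}) = \log W_{\boldsymbol{i}_n}(\boldsymbol{i}_{|n-1})$ indexed by $L_{m-1}+1\le n\le L_m$ are $\mathcal Q$-independent (as recalled in the proof of Theorem~\ref{sufcond2}: the $X_n$ are $\mathcal Q$-independent and $\mathbb E_{\mathcal Q}(g(X_n)) = \mathbb E(\sum_i W^{(n)}_i g(W^{(n)}_i))$), so $\mathbb E_{\mathcal Q}(X_n) = -H(W^{(n)}) = -H(W^{(L_m)})$ on each block by the type-$\ell$ property. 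Since $\inf_{n,i} p^{(n)}_i \ge \eta$ and $\sum_i W^{(n)}_i$ has expectation $1$, the quantity $\mathbb E_{\mathcal Q}(|X_n|^h)= \mathbb E(\sum_i W^{(n)}_i |\log W^{(n)}_i|^h)$ is finite for $h\in(1,2]$; under the Theorem~\ref{sufcond2}(1) hypotheses with $b_n = O(n)$ this is moreover $O(1)$, and in any case uniformly bounded on each block (indeed within a block the law is fixed). Set $\Sigma_m = \ell_m^{-1}\sum_{n=L_{m-1}+1}^{L_m}(X_n - \mathbb E_{\mathcal Q}(X_n))$; this is the centered block average. Applying the von Bahr--Esseen type inequality of Lemma~\ref{lemvBE} conditionally (the $X_n - \mathbb E_{\mathcal Q}(X_n)$ being centered and $\mathcal Q$-independent within the block) gives $\mathbb E_{\mathcal Q}(|\ell_m \Sigma_m|^h) \le 2 \sum_{n=L_{m-1}+1}^{L_m} \mathbb E_{\mathcal Q}(|X_n - \mathbb E_{\mathcal Q}(X_n)|^h) \le C\,\ell_m$ for a constant $C$ independent of $m$, whence $\mathbb E_{\mathcal Q}(|\Sigma_m|^h) \le C\,\ell_m^{1-h}$.

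\textbf{Step 2: Borel--Cantelli.} For $\delta>0$, $\mathcal Q(|\Sigma_m|>\delta) \le \delta^{-h} C\,\ell_m^{1-h}$. Since $\ell\in\mathscr L$ one has $\ell_m$ strictly increasing, but that alone does not give summability of $\ell_m^{1-h}$; this is where the extra hypothesis $\log m = o(\ell_m)$ enters. From $\log m = o(\ell_m)$ we get $\ell_m \ge 2(h-1)^{-1}\log m$ for $m$ large, hence $\ell_m^{1-h} = e^{-(h-1)\ell_m} \le m^{-2}$ eventually, so $\sum_m \mathcal Q(|\Sigma_m|>\delta) < \infty$. By Borel--Cantelli, $\mathcal Q$-a.s.\ $\Sigma_m\to 0$, i.e.\ $\ell_m^{-1}\sum_{n=L_{m-1}+1}^{L_m}(\log W_{\boldsymbol{i}_n}(\boldsymbol{i}_{|n-1}) + H(W^{(L_m)})) \to 0$, which is the first claimed limit. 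For the second claim, apply exactly the same argument with $X_n$ replaced, for each fixed $i\in\mathcal I$, by the bounded random variable $\mathbf 1_{\{i\}}(\boldsymbol{i}_n)$, whose $\mathcal Q$-expectation is $\mathbb E_{\mathcal Q}(\mathbf 1_{\{i\}}(\boldsymbol{i}_n)) = \mathbb E(\sum_{i'} W^{(n)}_{i'}\mathbf 1_{\{i\}}(i')) = \mathbb E(W^{(n)}_i) = p^{(n)}_i$; since these increments are bounded by $1$ the moment bound is immediate and the same Borel--Cantelli step (finitely many $i$, so a union bound costs nothing) gives $p^{(n)}(\boldsymbol{i}) - p^{(n)} \to 0$ in $\ell^\infty$ along $n$ within blocks, hence along all $n$ since $p^{(n)}$ is constant on each block. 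Transferring from $\mathcal Q$ to $\mathbb P$ finishes the proof.

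\textbf{Main obstacle.} The only genuinely delicate point is obtaining a \emph{uniform-in-$m$} moment bound of the form $\mathbb E_{\mathcal Q}(|\Sigma_m|^h) \le C\,\ell_m^{1-h}$ with $C$ not depending on $m$: one must check that $\sup_m \ell_m^{-1}\sum_{n=L_{m-1}+1}^{L_m}\mathbb E_{\mathcal Q}(|X_n - \mathbb E_{\mathcal Q}(X_n)|^h) < \infty$, which follows from the Theorem~\ref{sufcond2}(1) summability assumption together with $b_n = O(n)$ (forcing $\mathbb E_{\mathcal Q}(|X_n|^h)$ to be $o(n^h)$, and in fact the summability with $b_n$ increasing and $b_n^{-h}$-weighted forces a suitable averaged control), and from the lower bound $p^{(n)}_i\ge\eta$ which keeps the logarithmic moments from blowing up. Once that uniform bound is in hand, the passage through $\log m = o(\ell_m)$ to summability, and then Borel--Cantelli, are routine; so is the transfer from $\mathcal Q$ to the $\mathbb P\otimes\nu$ statement.
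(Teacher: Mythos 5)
Your framework (the Peyri\`ere measure $\mathcal Q$, blockwise $\mathcal Q$-independence of the increments, Borel--Cantelli, and transfer back to $\mathbb P\otimes\nu$) is exactly the paper's, but the concentration estimate you use is too weak and the step that makes it look sufficient contains an algebra error. You bound $\mathbb E_{\mathcal Q}(|\Sigma_m|^h)\le C\,\ell_m^{1-h}$ via von Bahr--Esseen and then write ``$\ell_m^{1-h}=e^{-(h-1)\ell_m}\le m^{-2}$ eventually''; but $\ell_m^{1-h}=e^{-(h-1)\log \ell_m}$, which decays only polynomially in $\ell_m$. To get $\ell_m^{1-h}\le m^{-2}$ you would need $\ell_m\ge m^{2/(h-1)}$, and this does \emph{not} follow from $\log m=o(\ell_m)$: the sequence $\ell_m\asymp(\log m)^2$ (or even $\ell_m=m$ when $h\le 2$) is admissible in $\mathscr L$ and satisfies $\log m=o(\ell_m)$, yet $\sum_m\ell_m^{1-h}=+\infty$, so Borel--Cantelli cannot be invoked. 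The same failure occurs in your treatment of the digit frequencies, where the order-$2$ moment bound only gives $\mathcal Q(\cdot>\delta)\le C\delta^{-2}\ell_m^{-1}$, again not summable in general. Fixed moments of order $h\in(1,2]$ simply cannot close the argument under the stated hypotheses.

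What is needed (and what the paper does) is exponential concentration at the block scale: bound $\mathcal Q\bigl(\bigl|\ell_m^{-1}\sum_{n=L_{m-1}+1}^{L_m}(X_n-\mathbb E_{\mathcal Q}X_n)\bigr|>\delta\bigr)$ by a Chernoff-type estimate, using $\mathbb E_{\mathcal Q}(e^{\pm\delta X_n})$, i.e.\ $\phi_{W^{(L_m)}}(1\pm\delta)$, together with the uniform control of the second derivative of $\log\phi_{W^{(n)}}$ near $1$ (the same estimates as in the proof of Theorem~\ref{dimHmu}); this yields $\max\bigl(H(W^{(L_m)})+\log\phi_{W^{(L_m)}}(1+\delta),\,-H(W^{(L_m)})+\log\phi_{W^{(L_m)}}(1-\delta)\bigr)\le C\delta^2$, hence a bound $e^{-\ell_m\delta/2}$ for $\delta$ small, decaying exponentially in $\ell_m$. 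Then $\log m=o(\ell_m)$ is exactly what allows one to pick $\delta_m\downarrow 0$ with $\ell_m\delta_m\ge 2\log m$, giving a summable series and the conclusion via Borel--Cantelli; the frequency statement is handled the same way with the explicit exponential moments of the indicators $\mathbf 1_{\{i\}}(\boldsymbol{i}_n)$. As it stands, your proof has a genuine gap at its crucial quantitative step.
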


Recall the definition of the sets $\Sigma_{\omega,n}$ in \eqref{Sigman}. Define the set 
$$
E=\bigcap_{0<\delta< 1}\bigcup_{M\ge 1} \Big (E(M,\delta):=\bigcap_{m\ge M} E(M,m,\delta)\Big ),
$$
where
\begin{equation}\label{epMmd}
E(M, m,\delta)=\bigcap_{m'= M}^m\left \{\boldsymbol{i}\in\Sigma:\,  
\begin{cases}
\boldsymbol{i}\in \Sigma_{\omega,L_{m'}},\\
\ell_{m'}^{-1}\left |\sum_{n=L_{m'-1}+1}^{L_{m'}} \log (W_{i_n}(\boldsymbol{i}_{|n-1}))+H(W^{(L_m)})\right|\le \delta\\
\left \|p^{(n)}(\boldsymbol{i})-p^{(n)}\right \|_\infty\le \delta, \forall \, L_{m'-1}+1\le n\le L_{m'}
\end{cases}
\right\}.
\end{equation}
Since $\ell_m=o(L_m)$ as $m\to\infty$, for each $\delta\in (0,1)$,  we can fix an integer $M_\delta $ such that  $\ell_{m}\le \delta L_{m-1}$ for all $m\ge M_\delta$.  

Recall \eqref{chip} and let us establish that for all $k\in\{1,\cdots,d\}$,  for $N\ge L_M/\delta$ and $\boldsymbol{i}\in E(M,\delta)$, one has 
\begin{equation}\label{lyap}
\Big |\chi_k(\widehat{\boldsymbol{p}}_N)+\frac{1}{N}\sum_{n=1}^N\log(|a_{i_n,k}|)\Big |\le   \big ((2+\#\mathcal I) \max_{i,k} |\log(|a_{i,k}|)|\big )\delta:=\lambda_a\delta.
\end{equation}

For $N\in\mathbb{N}^+$, denote by $m(N)$ the greatest integer such that $L_{m(N)}\le N-1$. Then, for $\delta\in (0,1)$, $M\ge M_\delta$ and $N>L_M$, so that $m(N)\ge M$, write $N\widehat {\boldsymbol{p}}_N=\sum_{n=1}^{L_M}p^{(n)}+\sum_{m=M+1}^{m(N)}\ell_m p^{(L_m)}+\sum_{n=L_{m(N)}+1}^Np^{(n)}$.  Also, note that for all $k\in\{1,\ldots,d\}$ and $n\in\mathbb{N}^+$, one has $0<\chi_k(p^{(n)})\le \max_{i} |\log(|a_{i,k}|)|$. Thus, setting  $\mathcal E_N=[1,L_{M}]\cup [L_{m(N)}+1,N]$, one has 
$$
\left|\sum_{n\in \mathcal E_N}\chi_k(p^{(n)})+\log(|a_{i_n,k}|)\right |\le (L_{M}+N-L_{m(N)})\max_{i} |\log(|a_{i,k}|)|.
$$
Moreover, using the definition of $p^{(n)}(\boldsymbol{i})$, which is independent of $n$ over each interval $[L_{m-1}+1,L_m]$, the third condition in the definition of $E(M,m,\delta)$  implies that for  $M\ge M_{\delta}$, $N> L_M$ and $\boldsymbol{i}\in E(M,\delta)$,  one has
\begin{align*}
\left|\sum_{m=M+1}^{m(N)}\ell_m \chi_k(p^{(L_m)})+\sum_{n=L_M+1}^{L_{m(N)}}\log(|a_{i,k}|)\right |&= \left|\sum_{m=M+1}^{m(N)}\ell_m \sum_{i\in\mathcal I} \big (p_i^{(L_m)}-p_i^{(L_m)}(\boldsymbol{i})\big )\log(|a_{i,k}|)\right|\\
&\le \sum_{m=M+1}^{m(N)}\ell_m \sum_{i\in\mathcal I}\delta \max_{i} |\log(|a_{i,k}|)|\\
&=(L_{m(N)}-L_M)(\#\mathcal I)\delta \max_{i} |\log(|a_{i,k}|)|.
\end{align*}
Thus 
$$
\Big |\chi_k(\widehat{\boldsymbol{p}}_N)+\frac{1}{N}\sum_{n=1}^N\log(|a_{i_n,k}|)\Big |\le \Big (\frac{L_M}{N}+\frac{N-L_{m(N)}}{N}+(\#\mathcal I)\delta\frac{L_{m(N)}-L_M}{N}\Big )\max_{i} |\log(|a_{i,k}|)|.
$$
Moreover, $\frac{L_{m(N)}-L_M}{N}\le 1$ and $\frac{N-L_{m(N)}}{N}\le \frac{\ell_{m(N)+1}}{L_{m(N)}}\le  \delta$. This is enough to get \eqref{lyap}. 

\begin{remark}\label{remindepassump}
Lemma~\ref{local control}  implies that there is a Borel set $F$ of full $\mu$-measure such that $F\subset \pi (E)$, but \textit{it is $\dim_H \pi(E)$ that we are going to estimate, independently of the assumption $\log(m)=o(\ell_m)$}. 
\end{remark}

Fix $\delta\in (0,1)$. By definition of the sets $A_r(N)$ and the integers $g_r(N)$, $1\le r\le s=s(N)$ (see section~\ref{thmdim}), \eqref{lyap} implies that  for  $M\ge M_{\delta}$, $N$ such that $g_1(N)\ge L_M/\delta$ and $\boldsymbol{i}\in E(M,\delta)$, for all $1\le r\le s$, one has
$$
\sup_{k\in A_r(N)}\prod_{n=1}^{g_r(N)} |a_{i_n,k}|\le e^{\lambda_a \delta  g_r(N)} e^{-g_r(N)\chi_k(\widehat{\boldsymbol{p}}_{g_r(N)})}\le e^{\lambda_a \delta  g_r(N)} e^{-N}\le  e^{\lambda_a \delta  \Lambda_aN } e^{-N},
$$
where $\Lambda_a$ is defined as in \eqref{Lambdaa}. In particular, if we use the notation introduced in the proof of Theorem~\ref{thm-2.4}(2),  all the sides of the parallelepiped $Q_{B_N(\boldsymbol{i})}$ are smaller than or equal to $e^{\lambda_a \Lambda_a \delta N } e^{-N}$. 

Below we find, for $N$ large enough,  an upper bound for the expectation of the number $\mathcal N_N$ (which depends on $M$) of sets $B_{N}(\boldsymbol{i})$ of $\mathcal F^D_N(g)$ such that $\boldsymbol{i}\in E(M,m( \widetilde{g}_s(N)),\delta)$. This will provide an asymptotic almost sure upper bound for this number and suitable coverings of $\pi (E(M,\delta))$. 

We will use the following observation.
\begin{remark}\label{Hinfty}
It follows from~\eqref{HW} that for all $n\ge 1$, 
$$
\max \big(|H(W^{(n)})|, h(p^{(n)}\big )\le H_\infty:= \log(\#\mathcal I) + \sup_{n\in\mathbb{N}^+}\sup_{i\in\mathcal I} \mathbb{E}(\widetilde W^{(n)}_i\log(\widetilde W^{(n)}_i))<+\infty.
$$ 
\end{remark}

Let $\epsilon\in(0,\log (\#\mathcal I))$ such that $\sum_{n=1}^NH(W^{(n)})\ge N\epsilon$ for $N$ large enough and let  $N_\epsilon=\min\big \{N\ge 1:\, \forall\, N'\ge N, \sum_{n=1}^{N'}H(W^{(n)})\ge N'\epsilon\big\}$. 

\noindent
\textbf{Claim:} Recall~\eqref{LambdaLambda'} where $\Lambda'_a$ is defined. The exists a constant $C=C(\#\mathcal I,\epsilon, \delta, H_\infty)$ such that for all $M\ge M_\delta$ and $N\ge \max (L_M/(\delta\Lambda'_a), N_\epsilon/\Lambda'_a)$, one has 
\begin{equation}\label{claim}
\mathbb{E}(\mathcal N_N)\le e^{(C\delta +d_{N}) N}. 
\end{equation}
Let us assume the claim and prove that $\dim_H(\mu)\le \underline{d}(\nu)$. Fix $M\ge M_\delta$. Let $(N_j)_{j\in\mathbb{N}^+}$ be a strictly  increasing sequence of integers such that $\underline{d}(\nu)=\lim_{j\to +\infty}d_{N_j}$. By the Borel-Cantelli Lemma, the claim  implies that with probability~1, for $j$ large enough, one has $\mathcal N_{N_j}\le e^{((C+1)\delta +d_{N_j}) N_j}$ (indeed, $\sum_{j\ge 1}e^{-((C+1)\delta +d_{N_j}) N_j}\mathbb{E}(\mathcal N_{N_j})<+\infty$). Consider the associated $\mathcal N_{N_j}$ sets of the form $B_{N_j}(\boldsymbol{i})$. To each such set is associated the  parallelepiped $Q_{B_{N_j}(\boldsymbol{i})}$, and by definition, the union of these parallelepipeds covers $\pi (E(M,m(  \widetilde{g}_s(N)),\delta))$, hence $\pi (E(M,\delta))$; moreover, by the discussion following Remark~\ref{remindepassump}, all these parallelepipeds have a diameter smaller than or equal to $\sqrt{d} e^{\lambda_a\Lambda_a\delta N_j } e^{-N_j}$. By definition of the $t$-dimensional Hausdorff measure $\mathcal H^t$, this implies that if $t> \underline{d}(\nu)+ (\lambda_a\Lambda_a+(C+1))\delta$, then $\mathcal H^{t} (\pi (E(M,\delta)))=0$, hence $\dim_H (\pi (E(M,\delta)))\le t$, so $\dim_H (\pi (E(M,\delta)))\le \underline{d}(\nu)+ (\lambda_a\Lambda_a+(C+1))\delta$. This is independent of $M$, hence $\dim_H F \le \sup_{M\ge 1} \dim_H (\pi (E(M,\delta)))\le \underline{d}(\nu)+ (\lambda_a\Lambda_a+(C+1))\delta$. Taking the infimum over $\delta$ yields $\dim_H F\le \underline{d}(\nu)$, so that $\dim_H(\mu)\le \underline{d}(\nu)$.

\medskip

Now we prove the claim. This is equivalent to proving that there exists a constant $C=C(\#\mathcal I,\epsilon, \delta, H_\infty)$ such that for all $M\ge M_\delta$ and $N\ge \max (L_M/(\delta\Lambda'_a), N_\epsilon/\Lambda'_a)$, one has 
\begin{equation}\label{espN}
\mathbb{E}(\mathcal N_N)\le \begin{cases}e^{C\delta N+\sum_{n=1}^{  \widetilde{g}_s(N)} H(W^{(n)})}\\
  e^{C\delta N+ H_{N,k}},\ g_1(s)\le k\le g_s(N)-1.
  \end{cases}
  \end{equation}  

We start with $\mathbb{E}(\mathcal N_N)\le e^{C\delta N+\sum_{n=1}^{  \widetilde{g}_s(N)} H(W^{(n)})}$. 

Let $M\ge M_\delta$. Fix $N\in\mathbb{N}^+$ such that $N\ge \max (L_M/(\delta\Lambda'_a), N_\epsilon/\Lambda'_a)$. In particular $g_1(N)\ge \Lambda'_a N\ge L_M/\delta$ (since $\delta\in(0,1)$) and $g_1(N)\ge N_\epsilon$.

Recall that each set $B_{N}(\boldsymbol{i})$ takes the form $B(U_1,\ldots,U_s)$, with $U_r\in (\mathcal{I}^{D(N)}_r)^{g_r(N)-g_{r-1}(N)}$ for $1\le r\le s=s(N)$. Observe that for $B(U_1,\ldots,U_{s})\cap E(M,m(  \widetilde{g}_s(N)),\delta)\neq\emptyset $ to hold,  $B(U_1,\ldots,U_{s})$ must contain a cylinder  $[U]$ of generation $  \widetilde{g}_s(N)$ which intersects $E(M, m(  \widetilde{g}_s(N)),\delta)$. So $\mathcal N_N$ is smaller than or equal to the cardinality of the set of those cylinders. Set $L=L_{M-1}$, $L'=L_{m( \widetilde{g}_s(N))}$ and $L''= \widetilde{g}_s(N)-L_{m( \widetilde{g}_s(N))}$. Such a $U$ writes $uvw$ with $(u,v,w)\in {\mathcal I}^{L}\times  {\mathcal I}^{L'-L}\times  {\mathcal I}^{L''}$, and by definition of $E(M, m(  \widetilde{g}_s(N)),\delta)$, $[uvw]\cap E(M, m(  \widetilde{g}_s(N)),\delta)\neq \emptyset$ implies that 
$$
\sum_{n=1}^{L'-L} \log (W_{v_n}((uv)_{|L+n-1}))+H(W^{(L+n)})\ge - (L'-L)\delta, 
$$
i.e. 
\begin{equation}\label{ineqforMarkov}
e^{(L'-L)\delta+\sum_{n=1}^{L'-L}H(W^{(L+n)})}\prod_{n=1}^{L'-L}W_{v_n}((uv)_{|L+n-1})\ge 1.
\end{equation}
Applying the Markov inequality with respect to the counting measure over $\mathcal{I}^{L}\times  \mathcal{I}^{L'-L}\times   \mathcal{I}^{L''}$ to  the function of the left hand side of \eqref{ineqforMarkov} viewed as a function of $(u,v,w)$, and taking the expectation, we can get ($u$ being any element of  $\mathcal{I}^{L}$),
\begin{align*}
\mathbb{E}(\mathcal N_N)&\le (\#\mathcal{I})^{L+L''}  e^{(L'-L)\delta+\sum_{n=1}^{L'-L}H(W^{(L+n)})} \mathbb{E}\Big (\sum_{v \in \mathcal I^{L'-L}}\prod_{n=1}^{L'-L}W_{v_n}((uv)_{|L+n-1})\Big )\\
&= (\#\mathcal{I})^{L+L''}  e^{(L'-L)\delta+\sum_{n=1}^{L'-L}H(W^{(L+n)})} \mathbb{E}(Y_{L'-L}(u))
\\
&=(\#\mathcal{I})^{L+L''}  e^{(L'-L)\delta+\sum_{n=L+1}^{L'}H(W^{(n)})}  .
\end{align*}

We have 
\begin{align*}
L+L''&\le L_{M-1}+\ell_{m(  \widetilde{g}_s(N))+1}\\
&\le \delta g_s(N)+ \delta L_{m(  \widetilde{g}_s(N))}\le 2 \delta   \widetilde{g}_s(N)\le 2\delta \widetilde C g_s(N)\le 2\delta  \widetilde C \Lambda_a N,
\end{align*}
where $\widetilde C = \frac{\log(\#\mathcal{I})}{\epsilon}$, by using Proposition~\ref{Moments1}(2) to get the fourth inequality (note that $g_s(N)\ge g_1(N)\ge N_\epsilon$). Also,   $L'-L\le  \widetilde{g}_s(N)\le  \widetilde C \Lambda_a N$.  Since the  $|H(W^{(n)})|$ are uniformly bounded by $H_\infty$ (Remark~\ref{Hinfty}), we conclude that  
$$
(\#\mathcal{I})^{L+L''}  e^{(L'-L)\delta+\sum_{n=L+1}^{L'}H(W^{(n)})}\le e^{C_1\delta N} e^{\sum_{n=1}^{ \widetilde{g}_s(N)}H(W^{(n)})},
$$
where $C_1=(1+2\log(\#\mathcal I)+2H_\infty)  \widetilde C \Lambda_a$. This is the first part of~\eqref{espN}. %

\smallskip

Before proving the second part of \eqref{espN}, we need additional notation and an observation: recall that for all $N\ge 1$,  $m(N)$ is the greatest integer such that $L_{m(N)}\le N-1$. 
For all $1\le r\le s$ denote $m(g_r(N))$ by $m_r$.  Also, simply denote $D(N)$ by $D$. For each $2\le r\le s$, the word~$U_r$,  if written $U_r=u_{g_{r-1}(N)+1}\cdots u_{g_r(N)}$, has the following decomposition into words whose indexes belong to intervals of $\mathbb{N}^+$ over which $p^{(n)}$ is independent of $n$:
$$
U_r=U_{r,m_{r-1}+1}\cdot U_{r,m_{r-1}+2}\cdots U_{r,m_r}\cdot U_{r,m_r+1},
$$
where
$$
\begin{cases}
U_{r,m_{r-1}+1}=u_{g_{r-1}(N)+1}\cdots u_{L_{m_{r-1}+1}}\\
U_{r,m}= u_{L_{m-1}+1}\cdots u_{L_m}&\text{for } m_{r-1}+2\le m\le m_r\\
U_{r,m_r+1}=u_{L(m_r)+1}\cdots u_{g_r(N)}.
\end{cases}
$$
For $m_{r-1}+1\le m\le m_r$ and $(U_1,\ldots,U_{r-1}, \odot_{m'=m_{r-1}+1}^m U_{r,m'})\in \Big (\prod_{r'=1}^{r-1} (\mathcal{I}^D_{r'})^{g_{r'}(N)-g_{r'-1}(N)}\Big )\times (\mathcal{I}^D_{r})^{L_m-g_{r-1}(N)}$, set 
\begin{multline*}
B(U_1,\ldots,U_{r-1}, \odot_{m'=m_{r-1+1}}^m U_{r,m'})\\
=\Big(\bigcap_{r'=1}^{r-1}( \Pi^D_{r'} \circ T^{g_{r'-1}(N)})^{-1} ([U_{r'}]) \Big )\cap \Big (\bigcap_{m'=m_{r-1}+1}^m( \Pi^D_{r} \circ T^{L(m')})^{-1}\Big ) ([U_{r,m'}])\Big).
\end{multline*}

We observe that if $B(U_1,\ldots, U_s)\cap E(M,m(g_s(N)),\delta)\neq\emptyset$ then, for all $2\le r\le s$ and $m_{r-1}+1\le m\le m_r+1$, one has 
$$
U_{r,m}\in \mathcal U_{r,m}=\left \{\Pi^D_r(U): \, U\in \mathcal{I}^{\ell_m},\, \sup_{i\in\mathcal I}\left | p_i^{(L_{m})}-\ell_{m}^{-1}\sum_{n=1}^{\ell_{m}}\mathbf{1}_{\{ i\}}(U_n)\right |\le \delta\right \};
$$ 
also, 
$$
\mathcal U_{r,m}\subset \mathcal U'_{r,m}= \left \{ U'\in (\mathcal{I}^D_r)^{\ell_{m}}:\, \sup_{i\in\mathcal I^D_r}\left | (\Pi^D_rp^{(L_{m})})_i-\ell_{m}^{-1}\sum_{n=1}^{\ell_{m}}\mathbf{1}_{\{ i\}}(U'_n)\right |\le (\#\mathcal I)\delta\right \},
$$
and it is standard that 
$$
\#  \mathcal U'_{r,m}\le \exp\big ((\# I)\delta  \sup_{i\in\mathcal I^D_r} |\log (\Pi^D_rp^{(L_{m})})_i |\ell_{m}\big)\cdot \exp(h( \Pi^D_rp^{(L_{m})})\ell_{m}), 
$$
so 
\begin{equation}\label{cardU}
\# \mathcal U_{r,m}\le \exp\big ((\# I)\delta  |\log(\eta)|\ell_m\big ) \exp (h( \Pi^D_rp^{(L_{m})})\ell_{m}),
\end{equation}
since we assumed that  $|\log (\Pi^D_rp^{(L_{m})})_i|\le |\log(\eta)|$.

\smallskip Now fix $M\ge M_\delta$, $N\ge  \max(L_M/(\delta\Lambda'_a), N_\epsilon/\Lambda'_a)$, and $g_1(N)\le k\le g_s(N)-1$. Recall that we want to prove~\eqref{espN}. Denote by $r_k$ the unique $2\le r\le s$ such that $g_{r-1}(N)+1\le k\le g_r(N)$.  

By the definition of $m(k)$ one has $L_{m(k)}+1\le k\le   L_{m(k)+1}$. Define  
$$
\mathcal M_{k}=\{m_{r_k-1}+2\le m\le  m_{r_k}: \, m> m(k)\}\cup \bigcup_{r>r_k}\{m_{r-1}+2\le m\le  m_r\}.
$$ 

The previous observation shows that setting $r(m)=r$ if $g_{r-1}(N)+1\le L_m\le g_r(N)$, one has 
\begin{align}
\nonumber&\{(U_1,\ldots, U_s): \, B(U_1,\ldots, U_s)\cap E(M,m(g_s(N),\delta)\neq\emptyset\}\\
\label{biginclusion}&\subset \left \{(U_r)_{r=1}^s: \, 
\begin{cases} B(U_1,\ldots,U_{r_k-1}, \odot_{m'=m_{r_k-1+1}}^{m(k)} U_{r_k,m'})\cap E(M,m(k),\delta)\neq\emptyset\}\\
\forall\, m\in\mathcal M_k,\, U_{r(m),m}\in \mathcal U_{r(m),m}\\
\forall \, r_k\le r\le s: \, U_{r,{m_{r}+1}}\in (\mathcal I^D_{r})^{g_{r}(N)-L_{m_{r}}},\\
\forall \, r_k\le r\le s: \, U_{r,m_{r-1}+1}\in (\mathcal I^D_{r})^{L_{m_{r-1}+1}-g_{r-1}(N)}\\
\text{if } m(k)\not\in\{m_{r_{k-1}}+1, m_{r_{k}}+1\},\, \text{then }U_{r_k,m(k)}\in  (\mathcal I^D_{r_k})^{\ell_{m(k)}}
\end{cases}\right\}.
\end{align}

Note that the proof of the first part of \eqref{espN} also yields, with 
\begin{equation}\label{C'1}
C'_1=(1+2\log(\#\mathcal I)+2H_\infty) \Lambda_a,
\end{equation}
that 
\begin{align*}
&\mathbb{E}(\#\{U=(U_1,\ldots,U_{r_k-1}, \odot_{m'=m_{r_k-1+1}}^{m(k)} U_{r,m'}): B(U)\cap E(M,m(k),\delta)\neq\emptyset\})\\
&\quad \le e^{C'_1\delta N} e^{\sum_{n=1}^{L_{m(k)}}H(W^{(n)})}.
\end{align*}
Using this fact,  as well as  \eqref{cardU} and the trivial inequality $\# \mathcal I^D_{r}\le \#\mathcal I$, we get from \eqref{biginclusion} that 
$$
\mathbb{E}(\mathcal N_N)\le e^{C'_1\delta N} e^{\sum_{n=1}^{L_{m(k)}}H(W^{(n)})} \Big (\prod_{m\in \mathcal M_k} e^{(\# \mathcal I)\delta  |\log(\eta)|\ell_m} e^{h( \Pi^D_{r(m)}p^{(L_{m})})\ell_{m}}\Big )(\#\mathcal I)^{\ell_{m(k)}+\sum_{r=1}^s\ell_{m_{r}+1}}.
$$
By the definition of $\mathcal M_k$ and $H_{N,k}$, one has 
\begin{align*}
&\Big |H_{N,k}-\sum_{n=1}^{L_{m(k)}}H(W^{(n)})-\sum_{m\in\mathcal M_k}h( \Pi^D_{r(m)}p^{(L_{m})})\ell_{m}\Big |\\
&\le \ell_{m(k)} \left (|H(W^{(L_{m(k)})})|+h( \Pi^D_{r_k}p^{(L_{m(k)})})\right )\\&\quad + \sum_{r=r_k}^s (g_{r}(N)-L_{m_{r}})h(\Pi^D_{r}p^{(L_{m_{r}+1})})+(L_{m_{r-1}+1}-g_{r-1}(N)) h(\Pi^D_{r}p^{(L_{m_{r-1}+1})})\\
&\le \Big (\ell_{m(k)}+\sum_{r=1}^s\ell_{m_{r}+1}\Big )(H_\infty+\log(\#\mathcal I)).
\end{align*}
Moreover,  
\begin{align*}
\ell_{m(k)}+\sum_{r=1}^s\ell_{m_{r}+1}&\le (s+1)\delta \Lambda_a N\\
\text{and }\prod_{m\in \mathcal M_k} e^{(\# \mathcal I)\delta  |\log(\eta)|\ell_m}&\le e^{(\# \mathcal I)|\log(\eta)|\delta g_s(N)}\le e^{(\# \mathcal I)|\log(\eta)|\delta\Lambda_a N}.
\end{align*} 
Thus, taking into account that  $C'_1=(1+2\log(\#\mathcal I)+2H_\infty) \Lambda_a$ and $s+1=s(N)+1\le d+1$, we obtain
$$
\mathbb{E}(\mathcal N_N)\le e^{C''_1\delta N+H_{N,k}}
$$
with 
\begin{equation}\label{C''1}
C''_1=[(2(d+1)+3)(\log(\#\mathcal I)+H_\infty)+ (\#\mathcal I)|\log(\eta)| ]\Lambda_a. 
\end{equation}
Taking the infimum of the various upper bounds we found for $\mathbb{E}(\mathcal N_N)$, we conclude that 
$$
\mathbb{E}(\mathcal N_N)\le e^{(C\delta+d_N)N},
$$
with 
\begin{equation}\label{constant C}
C= [(2(d+1)+3)(\log(\#\mathcal I)+H_\infty)  \widetilde C + (\#\mathcal I)|\log(\eta)| \big )]\Lambda_a,
\end{equation}
where 
\begin{equation}\label{constant tildeC}
\widetilde C=\frac{\log (\#\mathcal{I})}{\epsilon}.
\end{equation}

\begin{remark}\label{alternativeupperbound}
Among the upper bounds we obtained, only the first one depends on the property that $\liminf_{N\to\infty} \sum_{n=1}^NH(W^{(n)})>0$, since it requires the consideration of $ \widetilde{g}_s(N)$. 

In fact, we have also obtained that even if this property does not hold, for $M\ge M_\delta$ and  $N\ge  L_M/\delta\Lambda'_a$, if $\widetilde{\mathcal N}_N$ stands for the number of those sets $B_{N}(\boldsymbol{i})$, with $\boldsymbol{i}\in E(M,m({g_s(N)}),\delta)$, then 
\begin{equation}\label{constant C'}
\mathbb{E}(\widetilde{\mathcal N}_N)\le e^{C''_1\delta N +\min_{g_1(N)\le k\le g_s(N)}H_{N,k}}=e^{(C''_1\delta +\widetilde d_N) N} . 
\end{equation}
Also, independently of the above property, our estimates show that if $M\ge M_\delta$, $N\ge L_{M}/\delta$, and $\widehat{\mathcal N}_N$ stands for the cardinality of those $U_1\in\mathcal{I}^{N}$ such that $[U_1]\cap E(M, m(N),\delta)\neq\emptyset$, then 
\begin{equation}\label{espN'}
\mathbb{E}(\widehat{\mathcal N}_N)\le e^{C'_1\delta N+\sum_{n=1}^{N}H(W^{(n)})}.
\end{equation} 
These observations will be useful to get the sharp upper bounds for $\dim_H K_\omega$ and  $\dim_P K_\omega$. 
\end{remark}

\begin{proof}[Proof of Lemma~\ref{local control}]
Recall the definition \eqref{Peyriere} of the Peyri\`ere measure $\mathcal Q$. Fix $\delta\in (0,q-1)$. For $m\in\mathbb{N}^+$, set 
$$
G(m,\delta)=\left\{(\omega,\boldsymbol{i}): \,  \ell_m^{-1}\left |\sum_{n=L_{m-1}+1}^{L_m} \log (W_{i_n}(\boldsymbol{i}_{|n-1}))+H(W^{(L_m)})\right|>\delta\right\}.
$$
An application of Markov's inequality easily shows, using the fact that the distribution of $W^{(n)}$ is constant over $[L_{m-1}+1,L_m]$, that  
\begin{align*}
\mathcal{Q}\left(G(m,\delta)\right)&\le e^{-\ell_m\delta} e^{\ell_m\delta H(W^{(L_m)})}(\phi_{W^{(L_m)}}(1+\delta))^{\ell_m}\\
&\quad \quad +e^{-\ell_m\delta} e^{-\ell_m\delta H(W^{(L_m)})}(\phi_{W^{(L_m)}}(1-\delta))^{\ell_m}.
\end{align*}
Using the same estimates as in the proof of Theorem~\ref{dimHmu}, we can get  that there exists $C>0$ independent of $\delta$ and $m$ such that 
$$
\max \Big(\delta H(W^{(L_m)})+\log\phi_{W^{(L_m)}}(1+\delta), -\delta H(W^{(L_m)})+\log\phi_{W^{(L_m)}}(1-\delta)\Big )\le C\delta^2.
$$
So if $0<\delta <\delta_0=\min (q-1,(2C)^{-1})$,  then $\mathcal{Q}(G(m,\delta))\le e^{-\ell_m \delta/2}$. Since we assumed that $\log(m)=o(\ell_m)$, we can find a sequence $(\delta_m)_{m\in\mathbb{N}^+}\in (0,\delta_0)^{\mathbb{N}^+}$ which tends to 0 and such that 
$\sum_{M\ge 1}\sum_{m\ge M}e^{-\ell_m\delta_m}<+\infty$. It follows that $\mathcal Q (\limsup_{m\to +\infty} G(m,\delta_m))=0$, and the conclusion regarding $\lim_{m\to +\infty} \ell_m^{-1}\left |\sum_{n=L_{m-1}+1}^{L_m} \log (W_{i_n}(\boldsymbol{i}_{|n-1}))+H(W^{(L_m)})\right|$ follows from the Borel-Cantelli lemma. 

To deal with the other limit, we set $V^{(n)}(\boldsymbol{i})=(\mathbf {1}_{\{i\}}(i_n))_{i\in\mathcal I}$ and note that 
$$
\left \|p^{(n)}(\boldsymbol{i})-p^{(n)}\right \|_\infty=\max_{i\in\mathcal I}\, \ell_m^{-1}\left |\sum_{n=L_{m-1}+1}^{L_m} V_i^{(n)}(\boldsymbol{i})-p_i^{(n)}\right|.
$$ 
It is thus enough to treat each $\lim_{m\to +\infty} \ell_m^{-1}\left |\sum_{n=L_{m-1}+1}^{L_m} V_i^{(n)}(\boldsymbol{i})-p_i^{(n)}\right|$ individually. Fix $i\in\mathcal I$ and for $m\ge 1$ and $\delta>0$ 
set 
$$
G_i(m,\delta)=\left\{(\omega,\boldsymbol{i}): \,  \ell_m^{-1}\left |\sum_{n=L_{m-1}+1}^{L_m} V_i^{(n)}(\boldsymbol{i})-p_i^{(L_m)}\right|>\delta\right\}.
$$
We have 
\begin{align*}
\mathcal Q(G_i(m,\delta))&\le  e^{-\ell_m\delta} \Big (\sum_{j\in\mathcal I}\mathbb{E}(W_j^{(L_m)} )e^{\delta\mathbf{1}_{\{i\}}(j)}\Big)^{\ell_m} e^{-\ell_m\delta  p_i}+e^{-\ell_m\delta} \Big (\sum_{j\in\mathcal I}\mathbb{E}(W_j^{(L_m)}) e^{-\delta\mathbf{1}_{\{i\}}(j)}\Big)^{\ell_m} e^{\ell_m \delta p_i}\\
&=e^{-\ell_m\delta} \big (((e^{\delta}-1)p_i+1)e^{-\delta  p_i}\big )^{\ell_m}+e^{-\ell_m\delta} \big (((e^{-\delta}-1)p_i+1)e^{\delta  p_i}\big )^{\ell_m}\\
&\le e^{-\ell_m\delta/2}
\end{align*}
for $\delta$ small enough. We  conclude as for the first limit.
\end{proof}

\subsection{Proof of Theorem~\ref{dimKd};  upper bound for $\dim_H K_\omega$}\label{dimHK}

To getting suitable coverings of $K_\omega$, we will work with the  subclass  $\mathcal{M}$ of inhomogeneous Mandelbrot measures constructed with random vectors of the form 
\begin{equation}\label{Wn}
W(v)=W^{(n)}(v)=\left(p^{(n)}_i \frac{\mathbf{1}_{\{c_i(v)=1\}}}{\mathbb{P}(c_i=1)}\right)_{i\in\mathcal I}\text{ for $v\in \mathcal{I}^{n-1}$},
\end{equation}
where $\boldsymbol{p}=(p^{(n)})_{n\in\mathbb{N}^+}\in P_{\mathcal{I}}^{\mathbb{N}^+}$. Note that in this case the probability distribution of $(\widetilde W^{(n)}_i(v))_{i\in\mathcal I}=\Big (\frac{\mathbf{1}_{\{c_i(v)=1\}}}{\mathbb{P}(c_i=1)}\Big )_{i\in\mathcal I}$ does not depend on $n$. We denote $\widetilde W^{(1)}(\epsilon)$ by $\widetilde W$.  The associated inhomogeneous Mandelbrot measures $\nu$ and $\mu$ are also denoted by $\nu_{\boldsymbol{p}}$ and $\mu_{\boldsymbol{p}}$.

One has $\phi_{\widetilde W}(q)<+\infty$ for all $q>0$, so in particular for some $q\in (1,2]$ that we can fix arbitrarily. Moreover, the constant $H_\infty$ of Remark~\ref{Hinfty}, hence  the constant $C$ defined in  \eqref{constant C},  are independent of $\boldsymbol{p}$ such that $\liminf_{N\to +\infty}N^{-1}\sum_{n=1}^NH(W^{(n)})>\epsilon$.

\begin{remark}\label{mnc}If the components $p^{(n)}$, $n\ge 1$, of $\boldsymbol{p}$ are all positive, and if $\mu_{\boldsymbol{p}}$ is non degenerate, the support of $\mu_{\boldsymbol{p}}$ is equal to $K_\omega$ almost surely, and if, moreover, the $p^{(n)}_i$ are uniformly bounded from below by a real number $\eta>0$, the assumptions of Proposition~\ref{mursnonchargés} are fulfilled. 
\end{remark}

Now we can start the construction of coverings of $K_\omega$. Recall that we have fixed $\ell\in\mathscr L$. 

Fix $\epsilon>0$, and  $\eta\in (0, (\#\mathcal{I})^{-2})$ to be specified later as a function of $\epsilon$.    

Let 
$$
\mathscr{P}_{\mathcal{I}}(\epsilon)=\left \{(p^{(n)})_{n\in\mathbb{N}^+}\in P_{\mathcal{I}}^{\mathbb{N}^+}:\, \sum_{n=1}^N H(W^{(n)})\ge N\epsilon \text{ for all $N$ large enough}\right\}
$$
and  for $j\ge 1$ 
$$
\mathscr P_{\mathcal{I},j}(\epsilon)=\left \{(p^{(n)})_{n\in\mathbb{N}^+}\in \mathscr{P}_{\mathcal{I}}(\epsilon):\, \sum_{n=1}^N H(W^{(n)})\ge N\epsilon \text{ for $N\ge j$}\right\}.
$$
Let $P_{\mathcal I}(\eta)=\{(p_i)_{i\in\mathcal I}:\, p_i\ge \eta, \, \forall\, i\in\mathcal I\}$. Fix $\mathcal P_\eta\subset  P_{\mathcal I}(\eta)$ of cardinality at most $ \eta^{-\#\mathcal I}$  such that $\left \{B(q,(\#\mathcal I)\eta)\right \}_{q\in \mathcal P_\eta}$ is an $(\#\mathcal I)\eta$-covering  of $P_{\mathcal I}$; here we use the norm $\|\,\|_\infty$ on $\mathbb{R}^{\mathcal{I}}$. This is indeed possible since if $(p_i)_{i\in\mathcal I} \in P_{\mathcal I}$, picking  $i_0\in\mathcal I$  such that $p_{i_0}\ge (\#\mathcal I)^{-1}(\ge \eta)$ and setting 
$$
\widetilde p_i=
\begin{cases}
\eta&\text{if } p_i<\eta\\
\big \lfloor\frac {p_i}{\eta}\big\rfloor \eta& \text{if } p_i\ge \eta\text{ and } i\neq i_0\\
1-\sum_{i\neq i_0} \widetilde p_i& \text{if } i=i_0,
\end{cases}
$$
we leave the reader check that $\widetilde p\in \mathcal P_{\mathcal I}(\eta)$ and $\|p-\widetilde p\|_\infty\le (\#\mathcal I)\eta$ (in fact the upper bound $(\#\mathcal I -1)\eta$ holds). Moreover, there are at most $\#\mathcal I$ possibilities for $i_0$ and for each such $i_0$ at most $\eta^{-(\#\mathcal I-1)}$ probability vectors $\widetilde p$ as above, so in total at most $(\#\mathcal I)\eta^{-(\#\mathcal I-1)}\le \eta^{-\#\mathcal I}$ such vectors (what will really matter is that this number is finite). 

Let 
\begin{align*}
\mathscr{P}^{\ell,\eta}_{\mathcal{I}}&=\left \{(p^{(n)})_{n\in\mathbb{N}^+}\in P_{\mathcal{I}}^{\mathbb{N}^+}:\, \forall\, m\ge 1,\ \exists \, q \in \mathcal P_\eta,\ p^{(L_{m-1}+1)}=\cdots=p^{(L_m)}=q\right \},\\
\mathscr{P}^{\ell,\epsilon,\eta}_{\mathcal{I}}&=\mathscr{P}^{\ell,\eta}_{\mathcal{I}}\cap \mathscr{P}_{\mathcal{I}}(\epsilon), \text{ and for $j\ge 1$ }\mathscr{P}^{\ell,\epsilon,\eta}_{\mathcal{I},j}=\mathscr{P}^{\ell,\eta}_{\mathcal{I}}\cap \mathscr P_{\mathcal{I},j}(\epsilon).
\end{align*}
Note that Remark~\ref{mnc} applies to $\mu_{\boldsymbol{p}}$ if $\boldsymbol{p}\in \mathscr{P}^{\ell,\epsilon,\eta}_{\mathcal{I}}$.

For $c=\max (1,\widetilde C \Lambda_a)>0$ (where $\widetilde C$ is defined in \eqref{constant tildeC}) and $N\in\mathbb{N}^+$ set 
$$
\mathscr{P}^{\ell,\epsilon,\eta, cN}_{\mathcal{I}}=\left \{(p^{(n)})_{1\le n\le cN}: (p^{(n)})_{n\in\mathbb{N}^+}\in \mathscr{P}^{\ell,\epsilon,\eta}_{\mathcal{I}}\right \}.
$$

Note that if $L_{m-1}+1\le cN<L_m$, and if one sets $\gamma_N=c \frac{m}{L_{m-1}}$, then 
\begin{equation}\label{cardPlepsiloni}
\#\mathscr{P}^{\ell,\epsilon,\eta,  c N}_{\mathcal{I}}\le (\# \mathcal P_\eta)^m\le (\# \mathcal P_\eta)^{\gamma_N N},
\end{equation}
and $\lim_{N\to +\infty} \gamma_N=0$ (we will see that the fact that $\gamma_N$ depends on $\epsilon$ via $\widetilde C$ will not matter since we will let $N$ tend to $+\infty$ before letting $\epsilon$ tend to $0$). 

For each $\boldsymbol{p}=(p^{(n)})_{n\in\mathbb{N}^+}\in \mathscr{P}^{\ell,\epsilon,\eta}_{\mathcal{I},j}$, the associated sequence $(d_N)_{N\ge 1}$ defined in  \eqref{uN0} (with $W^{(n)}$ as in~\eqref{Wn}) is also denoted $d(\boldsymbol{p})=(d_N(\boldsymbol{p}))_{N\ge 1}$. 

Fix $\boldsymbol{i}\in\mathcal I^{\mathbb{N}^+}$, and recall the definition \eqref{pni} of $\big (p^{(n)}(\boldsymbol{i})\big)_{n\ge 1}$. We can pick $\boldsymbol{p}=(p^{(n)})_{n\in\mathbb{N}^+}\in \mathscr{P}^{\ell,\eta}_{\mathcal{I}}$ such that $\|p^{(n)}(\boldsymbol{i})-p^{(n)}\|_\infty\le (\#\mathcal I)\eta$. Since  $H(W^{(n)})=h(p^{(n)})+\sum_{i\in\mathcal I}p^{(n)} \log\mathbb{P}(c_i=1)$ and conditional on $\boldsymbol{i}\in \Sigma_\omega$, one has $\log (W_{i_n}(\boldsymbol{i}_{|n-1}))=\log (p^{(n)}_{i_n}) -\log(\mathbb{P}(c_{i_n}>0))$, for all $m\ge 1$ we get 
\begin{align}
\nonumber &\left|\ell_m^{-1}\sum_{n=L_{m-1}+1}^{L_m} \log (W_{i_n}(\boldsymbol{i}_{|n-1}))+H(W^{(n)})\right|\\
\nonumber 
&\le(\# \mathcal I)^2\eta\big (\max_{n\ge 1, i\in\mathcal I}|\log(p^{(n)}_i)|+\max_{i\in\mathcal I}|\log(\mathbb{P}(c_i=1))|\big )\\
\label{deltaeta}&\le \delta=\delta(\eta):= (\# \mathcal I)^2\eta\big (|\log(\eta)|+ \max_{i\in\mathcal I}|\log(\mathbb{P}(c_i=1))|\big ).
\end{align}

We take $\eta$ small enough so that $\delta\in (0,1)$ and $(\#\mathcal I)\eta\le \delta$. We then distinguish two cases.

\smallskip

\noindent
\textbf{Case 1}: there exists $j\ge 1$ such that  $\boldsymbol{p}\in \mathscr{P}^{\ell,\epsilon,\eta}_{\mathcal{I},j}$. 

Using the same definition of $M_\delta$ as in the alternative proof of the upper bound for $\dim_H(\mu)$ given by Theorem~\ref{thm-2.4}(1) (see Section~\ref{UPdimH}),  that is an integer such that  $\ell_{m}\le \delta L_{m-1}$ for all $m\ge M_\delta$, we can fix an integer $n_j\ge 1$, independent of $\boldsymbol{i}$ and $\boldsymbol{p}\in \mathscr{P}^{\ell,\epsilon,\eta}_{\mathcal{I},j}$, such that for all $N\ge n_j$, one has $g_1(N)\ge \max (L_{M_\delta}/\delta, j)$ (where $g_1(N)$ is associated to $\mu_{\boldsymbol{p}}$). Note that $j$ plays the role of the integer $N_\epsilon$ considered in Section~\ref{UPdimH}.

In particular, still with the notation of Section~\ref{UPdimH}, if $\boldsymbol{i}\in  \Sigma_\omega$, then $\|p^{(n)}(\boldsymbol{i})-p^{(n)}\|_\infty\le(\#\mathcal I) \eta\le \delta$ and \eqref{deltaeta} imply that $\boldsymbol{i}\in E_{\boldsymbol{p}}(M_\delta,\delta)$, where the subscript $\boldsymbol{p}$ notifies the dependence of $E(M_\delta,\delta)$ with respect to $\boldsymbol{p}$. So $\boldsymbol{i}\in E_{\boldsymbol{p}}(M_\delta,m(\widetilde g_N(s)),\delta)$ for all $N\ge n_j$. 

Also, for all $N\ge n_j$, \eqref{claim} provides an upper bound for the expectation of the number $\mathcal N_{\boldsymbol{p},N}$ of elements $B_N$ in $\mathcal F_{\boldsymbol{p},N}^{D(N)}$ which contain some $\boldsymbol{i}\in E_{\boldsymbol{p}}(M_\delta,m( \widetilde{g}_s(N)),\delta)$. Specifically, $\mathbb{E}(\mathcal{N}_{\boldsymbol{p},N})\le e^{(C\delta+d_N(\boldsymbol{p}))N}$. The parallelepiped $Q_{B_N(\boldsymbol{i})}$ associated to $B_N(\boldsymbol{i})$ has a diameter smaller than or equal to $\sqrt{d} e^{\lambda_a\Lambda_a\delta N} e^{-N}$. 
Moreover, the collection $\{B_N\}$ and the number $d_N(\boldsymbol{p})$ are  entirely determined by  $(p^{(n)})_{1\le n\le c N}$, since they are determined by $(p^{(n)})_{1\le n\le  \widetilde{g}_s(N)}$.  We denote this collection $\{B_N\}$ by $\mathcal B_N((p^{(n)})_{1\le n\le cN})$. We know from \eqref{cardPlepsiloni} that there are less than $(\# \mathcal P_\eta)^{\gamma_N N}$ such collections as $\boldsymbol{p}$ varies in $\mathscr{P}^{\ell,\epsilon,\eta}_{\mathcal{I},j}$. We denote the set of these collections by $\mathscr{\mathscr B}^j_N$, and  for $\mathcal B=\mathcal B_N((p^{(n)})_{1\le n\le cN})\in \mathscr{B}^j_N$ denote by $\mathcal N_{\mathcal B}$  and $d_N(\mathcal B)$ respectively the number $\mathcal N_{\boldsymbol{p},N}$ of elements of  $\mathcal B$ and the number $d_N(\boldsymbol{p})$ associated to  $(p^{(n)})_{1\le n\le cN}$ as above. 

\smallskip

\noindent
\textbf{Case 2}: suppose that  $\boldsymbol{p}\in \mathscr{P}^{\ell,\eta}_{\mathcal{I}} \setminus \mathscr{P}^{\ell,\epsilon,\eta}_{\mathcal{I}}$. The same reasoning as above shows that for all $N\ge 1 $ such that $N\ge L_{M_\delta}/\delta$, one has $\boldsymbol{i}\in E_{\boldsymbol{p}}(M_\delta,m(N),\delta)$. By \eqref{espN'}, the expectation of the  number $\widehat{\mathcal N}_N$ of cylinder $[U_1]$ of generation $N$ which intersect $E_{\boldsymbol{p}}(M_\delta,m(N),\delta)$ is smaller than or equal to $e^{(C'_1\delta) N} e^{\sum_{n=1}^{N}H(W^{(n)})}$. So when $\sum_{n=1}^{N}H(W^{(n)})<N\epsilon$, we have $\mathbb{E}(\widehat{\mathcal N}_N)\le e^{(C'_1\delta+\epsilon) N}$. Moreover, these cylinders project via $\pi$ onto sets of diameter less than $\sqrt{d}e^{-N/\Lambda_a}$. This collection of cylinders depends only on $(p^{(n)})_{1\le n\le N}$. Denote it by $\widehat{\mathcal B}_N((p^{(n)})_{1\le n\le N})$. Again, there are at most $(\# \mathcal P_\eta)^{\gamma_N N}$ such collections as $(p^{(n)})_{n\in\mathbb{N}^+}$ varies in $\mathscr{P}^{\ell,\eta}_{\mathcal{I}} \setminus \mathscr{P}^{\ell,\epsilon,\eta}_{\mathcal{I}}$. Denote the set of these collections by $\widehat{\mathscr{B}}_N$, and  for $\widehat{\mathcal B}\in \widehat{\mathscr{B}}_N$ denote by $\widehat{\mathcal N}_{\widehat{\mathcal B}}$ and $(W^{(n)}_{\widehat{\mathcal B}})_{1\le n\le N}$ respectively the number of elements of  $\widehat{\mathcal B}$ and the sequence of associated random vectors.

Now we can estimate $\dim_HK_\omega$ from above. As we noticed above, Remark~\ref{mnc} applies to $\mu_{\boldsymbol{p}}$ if $\boldsymbol{p}\in \mathscr{P}^{\ell,\epsilon,\eta}_{\mathcal{I}}$, hence Theorem~\ref{thm-2.4}(2) applies and $\dim_H(\mu_{\boldsymbol{p}})=\liminf_{N\to+\infty}d_N(\boldsymbol{p})$. Let  $D_{\epsilon,\eta}=\sup\{\liminf_{N\to+\infty}d_N(\boldsymbol{p}): \, \boldsymbol{p}\in \mathscr{P}^{\ell,\epsilon,\eta}_{\mathcal{I}}\}$.

The previous discussion (Case 1 and Case 2) shows that, by considering for each generations $N$ the elements $\boldsymbol{p}\in \mathscr{P}^{\ell,\epsilon,\eta,cN}_{\mathcal{I}}$ for which $d_N(\boldsymbol{p})\le D_{\epsilon,\eta}+\epsilon$, one has $K_\omega\subset \Big (\bigcup_{j\ge 1}E_j\Big )\cup \widehat E$,
where
\begin{align*}
E_j&=\bigcap_{J\ge n_j}\bigcup_{N\ge J} \bigcup_{\substack{\mathcal B\in \mathscr{B}^j_N\\ d_N(\mathcal B)\le D_{\epsilon,\eta}+\epsilon}}\bigcup_{B\in \mathcal B} Q_B\\
\widehat E&= \bigcap_{J\ge L_{M_\delta}/\delta}\bigcup_{N\ge J} \bigcup_{\substack{\widehat{\mathcal B}\in \widehat{\mathscr{B}}_N\\\sum_{n=1}^{N}H(W_{\widehat{\mathcal B}}^{(n)})<N\epsilon}}\bigcup_{[U]\in \widehat{\mathcal B}} \pi([U]).
\end{align*}

Using that $\max(\# \mathscr{B}^j_N, \#\widehat{\mathscr{B}}_N)\le (\# \mathcal P_\eta)^{\gamma_N N}$, for all $j\ge 1$ and $J\ge n_j$ we get 
\begin{align*}
&\sum_{N\ge J} \sum_{\substack{\mathcal B\in \mathscr{B}^j_N\\ d_N(\mathcal B)\le D_{\epsilon,\eta}+\epsilon}} e^{-(C\delta+2\epsilon +D_{\epsilon,\eta})N} \mathbb{E}(\mathcal N_{\mathcal B})\\
&\le \sum_{N\ge J}(\# \mathcal P_\eta)^{\gamma_N N} e^{-(C\delta+2\epsilon +D_{\epsilon,\eta})N}e^{(C\delta+\epsilon +D_{\epsilon,\eta})N}<+\infty
\end{align*}
and for all  $J\ge L_{M_\delta}/\delta$, 
$$
\sum_{N\ge J} \sum_{\substack{\widehat{\mathcal B}\in\widetilde{ \mathscr{B}}_N\\ \sum_{n=1}^{N}H(W_{\widehat{\mathcal B}}^{(n)})<N\epsilon}} e^{-(C'_1\delta+2\epsilon)N} \mathbb{E}(\widehat{\mathcal N}_{\widehat{\mathcal B}})\le \sum_{N\ge J}(\# \mathcal P_\eta)^{\gamma_N N} e^{-(C'_1\delta+2\epsilon)N}e^{(C'_1\delta+\epsilon)N}<+\infty.
$$

Consequently, by the Borel-Cantelli Lemma, with probability~1, for all $j\ge 1$, for $N$ large enough,  for all $\mathcal B\in\mathscr{B}^j_N$ such that $d_N (\mathcal B)\le D_{\epsilon,\eta}+\epsilon$ one has $\mathcal N_{\mathcal B}\le e^{(C\delta+2\epsilon +D_{\epsilon,\eta})N}$, and for $N$ large enough, for all $\widehat{\mathcal B}\in \widehat{\mathscr{B}}_N$ such that $ \sum_{n=1}^{N}H(W_{\widehat{\mathcal B}}^{(n)})<N\epsilon$ one has $\widehat{\mathcal N}_{\widehat{\mathcal B}}\le e^{(C'_1\delta+2\epsilon)N}$. This, together with the fact that $\lim_{N\to +\infty} \gamma_N/N=0$ and the estimates provided in the above discussion for the diameters of the elements of any collection $\mathcal B$ or $\widehat{\mathcal B}$ is enough to show that if $\eta$ is small enough so that $1-\lambda_a\Lambda_a\delta(\eta)>0$, with probability~1, for any real number $s>s(\epsilon,\eta,\delta)=\max \big (\frac{D_{\epsilon,\eta}+C\delta+2\epsilon}{1-\lambda_a\Lambda_a\delta},\Lambda_a(C'_1\delta+2\epsilon)\big )$, one has, for $J$ large enough,  
\begin{align*}
&\sum_{N\ge J} \sum_{\substack{\mathcal B\in \mathscr{B}^j_N\\ d_N(\mathcal B)\le D_{\epsilon,\eta}+\epsilon}}\sum_{B\in \mathcal B} |Q_B|^s<+\infty\\
\text{ and }&\sum_{N\ge J} \sum_{\substack{\widehat{\mathcal B}\in \widehat{\mathscr{B}}_N\\\sum_{n=1}^{N}H(W_{\widehat{\mathcal B}}^{(n)})<N\epsilon}}\sum_{[U]\in \widetilde{\mathcal B}} |\pi([U])|^s<+\infty
\end{align*}
(we leave the detail of this simple calculation to the reader). Since the supremum of the diameters of the sets involved in the above sums tend to 0 as $J\to +\infty$, this implies that $\dim_H E\le s$ for all $E\in\{E_j:\, j\ge 1\}\cup\{\widehat E\}$. Remembering the expression $\eqref{constant C}$ for the constant $C$ and \eqref{C'1} for $C'_1$, we see that due to the relation $\delta=\delta(\eta)$ \eqref{deltaeta} between $\delta$ and $\eta$, taking $\epsilon\in(0,(\#\mathcal{I})^{-1})$ and $\eta=\epsilon^2$ such that $\delta(\eta)<1$ yields $s(\epsilon,\epsilon^2,\delta(\epsilon^2))=D_{\epsilon,\epsilon^2}+O(\epsilon)$. Consequently, denoting by $D$ the supremum of the Hausdorff dimensions of elements of $\mathcal M$ and letting $\epsilon$ tend to $0$, we get  $\dim_H K_\omega\le D$ (note that in fact the previous lines show that $\dim_H\widehat E= 0$).

\section{Proofs of Theorems~\ref{dimK2} and~\ref{dimKd''}} \label{secdim2}

\subsection{Proof of Theorem~\ref{dimK2}}\label{secdimK2} Recall that the result was obtained in \cite{ref10,ref1} for the deterministic case, and \cite{ref6}  for random Sierpi\'nski carpets. We will derive it in general from Theorem~\ref{dimKd}. To do so we adapt to our context the approach used in \cite{ref5} to prove that for the deterministic case, in dimension 2, the supremum of the Hausdorff dimensions of exponentially periodic Bernoulli measures supported on $K$ does not exceed that of the supremum of the Hausdorff dimensions of self-affine measures. For the random case,  the situation is  a little more involved due to the fact that one must consider a minimum in the definition of  each term of the sequence $(d_N)_{N\ge 1}$ (see \eqref{uN0}) associated with any  IMM of class $\mathcal M$. 

By the proof of Theorem~\ref{dimKd}, for all $\epsilon>0$ small enough and $\eta=\eta(\epsilon)=\epsilon^2$, one has 
\begin{equation}\label{(.)}
\dim_H K_\omega\le \sup\big\{ \dim_H(\mu_{\boldsymbol{p}})=\liminf_{N\to +\infty} d_N(\boldsymbol p): \boldsymbol{p} \in \mathscr{P}^{\ell,\epsilon,\eta}_{\mathcal{I}}\big \} +O(\epsilon).
\end{equation} 
Fix   $\boldsymbol{p}\in \mathscr{P}^{\ell,\epsilon,\eta}_{\mathcal{I}}$.  For each $N\ge 1$, we simply denote $s(N)$, which belongs to $\{1,2\}$, by $s$ and $D(N)$ by $D$; the components of $(D(N),s(N))_{N\ge 1}$ takes at most three values: $((\{1,2\}),1)$, $((\{1\},\{2\}),2)$ and $((\{2\},\{1\}),2)$. Set 
$$
\bar d_N(\boldsymbol{p})= \min (d_{1,N}(\boldsymbol{p}),d_{2,N}(\boldsymbol{p})), $$
where 
\begin{align*}
d_{1,N}(\boldsymbol{p})&=\frac{1}{N}\sum_{n=1}^{g_1(N)}H(W^{(n)})+\frac{1}{N}\sum_{n=g_1(N)+1}^{g_{s}(N)}h(\Pi^{D}_2p^{(n)})\\
d_{2,N}(\boldsymbol{p})&=\frac{1}{N}\sum_{n=1}^{g_{s}(N)}H(W^{(n)}).
\end{align*} 
Note that by definition of $d_N(\boldsymbol{p})$, one has $d_N(\boldsymbol{p})\le \bar d_N$. Also, note that $d_{1,N}$ and $d_{2,N}$ coincide when $s(N)=1$.

Suppose that $s(N)=2$ and write $D(N)=(\{k_1\},\{k_2\}$). Recall that $\widetilde W_i= \frac{\mathbf{1}_{\{c_i=1\}}}{\mathbb{P}(c_i=1)}$ for all $i\in\mathcal I$. Using the concavity of the functions 
\begin{equation*}
\widetilde h: p\in P_{\mathcal I}\mapsto H((p_i\widetilde W_i)_{i\in\mathcal I})=h(p)+\sum_{i\in\mathcal I}p_i\log (\mathbb{P}(c_i=1))
\end{equation*} 
and $h$, and writing $g_r$ for $g_r(N)$ ($r\in\{1,2\}$) we get
$$
d_{1,N}(\boldsymbol{p})\le \frac{g_1}{N}\widetilde h\left(\frac{1}{g_1}\sum_{n=1}^{g_1}p^{(n)}\right )+\frac{g_2-g_1} {N} h\left(\Pi^D_2\Big (\frac{1}{g_2-g_1} \sum_{n=g_1+1}^{g_2}p^{(n)}\Big)\right).
$$
Moreover, using the concavity of $h$ again, we get 
$$
\frac{g_2-g_1}{g_2}h\left(\frac{1}{g_2-g_1} \sum_{n=g_1+1}^{g_2}\Pi^D_2p^{(n)}\right)\le h\left(\Pi^D_2\Big (\frac{1}{g_2} \sum_{n=1}^{g_2}p^{(n)}\Big )\right )-\frac{g_1}{g_2} h\left(\Pi^D_2\Big( \frac{1}{g_1} \sum_{n=1}^{g_1}p^{(n)}\Big )\right ).
$$
The two above inequalities and the definition of the Lyapunov exponents  yield
\begin{equation}\label{comparaison}
d_{1,N}(\boldsymbol{p})\le T_{1,N}(\boldsymbol{p})+ T_{2,N}(\boldsymbol{p}) +o(1),
\end{equation}
where, using the notation  $\widehat{\boldsymbol{p}}_N=N^{-1}\sum_{n=1}^N p^{(n)}$, 
\begin{align*}
T_{1,N}(\boldsymbol{p})&=\frac{1}{\chi_{k_1}(\widehat{\boldsymbol{p}}_{g_1(N)})}\left (\widetilde h(\widehat{\boldsymbol{p}}_{g_1(N)})-  h\big (\Pi^{D(N)}_2\widehat{\boldsymbol{p}}_{g_1(N)}\big )\right )\\
T_{2,N}(\boldsymbol{p})&= \frac{1}{\chi_{k_2}(\widehat{\boldsymbol{p}}_{g_2(N)})} h\big ( \Pi^{D(N)}_2\widehat{\boldsymbol{p}}_{g_2(N)}\big ),
\end{align*}
and we remark that \eqref{comparaison} holds as well  when $s=1$ (actually, as an equality). Similarly,
$$
d_{2,N}(\boldsymbol{p})\le \frac{1}{\chi_{k_2}(\widehat{\boldsymbol{p}}_{g_2(N)})} \widetilde h( \widehat{\boldsymbol{p}}_{g_2(N)}) +o(1).
$$

If $D(N)$ takes infinitely many times the value$((\{1,2\}),1)$ along a subsequence $(N_j)_{j\ge 1}$, then by Theorem~\ref{thm-2.4}(3), $d_{1,N_j}(\boldsymbol{p})= d_{2,N_j}(\boldsymbol{p})=d(\mu_j)+o(1)$ where $\mu_j$ is the Mandelbrot measure associated with $((\widehat{\boldsymbol{p}}_{g_2(N_j)})_i\widetilde W_i)_{i\in\mathcal I}$ (note that in this situation $g_2(N_j)=g_1(N_j)$ and that the components of $\widehat{\boldsymbol{p}}_{g_2(N)}$ are positive, since $\boldsymbol{p}\in \mathscr{P}^{\ell,\epsilon,\eta}_{\mathcal{I}}$, so that a.s., conditional on $\mu_j\neq 0$, $\mu_j$ is fully supported on $K_\omega$). 

If $s(N)=2$ for $N$ large enough, fix $D=(\{k_1\},\{k_2\}) \in \{(\{1\},\{2\}), (\{2\},\{1\})\}$ such that $D(N)$ takes infinitely often the value $D$. Consider  $\underline d=\liminf_{M\to +\infty} \theta(M)$, where $\theta(M)=\frac{1}{\chi_{k_1}(\widehat{\boldsymbol{p}}_{M})}\left (\widetilde h(\widehat{\boldsymbol{p}}_{M})-  h(\Pi^{D}_2\widehat{\boldsymbol{p}}_{M})\right)$. Suppose first that there is a strictly  increasing  sequence $(N_j)_{j\ge 1}$ such that both $D(N_j)=D$ for all $j\ge 1$ and $\lim_{j\to +\infty}\theta(g_1(N_j))=\underline d$. Then, as $\liminf_{j\to +\infty} \theta(g_2(N_j)) \ge \underline d$, we deduce from \eqref{comparaison} that 
$$
\liminf_{j\to +\infty} d_{N_j}(\boldsymbol{p})\le\liminf_{j\to +\infty} d(j)
$$
where 
$$
d(j)=\min \Big (\theta(g_2(N_j))+  \frac{1}{\chi_{k_2}(\widehat{\boldsymbol{p}}_{g_2(N_j)})} h( \Pi^{D(N_j)}_2\widehat{\boldsymbol{p}}_{g_2(N_j)}), \frac{1}{\chi_{k_2}(\widehat{\boldsymbol{p}}_{g_2(N_j)})} \widetilde h( \widehat{\boldsymbol{p}}_{g_2(N_j)})\Big ).
$$
Moreover, $d(j)=\dim(\mu_j)+o(1)$, where $\mu_j$ is the same Mandelbrot measure as above. This is due to the fact that by definition of $g_1(N_j)$ and $g_2(N_j)$, one has $g_2(N_j)\chi_{k_1}(\widehat{\boldsymbol{p}}_{g_2(N_j)})\ge g_1(N_j)\chi_{k_1}(\widehat{\boldsymbol{p}}_{g_1(N_j)})=N_j+O(1)=g_2(N_j)\chi_{k_2}(\widehat{\boldsymbol{p}}_{g_2(N_j)})$, so that  either $\chi_{k_1}(\widehat{\boldsymbol{p}}_{g_2(N_j)})> \chi_{k_2}(\widehat{\boldsymbol{p}}_{g_2(N_j)})$, or $g_2(N_j)=g_1(N_j)+O(1)$ so that $\|\widehat{\boldsymbol{p}}_{g_2(N_j)}-\widehat{\boldsymbol{p}}_{g_1(N_j)}\|_\infty=o(1)$ and  if one denotes by $\widetilde \mu_j$ the Mandelbrot measure associated with $((\widehat{\boldsymbol{p}}_{g_1(N)})_i\widetilde W_i)_{i\in\mathcal I}$, $d(j)=\dim(\mu_j)+o(1)=\dim(\widetilde \mu_j)+o(1)$.

Finally, suppose that there is no sequence $(N_j)_{j\ge 1}$ as above. This implies that $D(N)$ is not stationary, so we can find a strictly  increasing  sequence $(N_j)_{j\ge 1}$ such that $D(N_j)$ and $D(N_j+1)$ are different for all $j\ge 1$. By construction, the difference between $g_1(N_j)$ and $g_2(N_j)$ is then  bounded independently of $j$, so $\lim_{j\to +\infty} \theta(g_2(N_j))-\theta(g_1(N_j))=0$, and the same argument as above yields $\liminf_{j\to +\infty} d_{N_j}(\boldsymbol{p})\le\liminf_{j\to +\infty}\max ( \dim(\mu_j), \dim(\widetilde \mu_j))$. 

The three cases distinguished above yield that $\liminf_{N\to +\infty} d_{N}(\boldsymbol{p})$ is bounded by the supremum of the Hausdorff dimensions of Mandelbrot measures fully supported on $K$. This holds for all $\boldsymbol{p}\in \mathscr{P}^{\ell,\epsilon,\eta}_{\mathcal{I}}$, hence letting $\epsilon$ tend to 0 in \eqref{(.)} yields the desired variational principle. 

Now, take a sequence $(p(j))_{j\ge 1}$ of positive elements of $P_{\mathcal I}$ such that if for $j\ge 1$ one denotes by $\mu_j$ the Mandelbrot measure associated with the random vectors $W(j)(v)=(p_i(j)\frac{\mathbf{1}_{\{c_i(v)=1\}}}{\mathbb{P}(c_i=1)})_{i\in\mathcal I}$, $v\in\mathcal I^*$, then $\mu_j$ is non degenerate and fully supported on $K_\omega$ conditional on $\{K_\omega\neq\emptyset\}$, and $\lim_{j\to+\infty}\dim(\mu_j)=\dim_HK_\omega$. Without loss of generality we can assume that for all $j\ge 1$ one has $\chi_1(p(j))\ge \chi_2(p(j))$. The set $P_{\mathcal I}$ being compact, without loss of generality again, we can also assume that $p(j)$ converges to a probability vector $p$ as $j\to+\infty$. Set $W(v)=(p_i\frac{\mathbf{1}_{\{c_i(v)=1\}}}{\mathbb{P}(c_i=1)})_{i\in\mathcal I}$ for all $v\in\mathcal I^*$, and consider the associated Mandelbrot measure $\mu$. The value of $\dim(\mu_j)$ provided Theorem~\ref{thm-2.4}(3) converges to $D(p)=\frac{1}{\chi_1(p)}H(W)+\big(\frac{1}{\chi_2(p)}-\frac{1}{\chi_1(p)}\big) \min (H(W),h(\Pi_2p))$ as $j\to+\infty$. Hence $D(p)=\dim_HK_\omega>0$ so $H(W)>0$ and $\mu$ is non degenerate. Moreover, due to the expression of $D(p)$, it is not hard to prove that when $D(p)$ attains its maximal value one necessarily has that $p$ is an interior point of $P_{\mathcal I}$ (this is due to the convexity of  $P_{\mathcal I}$  and the fact that the derivative of $t\ge 0\mapsto-t\log(t)$ at $0^+$ is infinite, which forbids the maximum of $D(\cdot)$ to be attained at a  the boundary point of $P_{\mathcal I}$), so that the associated Mandelbrot measure $\mu$ satisfies $\mathbb{P}(\mu\neq 0)=\mathbb{P}(K_\omega\neq\emptyset)$. Finally, since the assumption of  Proposition~\ref{mursnonchargés} holds for $\mu$, by Theorem~\ref{thm-2.4}(3) one has $\dim(\mu)=\dim_H K_\omega$ conditional on $\{K_\omega\neq\emptyset\}$.  

\subsection{Proof of  Theorem~\ref{dimKd''}}\label{secdimKd''}  In~\cite{ref6}, in the case of random Sierpi\'nski sponges,  after having established in this special context the Ledrappier-Young type formula provided by Theorem~\ref{thm-2.4}(3), one starts by identifying the unique couple $(C,W)$ which generates the Mandelbrot measure $\mu$ with maximal Hausdorff dimension on the attractor $K_\omega$. This dimension is expressed as the weighted pressure of some potential (in the terminology of weighted thermodynamic formalism~\cite{BFAJM}). Then one constructs an uncountable family of random coverings of $K_\omega$, each of which providing an upper bound for $\dim_HK_\omega$ expressed as the weighted pressure of some potential. The infimum of theses values  is then  directly  identified with the  dimension of $\mu$. As mentioned in the introduction, this approach can be extended to the more general class of sponges considered in Theorem~\ref{dimKd''}. Along the lines to follow, we reverse the point of view. We start from the fact that the supremum of the Hausdorff dimensions of IMMs supported on $K_\omega$ is an upper bound for $\dim_HK_\omega$; then from this supremum we quite easily recover the  family of upper bounds mentioned above, and considering their infimum we naturally exhibit a Mandelbrot measure of maximal Hausdorff dimension. For the uniqueness of $(C,W)$ to which can be associated a Mandelbrot measure of maximal Hausdorff dimension, we refer to the approach used in~\cite{ref6}, which still works in the present context.

By the proof of Theorem~\ref{dimKd} again, for $\epsilon>0$ small enough and $\eta=\eta(\epsilon)=\epsilon^2$,  one has $\dim_H K_\omega\le\sup\big \{ \liminf_{N\to +\infty}\widetilde d_N(\boldsymbol p): \boldsymbol{p} \in \mathscr{P}^{\ell,\epsilon,\eta}_{\mathcal{I}}\big \} +O(\epsilon)$, where $
\widetilde d_N(\boldsymbol{p})= \min\{N^{-1}H_{N,k}:\, g_1(N)\le k\le g_s(N)\}
$ was defined in \eqref{uN1}.

Fix the IMM in the class $\mathcal M$ associated with $\boldsymbol{p}\in \mathscr{P}^{\ell,\epsilon,\eta}_{\mathcal{I}}$. Note that since the linear parts $A_i$, $i\in\mathcal I$, are equal, for $N$ large enough $s(N)$ and $D(N)$ are independent of $N$ and~$\nu$,  and for all probability vectors~$p$, the exponents $\widetilde \chi_r(p)$ do not depend on $p$ and are given by $(\widetilde \chi_r)_{1\le r\le s}=(-\log (|a_{1,k_r}|)_{1\le r\le s}$, where the $|a_{1,k_r}|$, ${1\le r\le s}$, are the absolute values of the eigenvalues ordered in the increasing order and counted without multiplicity.  In particular, $g_r(N)/N\to 1/\widetilde \chi_r$ as $N\to +\infty$. Without loss of generality we assume that we are in the non-conformal case, so that $s\ge 2$. 

Fix $2\le r\le s$ as well as $\theta\in\big  [\frac{\widetilde \chi_{r}}{\widetilde \chi_{r-1}},1\big ]$. Then fix $g_{r-1}+1\le k\le g_r(N)$ such that $\theta_k= \frac{k}{g_r(N)}$ satisfies $|\theta_k-\theta|\le 1/g_r(N)$.  

Denoting $g_{r'}(N)$ by $g_{r'}$, and using similar concavity inequalities as in the previous section, we can write
\begin{align}
\nonumber H_{N,k}&=\sum_{n=1}^{\theta_k g_r}\widetilde h(p^{(n)})+ \sum_{n=\theta_k g_r +1}^{g_r} h(\Pi^D_r p^{(n)})+\sum_{r'=r+1}^s \sum_{n=g_{r'-1}+1}^{g_{r'}}h(\Pi^D_{r'} p^{(n)})\\
\label{ineqHNk}&\le \theta_k g_r\widetilde h(\widehat{\boldsymbol{p}}_{\theta_k g_r})+g_r h(\Pi^D_r \widehat{\boldsymbol{p}}_{g_r}) - \theta_k g_r h(\Pi^D_r \widehat{\boldsymbol{p}}_{\theta_k g_r})\\
\nonumber&\quad +\sum_{r'=r+1}^s \big (g_{r'} h(\Pi^D_{r'} \widehat{\boldsymbol{p}}_{g_{r'}})- g_{r'-1} h(\Pi^D_{r'} \widehat{\boldsymbol{p}}_{g_{r'-1}})\big ).
\end{align}

Similarly to what was done in the proof of Proposition~\ref{tau}, define for $j\in \mathcal I_r=\Pi_r(\mathcal I)$ and  $i\in \Pi_r^{-1}(\{j\})$ (we write $\Pi_r$ for $\Pi^D_r$ and $\mathcal I_r$ for $\mathcal I^D_r$)
$$
(V_{r})_{i,j}=
\begin{cases}
\displaystyle \frac{ (\widehat{\boldsymbol{p}}_{\theta_k g_r})_i\widetilde W_i}{(\Pi_r\widehat{\boldsymbol{p}}_{\theta_k g_r})_{j}}&\text{if }(\Pi_r\widehat{\boldsymbol{p}}_{\theta_k g_r})_{j}\neq 0\\
0&\text{otherwise}.
\end{cases}
$$
Setting for $q\ge 0$
$$
T_{(V_{r})_j}(q)=-\log \mathbb{E}\Big (\sum_{i\in \Pi_r^{-1}(\{j\})} (V_{r})^q_{i,j}\Big ),
$$
a calculation shows that
$$
\widetilde h(\widehat{\boldsymbol{p}}_{\theta_k g_r})=h(\Pi_r\widehat{\boldsymbol{p}}_{\theta_k g_r})+ \sum_{j\in \mathcal I_r}  (\Pi_r\widehat{\boldsymbol{p}}_{\theta_k g_r})_{j}T'_{(V_{r}),j}(1).
$$
Moreover, since $T_{(V_{r})_j}$ is concave and by construction $T_{(V_{r})_j}(1)=0$, we have 
$$
T'_{(V_{r}),j}(1)\le -T_{(V_{r}),j}(0)=\log(\mathbb{E}(N_{r,j})), $$
where 
$$
N_{r,j}=\#\{i\in \Pi_r^{-1}(\{j\}):\, c_i=1\}.
$$
Thus, setting 
\begin{align*}
R_N(r,\theta_k)&=  \theta_k g_r\sum_{j\in \mathcal I_r}  \big((\Pi_r\widehat{\boldsymbol{p}}_{\theta_k g_r})_{j}- (\Pi_r\widehat{\boldsymbol{p}}_{g_r})_{j}\big )\log(\mathbb{E}(N_{r,j}))\\
&\quad +\sum_{r'=r+1}^s \Big(g_{r'} \big (h(\Pi^D_{r'} \widehat{\boldsymbol{p}}_{g_{r'}}) - h(\Pi^D_{r'} \widehat{\boldsymbol{p}}_{g_{r}})\big )- g_{r'-1} \big (h(\Pi^D_{r'} \widehat{\boldsymbol{p}}_{g_{r'-1}})-h(\Pi^D_{r'} \widehat{\boldsymbol{p}}_{g_{r}})\big )\Big).
\end{align*}
we get from~\eqref{ineqHNk} that 
\begin{align*}
H_{N,k}&\le g_r\sum_{j\in \mathcal I_r}  (\Pi_r\widehat{\boldsymbol{p}}_{g_r})_{j} \theta_k\log(\mathbb{E}(\# N_{r,j})) +g_r  h(\Pi^D_{r'} \widehat{\boldsymbol{p}}_{g_r})+\sum_{r'=r+1 }^s  (g_{r'}-g_{r'-1}) h(\Pi^D_{r'} \widehat{\boldsymbol{p}}_{g_r}) \\
&\quad+ R_{N}(r,\theta_k).
\end{align*}

Denote by $\eta_{r,N}$ the Bernoulli product measure on ${\mathcal I}_r^{\mathbb{N}^+}$ associated with the probability vector $\Pi^D_r\widehat{\boldsymbol{p}}_{g_r(N)}$ and by $\varphi_{r}$ the potential defined over ${\mathcal I_r}^{\mathbb{N}^+}$ as being constant and equal to $\frac{1}{\widetilde \chi_r} \log(\mathbb{E}(N_{r,j}))$ over each cylinder $[j]$ of the first generation.  The previous inequality yields (using that $g_{r'}(N)/N\to 1/\widetilde \chi_{r'}$ as $N\to+\infty$)
\begin{align}
\label{prepressure}\widetilde d_N(\boldsymbol{p})&\le S(\theta, \eta_{r,N}) +\frac{R_{N}(r,\theta)}{N}+ o(1),
\end{align}
where for any $T_r$-invariant probability measure $\eta$ on $\mathcal I_r^{\mathbb{N}^+}$, 
$$
S(\theta, \eta)= \int \theta\varphi_{r}\,\mathrm{d}\eta+ \frac{h(\eta,T_r)}{\widetilde \chi_r}+ \sum_{r'=r+1}^s\Big (\frac{1}{\widetilde \chi_{r'}}- \frac{1}{\widetilde \chi_{r'-1}}\Big ) h(\Pi^D_{r,r'}\eta,T_{r'}),
$$
and $\Pi_{r,r'}=\Pi_{r'-1,r'}\circ\cdots\circ \Pi_{r,r+1}$. Using the terminology of \cite{BFAJM}, set $\vec{\boldsymbol{\gamma}}_r=(\frac{1}{\widetilde \chi_r}, \frac{1}{\widetilde \chi_{r+1}}- \frac{1}{\widetilde \chi_{r}},\ldots,  \frac{1}{\widetilde \chi_{s}}- \frac{1}{\widetilde \chi_{s-1}})$ and 
\begin{equation}\label{wtherm}
P_r^{\vec{\boldsymbol{\gamma}}_r}(\theta\varphi_{r},T_r)=\sup\big\{S(\theta, \eta): \, \text{$\eta$ is a $T_r$-invariant probability measure on $\mathcal{I}_r^{\mathbb{N}^+}$}\big\};
\end{equation}
this supremum is called the $\vec{\boldsymbol{\gamma}}_r$-weighted topological pressure of $\theta\varphi_{r}$. It is attained at a unique fully supported Bernoulli product measure $\eta_{\theta,r}$ on $\mathcal I_r^{\mathbb{N}^+}$ (see~\cite{BFAJM}) generated by a probability vector that we denote by $p_{\theta,r}$. 

We thus deduce from \eqref{prepressure} that  
$$
\widetilde d_N(\boldsymbol{p})\le P^{\vec{\boldsymbol{\gamma}}_r}(\theta\varphi_r,T_r) +\frac{R_{N}(r,\theta)}{N}+ o(1).
$$
The term $\frac{R_{N}(r,\theta)}{N}$ can easily be written under the form $\sum_{p=1}^P u_p(\lfloor \alpha_p N\rfloor)-u_p(\lfloor \beta_p N\rfloor) +\delta_N$, where for each $p$ one has $\lim_{N\to +\infty} u_p(N)-u_p(N-1)=0$ and $(\alpha_p, \beta_p)\in(\mathbb{R}_+^*)^2$, and $\lim_{N\to +\infty} \delta_N=0$. According to a slight extension (see \cite[Lemma 5.4]{FHJMPA}) of a combinatorial lemma first considered by Kenyon and Peres in~\cite{ref18} in the study of deterministic Sierpi\'nski sponges, this implies that $\liminf_{N\to +\infty} \frac{1}{N}R_{N}(r,\theta)\le 0$. Consequently,
$
\liminf_{n\to +\infty} \widetilde d_N(\boldsymbol{p})\le P_r^{\vec{\boldsymbol{\gamma}}_r}(\theta\varphi,T_r)$ for all $2\le r\le s$ and $\theta\in \big [\frac{\widetilde \chi_{r}}{\widetilde \chi_{r-1}},1\big ]$. Thus
\begin{equation}\label{UPS}
\liminf_{n\to +\infty} \widetilde d_N(\boldsymbol{p})\le\inf_{2\le r\le s}\inf_{\theta\in [\frac{\widetilde \chi_{r}}{\widetilde \chi_{r-1}},1]} P_r^{\vec{\boldsymbol{\gamma}}_r}(\theta\varphi_{r},T_r).
\end{equation}
For each $2\le r\le s$, by continuity of  $P_r:\theta\in(0,\infty)\mapsto P_r^{\vec{\boldsymbol{\gamma}}_r}(\theta\varphi_{r},T_r)$, the infimum $\inf_{\theta\in [\frac{\widetilde \chi_{r}}{\widetilde \chi_{r-1}},1]} P_r^{\vec{\boldsymbol{\gamma}}_r}(\theta\varphi_{r},T_r)$ is a minimum. Let $2\le r_0\le s$ and $\theta_{r_0}\in[\frac{\widetilde \chi_{r_0}}{\widetilde \chi_{r_0-1}},1]$ be such that the right hand side of \eqref{UPS} equals  $P_{r_0}^{\vec{\boldsymbol{\gamma}}_{r_0}}(\theta_{r_0}\varphi_{r_0},T_{r_0})=P_{r_0}(\theta_{r_0})$. 

We can associate to each $(\theta,r)$ a Mandelbrot measure  $\nu_{\theta,r}$ by defining, for $j\in \mathcal I_{r}$, $i\in\Pi_{r}^{-1}(\{j\})$ and $v\in \mathcal{I}^*$, 
$$
W^{\theta,r}_i(v)=(p_{\theta,r})_j \frac{\mathbf{1}_{\{c_i(v)=1\}}}{\mathbb{E}(\# N_{r,j})}=(p_{\theta,r})_j \frac{\mathbb{P}(c_i=1)}{\mathbb{E}(\# N_{r,j})} \frac{\mathbf{1}_{\{c_i(v)=1\}}}{\mathbb{P}(c_i=1)}.
$$
The Mandelbrot measure $\mu_{\theta_{r_0},r_0}$ is non degenerate (this is justified below), and since the components of $W=W^{\theta_{r_0},r_0}_i$ have positive expectations (equal to $(p_{\theta_{r_0},r_0})_j \frac{\mathbb{P}(c_i=1)}{\mathbb{E}(\# N_{r_0,j})}$ with the previous notation), one has both that $\mu_{\theta_{r_0},r_0}$ is fully supported on $K_\omega$, conditional on $K_\omega\neq\emptyset$, and that Proposition~\ref{mursnonchargés} applies to $\mu_{\theta_{r_0},r_0}$. Moreover,  Theorem~\ref{thm-2.4}(3) implies that it is exact dimensional,  with dimension $P_{r_0}(\theta_{r_0})$ (this value is justified below as well).  Consequently, letting $\epsilon$ tend to $0$ in the inequality $\dim_H K_\omega\le\sup\big \{ \liminf_{N\to +\infty}\widetilde d_N(\boldsymbol p): \boldsymbol{p} \in \mathscr{P}^{\ell,\epsilon,\eta}_{\mathcal{I}}\big \} +O(\epsilon)$ yields the desired result in terms of realizing the supremum in Theorem~\ref{dimKd''} as a maximum attained by choosing $\mu_{\theta_{r_0},r_0}$. 

Now let us justify that $\nu_{\theta_{r_0},r_0}$ is non degenerate and that $\dim(\mu_{\theta_{r_0},r_0})=P_{r_0}(\theta_{r_0})$.

We first make some observations based on the thermodynamic formalism. 

\noindent 
$(i)$ For $2\le r\le s-1$ one has $P_r(1)=P_{r+1}\big (\frac{\widetilde \chi_{r+1}}{\widetilde \chi_r}\big )$. This is obtained by using the relativized thermodynamic formalism~(see \cite{LeWa77}, and \cite[Theorem 3.1]{BFAJM}) and by conditioning on $(\Pi_{r,r+1})_*\eta$ in seeking for the  measure $\eta$ at which $P_r(1)=P_r^{\vec{\boldsymbol{\gamma}}_r}(\varphi_{r},T_r)$ is attained in \eqref{wtherm}). As a result, if $j\in\mathcal I_{r+1}$ and $i\in\mathcal I_{r}$ are related by $j=\Pi_{r,r+1}(i)$, one has $(p_{1,r})_i=\big (p_{\frac{\widetilde \chi_{r+1}}{\widetilde \chi_r},r+1}\big)_j\frac{\mathbb{E}(N_{r,i})}{\mathbb{E}(N_{r+1,j})}$. Also, it is easily checked that $\big (p_{\frac{\widetilde \chi_{r+1}}{\widetilde \chi_r},r+1}\big)_j\frac{\mathbb{E}(N_{r,i})}{\mathbb{E}(N_{r+1,j})}= \big (\Pi_r\mathbb{E}(W^{\frac{\widetilde \chi_{r+1}}{\widetilde \chi_r},r})\big)_i$.  Thus, $(p_{1,r})_i=\big (p_{\frac{\widetilde \chi_{r+1}}{\widetilde \chi_r},r+1}\big)_j\frac{\mathbb{E}(N_{r,i})}{\mathbb{E}(N_{r+1,j})}$. 

\noindent
$(ii)$ For $2\le r\le s$, $P_r'(\theta)$ exists and equals $\frac{1}{\widetilde \chi_r}\sum_{j\in \mathcal I_r}(p_{\theta,r})_j\log(\mathbb{E}(N_{r,j}))$ (this is a special case of \cite[Proposition 4.1]{BFAJM}). Moreover, it is direct to see that $P_s(1)$ is attained by the Bernoulli product measure on $\mathcal I_s^{\mathbb{N}^+}$ associated with $p_{1,s}=\big(\frac{\mathbb{E}(N_{s,j})}{\mathbb{E}(\#\mathcal I_\omega)}=\frac{\mathbb{E}(N_{s,j})}{\sum_{j'\in\mathcal I_s}\mathbb{E}(N_{s,j'}))}\big)_{j\in\mathcal I_s}$. 

Next, we remark that due to the definition of $(r_0,\theta_{r_0})$, we have either $\theta_{r_0}\in \big [\frac{\widetilde \chi_{r_0}}{\widetilde \chi_{r_0-1}},1\big )$ and  $P'_{r_0}(\theta_{r_0})\ge 0$, or $\theta_{r_0}=1$ and in  this case either $r_0\le s-1$  and by observation $(i)$ we can change $(\theta_{r_0},r_0)$ to $\big(\frac{\widetilde \chi_{r_0+1}}{\widetilde\chi_{r_0}},r_0+1\big )$ which makes it possible to initially assume that $\theta_{r_0}\in \big [\frac{\widetilde \chi_{r_0}}{\widetilde \chi_{r_0-1}},1\big )$, or $r_0=s$ and $P'_s(1)\le 0$. Moreover, 
$$
H(W)= h(p_{\theta_{r_0},r_0})+\sum_{j\in \mathcal I_{r_0}}(p_{\theta_0,r_0})_j\log(\mathbb{E}(N_{r_0,j})).
$$
Thus, by the observation $(ii)$, if $\theta_{r_0}\in \big [\frac{\widetilde \chi_{r_0}}{\widetilde \chi_{r_0-1}},1\big )$, one has $H(W)\ge  h(p_{\theta_{r_0},r_0})>0$, and if $r_0=s$ and $\theta_s=1$, $W=W^{1,s}$ so that $\frac{H(W)}{\widetilde \chi_s}=\frac{h(p_{1,s})+\sum_{j\in\mathcal I_s}(p_{1,s})_j\log \mathbb{E}(N_{s,j})}{\widetilde \chi_s}=P_s(1)\ge \dim_HK_\omega>0$ (conditional on $\{K_\omega\neq\emptyset\}$). Consequently, $\nu_{\theta_{r_0},r_0}$ is non degenerate. 

Now let us determine $\dim (\mu_{\theta_{r_0},r_0})$. 

Suppose that $\theta_{r_0}\in \big[\frac{\widetilde \chi_{r_0}}{\widetilde \chi_{r_0-1}},1\big)$ and  $P'_{r_0}(\theta_{r_0})\ge 0$. To see that the value provided by Theorem~\ref{thm-2.4}(3) for $\dim (\mu_{\theta_{r_0},r_0})$ is indeed $P_{r_0}(\theta_{r_0})$, due to the Ledrappier-young  type formula for $\dim (\mu_{p_{\theta_{r_0},r_0}})$, as well as the expression of $P_{r_0}(\theta_{r_0})$ in terms of $S(\theta_{r_0},\eta_{\theta_{r_0},r_0})$ and the previous paragraph which yields $H(W)\ge  h(p_{\theta_{r_0},r_0})$, we only need to prove that $H(W)\le h(p)$, where $p=(p_i)_{i\in\mathcal{I}_{r_0-1}}$ is the $\Pi_{r_0-1}$-projection of $\mathbb{E}(W)$, that is $p_i= (\Pi_{r_0-1}\mathbb{E}(W))_i$ for $i\in \mathcal{I}_{r_0-1}$. 

Note that $p_{\theta_{r_0},r_0}=\Pi_{r_0-1,r_0} p$. Consequently, if $P'_{r_0}(\theta_{r_0})=0$, the desired property comes from the inequalities $h(p)\ge h(p_{\theta_{r_0},r_0})=H(W)$. If $P'_{r_0}(\theta_{r_0})> 0$, then $\theta_{r_0}=\frac{\widetilde \chi_{r_0}}{\widetilde \chi_{r_0-1}}$.  If $r_0=2$, the inequality $H(W)\le h(p)$ is obvious by \eqref{HW}. If $r_0\ge 3$, by observation $(i)$ above, setting $j=\Pi_{r_0-1,r_0}(i)$, one has $p_i=\big (p_{\frac{\widetilde \chi_{r_0}}{\widetilde \chi_{r_0-1}},r_0}\big)_{j}\frac{\mathbb{E}(N_{r_0-1,i})}{\mathbb{E}(N_{r_0,j})}=(p_{1,r_0-1})_i$. Also, by observation $(ii)$, $P'_{i_0-1}(1)=\frac{1}{\widetilde\chi_{r_0-1}}\sum_{i\in\mathcal I_{r_0-1}}(p_{1,r_0-1})_i\log(\mathbb{E}(N_{r_0-1,i}))\le 0$. Noting, moreover, that for $i'\in\Pi_{r_0-1}^{-1}(\{i\})$, we have $W_{i'}=\big (p_{\frac{\widetilde \chi_{r_0}}{\widetilde \chi_{r_0-1}},r_0}\big)_{j}\frac{\mathbf{1}_{\{c_{i'}>0\}}}{\mathbb{E}(N_{r_0,j})}=p_i \frac{\mathbf{1}_{\{c_{i'}>0\}}}{\mathbb{E}(N_{r_0-1,i})}$, we get  $H(W)=h(p)+\sum_{i\in\mathcal I_{r_0-1}}p_i\log(\mathbb{E}(N_{r_0-1,i}))$. Finally,  $H(W)\le h(p)$. 

If $r_0=s$, $\theta_s=1$ and $P'_s(1)\le 0$, then $W=W^{1,s}$ implies that $H(W)=h(p_{1,s})+ \widetilde\chi_s P'_s(1)$ and  $P'_s(1)\le 0$ yields $H(W)\le h(p_{1,s})=h(\Pi_s\mathbb{E}(W))$, so we directly see that the Ledrappier-Young type formula yields $\dim (\mu_{1,s})=\frac{H(W)}{\widetilde \chi_s}$; also, $P_s(1)=\frac{H(W)}{\widetilde \chi_s}$ by definition of $P_s(1)$.

\section{Proof of Theorem~\ref{dimPKd}}\label{secdimPK} 

We continue to work, for each $\boldsymbol{p}=(p^{(n)})_{n\in\mathbb{N}^+}\in P_{\mathcal I}^{ \mathbb{N}^+}$ with the sequence of weights 
$$
W_{\boldsymbol{p}}^{(n)}=(p^{(n)}_i\widetilde W_i)_{i\in\mathcal I}, \ n\ge 1, \text{where }\widetilde W_i=\frac{\mathbf{1}_{\{c_i=1\}}}{\mathbb{P}(c_i=1)}.
$$ 
Recall that for all $N\ge 1$, $
\widetilde d_N(\boldsymbol{p})=\frac{1}{N}\min_{g_1(N)\le k\le  g_s(N)} H_{N,k}
$ was defined in~\eqref{uN1}.

Denote by $H_{\max}$ and $H_{\min}$ respectively the maximum and the minimum of the function 
$
\widetilde h: p\in P_{\mathcal I}\mapsto H((p_i\widetilde W_i)_{i\in\mathcal I})=h(p)+\sum_{i\in\mathcal I} p_i\log(\mathbb{P}(c_i=1)).
$ 
One has $H_{\max}=\log(\mathbb{E}(N))$ and the maximum is uniquely reached, at the point $p_{\max}=\left(\frac{\mathbb{P}(c_i=1)}{\mathbb{E}(\#\mathcal I_\omega))}\right)_{i\in\mathcal I}$, and $H_{\min}= \min_{i\in\mathcal I}\log(\mathbb{P}(c_i=1))$. Let $\lambda=8 \frac{H_{\max}-2H_{\min}}{H^2_{\max}}$.

Recall that $\Lambda'_a$ is a positive constant such that $g_1(N)\ge \Lambda'_a N$ for all $\boldsymbol{p}\in P_{\mathcal I}^{ \mathbb{N}^+}$ and $N\ge 1$. 

Fix $\ell\in \mathscr{L}$. For $\epsilon\in \left (0, \min (\lambda^{-1},\Lambda'_a,(\#\mathcal{I})^{-1} )\right)$, set $\eta=\eta(\epsilon)=\epsilon^2$. As in the study of the upper bound for $\dim_H K_\omega$, set $P_{\mathcal I}(\eta)=\{(p_i)_{i\in\mathcal I}:\, p_i\ge \eta, \, \forall\, i\in\mathcal I\}$ and fix a finite $(\#\mathcal I)\eta$-covering  $\left \{B(q,(\#\mathcal I)\eta)\right \}_{q\in \mathcal P_\eta}$ of $P_{\mathcal I}$, where $\mathcal P_\eta\subset  P_{\mathcal I}(\eta)$.  Recall also that we defined 
\begin{align*}
\mathscr{P}^{\ell,\eta}_{\mathcal{I}}&=\left \{(p^{(n)})_{n\in\mathbb{N}^+}\in P_{\mathcal{I}}^{\mathbb{N}^+}:\, \forall\, m\ge 1,\ \exists \, q \in \mathcal P_\eta,\ p^{(L_{m-1}+1)}=\cdots=p^{(L_m)}=q\right \}.
\end{align*}

For $N\in\mathbb{N}^+$ such that $N\epsilon\ge 1$, let 
\begin{align}\label{QIl}
\mathscr{Q}^{\ell,\epsilon,\eta,\lfloor\Lambda_a N\rfloor}_{\mathcal{I}}=\left \{\boldsymbol{p}\in \mathscr{P}^{\ell,\eta}_{\mathcal{I}}:\, \forall\,  \lfloor N\epsilon\rfloor \le M\le \lfloor\Lambda_aN\rfloor  ,\, \sum_{n=1}^{M}H(W_{\boldsymbol{p}}^{(n)})\ge  -M\epsilon\right \}.
\end{align}
We are going to prove the following proposition, which is enough to get Theorem~\ref{dimPKd}. 
\begin{proposition}\label{propdimP} For $\epsilon>0$ and $N\ge 1$, set $\Delta(\epsilon,N)=\sup\left\{\widetilde d_N(\boldsymbol{p}): \, \boldsymbol{p}\in \mathscr{Q}^{\ell,\epsilon,\eta(\epsilon),\lfloor\Lambda_a N\rfloor}_{\mathcal{I}}\right \}$ and  $\Delta(\epsilon)=\limsup_{N\to +\infty}\Delta(\epsilon,N)$. With probability~1, conditional on $K_\omega\neq\emptyset$, one has 
$$
\dim_P K_\omega\le \Delta:= \lim_{\epsilon\to 0} \Delta(\epsilon).
$$
Moreover, for all $\epsilon>0$ there exists $\boldsymbol{q}_\epsilon\in P_{\mathcal I}^{\mathbb{N}^+}$, such that $\mu_{\boldsymbol{q}_\epsilon}$ is of type $\ell$ and a.s. fully supported on $K_\omega$, and for which conditional on $K_\omega\neq\emptyset$,  $\dim_P(\mu_{\boldsymbol{q}_\epsilon})\ge \Delta-\epsilon$. Also, in the deterministic case, one can find $\boldsymbol{q}\in P_{\mathcal I}^{\mathbb{N}^+}$ of type $\ell$ such that $\mu_{\boldsymbol{q}}$ is fully supported on $K$ and $\dim_P(\mu_{\boldsymbol{q}})=\Delta$. 
 \end{proposition}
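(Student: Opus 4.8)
The proof of Proposition~\ref{propdimP} splits into an upper bound for $\dim_P K_\omega$ and the construction of measures $\mu_{\boldsymbol{q}_\epsilon}$ (respectively $\mu_{\boldsymbol{q}}$ in the deterministic case) realizing, or almost realizing, the value $\Delta$. For the upper bound, the plan is to reuse the covering numbers estimates already obtained in Section~\ref{UPdimH}, but now to control the \emph{upper} box-counting dimension rather than the Hausdorff dimension, exploiting the almost sure identity $\dim_P K_\omega=\overline\dim_B K_\omega$ coming from statistical self-affinity. The key point is that for a fixed scale $e^{-N}$, every point $\boldsymbol{i}\in\Sigma_\omega$ lies in $E_{\boldsymbol{p}}(M_\delta,m(g_s(N)),\delta)$ for a suitable $\boldsymbol{p}\in\mathscr{P}^{\ell,\eta}_{\mathcal I}$ approximating its localized digit frequencies up to $\eta$ (this is exactly \eqref{deltaeta}), and that the number of parallelepipeds $Q_B$ of side lengths comparable to $e^{-N}$ needed to cover the corresponding part of $K_\omega$ is controlled by $\mathbb{E}(\widetilde{\mathcal N}_N)\le e^{(C''_1\delta+\widetilde d_N(\boldsymbol{p}))N}$ from \eqref{constant C'}, together with the auxiliary bound \eqref{espN'} handling those $\boldsymbol{p}$ for which $\sum_{n=1}^N H(W^{(n)})$ dips below $-N\epsilon$ on the relevant range. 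Summing over the at most $(\#\mathcal P_\eta)^{\gamma_N N}=e^{o(N)}$ admissible truncated sequences $\boldsymbol{p}$ and applying a Borel--Cantelli argument along a subsequence realizing $\Delta(\epsilon)=\limsup_N\Delta(\epsilon,N)$, one obtains that the covering number of $K_\omega$ at scale $e^{-N}$ is a.s. at most $e^{(\Delta(\epsilon)+O(\delta)+O(\epsilon))N}$ for infinitely many $N$, hence $\overline\dim_B K_\omega\le\Delta(\epsilon)+O(\epsilon)$; letting $\epsilon\to 0$ (using $\eta=\epsilon^3$ and the explicit form of the constants in \eqref{constant C}, \eqref{C'1}) gives $\dim_P K_\omega\le\Delta$.

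\textbf{Choice of the range in the definition of $\mathscr{Q}^{\ell,\epsilon,\eta,\lfloor\Lambda_aN\rfloor}_{\mathcal I}$.} Here lies the reason the packing statement uses the two-sided constraint $\sum_{n=1}^M H(W_{\boldsymbol p}^{(n)})\ge -M\epsilon$ over $\lfloor N\epsilon\rfloor\le M\le\lfloor\Lambda_a N\rfloor$ rather than the lower bound $\ge M\epsilon$ of $\mathscr{P}^{\ell,\epsilon,\eta}_{\mathcal I}$: to estimate $\dim_P K_\omega=\overline\dim_B K_\omega$ one wants to control covering numbers at \emph{every} large $N$, so one cannot discard sequences whose entropy sums are occasionally negative; one only needs them not to be too negative on the scales $g_1(N),\dots,g_s(N),\widetilde g_s(N)$, which are all $\Theta(N)$ by \eqref{Lambdaa}. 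The definition of $\lambda$ and the restriction $\epsilon<\lambda^{-1}$ are calibrated so that the contribution $e^{\sum_{n=1}^N H(W^{(n)})}$ in \eqref{espN'}, when $\sum_{n=1}^N H(W^{(n)})<-N\epsilon$ at the relevant scale, is dominated by $e^{-N\epsilon/2}$ after multiplying by $(\#\mathcal P_\eta)^{\gamma_N N}e^{C'_1\delta N}$, so these sequences contribute a negligible (indeed zero-dimensional) set, exactly as the set $\widehat E$ did in Section~\ref{dimHK}.

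\textbf{The matching lower bound.} For the existence of $\boldsymbol{q}_\epsilon$, I would argue as follows. By definition of $\Delta(\epsilon)=\limsup_N \Delta(\epsilon,N)$, pick a subsequence $(N_j)$ and sequences $\boldsymbol{p}^{(j)}\in\mathscr{Q}^{\ell,\epsilon,\eta,\lfloor\Lambda_aN_j\rfloor}_{\mathcal I}$ with $\widetilde d_{N_j}(\boldsymbol{p}^{(j)})\to\Delta(\epsilon)$; since each $\boldsymbol{p}^{(j)}$ only matters through its first $\lfloor\Lambda_a N_j\rfloor$ entries, which take values in the finite set $\mathcal P_\eta$ in blocks prescribed by $\ell$, a diagonal extraction produces a single $\boldsymbol{q}_\epsilon\in\mathscr{P}^{\ell,\eta}_{\mathcal I}$ with $\limsup_N\widetilde d_N(\boldsymbol{q}_\epsilon)\ge\Delta(\epsilon)-\epsilon$; one checks that $\boldsymbol{q}_\epsilon$ still satisfies $\liminf_N N^{-1}\sum_{n=1}^N H(W_{\boldsymbol{q}_\epsilon}^{(n)})>0$ (using the two-sided bound and the fact that entropy sums over $[1,M]$ for $M\gtrsim N_j$ are forced upward on a density-one set of scales by the structure of the $\boldsymbol{p}^{(j)}$; if not, one adjusts $\boldsymbol q_\epsilon$ on a sparse set of blocks at no cost to $\widetilde d_N$ thanks to $\ell_m=o(L_{m-1})$), so that $\mu_{\boldsymbol{q}_\epsilon}$ is a non-degenerate IMM of type $\ell$, fully supported on $K_\omega$ by Remark~\ref{mnc}, and Theorem~\ref{thm-2.4}(2) together with Theorem~\ref{thm-2.4}(4) ($\overline d(\mu)=\limsup_N\widetilde d_N$ fails in general but $\overline d(\mu)\ge$ cannot be read off $\widetilde d_N$ — here one should instead invoke directly that $\dim_P(\mu_{\boldsymbol q_\epsilon})=\overline d(\mu_{\boldsymbol q_\epsilon})=\limsup_N d_N(\boldsymbol q_\epsilon)$ and that $d_N\le\widetilde d_N$, so one must verify $\limsup_N d_N(\boldsymbol q_\epsilon)\ge\Delta(\epsilon)-O(\epsilon)$, which holds because on the subsequence $(N_j)$ the extra term $N^{-1}\sum_{n=1}^{\widetilde g_s(N)}H(W^{(n)})$ appearing in $d_N$ is $\ge -O(\epsilon)$ by the two-sided constraint and is also $\ge\widetilde d_{N_j}$ minus $O(\epsilon)$) give $\dim_P(\mu_{\boldsymbol q_\epsilon})\ge\Delta(\epsilon)-O(\epsilon)\ge\Delta-O(\epsilon)$. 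In the deterministic case $\widetilde d_N=d_N$, all entropy sums are nonnegative so the two-sided constraint is automatic, $P_{\mathcal I}$-compactness plus continuity of $\boldsymbol p\mapsto\widetilde d_N(\boldsymbol p)$ in the relevant finitely many coordinates lets one extract an honest limit rather than a $\limsup$, and one obtains $\boldsymbol q$ with $\dim_P(\mu_{\boldsymbol q})=\Delta$ exactly. The main obstacle is precisely this last extraction step: reconciling the $\limsup$ defining $\Delta(\epsilon)$ (a scale-by-scale supremum over infinitely many sequences $\boldsymbol{p}$) with the need for a \emph{single} sequence $\boldsymbol q_\epsilon$ whose packing dimension is close to $\Delta$ — one must ensure the diagonally-extracted $\boldsymbol q_\epsilon$ inherits both non-degeneracy and the asymptotic value of $\widetilde d_N$ along a common subsequence, which is where the slow-growth property $\ell_m=o(L_{m-1})$ of $\mathscr L$ and the finiteness of $\mathcal P_\eta$ are used decisively, exactly as in the proof of Theorem~\ref{dimKd}.
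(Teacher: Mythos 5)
Your upper-bound argument follows the paper's route (covering $K_\omega$ by the parallelepipeds $Q_B$ attached to the partitions $\mathcal F^{D}_N(g)$, counting them via \eqref{constant C'} and \eqref{espN'}, summing over the at most $(\#\mathcal P_\eta)^{\gamma_N N}$ admissible truncated sequences), but the way you close it is logically wrong: you apply Borel--Cantelli only \emph{along a subsequence realizing} $\Delta(\epsilon)$ and conclude from a covering bound valid "for infinitely many $N$" that $\overline\dim_B K_\omega\le\Delta(\epsilon)+O(\epsilon)$. A covering estimate at infinitely many scales only bounds the \emph{lower} box dimension; $\overline\dim_B$ (hence $\dim_P$) requires the bound at \emph{all} large $N$, and one cannot invoke existence of $\dim_B K_\omega$ here since the paper explicitly leaves that open in the random case. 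The correct argument (and the paper's) runs Borel--Cantelli over all $N$, using Lemma~\ref{elim} to discard the sequences violating the two-sided constraint, so that a.s.\ for every large $N$ the covering number at scale $e^{-N}$ is at most $e^{(\Delta(\epsilon,N)+O(\delta)+o(1))N}$, and then takes $\limsup_N$. This is repairable, but as written the inference fails.

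The lower bound is where the genuine gap lies. Your "diagonal extraction" of a single $\boldsymbol q_\epsilon$ from the $\boldsymbol p^{(j)}$ does not work: $\widetilde d_{N_j}(\cdot)$ depends on roughly the first $\Lambda_a N_j$ coordinates, a moving target, so product-topology convergence in $\mathcal P_\eta^{\mathbb N^*}$ never guarantees that $\boldsymbol q_\epsilon$ agrees with $\boldsymbol p^{(j_k)}$ on a segment of length $\lfloor\Lambda_a N_{j_k}\rfloor$ at stage $k$. More seriously, non-degeneracy cannot be restored by "adjusting a sparse set of blocks'': the elements of $\mathscr{Q}^{\ell,\epsilon,\eta,\lfloor\Lambda_a N\rfloor}_{\mathcal{I}}$ may have $\sum_{n=1}^M H(W^{(n)}_{\boldsymbol p})$ close to $-M\epsilon$ throughout the window, so the entropy deficit is spread over each window and must be cured by the density-$\lambda\epsilon$ mixing with $p_{\max}$ of Lemma~\ref{lemperturbation} (plus the replacement of the initial $\lfloor N\epsilon\rfloor$ entries by $p_{\max}$), at a cost $O(\epsilon)$ in $\widetilde d_N$; the paper then \emph{concatenates} the perturbed blocks along a fast-growing sequence ($L_{m_{j-1}}\le\log\lfloor\epsilon N_j\rfloor$) rather than extracting a limit. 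Finally, your claim that $\limsup_N d_N(\boldsymbol q_\epsilon)\ge\Delta(\epsilon)-O(\epsilon)$ because the extra term $N^{-1}\min_{N'\ge g_s(N)}\sum_{n=1}^{N'}H(W^{(n)})$ in $d_N$ is "$\ge\widetilde d_{N_j}$ minus $O(\epsilon)$ by the two-sided constraint'' is unjustified and false in general: the two-sided constraint only gives that this term is $\ge-\Lambda_a\epsilon$, and since $d_N$ is bounded above by it, $d_{N_j}$ could collapse to near $0$ even while $\widetilde d_{N_j}$ is large. It is precisely the perturbation/concatenation construction that forces all partial entropy sums to be $\ge M\epsilon/2$ and $\widetilde g_s(N_j)\le L_{m_j}$, whence $d^{\boldsymbol p_j}_{N_j}(\boldsymbol q_\epsilon)=\widetilde d^{\boldsymbol p_j}_{N_j}(\boldsymbol q_\epsilon)+o(1)$ in the sense of Definition~\ref{HN'}; one must also compare the Lyapunov scales of $\boldsymbol q_\epsilon$ with those of $\boldsymbol p_j$ (they differ by $O(\epsilon)$) and conclude via the local-dimension estimates along $Q_{N_j}(z)$ and Lemma~\ref{calcdim}(3), since Theorem~\ref{thm-2.4}(2) alone, stated for the scales of $\boldsymbol q_\epsilon$, does not directly apply to the quantities you control. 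These steps are missing, and without them neither the non-degeneracy of $\mu_{\boldsymbol q_\epsilon}$ nor the bound $\dim_P(\mu_{\boldsymbol q_\epsilon})\ge\Delta-\epsilon$ is established.
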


Before proving the proposition, we  establish two lemmas. Their proofs can be skipped at first reading. 

\begin{lemma}\label{lemperturbation} Let $N\ge \epsilon^{-1}$. If $\boldsymbol{p}=(p^{(n)})_{n\in\mathbb{N}^+}\in \mathscr{Q}^{\ell,\epsilon,\eta,\lfloor\Lambda_a N\rfloor}_{\mathcal{I}}$, set $\boldsymbol{p}_\epsilon=(p_\epsilon^{(n)})_{n\in\mathbb{N}^+}$, where 
$$
p_\epsilon^{(n)}=
\begin{cases}
p_{\max}&\text{if }  1\le n\le   \lfloor N\epsilon\rfloor\\
(1-\lambda\epsilon)p^{(n)}+\lambda\epsilon p_{\max}&\text{if } \lfloor N\epsilon\rfloor+1\le n\le  \lfloor\Lambda_aN\rfloor  \text{ and } H(W_{\boldsymbol{p}}^{(n)})\le H_{\max}/2\\
p^{(n)}&\text{otherwise}.
\end{cases}
$$
Then, $\sum_{n=1}^{M}H(W_{\boldsymbol{p}_\epsilon}^{(n)})\ge  M\epsilon$ for all $1\le M\le \lfloor\Lambda_aN\rfloor $. And the same holds if one redefines $p_\epsilon^{(n)}=p_{\max}$ for those $n$ belonging to the same interval  $[L_{m-1}+1, L_m]$ as $ \lfloor N\epsilon\rfloor$. 
\end{lemma}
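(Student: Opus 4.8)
The plan is to compare the partial sums $f(M)=\sum_{n=1}^M H(W_{\boldsymbol{p}}^{(n)})$ and $g(M)=\sum_{n=1}^M H(W_{\boldsymbol{p}_\epsilon}^{(n)})$, using throughout that $H(W_{\boldsymbol{p}}^{(n)})=\widetilde h(p^{(n)})$ and $H(W_{\boldsymbol{p}_\epsilon}^{(n)})=\widetilde h(p_\epsilon^{(n)})$, that $\widetilde h$ is concave, and that $H_{\min}\le\widetilde h\le H_{\max}=\widetilde h(p_{\max})$. Set $m_0=\lfloor N\epsilon\rfloor$ and, for $n\ge 1$, $\delta_n=H_{\max}-\widetilde h(p^{(n)})\in[0,H_{\max}-H_{\min}]$; call $n$ \emph{bad} if $\widetilde h(p^{(n)})\le H_{\max}/2$, i.e.\ $\delta_n\ge H_{\max}/2$, and \emph{good} otherwise. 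First I would record that $\epsilon<\lambda^{-1}=H_{\max}^2/\big(8(H_{\max}-2H_{\min})\big)\le H_{\max}/8$, so $H_{\max}>8\epsilon$; consequently, for $1\le M\le m_0$ one has $g(M)=M\,\widetilde h(p_{\max})=MH_{\max}\ge M\epsilon$, which disposes of that range.

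For $m_0<M\le\lfloor\Lambda_a N\rfloor$ (so that the constraint $f(M)\ge-M\epsilon$ from the definition of $\mathscr{Q}^{\ell,\epsilon,\eta,\lfloor\Lambda_a N\rfloor}_{\mathcal{I}}$ is available), since $p_\epsilon^{(n)}=p_{\max}$ for $n\le m_0$, $p_\epsilon^{(n)}=(1-\lambda\epsilon)p^{(n)}+\lambda\epsilon p_{\max}$ for bad $n\in(m_0,M]$ (note $\lambda\epsilon<1$), and $p_\epsilon^{(n)}=p^{(n)}$ for good $n\in(m_0,M]$, concavity of $\widetilde h$ yields
\[
g(M)\ \ge\ P+f(M)+\lambda\epsilon\,\Sigma_{\mathrm{big}}(M),\qquad P:=\sum_{n=1}^{m_0}\delta_n\ge 0,\quad \Sigma_{\mathrm{big}}(M):=\sum_{\substack{m_0<n\le M\\ n\text{ bad}}}\delta_n\ge 0,
\]
because $m_0H_{\max}-f(m_0)=P$ and each bad index in $(m_0,M]$ contributes the extra term $\lambda\epsilon\big(H_{\max}-\widetilde h(p^{(n)})\big)=\lambda\epsilon\delta_n$. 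Now I would split into cases. If $f(M)\ge M\epsilon$, then $g(M)\ge M\epsilon$ at once. Otherwise $\sum_{n=1}^M\delta_n=MH_{\max}-f(M)>M(H_{\max}-\epsilon)$; since the good indices in $(m_0,M]$ account for less than $(M-m_0)H_{\max}/2\le MH_{\max}/2$ of $\sum_{n=1}^M\delta_n=P+\Sigma_{\mathrm{big}}(M)+\sum_{\text{good }n\in(m_0,M]}\delta_n$, this forces $P+\Sigma_{\mathrm{big}}(M)>M(H_{\max}/2-\epsilon)$, hence $\Sigma_{\mathrm{big}}(M)>M(H_{\max}/2-\epsilon)-P$. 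Substituting and using that $H_{\min}\le 0$ gives $\lambda H_{\max}=8(H_{\max}-2H_{\min})/H_{\max}\ge 8$, whence $\lambda(H_{\max}/2-\epsilon)=\lambda H_{\max}/2-\lambda\epsilon\ge 4-1=3$, I would obtain
\[
g(M)\ \ge\ (1-\lambda\epsilon)P+f(M)+\lambda\epsilon M(H_{\max}/2-\epsilon)\ \ge\ (1-\lambda\epsilon)P-M\epsilon+3M\epsilon\ \ge\ 2M\epsilon\ \ge\ M\epsilon,
\]
which completes the estimate. The variant in which $p_\epsilon^{(n)}$ is reset to $p_{\max}$ on the entire block $[L_{m-1}+1,L_m]$ containing $m_0$ merely replaces $m_0$ by $L_m\ge m_0$: the constraint is still in force on $[L_m,\lfloor\Lambda_a N\rfloor]$, $g$ still equals $M\mapsto MH_{\max}$ up to $L_m$, and the same computation applies verbatim; this is the form actually needed, since (recall $p^{(n)}$ is constant on each block of $\ell$) it makes $\boldsymbol{p}_\epsilon$ constant on every block of $\ell$ as well.

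I expect the crux to be exactly the dichotomy in the second paragraph: a shortfall of $f(M)$ below the target $M\epsilon$ must be compensated by a large total mass $\sum_{n\le M}\delta_n$, which, because good indices are cheap ($\delta_n<H_{\max}/2$), concentrates on bad indices — precisely where the $\lambda\epsilon$-tilt toward $p_{\max}$ adds back the most. The thresholds $H_{\max}/2$ for ``bad'', $\lambda=8(H_{\max}-2H_{\min})/H_{\max}^2$, and $\epsilon<\lambda^{-1}$ are calibrated so that this compensation (worth $\ge 3M\epsilon$ on the bad part) always exceeds the worst-case deficit ($M\epsilon$) plus the extra $M\epsilon$ needed to overshoot the target; keeping the bookkeeping of these constants transparent is the only real work.
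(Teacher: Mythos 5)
Your proof is correct and follows essentially the same route as the paper: concavity of $\widetilde h$ gives the $\lambda\epsilon$-gain on the ``bad'' indices, the dichotomy on whether $f(M)\ge M\epsilon$ together with the constraint $f(M)\ge -M\epsilon$ from $\mathscr{Q}^{\ell,\epsilon,\eta,\lfloor\Lambda_a N\rfloor}_{\mathcal I}$ does the rest, and the calibration of $\lambda$ and $\epsilon<\lambda^{-1}\le H_{\max}/8$ is used exactly as in the paper. The only difference is bookkeeping — you aggregate the deficits $\delta_n$ and bound the good-index contribution by $H_{\max}/2$ each, whereas the paper lower-bounds the \emph{number} of bad indices $\#S_M$ using $H_{\min}$ and gains $\lambda\epsilon H_{\max}/2$ per bad index — and your treatment of the block variant (which also follows from the trivial termwise increase, since $\widetilde h(p_{\max})=H_{\max}$ is the maximum) is fine.
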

Note that the modification of $\boldsymbol{p}_\epsilon$ in the last assertion is considered so that $p^{(n)}$ is independent of $n$ in intervals of the form $[L_{k-1}+1,L_k]$. 

\begin{proof}[Proof of Lemma~\ref{lemperturbation}] We note that by concavity of the mapping $\widetilde h$ and the fact that $\lambda\epsilon\in (0,1)$, when $\lfloor N\epsilon\rfloor+1\le n\le  g_s(N)$ and $H(W_{\boldsymbol{p}}^{(n)})=\widetilde h(p^{(n)})\le H_{\max}/2$, one has 
\begin{align*}
H(W_{\boldsymbol{p}_\epsilon}^{(n)})=\widetilde h(p^{(n)}_\epsilon)&=\widetilde h((1-\lambda\epsilon)p^{(n)}+\lambda\epsilon p_{\max})\\
&\ge \widetilde h(p^{(n)})+ \lambda\epsilon (\widetilde h(p_{\max})-\widetilde h(p^{(n)}))\\
&\ge  \widetilde h(p^{(n)})+ \lambda\epsilon H_{\max}/2= H(W_{\boldsymbol{p}}^{(n)})+\lambda\epsilon H_{\max}/2.
\end{align*}

It follows that for all $1\le M\le \lfloor\Lambda_aN\rfloor$, one has $\sum_{n=1}^{M}H(W_{\boldsymbol{p}_\epsilon}^{(n)})\ge \sum_{n=1}^{M}H(W_{\boldsymbol{p}}^{(n)})$. So if $M\le \lfloor N\epsilon\rfloor$ or $\sum_{n=1}^{M}H(W_{\boldsymbol{p}}^{(n)})\ge M\epsilon$, there is nothing to prove. If $M\ge \lfloor N\epsilon\rfloor+1$ and $\sum_{n=1}^{M}H(W_{\boldsymbol{p}}^{(n)})< M\epsilon$, denote $S_M=\{1\le n\le M:\,   H(W_{\boldsymbol{p}}^{(n)})\le H_{\max}/2\}$.   One has 
$$
M\epsilon>  \sum_{n=1}^{M}H(W_{\boldsymbol{p}}^{(n)}) \ge (M-\#S_M)H_{\max}/2 +(\#S_M) H_{\min},
$$
hence $\#S_M\ge    M \frac{(H_{\max}-2\epsilon)}{H_{\max}-2H_{\min}}$ (note that    $H_{\min}<0$). Now 
\begin{align*}
\sum_{n=1}^{M}H(W_{\boldsymbol{p}_\epsilon}^{(n)})&\ge \sum_{n=1}^{\lfloor N\epsilon\rfloor}\mathbf{1}_{S_M^c}(n)H_{\max}+ \sum_{n=1}^{\lfloor N\epsilon\rfloor}\mathbf{1}_{S_M}(n)(H(W_{\boldsymbol{p}}^{(n)})+\lambda\epsilon H_{\max}/2)\\
&\quad +\sum_{n=\lfloor N\epsilon\rfloor+1}^M\mathbf{1}_{S_M^c}(n)(H(W_{\boldsymbol{p}}^{(n)}))+\sum_{n=\lfloor N\epsilon\rfloor+1}^M\mathbf{1}_{S_M}(n)(H(W_{\boldsymbol{p}}^{(n)})+\lambda\epsilon H_{\max}/2)\\
&= \sum_{n=1}^{\lfloor N\epsilon\rfloor}\mathbf{1}_{S_M^c}(n)(H_{\max}-H(W_{\boldsymbol{p}}^{(n)})) +\sum_{n=1}^M  H(W_{\boldsymbol{p}}^{(n)})+ (\#S_M) \lambda\epsilon H_{\max}/2\\
&\ge \sum_{n=1}^M  H(W_{\boldsymbol{p}}^{(n)})+ (\#S_M) H_{\max}/2\ge -M\epsilon+ M\lambda\epsilon \frac{H_{\max}(H_{\max}-2\epsilon)}{2(H_{\max}-2H_{\min})}\ge M\epsilon; 
\end{align*}                                                                                                                                                                                                                                                                                                                                                                                                                                                                                                                                                                                                                                                                                                                                                                                                                                                                                                                                                                                                                                                                                                                                                                                                                                                                                                                                                                                                                                                                                                                                                                                                                                                                                                                                                                                                                                                                                                                                                                                                                                                                                                                                                                                                                                                                                                                                                                                                                                                                                                                                                                                                                                                                                                                                                                                                                                                                                                                                                                                                                                                                                                                                                                                                                                                                                                                                                                                                                                                                                                                                                                                                                                                                                                                                                                                                                                                                                                                                                                                                                                                                                                                                                                                                                                                                                                                                                                                                                                                                                                                                                                                                                                                                                                                                                                                                                                                                                                                                                                                                                                                                                                                                                                                                                                                                                                                                                                                                                                                                                
indeed our choice of $\epsilon$  implies that $\epsilon\le H_{\max}/4$, so that 
$$
\lambda \frac{H_{\max}(H_{\max}-2\epsilon)}{2(H_{\max}-2H_{\min})}\ge \lambda \frac{H^2_{\max}}{4(H_{\max}-2H_{\min})}\ge 2.
$$ 
by definition of $\lambda$.
\end{proof}

The statement of the second lemma requires two last definitions. Recall the definition \eqref{epMmd} of the sets of the form $E_{\boldsymbol{p}}(M, m,\delta)$ (we add the subscript $\boldsymbol{p}$ to indicate the dependence in $\boldsymbol{p}$). Recall also that for $N\in\mathbb{N}^+$, we defined $m(N)$ the greatest integer such that $L_{m(N)}\le N-1$, and for any fixed $\delta\in(0,1)$ we can consider an integer $M_\delta $  such that  $\ell_{m+1}\le \delta L_m$ for all $m\ge M_\delta$.  Observe that $E_{\boldsymbol{p}}(M_\delta, m,\delta)$ only depends on $(p^{(n)})_{1\le n\le L_m}$, so that in the lemma below the sets $E_{\boldsymbol{p}}(M_\delta , m(M),\delta)$, $ \lfloor N\epsilon\rfloor \le M\le \lfloor\Lambda_aN\rfloor$, depend only on $(p^{(n)})_{1\le n\le  \lfloor\Lambda_aN\rfloor }$.

For $N\ge 1$ define 
$$
\mathscr{P}^{\ell,\eta, \lfloor\Lambda_aN\rfloor }_{\mathcal{I}}=\left \{(p^{(n)})_{1\le n\le\lfloor\Lambda_aN\rfloor }: \boldsymbol{p}\in \mathscr{P}^{\ell,\eta}_{\mathcal{I}}\right \}.
$$
and 
$$
\widetilde{\mathscr{P}}^{\ell,\epsilon,\eta, \lfloor\Lambda_aN\rfloor}_{\mathcal{I}}=\left \{\boldsymbol{p}\in \mathscr{P}^{\ell,\eta, \lfloor\Lambda_aN\rfloor }_{\mathcal{I}}: \, \exists\,   \lfloor N\epsilon\rfloor \le M\le \lfloor\Lambda_aN\rfloor  ,\, \sum_{n=1}^{M}H(W_{\boldsymbol{p}}^{(n)})<  -M\epsilon\right \}.
$$

\begin{lemma}\label{elim}Recall the constant $C'_1$ defined in \eqref{C'1}. Fix $\delta\in(0,\epsilon/C'_1)$. With probability~1, for $N$ large enough, 
\begin{equation}\label{intvide}
\Sigma_\omega\cap \Big (\bigcup_{\boldsymbol{p}\in \widetilde {\mathscr{P}}^{\ell,\epsilon,\eta, \lfloor\Lambda_aN\rfloor }_{\mathcal{I}}} E_{\boldsymbol{p}}(M_\delta, m(g_s(N)), \delta)\Big )=\emptyset.
\end{equation}
\end{lemma}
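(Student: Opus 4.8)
The plan is to bound, for each large $N$, the expected number of cylinders of generation roughly $\lfloor\Lambda_a N\rfloor$ that meet some $E_{\boldsymbol p}(M_\delta,m(g_s(N)),\delta)$ with $\boldsymbol p\in\widetilde{\mathscr P}^{\ell,\epsilon,\eta,\lfloor\Lambda_a N\rfloor}_{\mathcal I}$, and show the sum over $N$ of these expectations is finite, so that Borel--Cantelli eliminates the event in \eqref{intvide}. First I would fix $\boldsymbol p\in\widetilde{\mathscr P}^{\ell,\epsilon,\eta,\lfloor\Lambda_a N\rfloor}_{\mathcal I}$ and the corresponding witness $M$ with $\lfloor N\epsilon\rfloor\le M\le\lfloor\Lambda_a N\rfloor$ and $\sum_{n=1}^{M}H(W^{(n)}_{\boldsymbol p})<-M\epsilon$. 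Since $E_{\boldsymbol p}(M_\delta,m(g_s(N)),\delta)\subset E_{\boldsymbol p}(M_\delta,m(M),\delta)$ once $g_s(N)\ge M$ — which holds because $g_s(N)\ge g_1(N)\ge\Lambda'_a N$ and $M\le\lfloor\Lambda_a N\rfloor$, after possibly shrinking the relevant range or using $\Lambda_a$ as in \eqref{Lambdaa} — it suffices to control the number $\widehat{\mathcal N}_M$ of cylinders $[U_1]$ of generation $M$ meeting $E_{\boldsymbol p}(M_\delta,m(M),\delta)$. Exactly as in the derivation of \eqref{espN'} in Section~\ref{UPdimH} (see Remark~\ref{alternativeupperbound}), one has, for $N$ large enough and $M\ge L_{M_\delta}/\delta$,
\begin{equation*}
\mathbb{E}(\widehat{\mathcal N}_M)\le e^{C'_1\delta M}\,e^{\sum_{n=1}^{M}H(W^{(n)}_{\boldsymbol p})}\le e^{C'_1\delta M}e^{-M\epsilon}\le e^{-M(\epsilon-C'_1\delta)},
\end{equation*}
using the defining inequality of $\widetilde{\mathscr P}^{\ell,\epsilon,\eta,\lfloor\Lambda_a N\rfloor}_{\mathcal I}$ and the choice $\delta<\epsilon/C'_1$.

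Next I would sum over the possible witnesses $M$ (there are at most $\lfloor\Lambda_a N\rfloor$ of them) and over the collections $(p^{(n)})_{1\le n\le\lfloor\Lambda_a N\rfloor}$. As in \eqref{cardPlepsiloni}, the number of such truncated sequences arising from $\mathscr P^{\ell,\eta}_{\mathcal I}$ is at most $(\#\mathcal P_\eta)^{\gamma_N N}$ with $\gamma_N\to0$, so in particular it is $e^{o(N)}$. Note that whether a given cylinder $[U_1]$ meets $E_{\boldsymbol p}(M_\delta,m(M),\delta)$ depends on $\boldsymbol p$ only through $(p^{(n)})_{1\le n\le L_{m(M)}}$, hence through the truncation to $\lfloor\Lambda_a N\rfloor$ coordinates; so the union over $\boldsymbol p\in\widetilde{\mathscr P}^{\ell,\epsilon,\eta,\lfloor\Lambda_a N\rfloor}_{\mathcal I}$ is really a union over finitely many collections. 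Writing $\mathcal K_N$ for the total number of generation-$M$ cylinders (over all admissible $M$ and all admissible truncated $\boldsymbol p$) that meet the corresponding sets $E_{\boldsymbol p}(M_\delta,m(M),\delta)$, we get
\begin{equation*}
\mathbb{E}(\mathcal K_N)\le \sum_{\lfloor N\epsilon\rfloor\le M\le\lfloor\Lambda_a N\rfloor}(\#\mathcal P_\eta)^{\gamma_N N}\,e^{-M(\epsilon-C'_1\delta)}\le \lfloor\Lambda_a N\rfloor\,e^{\gamma_N N\log(\#\mathcal P_\eta)}\,e^{-\lfloor N\epsilon\rfloor(\epsilon-C'_1\delta)}.
\end{equation*}
Since $\gamma_N\to0$ while $\epsilon-C'_1\delta>0$ and $M\ge\lfloor N\epsilon\rfloor$, the exponent is $\le -cN$ for some $c>0$ and all large $N$, so $\sum_N\mathbb{E}(\mathcal K_N)<+\infty$. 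By Borel--Cantelli, almost surely $\mathcal K_N=0$ for $N$ large; equivalently, for $N$ large no generation-$M$ cylinder meeting any of these $E_{\boldsymbol p}(M_\delta,m(M),\delta)$ survives in the sense of being nonempty — but more precisely we want \eqref{intvide}, i.e.\ that $\Sigma_\omega$ itself does not meet the union. For this I note that if $\boldsymbol i\in\Sigma_\omega\cap E_{\boldsymbol p}(M_\delta,m(g_s(N)),\delta)$, then $\boldsymbol i_{|M}$ generates a cylinder meeting $E_{\boldsymbol p}(M_\delta,m(M),\delta)$ and contained in $\Sigma_{\omega,M}$, hence counted by $\mathcal K_N$; so $\mathcal K_N=0$ forces the intersection in \eqref{intvide} to be empty.

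The main obstacle I anticipate is bookkeeping rather than conceptual: making sure the truncation length $\lfloor\Lambda_a N\rfloor$ genuinely dominates $g_s(N)$ and $M$ uniformly (this is where \eqref{Lambdaa} is used, and one must check $M_\delta$ and the lower bound $M\ge L_{M_\delta}/\delta$ are eventually satisfied since $M\ge\lfloor N\epsilon\rfloor\to\infty$), and verifying that the estimate \eqref{espN'}, which was proved in Section~\ref{UPdimH} under the standing hypothesis $\inf_{n,i}p^{(n)}_i\ge\eta$ and the type-$\ell$ structure, applies verbatim here — which it does, because every $\boldsymbol p\in\mathscr P^{\ell,\eta}_{\mathcal I}$ has entries bounded below by $\eta$ and is of type $\ell$, and \eqref{espN'} does not use any lower bound on $\sum_{n=1}^{N}H(W^{(n)})$ (cf.\ Remark~\ref{alternativeupperbound}). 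A minor point to handle with care is that the constant $C'_1$ in \eqref{C'1} depends on $H_\infty$ and $\Lambda_a$ but not on $\boldsymbol p$ within $\mathscr P^{\ell,\eta}_{\mathcal I}$ (using Remark~\ref{Hinfty}), so the choice $\delta<\epsilon/C'_1$ is legitimate and uniform, and the final geometric decay is genuine.
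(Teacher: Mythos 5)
Your proposal is correct and is essentially the paper's own proof: the same expectation bound \eqref{espN'} from Remark~\ref{alternativeupperbound} applied at a witness generation $M$ with $\sum_{n=1}^{M}H(W^{(n)}_{\boldsymbol{p}})<-M\epsilon$, the same count $(\#\mathcal P_\eta)^{\gamma_N N}$ of truncated sequences as in \eqref{cardPlepsiloni}, a sum over $M\in[\lfloor N\epsilon\rfloor,\lfloor\Lambda_a N\rfloor]$ and over $N$, and Borel--Cantelli (the paper's $\mathscr C_N$ is your $\mathcal K_N$). The only blemish is your justification of $M\le g_s(N)$ for the inclusion $E_{\boldsymbol{p}}(M_\delta,m(g_s(N)),\delta)\subset E_{\boldsymbol{p}}(M_\delta,m(M),\delta)$: the bounds $g_s(N)\ge\Lambda'_a N$ and $M\le\lfloor\Lambda_a N\rfloor$ do not imply it since $\Lambda'_a\le\Lambda_a$; this is precisely the point the paper also passes over silently (its $\mathscr C_N$ ranges over all $M$ in the window, and in the application the relevant points of $\Sigma_\omega$ belong to $E_{\boldsymbol{p}}(M_\delta,m,\delta)$ for every $m$, so the covering at witnesses $M>g_s(N)$ is harmless), hence it does not constitute a genuine gap relative to the paper's argument.
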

\begin{proof}
For $M\ge L_{M_\delta}/\delta$, set 
$$
\widetilde {\mathscr{P}}^{\ell,\epsilon,\eta, \lfloor\Lambda_aN\rfloor}_{\mathcal I,M}=\left\{\boldsymbol{p}\in \widetilde {\mathscr{P}}^{\ell,\epsilon,\eta, \lfloor\Lambda_aN\rfloor}: \sum_{n=1}^{M}H(W_{\boldsymbol{p}}^{(n)})<  -M\epsilon\right\}.
$$
Remark~\ref{alternativeupperbound} yields that for  $M\ge L_{M_\delta}/\delta$ and $\boldsymbol{p}\in \widetilde {\mathscr{P}}^{\ell,\epsilon,\eta, \lfloor\Lambda_aN\rfloor}_{\mathcal I,M}$, if $\widehat{\mathcal N}_{\boldsymbol{p},M}$ stands for the cardinality of those $U_1\in\mathcal{I}^{M}$ such that $[U_1]\cap E_{\boldsymbol{p}}(M_\delta, m(M),\delta)\neq\emptyset$, then $\mathbb{E}(\widehat{\mathcal N}_{\boldsymbol{p},M})\le e^{C'_1\delta M+\sum_{n=1}^{M}H(W_{\boldsymbol{p}}^{(n)})}\le e^{(C'_1\delta-\epsilon)M}$; note also that  $\widetilde{\mathscr{P}}_{\mathcal I}^{\ell,\epsilon,\eta, \lfloor\Lambda_aN\rfloor}\le \#(\mathcal P_\eta)^{\gamma_NN}$ with $\gamma_N \to 0$ as $N\to+\infty$  (this is obtained as \eqref{cardPlepsiloni}). It follows that if $\lfloor N\epsilon\rfloor \ge L_{M_\delta}/\delta$ and we denote by $\mathscr{C}_N$ the set of cylinders which are of some generation $M\in  [\lfloor N\epsilon\rfloor ,\lfloor\Lambda_aN\rfloor]$ and which meet $E_{\boldsymbol{p}}(M_\delta, m(M), \delta)$ for some $\boldsymbol{p}\in \widetilde {\mathscr{P}}_{\mathcal I,M}^{\ell,\epsilon,\eta, \lfloor\Lambda_aN\rfloor}$, we have 
$$
\mathbb{E} (\#\mathscr{C}_N)\le \#(\mathcal P_\eta)^{\gamma_NN}\sum_{M=\lfloor N\epsilon\rfloor }^{\lfloor\Lambda_aN\rfloor} e^{(C'_1\delta -\epsilon)M},
$$ 
from which it follows that $\mathbb{E}(\sum_{N:\, \lfloor N\epsilon\rfloor \ge L_{M_\delta}/\delta}\#\mathscr{C}_N)<+\infty$ due to the  assumption $C'_1\delta -\epsilon<0$ and the property of $(\gamma_N)_{N\ge 1}$. Subsequently, almost surely, for $N$ large enough one has $\#\mathscr{C}_N=0$, that is $\mathscr{C}_N=\emptyset$. Since $\bigcup_{\boldsymbol{p}\in \widetilde {\mathscr{P}}^{\ell,\epsilon,\eta, \lfloor\Lambda_aN\rfloor }_{\mathcal{I}}} E_{\boldsymbol{p}}(M_\delta, m(g_s(N)), \delta)$ is covered by the elements of $\mathscr{C}_N$, we get ~\eqref{intvide}.
\end{proof}
Before proving Proposition~\ref{propdimP}, we need to extend Definition~\ref{HN}.
\begin{definition}\label{HN'} Recall the notations of Definition~\ref{HN}, all associated with a fixed $\boldsymbol{p}\in P_{\mathcal I}^{\mathbb{N}^+}$. If $\boldsymbol{q}\in P_{\mathcal I}^{\mathbb{N}^+}$, for $N\ge 1$, define
\begin{align*}
H_{N,k}^{\boldsymbol{p}}(\boldsymbol{q})=\sum_{n=1}^k H({W_{\boldsymbol{q}}^{(n)}})+\sum_{n=k+1}^{g_s(N)} h(\Pi_{r_n}q^{(n)})\quad (0\le k\le g_s(N)),
\end{align*}
where $r_n$ is the index $r$ such that $g_{r-1}(N)+1\le n\le g_{r}(N)$.

Also, set 
\begin{align*}
d^{\boldsymbol{p}}_N(\boldsymbol{q})&=\frac{1}{N}\min \Big (\min_{g_1(N)\le k\le  g_s(N)-1} H^{\boldsymbol{p}}_{N,k}(\boldsymbol{q}),\min_{N'\ge g_s(N)} \sum_{n=1}^{N'} H(W_{\boldsymbol{q}}^{(n)})\Big )\\
\text{and}\quad
\widetilde d^{\boldsymbol{p}}_N(\boldsymbol{q})&=\frac{1}{N}\min_{g_1(N)\le k\le  g_s(N)} H^{\boldsymbol{p}}_{N,k}(\boldsymbol{q}).
\end{align*}
In particular, $d_N(\boldsymbol{p})$ and $\widetilde d_N(\boldsymbol{p})$ equal $d^{\boldsymbol{p}}_N(\boldsymbol{p})$ and $\widetilde d^{\boldsymbol{p}}_N(\boldsymbol{p})$ respectively.
\end{definition}

\begin{proof}[Proof of Proposition~\ref{propdimP}] Let $\delta=\delta(\eta)$ as in~\eqref{deltaeta} and note that if $\epsilon$ is small enough then $\delta(\eta)<\epsilon/C'_1(<1)$. Fix $N\ge M_\delta/(\delta\Lambda'_a)$. Recall the inequality~\eqref{constant C'} in Remark~\ref{alternativeupperbound}, namely $\mathbb{E}(\widetilde{N}_{\boldsymbol{p},N})\le e^{(C''_1\delta  +\widetilde d_N(\boldsymbol{p}))N}$ valid for any $\boldsymbol{p}\in \mathscr P_{\mathcal I}^{\ell,\eta}$ and $\widetilde{N}_{\boldsymbol{p},N}$, the number  of sets $B_{N}(\boldsymbol{i})$ in $\mathcal F^{D}_N(g)$, with $\boldsymbol{i}\in E_{\boldsymbol{p}}(M_\delta,m({g_s(N)}),\delta)$. Denote this collection of sets $B_{N}(\boldsymbol{i})$ by $\mathcal B_N(\boldsymbol{p})$. It only depends on $(p^{(n)})_{1\le N\le \lfloor\Lambda_aN\rfloor}$. 

We deduce from Lemma~\ref{elim} that with probability~1, conditional on $K_\omega\neq\emptyset$, for $N$ large enough, one has (recall \eqref{QIl})
$$
K_\omega\subset \bigcup_{\boldsymbol{p}\in {\mathscr{Q}}_{\mathcal I}^{\ell,\epsilon,\eta, \lfloor\Lambda_aN\rfloor}}\bigcup_{B\in \mathcal B_N(\boldsymbol{p})}Q_B.
$$
Each $Q_B$ in the above union is a parallelepiped of sides lengths  smaller than or equal to $e^{\lambda_a\Lambda_a\delta N} e^{-N}$, so there exists a constant $C(d)$ such that $Q_B$ is contained in a union of at most $C(d)e^{\lambda_a\Lambda_a\delta N d}$ cubes of sides lengths $e^{-N}$. Moreover, the expectation of the total number of parallelepipeds $Q_B$ occurring in the above union is bounded by 
\begin{align*}
\sum_{\boldsymbol{p}\in {\mathscr{Q}}_{\mathcal I}^{\ell,\epsilon,\eta, \lfloor\Lambda_aN\rfloor}}\mathbb{E}(\widetilde{N}_{\boldsymbol{p},N})\le \big (\# \mathscr{Q}_{\mathcal I}^{\ell,\epsilon,\eta, \lfloor\Lambda_a N\rfloor}\big )  e^{(C''_1\delta +\Delta(\epsilon,N))N}\le \#(\mathcal P_\eta)^{\gamma_NN}e^{(C''_1\delta + \Delta(\epsilon,N))N}.
\end{align*}
This implies that with probability~1, conditional on $K_\omega\neq\emptyset$, for $N$ large enough, $K_\omega$ is covered by at most $C(d)e^{\lambda_a\Lambda_a\delta N d}(\#\mathcal P_\eta)^{\gamma_NN}e^{(C''_1\delta) N+ \Delta(\epsilon,N)N+\epsilon N}$ cubes of sides lengths $e^{-N}$. Consequently,
\begin{align*}
\overline \dim_BK_\omega&\le \limsup_{N\to +\infty} \gamma_N \log(\# \mathcal P_\eta)+(C''_1+\lambda_a\Lambda_a d)\delta(\eta(\epsilon))+\Delta(\epsilon,N)+\epsilon\\
&=(C''_1+\lambda_a\Lambda_a d)\delta(\eta(\epsilon))+\limsup_{N\to +\infty} \Delta(\epsilon,N)+\epsilon.
\end{align*}
Since $C''_1$ (see \eqref{C''1}) does not depend on $\epsilon$ and $\delta(\eta(\epsilon))$ tends to 0 as $\epsilon\to 0$, we deduce that $\dim_PK_\omega\le  \overline \dim_BK_\omega\le \Delta$ as desired.  

It remains to exhibit, for each $\gamma>0$, an inhomogeneous Mandelbrot measure of type~$\ell$ whose packing dimension is larger than $\Delta-\gamma$, and show that in the deterministic case  one can take $\gamma=0$.  

Suppose now that $\epsilon$ is also strictly smaller than $2 H_{\max} $ and small enough so that the conclusions of Lemma~\ref{elim} hold with $\delta(\eta(\epsilon))$. Consider also an increasing sequence of integers $(N_j)_{j\in\mathbb{N}^+}$,  as well as a sequence $(\boldsymbol{p}_j)_{j\in\mathbb{N}^+}\in \mathscr{P}^{\ell,\eta}_{\mathcal I}$ such that for each $j\ge 1$ one has $\boldsymbol{p}_j\in {\mathscr{Q}}_{\mathcal I}^{\ell,\epsilon,\eta(\epsilon), \lfloor\Lambda_aN_j\rfloor}$ and $\widetilde d_{N_j}(\boldsymbol{p}_j)\ge \Delta(\epsilon)(1-\epsilon)$. 

To each $\boldsymbol{p}_{j}$ are associated the objects $(\gamma_k(N_j))_{1\le k\le d}$, $D(N_j)$, $s=s(N_j)$, $g(N_j)=(g_1(N_j),\ldots g_s(N_j))$ and the partition $\mathcal{F}^{D}_{N_j}(g)$ at scale $N_j$ as in Section~\ref{pfthmdim}. In particular $ \gamma_k(N_j)\chi_k((\widehat{\boldsymbol{p}_{j}})_{\gamma_k(N_j)})\sim N_j$ as $j\to+\infty$ for $1\le k\le d$. 

We denote by $m_j$ the unique integer $m$ such that $L_{m-1}+1\le g_{s(N_j)}(N_j)\le L_m$ (remember that $g_{s(N_j)}(N_j)$ is associated with $\boldsymbol{p}_j$), and without loss of generality we can assume that $L_{m_{j-1}}\le \log (\lfloor\epsilon N_{j}\rfloor)\le \frac{\epsilon}{2H_{\max}-\epsilon} \lfloor\epsilon N_{j}\rfloor$ for all $j\ge 2$. This implies in particular that 
\begin{equation}\label{control2}
\text{for all }M\ge \lfloor\epsilon N_{j}\rfloor, \  M\epsilon- L_{m_{j-1}}H_{\max}\ge (M-L_{m_{j-1}})\epsilon/2.
\end{equation}  

For each $j\ge 1$, we denote by $\boldsymbol{p}_{\epsilon,j}$ the sequence $(\boldsymbol{p}_j)_{\epsilon}$ constructed from  $\boldsymbol{p_j}$ in Lemma~\ref{lemperturbation}.

We then define a sequence $\boldsymbol{q}_\epsilon$ as follows: 
$$
\boldsymbol{q}_\epsilon^{(n)}=
\begin{cases}
\boldsymbol{p}^{(n)}_{\epsilon,1} \text{ if $1\le n\le L_{m_1}$}\\
\boldsymbol{p}^{(n)}_{\epsilon,j} \text{ if $j\ge 2$ and $L_{m_{j-1}}+1\le n\le L_{m_j}$}.
\end{cases}
$$
We denote by $\mu_{\boldsymbol{q}_\epsilon}$ the Mandelbrot measure  constructed from  $\boldsymbol{q}_\epsilon$ and random vectors of generation $n-1$ identically distributed with $W^{(n)}_{\boldsymbol{q}_\epsilon}$ for all $n\ge 1$. It is of type $\ell$. Let us check that this measure is not degenerate. By construction, for all $j\ge 2$ and $L_{m_{j-1}}+1\le M\le L_{m_j}$, one has 
\begin{align*}
\sum_{n=1}^MH(W_{{\boldsymbol{q}_\epsilon}}^{(n)})&= \sum_{n=1}^{L_{m_{j-1}}}H(W_{{\boldsymbol{q}_\epsilon}}^{(n)})+ \sum_{n=L_{m_{j-1}}+1}^{M}H(W_{\boldsymbol{p}_{\epsilon,j}}^{(n)})\\
&\ge \sum_{n=1}^{L_{m_{j-1}}}H(W_{{\boldsymbol{q}_\epsilon}}^{(n)})+
\begin{cases}
(M-L_{m_{j-1}})H_{\max}&\text{if } M\le  \lfloor\epsilon N_{j}\rfloor\\
M\epsilon- L_{m_{j-1}}H_{\max} &\text{otherwise}
\end{cases}\\
&\ge  \Big (\sum_{n=1}^{L_{m_{j-1}}}H(W_{{\boldsymbol{q}_\epsilon}}^{(n)})\Big )+  (M-L_{m_{j-1}})\epsilon/2,
\end{align*}
where we used that $H_{\max} \ge \epsilon/2$ and \eqref{control2}. Since for $1\le M\le L_{m_1}$ one has  $\sum_{n=1}^{M}H(W_{{\boldsymbol{q}_\epsilon}}^{(n)})=\sum_{n=1}^{M}H(W_{\boldsymbol{p}_{\epsilon,1}}^{(n)})\ge M\epsilon\ge M\epsilon/2$, we deduce by recursion on the integer $j$ such that $L_{m_{j-1}}+1\le n\le L_{m_j}$ that for all $M\ge 1$ one has  that $\sum_{n=1}^{M}H(W_{{\boldsymbol{q}_\epsilon}}^{(n)})\ge M\epsilon/2$, hence by Theorem~\ref{sufcond} the measure $\mu_{\boldsymbol{q}_\epsilon}$ is positive and fully supported on $K_\omega$, conditional on $K_\omega\neq\emptyset$ (by construction the components of any vector $\boldsymbol{q}_\epsilon^{(n)}$ are positive). Similar arguments as above show that $\sum_{n=m_j+1}^MH(W_{{\boldsymbol{q}_\epsilon}}^{(n)})\ge 0$ for all $M\ge L_{m_j}+1$. In particular, $L_{m_j-1}+1\le g_s(N_j)\le \widetilde {g}_s(N_j)\le L_{m_j}$, hence $d^{\boldsymbol{p}_{j}}_{N_j}(\boldsymbol{q}_\epsilon)=\widetilde d^{\boldsymbol{p}_{j}}_{N_j}(\boldsymbol{q}_\epsilon)+o(1)$ (recall  Definition~\ref{HN'}). Note also that  the components of $\boldsymbol{q_\epsilon}$ are uniformly bounded away from 0, so that Proposition~\ref{mursnonchargés} applies to $\mu_{\boldsymbol{q}_\epsilon}$.

What is left to prove is that as $\epsilon\to 0$, conditional on $\mu_{\boldsymbol{q}_\epsilon}\neq 0$, we have $\dim_P(\mu_{\boldsymbol{q}_\epsilon})\to \Delta$ as $\epsilon\to 0^+$. Indeed, one has $\dim_P(\mu_{\boldsymbol{q}_\epsilon})\ge \Delta(\epsilon)(1-\epsilon)+O(\epsilon)$; to see this, the idea  is to use a computation similar to that used to prove   Theorem~\ref{thm-2.4}(2) via Propositions~\ref{tau} and~\ref{SN'} and Theorem~\ref{dimHmu}(2), by considering the partitions $\mathcal{F}^{D}_{N_j}(g)$, $j\ge 1$ (remember that $\mathcal{F}^{D}_{N_j}(g)$ is associated with $\boldsymbol{p}_j$), and estimating from above $\mathbb{E}\Big (\sum_{B\in \mathcal{F}^{D}_{N_j}(g)}\mu_{\boldsymbol{q}_\epsilon}(B)^q\Big )$ for $q$ close to~$1^+$.  Due to the assumption $L_{m_{j-1}}\le \log (\lfloor\epsilon N_{j}\rfloor)$ on the growth of $N_j$, this yields that with probability~1, conditional on $\mu_{\boldsymbol{q}_\epsilon}\neq 0$,  for $\mu_{\boldsymbol{q}_\epsilon}$-almost every $z$, $\liminf_{j\to +\infty}\frac{\log(\mu_{\boldsymbol{q}_\epsilon}(Q_{N_j}(z)))}{-N_j}\ge \liminf_{j\to +\infty}d^{\boldsymbol{p}_{j}}_{N_j}(\boldsymbol{q}_\epsilon)= \liminf_{j\to +\infty}\widetilde d^{\boldsymbol{p}_{j}}_{N_j}(\boldsymbol{q}_\epsilon)$. Moreover, the relation between $\boldsymbol{p}_{\epsilon,j}$ and $\boldsymbol{p}_{j}$, as well as the constraint $L_{m_{j-1}}\le \log (\lfloor\epsilon N_{j}\rfloor)$ imply that $|\widetilde d^{\boldsymbol{p}_{j}}_{N_j}(\boldsymbol{q}_\epsilon)-\widetilde d_{N_j}(\boldsymbol{p}_j)|=O(\epsilon)$ and for all $1\le k\le d$ (recall \eqref{lambdakN} and that the $\gamma_k(N_j)$ are associated with ${\boldsymbol{p}_{j}}$)
$$
\gamma_k(N_j)\chi_k((\widehat{\boldsymbol{q}_\epsilon})_{\gamma_k(N_j)})=\gamma_k(N_j)\chi_k((\widehat{\boldsymbol{p}_{j}})_{\gamma_k(N_j)})+O(\epsilon)N_j=N_j(1+O(\epsilon)).
$$ 
This implies that for $\mu_{\boldsymbol{q}_\epsilon}$-almost every $z$, $Q_{N_j}(z)$ is a parallelepiped whose sides lengths are $e^{-N_j(1+O(\epsilon))}$, and $\liminf_{j\to +\infty} \frac{\log(\mu_{\boldsymbol{q}_\epsilon}(Q_{N_j}(z)))}{-N_j}\ge \liminf_{j\to +\infty} \widetilde d_{N_j}(\boldsymbol{p}_j)(1+ O(\epsilon))\ge \Delta(\epsilon)(1-\epsilon)+O(\epsilon)$. Consequently, Lemma~\ref{calcdim}(3) yields $\lim_{\epsilon\to 0^+} \dim_P(\mu_{\boldsymbol{q}_\epsilon})\ge\Delta$ (for IMMs we know that the packing dimension exists). 

For the deterministic case, we do not have to take care of the non degeneracy of the measure we construct, since we simply consider an inhomogeneous Bernoulli measure. This makes it possible to consider a decreasing sequence $(\epsilon_j)_{j\ge 1}$ converging to 0 and require at the beginning of the above argument that the  sequence $(\boldsymbol{p}_j)_{j\in\mathbb{N}^+}\in \mathscr{P}^{\ell,\eta}_{\mathcal I}$ is such that for each $j\ge 1$ one has $\boldsymbol{p}_j\in {\mathscr{Q}}_{\mathcal I}^{\ell,\epsilon_j,\eta(\epsilon_j), \lfloor\Lambda_aN_j\rfloor}$ and $\widetilde d_{N_j}(\boldsymbol{p}_j)\ge \Delta(\epsilon_j)(1-\epsilon_j)$. Then we  consider $\boldsymbol{p}_{\epsilon_j,j}$ instead of $\boldsymbol{p}_{\epsilon,j}$, construct $\boldsymbol{q}_\epsilon$ as above but from the collection $\{\boldsymbol{p}_{\epsilon_j,j}\}_{j\ge 1}$ instead of $\{\boldsymbol{p}_{\epsilon,j}\}_{j\ge 1}$, and it results that $\dim_P(\mu_{\boldsymbol{q}_\epsilon})=\Delta$. 
\end{proof}

\section{Appendix}\label{appendix}

\noindent
\textbf{\textit{An inequality on the moments of a sum of independent and centered random variables.}}

\begin{lemma}[\cite{vBE}] \label{lemvBE} For all $h\in(1,2]$, for all integers $m\ge 1$, if $Z_1,\ldots, Z_m$ are independent and centered real random variables.  Then   $\mathbb{E}\big(\big|\sum_{i=1}^mZ_i\big|^h\big )\le 2^{h}\sum_{i=1}^m\mathbb{E}(|Z_i|^h)$. 
\end{lemma}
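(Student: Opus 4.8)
\textbf{Proof proposal for Lemma~\ref{lemvBE}.}

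The plan is to reduce the statement to the classical von Bahr--Esseen inequality, whose proof I would recall for completeness. First I would record the symmetrization step, which is where the factor $2^h$ comes from. Let $Z_1',\ldots,Z_m'$ be an independent copy of $Z_1,\ldots,Z_m$, defined on a product space, and set $\widetilde Z_i=Z_i-Z_i'$. Since each $Z_i$ is centered, conditioning on the $Z_i$ and applying Jensen's inequality to the convex map $x\mapsto |x|^h$ gives
\begin{equation*}
\mathbb{E}\Big(\Big|\sum_{i=1}^m Z_i\Big|^h\Big)=\mathbb{E}\Big(\Big|\sum_{i=1}^m Z_i-\mathbb{E}\Big(\sum_{i=1}^m Z_i'\Big)\Big|^h\Big)\le \mathbb{E}\Big(\Big|\sum_{i=1}^m \widetilde Z_i\Big|^h\Big).
\end{equation*}
The variables $\widetilde Z_i$ are independent and symmetric, and $\mathbb{E}(|\widetilde Z_i|^h)\le 2^h\,\mathbb{E}(|Z_i|^h)$ by convexity again (writing $\widetilde Z_i=Z_i+(-Z_i')$ and using $|a+b|^h\le 2^{h-1}(|a|^h+|b|^h)$, together with $\mathbb{E}(|Z_i'|^h)=\mathbb{E}(|Z_i|^h)$). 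Actually, a cleaner bookkeeping: $\mathbb{E}(|\widetilde Z_i|^h)\le 2^{h-1}(\mathbb{E}(|Z_i|^h)+\mathbb{E}(|Z_i'|^h))=2^h\mathbb{E}(|Z_i|^h)$. So it suffices to prove the inequality $\mathbb{E}(|\sum_i \widetilde Z_i|^h)\le \sum_i\mathbb{E}(|\widetilde Z_i|^h)$ for independent symmetric summands, which then yields the claimed bound with constant $2^h$.

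For the symmetric case I would argue by induction on $m$, following von Bahr and Esseen. The base case $m=1$ is trivial. For the inductive step, let $S_{m-1}=\sum_{i=1}^{m-1}\widetilde Z_i$, which is symmetric and independent of $\widetilde Z_m$. The key deterministic inequality is: for a symmetric random variable $X$ independent of a symmetric random variable $Y$, one has $\mathbb{E}(|X+Y|^h)\le \mathbb{E}(|X|^h)+\mathbb{E}(|Y|^h)$ whenever $1\le h\le 2$. This follows from the elementary pointwise bound, valid for real $x,y$ and $1\le h\le 2$,
\begin{equation*}
\tfrac12\big(|x+y|^h+|x-y|^h\big)\le |x|^h+|y|^h,
\end{equation*}
combined with symmetry of $Y$ (so that $X+Y$ and $X-Y$ have the same law, conditionally on $X$), then taking expectations. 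Applying this with $X=S_{m-1}$, $Y=\widetilde Z_m$ and using the induction hypothesis on $\mathbb{E}(|S_{m-1}|^h)$ closes the induction.

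The main obstacle, and the only genuinely nontrivial analytic point, is the pointwise inequality $\tfrac12(|x+y|^h+|x-y|^h)\le |x|^h+|y|^h$ for $1\le h\le 2$. I would prove it by homogeneity: dividing through, it reduces to showing $g(t)=\tfrac12(|1+t|^h+|1-t|^h)\le 1+|t|^h$ for all real $t$, which by evenness we may take $t\in[0,1]$. One checks $g(0)=1$, $g(1)=2^{h-1}\le 2=1+1$, and studies $t\mapsto 1+t^h-g(t)$ on $[0,1]$; concavity of $s\mapsto s^h$ on $[0,\infty)$ for $h\le 1$ does not directly apply here since $h>1$, so instead I would use that $s\mapsto s^{h/2}$ is concave (as $h/2\le 1$) applied to $s=(1\pm t)^2$, giving $|1+t|^h+|1-t|^h=((1+t)^2)^{h/2}+((1-t)^2)^{h/2}\le 2\big(\tfrac{(1+t)^2+(1-t)^2}{2}\big)^{h/2}=2(1+t^2)^{h/2}$, and then $2(1+t^2)^{h/2}\le 2(1+t^h\cdot t^{2-h})^{h/2}$; a short convexity argument on $(1+t^2)^{h/2}$ versus $1+t^h$ finishes it. This elementary lemma is exactly the content cited from \cite{vBE}, so in the paper I would simply invoke it, but the above gives a self-contained route if needed.
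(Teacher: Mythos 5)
The paper offers no proof of this lemma at all: it is quoted directly from von Bahr and Esseen \cite{vBE}, so there is nothing internal to compare against. Your proposal is a correct, self-contained reconstruction of the standard argument, and it does deliver exactly the constant $2^h$ stated in the paper: the conditional Jensen step (inserting an independent copy and using centering) and the bound $\mathbb{E}(|\widetilde Z_i|^h)\le 2^{h-1}\big(\mathbb{E}(|Z_i|^h)+\mathbb{E}(|Z_i'|^h)\big)=2^h\,\mathbb{E}(|Z_i|^h)$ are both right, and the reduction of the symmetric case to the two-variable inequality $\mathbb{E}(|X+Y|^h)\le\mathbb{E}(|X|^h)+\mathbb{E}(|Y|^h)$ via the identity in law of $X+Y$ and $X-Y$, followed by induction, is the classical route. (For the record, von Bahr--Esseen actually obtain the sharper constant $2$ rather than $2^h$; your weaker constant is all the paper uses.)

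One cosmetic repair in your elementary lemma: after the correct chain $|1+t|^h+|1-t|^h\le 2\big(1+t^2\big)^{h/2}$ (concavity of $s\mapsto s^{h/2}$ applied to the two values $(1\pm t)^2$, i.e. the power-mean inequality), the line ``$2(1+t^2)^{h/2}\le 2(1+t^h\cdot t^{2-h})^{h/2}$'' is vacuous, since $t^h\cdot t^{2-h}=t^2$. The clean finish is simply subadditivity of the concave function $s\mapsto s^{h/2}$ vanishing at $0$: $\big(1+t^2\big)^{h/2}\le 1^{h/2}+\big(t^2\big)^{h/2}=1+|t|^h$, which gives the pointwise bound $\tfrac12\big(|x+y|^h+|x-y|^h\big)\le |x|^h+|y|^h$ directly. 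With that one-line substitution your argument is complete and could stand in place of the citation.
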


\noindent
\textbf{\textit{Dimensions of a measure.} }Recall that if $\mu$ is a positive and finite Borel measure on $\mathbb{R}^d$, then its lower Hausdorff dimension and upper Hausdorff dimensions are respectively defined as 
\begin{align*}
\underline{\dim}_H(\mu)&=\inf\{\dim_H E:\, E \text{ is Borel and } \mu(E)>0\}\\ 
\text{and}\quad\overline{\dim}_H(\mu)&=\inf\{\dim_H E:\, E \text{ is Borel  and }\mu(\mathbb{R}^d\setminus E)=0\}, 
\end{align*} 
In case of equality of these dimensions, their common value is simply denoted by $\dim_H(\mu)$ and called the Hausdorff dimension of $\mu$. The lower packing  dimension $\underline{\dim}_P(\mu)$ and upper packing  dimensions $\overline{\dim}_H(\mu)$ of $\mu$ are define similarly by replacing $\dim_H$ by $\dim_P$, as well as the packing dimension of $\mu$, defined as their common value whenever they coincide, and denoted $\dim_P(\mu)$. 

Defining the lower local and upper local dimensions of $\mu$ at any point $x\in\mathrm{supp}(\mu)$ respectively as as 
$$
\underline \dim(\mu,x)=\liminf_{r\to 0^+} \frac{\log\big (\mu(B(x,r))\big )}{\log(r)}\text{ and } \overline \dim(\mu,x)=\limsup_{r\to 0^+} \frac{\log\big (\mu(B(x,r))\big )}{\log(r)},
$$
one has the characterizations~(see \cite{FLR} for instance): 
\begin{align*}
&\underline{\dim}_H(\mu)=\mathrm{ess\, inf}_\mu \, \underline \dim(\mu,\cdot), \ \underline{\dim}_H(\mu)=\mathrm{ess\, sup}_\mu \, \underline \dim(\mu,\cdot),\\ 
&\underline{\dim}_P(\mu)=\mathrm{ess\, inf}_\mu \, \overline \dim(\mu,\cdot),\ \overline{\dim}_P(\mu)=\mathrm{ess\,sup}_\mu \, \overline \dim(\mu,\cdot),
\end{align*}
and one says that $\mu$ is exact dimensional if $\underline{\dim}_H(\mu)=\overline{\dim}_P(\mu)$, and denote the common value by $\dim(\mu)$.

The following lemma and its proofs are elementary. They are in spirit of \cite[Proposition 2.3]{ref32} (which only deals with Hausdorff dimension), though different. They exploit the characterization of lower and upper Hausdorff or packing dimensions recalled in Section~\ref{thmdim} as well as the characterization of packing dimension as modified box-counting dimension (see~\cite{ref22}).

\begin{lemma}\label{calcdim}Let $\mu$ be a positive and finite Borel measure supported on $[0,1]^d$. Let $(\mathcal G_N)_{N\ge 1}$ a sequence of finite families of closed parallelepipeds included in $[0,1]^d$ and such that for all $N\ge 1$ two elements of $\mathcal G_N$ are equal or have disjoint interior. 

Suppose that for each $N\ge 1$ and each $Q\in\mathcal G_N$ one has $\mu(\partial Q)=0$ and  the elements of $\mathcal G_N$ form a covering of $\supp(\mu)$. In particular, $\mu$-almost every $z\in \supp(\mu)$ is contained in a unique element $Q_N(z)$ of $\mathcal{F}_N$ for all $N\ge 1$. 
 
Let $\epsilon_1>0$, $\epsilon_2\in (0,1)$, $\delta_2\ge \delta_1\ge 0$ and $\Delta_2\ge \Delta_1\ge 0$. Let $(N_j)_{j\ge 1}$ be an increasing sequence of integers. 
\begin{enumerate}
\item Suppose that  for $\mu$-almost every $z$ one has $\liminf_{N\to+\infty} \frac{\log(\mu(Q_N(z)))}{-N}\ge \delta_1$ and for~$N$ large enough the sides lengths of $Q_N(z)$ are larger than $e^{-N(1+\epsilon_1)}$. Then,  $\underline\dim_H(\mu)\ge \frac{\delta_1}{1+\epsilon_1}$.

\item Suppose that for $\mu$-almost every $z$ one has $\liminf_{j\to+\infty} \frac{\log(\mu(Q_{N_j}(z)))}{-N_j}\le \delta_2$ and  the  sides lengths of $Q_{N_j}(z)$ are smaller than $e^{-N_j(1-\epsilon_2)}$. Then, $\overline\dim_H(\mu)\le \frac{\delta_2}{1-\epsilon_2}$.

\item  Suppose that for $\mu$-almost every $z$ one has $\limsup_{j\to+\infty} \frac{\log(\mu(Q_{N_j}(z)))}{-N_j}\ge \Delta_1$  and the  sides lengths of $Q_{N_j}(z)$ are larger than $e^{-N_j(1+\epsilon_1)}$. Then, $\underline\dim_P(\mu)\ge \frac{\Delta_1}{1+\epsilon_1}$.

\item Suppose that for $\mu$-almost every $z$ one has $\limsup_{N\to+\infty} \frac{\log(\mu(Q_N(z)))}{-N}\le \Delta_2$  and for~$N$ large enough the sides lengths of $Q_N(z)$ are smaller than $e^{-N(1-\epsilon_2)}$.  Then,  $\overline\dim_P(\mu)\le \frac{\Delta_2}{1-\epsilon_2}$. 
\end{enumerate}
\end{lemma}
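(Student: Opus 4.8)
The plan is to deduce everything from the characterizations of the four dimensions recalled in Section~\ref{thmdim} (lower, resp. upper, Hausdorff dimension as the $\mu$-essential infimum, resp. supremum, of $\underline\dim(\mu,\cdot)$; lower, resp. upper, packing dimension as the $\mu$-essential infimum, resp. supremum, of $\overline\dim(\mu,\cdot)$) together with, for the packing upper bound, the description of $\dim_P$ as modified box-counting dimension. First I would record the common reductions: $\mathcal Q=\bigcup_{N\ge 1}\mathcal G_N$ is countable and $\mu(\partial Q)=0$ for every $Q\in\mathcal Q$, so the set $Z$ of $z\in\supp(\mu)$ lying in the interior of $Q_N(z)$ for all $N$ has full $\mu$-measure; all estimates will be established for $z\in Z$. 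For each $\eta>0$ and $M\ge 1$ I will also use the increasing sets $A_M=A_M^\eta$ consisting of those $z\in Z$ for which the hypotheses of the item at hand hold for $Q_N(z)$ for every $N\ge M$ (resp. $Q_{N_j}(z)$ for every $j$ with $N_j\ge M$), with $\delta_i,\Delta_i$ replaced by $\delta_i\pm\eta,\Delta_i\pm\eta$; by hypothesis $\mu\big(Z\setminus\bigcup_M A_M\big)=0$.

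For the upper bounds (items (2) and (4)) I would argue by direct coverings, with no reference to balls. Fix $\eta>0$ and $s>\tfrac{\delta_2+\eta}{1-\epsilon_2}$ (resp. $>\tfrac{\Delta_2+\eta}{1-\epsilon_2}$). In item (4), for $N\ge M$ the distinct cells among $\{Q_N(z):z\in A_M\}$ satisfy $\mu(Q_N(z))\ge e^{-N(\Delta_2+\eta)}$ and $\operatorname{diam}(Q_N(z))\le \sqrt d\,e^{-N(1-\epsilon_2)}$, and since they have pairwise disjoint interiors there are at most $\|\mu\|e^{N(\Delta_2+\eta)}$ of them; so they cover $A_M$ by that many sets of that diameter, and interpolating over the scales between consecutive $N$'s gives $\overline\dim_B(A_M)\le\tfrac{\Delta_2+\eta}{1-\epsilon_2}$, hence $\dim_P(A_M)\le\tfrac{\Delta_2+\eta}{1-\epsilon_2}$; countable stability of packing dimension then yields $\dim_P\big(\bigcup_M A_M\big)\le\tfrac{\Delta_2+\eta}{1-\epsilon_2}$, and $\eta\to 0$ gives $\overline\dim_P(\mu)\le\tfrac{\Delta_2}{1-\epsilon_2}$. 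Item (2) is the analogue along the subsequence: for $\mu$-a.e. $z$ and every $M$ one picks $j=j(z)\ge M$ with $N_j\ge M$, $\mu(Q_{N_j}(z))\ge e^{-N_j(\delta_2+\eta)}$ and $\operatorname{diam}(Q_{N_j}(z))\le\sqrt d\,e^{-N_j(1-\epsilon_2)}$; grouping the cells $Q_{N_{j(z)}}(z)$ by the value of $N_{j(z)}$ and again bounding by $\|\mu\|e^{N(\delta_2+\eta)}$ the number of cells of $\mathcal G_N$ of $\mu$-mass $\ge e^{-N(\delta_2+\eta)}$, one gets $\sum(\operatorname{diam})^{s}\le \|\mu\|(\sqrt d)^{s}\sum_{j:\,N_j\ge M}e^{N_j(\delta_2+\eta-s(1-\epsilon_2))}$, the tail of a convergent series; letting $M\to\infty$ shows $\mathcal H^{s}$ of the co-null set vanishes, whence $\overline\dim_H(\mu)\le\tfrac{\delta_2}{1-\epsilon_2}$.

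For the lower bounds (items (1) and (3)) I would estimate $\underline\dim(\mu,\cdot)$, resp. $\overline\dim(\mu,\cdot)$. The key comparison is: if $r\le\tfrac12 e^{-N(1+\epsilon_1)}$ then, because the cells of $\mathcal G_N$ have pairwise disjoint interiors and (on $A_M$, $N\ge M$) all their side lengths are $\ge e^{-N(1+\epsilon_1)}\ge 2r$, the ball $B(z,r)$ meets at most $C(d)$ cells $Q$ of $\mathcal G_N$ with $\mu(Q\cap A_M)>0$, and each such $Q$ contains a point of $A_M$, hence satisfies $\mu(Q)\le e^{-N(\delta_1-\eta)}$ (resp., for the relevant $N_j$, $\le e^{-N_j(\Delta_1-\eta)}$); therefore $\mu(B(z,r)\cap A_M)\le C(d)e^{-N(\delta_1-\eta)}$. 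Combining this with the Besicovitch density theorem — which gives $\mu(B(z,r))\le 2\mu(B(z,r)\cap A_M)$ for $r$ small at $\mu$-a.e. density point $z$ of $A_M$ — and choosing, for given small $r$, the integer $N$ with $\tfrac12e^{-(N+1)(1+\epsilon_1)}\le r<\tfrac12 e^{-N(1+\epsilon_1)}$, one obtains $\underline\dim(\mu,z)\ge\tfrac{\delta_1-\eta}{1+\epsilon_1}$ for $\mu$-a.e. $z$ in item (1), hence $\underline\dim_H(\mu)\ge\tfrac{\delta_1}{1+\epsilon_1}$ after $\eta\to0$; in item (3), since the bound $\mu(Q_{N_j}(z))\le e^{-N_j(\Delta_1-\eta)}$ holds for infinitely many $j$, the same two ingredients give $\overline\dim(\mu,z)\ge\tfrac{\Delta_1-\eta}{1+\epsilon_1}$ for $\mu$-a.e. $z$, hence $\underline\dim_P(\mu)\ge\tfrac{\Delta_1}{1+\epsilon_1}$.

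The part requiring the most care — and the only genuinely geometric point — is the claim invoked in the lower bounds that a ball of radius $\le\tfrac12 e^{-N(1+\epsilon_1)}$ meets at most $C(d)$ cells of $\mathcal G_N$ all of whose side lengths are $\ge e^{-N(1+\epsilon_1)}$. This is transparent when the parallelepipeds are axis-parallel, which is the case in all our applications (where $\mathcal G_N=\{Q_B:B\in\mathcal F^{D(N)}_N(g)\}$), and one should either invoke this or impose a uniform non-degeneracy of the cells; everything else is bookkeeping — the scale interpolation in items (2) and (4), the verification that the covering numbers and series behave as stated, and the routine passage from $\bigcup_M A_M$ back to $\supp(\mu)$ modulo a $\mu$-null set.
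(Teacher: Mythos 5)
Your treatment of items (1), (2) and (4) is correct and is essentially the elementary argument the paper has in mind (the paper only indicates the tools: the local-dimension characterizations of $\underline\dim_H,\overline\dim_H,\underline\dim_P,\overline\dim_P$ and the box-counting characterization of packing dimension): the covering/Hausdorff-measure estimate for (2), the box-counting plus countable stability of $\dim_P$ for (4), and the counting-plus-density argument for (1) all work. Your geometric caveat is legitimate: for genuinely tilted thin parallelepipeds a ball of radius $r$ can meet arbitrarily many cells with all side lengths $\ge 2r$, so the bound by $C(d)$ does require coordinate-parallel (or uniformly non-degenerate) cells; in all the applications the cells $Q_B$ are axis-parallel boxes, and then your claim holds, e.g.\ because any such box meeting the cube of side $2r$ centred at $z$ contains one of its $2^d$ vertices, and at most $2^d$ boxes with pairwise disjoint interiors can contain a given point.

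Item (3), however, has a genuine gap. Your sets $A_M$ are defined by asking that ``the hypotheses hold for $Q_{N_j}(z)$ for every $j$ with $N_j\ge M$'', but the mass hypothesis in (3) is a $\limsup$: it only gives $\mu(Q_{N_j}(z))\le e^{-N_j(\Delta_1-\eta)}$ along a subsequence of $j$'s \emph{depending on $z$}, so the uniformized sets need not exhaust a full-measure set (they may all be empty), and the identity $\mu\big(Z\setminus\bigcup_M A_M\big)=0$ fails. Even if you keep only the side-length condition in $A_M$ and work, for each $z$, at its own good scales $N_j$, the counting step breaks: the neighbouring cells met by $B\big(z,\tfrac12 e^{-N_j(1+\epsilon_1)}\big)$ are only known to be light at scales depending on their own points, not at $N_j$, so you cannot bound $\mu(B(z,r_j)\cap A_M)$ by $C(d)e^{-N_j(\Delta_1-\eta)}$, and ``the same two ingredients'' do not yield $\overline\dim(\mu,z)\ge\frac{\Delta_1-\eta}{1+\epsilon_1}$. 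This is exactly where the characterization of $\dim_P$ as modified box-counting dimension (cited by the paper) is needed rather than the density-point argument. A correct route: it suffices to show that every set $F$ with $\mu^*(F)>0$ (which one may intersect with an $A_M$ uniformizing only the side-length condition) satisfies $\overline\dim_B F\ge\frac{\Delta_1-\eta}{1+\epsilon_1}$; indeed, with $F_j=\{z\in F:\ \mu(Q_{N_j}(z))\le e^{-N_j(\Delta_1-\eta)}\}$ one has $F=\limsup_j F_j$, so by countable subadditivity $\mu^*(F_j)\ge e^{-\sqrt{N_j}}$ along a subsequence; for such $j$ the set $F_j$ is covered by cells that are light and have all sides $\ge e^{-N_j(1+\epsilon_1)}$, so any ball of radius $\tfrac12 e^{-N_j(1+\epsilon_1)}$ carries at most $C(d)e^{-N_j(\Delta_1-\eta)}$ of the outer mass of $F_j$, forcing at least $C(d)^{-1}e^{N_j(\Delta_1-\eta)-\sqrt{N_j}}$ balls of that radius to cover $F$. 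Since in any countable cover of a Borel set $E$ with $\mu(E)>0$ some piece has positive outer measure, this gives $\dim_P E\ge \frac{\Delta_1-\eta}{1+\epsilon_1}$, hence $\underline\dim_P(\mu)\ge\frac{\Delta_1}{1+\epsilon_1}$ after $\eta\to0$. Your items (1), (2), (4) can stand as written; item (3) needs this replacement.
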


\newpage
\section*{Appendix: glossary of notation}
\begin{center} 
\begin{tabular}{|l l  |}
\hline 
\footnotesize $\mathbb{N}^+$ &\footnotesize Set of positive integers\\
\footnotesize $\mathcal I$ & \footnotesize \text{Finite set of cardinality $\ge 2$ }  \\
\footnotesize $\mathcal I^*$ & \footnotesize Set of finite words over the alphabet $\mathcal I$\\
\footnotesize $(\Sigma,T)=(\mathcal I^{\mathbb{N}^+},T)$& \footnotesize One-sided full shift over the alphabet $\mathcal I$\\
\footnotesize $T$&\footnotesize shift operation on $\Sigma$\\
\footnotesize $(f_i)_{i\in\mathcal I}$ & \footnotesize Contracting self-affine IFS \\
\footnotesize $K$&\footnotesize Attrator of $(f_i)_{i\in\mathcal I}$\\
\footnotesize $\pi$&\footnotesize Coding map from $\Sigma $ to $K$\\
\footnotesize $(a_{i,k})_{1\le k\le d}$ & \footnotesize diagonal coefficients of the linear part of $f_i$\\
\footnotesize $\Lambda_a$, $\Lambda'_a$, $\lambda_a$ & \footnotesize Constants depending on $\#\mathcal I$ and the $(a_{i,k})_{\substack{1\le k\le d\\i\in\mathcal I}}$ (see \eqref{LambdaLambda'} and \eqref{lyap}) \\ 
\footnotesize $(c_i)_{i\in\mathcal I}$& \footnotesize Random vector taking values in $\{0,1\}^{\mathcal I}$\\
\footnotesize $\mathcal I_\omega$& \footnotesize $\{i\in\mathcal I:\, c_i(\omega)=1\}$\\
\footnotesize $\Sigma_\omega$& \footnotesize Boundary of the Galton-Watson tree constructed in $\mathcal I^*$ via fractal\\
\footnotesize  &\footnotesize percolation by using independent copies of $(c_i)_{i\in\mathcal I}$ indexed by $\mathcal{I}^*$\\
\footnotesize $K_\omega$& \footnotesize Image of $\Sigma_\omega$ by $\pi$\\
\footnotesize $\mathbb R^A$&\footnotesize  Linear subspace  of  the Euclidean space $\mathbb{R}^d$ generated by $(e_k)_{k\in A}$,\\
\footnotesize & \footnotesize where $\emptyset\neq A\subset \{1,\ldots,d\}$ and $(e_k)_{1\le k\le d}$ is the canonical basis of $\mathbb{R}^d$\\ 
\footnotesize $\pi^A$& \footnotesize Orthogonal projection from $\mathbb R^d$ to $\mathbb R^A$\\ 
\footnotesize $\mathcal{P}_{\mathcal I}$& \footnotesize Set of probability vectors indexed by $\mathcal I$\\
\footnotesize $\chi_k(p)$& \footnotesize $k$-th Lyapunov exponent associated with $p\in\mathcal P_{\mathcal I}$ and $(f_i)_{i\in\mathcal I}$ (see \eqref{chip})\\
\footnotesize $h(p)$& \footnotesize Entropy $-\sum_{j\in\mathcal J} p_j\log(p_j)$ of the probability vector $p=(p_j)_{j\in\mathcal J}$\\
\footnotesize $H(W)$&\footnotesize  ``Entropy'' of the non negative random vector $W=(W_i)_{i\in\mathcal I}$ (see \eqref{HW})\\ 
\footnotesize $(D_r)_{r=1}^s$& \footnotesize Decreasing family of sets of principal directions in $\mathbb R^d$ related to some\\
\footnotesize &\footnotesize  Lyapunov exponents defined as above (see Sections~\ref{preliminaries} and~\ref{thmdim}) \\
\footnotesize $(\Pi^D_r:\mathcal I\to \mathcal I^D_r)_{r=1}^s$& \footnotesize Family of mappings from $\mathcal I$ to some of its subsets $\mathcal I^D_r$ \\
 \footnotesize &\footnotesize associated to some $(D_r)_{r=1}^s$ (see Section~\ref{preliminaries})\\
\footnotesize $\Pi^D_rp$& \footnotesize Probability vector indexed by $\mathcal I^D_r$ obtained by  projecting $p\in\mathcal{P}_{\mathcal I}$ via $\Pi^D_r$\\
\footnotesize &\footnotesize (see Section~\ref{preliminaries})\\
\footnotesize $( \mathcal I_r^{\mathbb{N}^+},T_r)$&\footnotesize One-sided full shift over the alphabet $\mathcal I_r=\mathcal I^D_r$\\
\footnotesize $(g_r)_{r=1}^s$&\footnotesize  Increasing sequence of integers associated to some $(D_r)_{r=1}^s$ \\
\footnotesize &\footnotesize (see Section~\ref{thmdim})\\
\footnotesize $\widetilde {g_s}$ (also denoted $\tilde g_s$)&\footnotesize  Integer defined from $g_s$ via \eqref{Ntilde}\\
\footnotesize $H_{n,k}$, $d_N$ and $\widetilde d_N$& \footnotesize See Definition~\ref{HN}\\
\footnotesize $\mathcal F^D(g)$ and $\mathcal F^D_N(g)$&\footnotesize See \eqref{partition} and Section~\ref{pfthmdim}\\
\hline
\end{tabular}
\end{center}

\noindent
\textbf{Acknowledgements.} The authors thank an anonymous referee for their extremely careful reading, which made it possible to substantially improve the manuscript. 


\begin{thebibliography}{50}

\bibitem{ref1}
K.~Bara\'nski,
\newblock Hausdorff dimension of the limit sets of some planar geometric
  constructions.
\newblock {\em Adv. Math.}, 210 (2007), 215\--245.

\bibitem{ref31}
B.~B\'ar\'any, M.~Hochman, and A.~Rapaport,
\newblock Hausdorff dimension of planar self-affine sets and measures,
\newblock {\em Invent. Math.}, 216 (2019), 601--659.

\bibitem{BK} B.~B\'ar\'any and A. K\"aenm\"aki, Ledrappier-Young formula and exact dimensionality of self-affine measures, {\em Adv. Math.}, 318 (2017), 88--129.

\bibitem{BRS}
B.~B\'ar\'any, M.~Rams, and K.~Simon,
\newblock On the dimension of self-affine sets and measures with overlaps.
\newblock {\em Proc. Amer. Math. Soc.}, 144 (2016), 4427--4440.

\bibitem{ref39}
B.~B\'ar\'any, M.~Rams, and K.~Simon,
\newblock On the dimension of triangular self-affine sets.
\newblock {\em Ergod. Th. and Dynam. Sys.}, 39 (2019), 1751--1783.

\bibitem{vBE} B.~von Bahr, C.~G.~Esseen,  Inequalities for the $r$-th absolute moment of a sum of random variables, $1\le r\le 2$. {\it Ann. Math. Stat.}, {36} (1965), No. 1, 299--303.

\bibitem{ref4}
J.~Barral,
\newblock Generalized vector multiplicative cascades,
\newblock {\em Adv. in Appl. Prob.}, 33 (2002), 874--895.

\bibitem{BFNL}J.~Barral and D.-J.~Feng,  Non-uniqueness of ergodic measures with full Hausdorff dimension on Gatzouras-Lalley carpet, {\em Nonlinearity}, 24 (2011), 2563--2567. 

\bibitem{BFAJM}J.~Barral and D.-J.~Feng,
\newblock Weighted thermodynamic formalism on subshifts and applications.
\newblock {\em Asian J. Math.}, 16 (2012), 319--352.

\bibitem{ref2}
J.~Barral and D.-J.~Feng,
\newblock Projections of random {M}andelbrot measures.
\newblock {\em Adv. Math.}, 325 (2018), 640--718.

\bibitem{ref6}
J.~Barral and D.-J.~Feng,
\newblock Dimensions of random statistically self-affine Sierpi\'nski sponges in
  $\mathbb{R}^k$.
\newblock {\em J. Math. Pures Appl.}, 149 (2021), 254--303.

\bibitem{BM} J. Barral and M.~Mensi, Gibbs measures on self-affine Sierpi\'nski carpets and their singularity spectrum, {\em Ergod. Th. $\&$ Dynam. Sys.}, 27 (2007), 1419--1443. 



\bibitem{ref3}
T.~Bedford,
\newblock Crinkly curves, {M}arkov partitions and box dimension in self-similar
  sets. {P}h.{D}. {T}hesis.
\newblock 1984.

\bibitem{BenNasr} F.~Ben Nasr.
 Dimension de Hausdorff de certains fractals al\'eatoires.  {\it  S\'em. Th\'eor. Nombres Bordeaux}, {\bf 4} (1992), 129--140.


\bibitem{Br} L. Breiman, \textit{Probability}, Classics in Applied Mathematics, 7, SIAM, 1992.  

\bibitem{ref5}
T.~Das and D.~Simmons,
\newblock The Hausdorff and dynamical dimensions of self-affine sponges: a
  dimension gap result.
\newblock {\em Invent. Math.}, 210 (2017), 85--134.

\bibitem{DekGri} F.~M.~Dekking and G.~R.~Grimmett, Superbranching processes and projections
of random Cantor sets, {\em Probab.\ Theory Related Fields},  78 (1988), 335--355.

\bibitem{Dur}
R.~Durrett,  
\textit{Probability: Theory and Examples, 5th ed.} Cambridge Series in Statistical and Probabilistic Mathematics, 2019.

\bibitem{ref37}
R.~Durrett and T.-M.~Liggett,
\newblock Fixed points of the smoothing transformation.
\newblock {\em Z. Wahrsch. Verw. Gebiete}, 64 (1983), 275--301.

\bibitem{ref28}
K.~J.~Falconer,
\newblock The Hausdorff dimension of self-affine fractals.
\newblock {\em Math. Proc.  Camb. Phil. Soc}, 103 (1988), 339--350.

\bibitem{Fal} K.~J.~Falconer,
 Projections of random Cantor sets. {\em J.~Theoret.\ Probab.,} 2 (1989), 65--70.


\bibitem{ref22}
K.~J.~Falconer,
\newblock {\em Fractal Geometry: Mathematical Foundations and Applications}.
\newblock Wiley, 1990.

\bibitem{FJ} K.~J.~Falconer and X.~Jin,  Exact dimensionality and projections of random self-similar measures and sets,
{\em J. London Math. Soc.}, {90} (2014), 388--412.

\bibitem{FLR} A.-H.~Fan, K.-S.~Lau and H.~Rao, Relationships between different dimensions of a measure, {\em Monatsh. Math.}, 135 (2002), 191--201.

\bibitem{FengDuke} D.-J.~Feng,  Dimension of invariant measures for affine iterated function systems, {\em Duke Math. J.}, 172 (2023), no. 4, 701--774. 

\bibitem{FHJMPA}
D.-J.~Feng and W.~Huang,
\newblock {\em J. Math. Pure. Appl.}, 106 (2016), 411--452.

\bibitem{FengWang} D.-J.~Feng and Y.~Wang, A class of self-affine sets and self-affine measures, {\em J. Fourier Anal. Appl.}, 11 (2005), 107--124.


\bibitem{Fra-Kol}
J.~Fraser and I.~Kolossv\'ary,
\newblock The assouad dimension of self-affine measures on sponges.
\newblock {\em Ergod. Th. $\&$ Dynam. Sys.}, 43 (2023), 2841--2862.

\bibitem{ref10}
D.~Gatzouras and S.~Lalley,
\newblock Hausdorff and box dimensions of certain self-affine fractals.
\newblock {\em Indiana Univ. Math. J.}, 41 (1992), 533--568.

\bibitem{ref9}
D.~Gatzouras and S.~Lalley,
\newblock Statistically self-affine sets: Hausdorff and box dimensions.
\newblock {\em J. Theoret. Probab.}, 7 (1994), 437--468.

\bibitem{GMW}S.~Graf, R.~D.~Mauldin and S.~C.~Williams,  The exact Hausdorff dimension in random recursive constructions. {\it Mem. Amer. Math. Soc.}, 71 (1988), no. 381.

\bibitem{Heurteaux1998}
Y.~Heurteaux, 
Estimation de la dimension inf\'erieure et de la dimension sup\'erieure des mesures,
\textit{Ann. Inst. H. Poincar\'e}, 34 (1998), 309--338. 


\bibitem{Hochman1} M.~Hochman,
On self-similar sets with overlaps and inverse theorems for entropy, {\em Ann. Math.}, 180 (2014), 773--822. 

\bibitem{Hochman2} M.~Hochman,
On self-similar sets with overlaps and inverse theorems for entropy in $\mathbb{R}^d$, Memoirs of the A.M.S 1287, 2020.

\bibitem{ref40}
M.~Hochman and A.~Rapaport,
\newblock Hausdorff dimension of planar self-affine sets and measures with
  overlaps.
\newblock {\em J. Eur. Math. Soc.}, 24 (2022), 2361--2441.

 \bibitem{Hutchinson}J.~E.~Hutchinson,
 Fractals and self-similarity.
 {\it Indiana Univ. Math. J.}, {30} (1981),  713--747.

\bibitem{JPS} T.~Jordan, M.~Pollicott and K.~Simon,  Hausdorff dimension for randomly perturbed self affine attractors, {\em Comm. Math. Phys.}, {270} (2007),
 519--544.
 
 \bibitem{Kaenmaki} A.~K\"{a}enm\"{a}ki,  On natural invariant measures on generalised iterated function systems. {\em Ann. Acad. Sci. Fenn. Math.},  29 (2004),
    419--458.
    
\bibitem{K87} J.-P.~Kahane, Multiplications al\'eatoires et
  dimensions de Hausdorff, {\em  Ann. Inst. H. Poincar\'e Probab. Statist.}, {23} (1987), 289--296.

\bibitem{ref26}
J.-P.~Kahane and J.~Peyri{\`e}re,
\newblock Sur certaines martingales de {B}. {M}andelbrot.
\newblock {\em Adv. Math.}, 22 (1976), 131--145.

\bibitem{ref18}
R.~Kenyon and Y.~Peres,
\newblock Measures of full dimension on affine-invariant sets.
\newblock {\em Ergod. Th. and Dyn. Sys.}, 16 (1996), 307--323.

\bibitem{Kol}
I.~Kolossv\'ary.
\newblock ${L}^q$-spectrum of self-affine sponges,
\newblock {\em J. London Math. Soc.}, 108 (2023), 666--701.

\bibitem{LeWa77} F.~Ledrappier and P.~Walters,
A relativised variational principle for continuous
transformations. \textit{J. Lond. Math. Soc.}, {16} (1977), 568--576.


\bibitem{Lyons}
R.~Lyons, A simple path to Biggins' martingale convergence for branching random walk,
In \textit{Classical and Modern Branching Processes} (IMA Vol. Math. Appl. 84), eds. K. B. Athreya and P. Jagers, Springer, New York, 1997, pp. 217--222.


\bibitem{ref25}
B.~B.~Mandelbrot.
\newblock Intermittent turbulence in self-similar cascades, divergence of high
  moments and dimension on the carrier,
\newblock {\em J. Fluid Mech.}, 62 (1974), 331--358.

\bibitem{Mand-Orsay}
B.~B.~Mandelbrot.
\newblock Intermittent turbulence and fractal dimension: Kurtosis and the
  spectral exponent 5/3 + {B},
\newblock In {\em Turbul. Navier Stokes Equat., Proc. Conf. Univ. Paris-Sud,
  Orsay 1975}, volume 565 of {\em Lect. Notes Math.}, pages 121--145, 1976.


\bibitem{ref12}
C.~McMullen.
\newblock Hausdorff dimension of general {S}ierpi\'nski carpets,
\newblock {\em Nagoya Math. J.}, 96 (1984), 1--9.

\bibitem{Moran} P.~A.~P.~Moran, Additive functions of intervals and Hausdorff measure, {\em Math. Proc. Camb. Phil.
Soc.} 42 (1946), 15--23.

\bibitem{MSh} I.~Morris and P.~Shmerkin, On equality of Hausdorff and affinity dimensions, via self-affine measures on positive subsystems, 
    {\em Trans.  Amer. Math. Soc.},  371 (2019) 1547-1582. 

\bibitem{MSert} I.~Morris and C.~Sert, A variational principle relating self-affine measures to self-affine sets, arXiv:2303.03437.

\bibitem{Ngai} S.~M.~Ngai, A dimension result arising from the
  $L^q$-spectrum of a measure, {\em Proc.\ Amer.\ Math.\ Soc.}
  125 (1997), 2943--2951.

\bibitem{Olsen} L.~Olsen, Self-affine multifractal Sierpi\'nski sponges, {\em Pacific J. Math.}, 183 (1998), 1--57. 

\bibitem{SimonOrgov} V.~Orgov\'anyi and K.~Simon, Projections of random Menger sponges, {\em Asian J. Math.}, 27 (2023), 893--936. 

\bibitem{ref32}
J.~Peyri{\`e}re,
\newblock Mesures singuli{\`e}res associ{\'e}es {\`a} des d{\'e}coupages
  al{\'e}atoires d'un hypercube.
\newblock {\em Colloquium Math.}, 51 (1987), 267--276.

\bibitem{RamSi3} M.~Rams, K.~Simon.  The geometry of fractal percolation. In: Geometry and analysis of fractals, 303--323, Springer Proc. Math. Stat., 88, Springer, Heidelberg, 2014.

\bibitem{Rapaport1} A.~Rapaport, On self-affine measures associated to strongly irreducible and proximal systems, {\em Adv. Math.},  449 (2024), 109734.

\bibitem{Rapaport2} 	A.~Rapaport, Dimension of diagonal self-affine sets and measures via non-conformal partitions,  arXiv:2309.03985.

\bibitem{Solomyak} B.~Solomyak, Measure and dimension for some fractal families, {\em Math. Proc. Camb. Phil. Soc.},  124  (1998), 531--546.


\bibitem{WW}
E.~C.~Waymire and S.~C.~Williams, A cascade decomposition theory with applications to Markov and exchangeable cascades, 
\textit{Trans. Amer. Math. Soc.} 348 (1996), 585--632. 

\end{thebibliography}
\end{document}